\documentclass[10pt,reqno]{amsart}

 \usepackage{amsmath,amsfonts,amssymb,amscd,amsthm,amsbsy,bbm, epsf,calc,enumerate,color,graphicx,verbatim,cite,import}
\usepackage{color}
\usepackage{datetime,appendix}
\usepackage{epstopdf}

\usepackage{float}

\usepackage{ifpdf}
\ifpdf
    \usepackage[colorlinks,citecolor=black,linkcolor=black]{hyperref}
    % must be before amsrefs for backrefs to work
\fi

%\titlespacing{\paragraph}{12pc}{1.5ex plus .1ex minus .2ex}{1pc}

%%%%%%%  PAGE STYLE/SIZING  %%%%%%%%%%%%
\textwidth=6.25truein
\textheight=8.5truein
\hoffset=-0.65truein
\voffset=-.5truein

%%%%%%%%

%\newenvironment{myfont}{\fontfamily{AMSb}\selectfont}{\par}
%\DeclareTextFontCommand{\textcmr}{\myfont}
%\def\varv{\textcmr{$v$}}

%%%%%%% Theorem Styles %%%%%%%%%%%%%%%%%%%
%\usepackage[displaymath, mathlines]{lineno}
%\linenumbers

\theoremstyle{plain}

\newtheorem{thm}{Theorem}[section]

\newtheorem{mainthm}{Theorem}

\newtheorem{lem}[thm]{Lemma}
\newtheorem{lemma}[thm]{Lemma}
\newtheorem{cor}[thm]{Corollary}

\newtheorem{prop}[thm]{Proposition}

         %% un-numbered

\theoremstyle{definition}

\newtheorem{DEF}[thm]{Definition}

\theoremstyle{remark}                  %% For unnumbered Remarks, etc.
\newtheorem{rem}[thm]{Remark}
\newtheorem{remark}[thm]{Remark}

%%%%%%%  OTHER MACROS  %%%%%%%%%%%%%%%%%%%%%%%

\def\F{{\mathcal F}}

\def\R{{\mathbb R}}

\def\real{{\mathbb R}}

\def\indicator{\mathbf{1}}
\def\integer{{\mathbb Z}}

\def\e{\varepsilon}

\def\Per{\textnormal{Per}}

\def\grad{\nabla}

\def\div{\textnormal{div}}

\def\dside{\partial_{\textnormal{side}}}

\definecolor{darkgreen}{rgb}{0,0.4,0}

%%%%%%%%%%%%%%%%%%%%%%%%%%%%%%%%%%%%%%%%%%%%%%

\DeclareMathOperator*{\argmin}{\arg\!\min}

\numberwithin{equation}{section}

\setlength{\parindent}{0pt}

\DeclareRobustCommand{\SkipTocEntry}[5]{}

\def\XXint#1#2#3{{\setbox0=\hbox{$#1{#2#3}{\int}$ }
\vcenter{\hbox{$#2#3$ }}\kern-.6\wd0}}

\begin{document}
\title{Liquid Drops on a Rough Surface}
\author{William M. Feldman}
\email{feldman@math.uchicago.edu \vspace{-.5\baselineskip}}
\thanks{W. M. Feldman was partially supported by NSF-RTG grant DMS-1246999.}
\address{Department of Mathematics, The University of Chicago, Chicago, IL 60637, USA \vspace{-.5\baselineskip}}
\author{Inwon C. Kim}
\email{ikim@math.ucla.edu \vspace{-.5\baselineskip}}
\address{Department of Mathematics, UCLA, Los Angeles, CA 90095, USA}
\thanks{I. C. Kim was partially supported by NSF grant DMS-1566578.}
\maketitle
\begin{abstract}
We consider a liquid drop sitting on a rough solid surface at equilibrium, a volume constrained minimizer of the total interfacial energy.  The large-scale shape of such a drop strongly depends on the micro-structure of the solid surface.  Surface roughness enhances hydrophilicity and hydrophobicity properties of the surface, altering the equilibrium contact angle between the drop and the surface. Our goal is to understand the shape of the drop with fixed small scale roughness. To achieve this, we develop a quantitative description of the drop and its contact line in the context of periodic homogenization theory, building on the qualitative theory of Alberti and DeSimone \cite{AlbertiDeSimone}. 
\end{abstract}
%\tableofcontents

\section{Introduction}
We consider the interfacial energy of a configuration of solid, liquid and vapor occupying complementary regions $S$, $L$ and $V$ of $\real^{d+1}$. Points in $\real^{d+1}$ will be denoted by $(x,z) \in \real^d \times \real$.  We will take the solid region $S$ to to be an almost flat surface with small scale roughness,
 \begin{equation}\label{eqn: solid}
 S_\e = \{ (x,z) \in \real^d \times \real : z \leq \e\phi(\tfrac{x}{\e})\}.
 \end{equation}
Here $\phi : \real^{d} \to \real$ will be at least upper semi-continuous, bounded variation and $\integer^{d}$ periodic.  We take as a normalization $\max \phi = 0$.  Since the solid region $S$ is fixed by \eqref{eqn: solid}, the entire configuration is determined by either $L$ or $V$.  We will put the focus on the shape of the minimizing liquid drop $L$ with the volume constraint $|L| = \textup{Vol}$.  The total energy of a given configuration is,
\begin{equation}\label{main}
{{E}}_\e(L) := \sigma_{\textup{LV}}| \partial L \cap \partial V | + \sigma_{\textup{SV}}|\partial S_\e \cap \partial V| + \sigma_{\textup{SL}}|\partial S_\e \cap \partial L|,
\end{equation}
where $\sigma_{\textup{LV}}$, $\sigma_{\textup{SV}}$ and $\sigma_{\textup{SL}}$ are positive constants that represent, respectively, the liquid-vapor, solid-vapor and solid-liquid interfacial energies per unit surface area.   For now this energy should be interpreted formally, $|\cdot |$ refers to the surface measure, later we will be more rigorous and interpret this energy on the space of finite perimeter sets.  We also remark that, to be precise, $E_\e$ should really be evaluated only on compact (in $x$) subsets of $\real^{d+1}$.  See Section~\ref{sec: energy} for more details.

\medskip

  We refer to the three phase interface $\partial L \cap \partial S \cap \partial V$, which is formally a $d-1$ dimensional set, as the \emph{contact line}.  Smooth local minimizers of \eqref{main} satisfy a so-called contact angle condition along their contact line.  The prescribed angle between the liquid-vapor interface and the solid-liquid interface (see Figure~\ref{fig: apparent angle}) is referred to as the {\it Young contact angle} $\theta_Y$ associated with the energy and is given by,
  \begin{equation}\label{young}
\cos \theta_Y := \frac{\sigma_{\textup{SL}}-\sigma_{\textup{SV}}}{\sigma_{\textup{LV}}} \ \hbox{ with } \ \theta_Y \in [0,\pi].
\end{equation}
We say that the surface is \emph{hydrophobic} when $\sigma_{\textup{SL}} \geq \sigma_{\textup{SV}}$, i.e. when $\cos \theta_Y \geq 0$, and we say that the surface is \emph{hydrophilic} when $\sigma_{\textup{SL}} \leq  \sigma_{\textup{SV}}$, i.e. when $\cos \theta_Y \leq 0$.  We make note that our convention on the sign of $\cos\theta_Y$, i.e. the contact angle is measured from the outside of the liquid phase, is not standard.

\medskip

In the setting of completely flat surface, i.e. when $\phi \equiv 0$, the unique (modulo translation) global minimizers are spherical caps with contact angle $\theta_Y$ along the contact line. In the general rough surface setting the shape of the minimizer is more difficult to describe.  At the microscopic scale $\e$ the volume constrained global minimizers $L_\e$ of the interfacial energy $E_\e$ will satisfy the contact angle condition with angle $\theta_Y$ along their microscopic contact line. Our goal is to understand the global shape of $L_\e$ for a small, fixed, roughness scale $\e$. More precisely we are interested in (a) the appearance of a \emph{macroscopic} or \emph{apparent} contact angle measured at a large scale, and (b) the precise size of this larger scale or, similarly, what is the size of the {\it boundary layer} caused by the propagating influence of the small scale roughness.

 \begin{figure}[t]
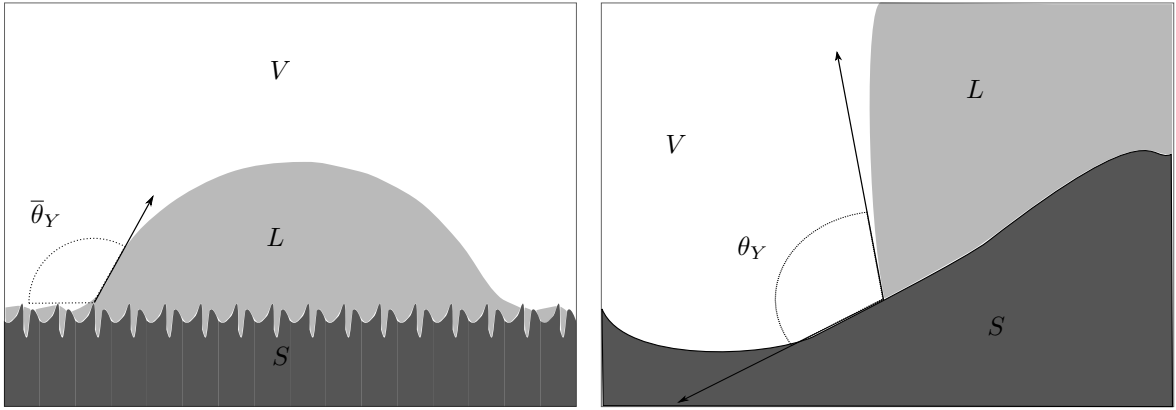

  \centering
\begin{minipage}{.5\textwidth}
  \centering
 \def\svgwidth{3in}
  \import{figures/}{drop_pic.pdf_tex}
\end{minipage}%
\begin{minipage}{.5\textwidth}
\centering
 \def\svgwidth{3in}
  \import{figures/}{young_angle.pdf_tex}
\end{minipage}
  \caption{Left: A liquid droplet on a rough hydrophilic surface, the effective/apparent contact angle is displayed. Right: The microscopic contact line and the Young contact angle are displayed.}
  \label{fig: apparent angle}
 \end{figure}

\medskip

For the global minimizer, the effective contact angle is uniquely determined by cell problems in the bulk of the wetted set and non-wetted sets. This connection between the homogenized contact angle for global minimizer and homogenization theory was first introduced by Alberti and DeSimone \cite{AlbertiDeSimone}.  
There are two cell problems, one for the effective solid-liquid interaction energy and one for the effective solid-vapor interaction energy.  The effective solid-vapor interaction energy $\overline{\sigma}_{\textup{SV}}$ is the minimal energy per unit area which can be achieved by interposing a liquid region in between the solid region $S$ and a vapor region $V \supset \{z \geq 0\}$.  Symmetrically, the effective solid-liquid interaction energy $\overline{\sigma}_{\textup{SL}}$ is the minimal energy per unit area which can be achieved by interposing a vapor region in between the solid region $S$ and a vapor region $L \supset \{z \geq 0\}$.  
See Section~\ref{sec: cell} for a detailed derivation and rigorous definition of the cell problems.  Given the effective interaction energies one can also define the effective contact angle,
\[ \cos \overline{\theta}_Y = \frac{\overline{\sigma}_{\textup{SL}}-\overline{\sigma}_{\textup{SV}}}{\sigma_{\textup{LV}}}.\]
 Different forms of the surface roughness lead to different optimal configurations for the two cell problems, which lead to different effective contact angles.  What one can show (see \cite{AlbertiDeSimone}), without any quantitative information about the volume constrained minimizers, is the $\Gamma$-convergence of $E_\e$ to the homogenized energy,
\begin{equation}\label{hom_energy}
\overline{{E}}(L):= \sigma_{\textup{LV}}|\partial L \cap\partial V| + \overline{\sigma}_{\textup{SL}}|\partial S_0\cap \partial L|+ \overline{\sigma}_{\textup{SV}}|\partial S_0\cap \partial V|,
\end{equation} 
 with the flat solid surface $S_0 = \{ z\leq0\}$.  However, exactly in what sense $\cos\overline{\theta}_Y$ relates to the apparent contact angle of the volume constrained minimizers $E_\e$ for a fixed positive $\e$ is not clear from the cell problems or from the qualitative homogenization result.  Clarifying this question is one of our main goals (see Theorem~\ref{thm: main}).

\medskip

We emphasize that the effective contact angle is not determined by a cell problem at the contact line, but rather in the bulk of the wetted set and non-wetted sets.  Identifying local minimizers or meta-stable states is more difficult, since then the cell problem is at the contact line and requires to actually solve the Euler-Lagrange equation which is a free boundary problem for the minimal surface equation.  For this article we restrict our attention to global minimizers.

 \medskip

 This problem is frequently discussed in the engineering literature, in particular in the context of a liquid droplet sitting on surface with periodic flat-topped pillars (see literature in Section~\ref{sec: lit} below).  Two, by now classical, ``models"  were developed to derive the value of the effective contact angle: the {\it Wenzel} model \cite{Wenzel1936} and the {\it Cassie-Baxter} model \cite{cassie1944}. We will describe the two models in the hydrophilic case, the hydrophobic case can be understood by symmetry -- switching the roles of $L$ and $V$.  In the Wenzel model one supposes that the liquid phase fills in the grooves of the solid completely and so the (approximate) energy per unit area under the wetted region is given by,
 \[ \cos \overline{\theta}_Y = \cos\theta_Y \int_{[0,1)^d} \sqrt{1+|D\phi|^2} \ dx.\]
 In the Cassie-Baxter model one supposes the the liquid spreads out filling in the grooves of the rough surface and creating a new ``effective" surface with a varying contact angle $\cos\theta_Y(x) = -1$ if $x \in \{ \phi <0\}$ (which is the liquid-liquid interaction energy) and on the maximum set of the solid surface $\{ \phi = 0\}$ the contact angle is unchanged $\cos\theta_Y(x) = \cos\theta_Y$.  Calling $f$ to be the area fraction of the unit cell taken up by $\{ \phi = 0\}$ this leads to an effective contact angle of,
 \[ \cos\overline{\theta}_Y = \cos\theta_Yf - (1-f).\]
 Empirically both of these two laws can be observed depending on the degree of roughness of the surface and on the wetting properties of the solid.  
  
 \medskip

In the context of the cell problems the Cassie-Baxter and Wenzel models should in fact be interpreted as test configurations for the two minimization problems for $\overline{\sigma}_{\textup{SL}}$ and $\overline{\sigma}_{\textup{SV}}$, see Section~\ref{sec: values} for details.  In fact the true minimizing states can be something much more complicated than the Wenzel or Cassie-Baxter states.  Nonetheless, again shown in \cite{AlbertiDeSimone}, when $\phi$ is Lipschitz continuous and $\cos\theta_Y$ is sufficiently small (depending on the Lipschitz constant of $\phi$) the Wenzel model is exactly correct.  We give a slight generalization of this fact below (see Lemma~\ref{lem: wenzel}) which covers more general surfaces including the case of periodically repeated flat-topped pillars. 
One may also wonder, for the particular type of surfaces where this statement makes sense, whether the Cassie-Baxter model is exact when $\cos\theta_Y$ is close to $1$.  In $d=1$ one can make explicit computations to find that this occurs \cite{AlbertiDeSimone}. We were able to show the analogous result in $d=2$ (see Theorem~\ref{thm:main0} below), which is the most physically relevant case.

\medskip

  A question which naturally occurs in our analysis is whether a hydrophobic surface can be made completely de-wetting, i.e. $\cos\theta_Y<1$ but $\cos \overline{\theta}_Y =1$, by surface roughness.  Symmetrically one can ask whether a hydrophilic surface can be made completely wetting, i.e. $\cos\theta_Y = -1$, by surface roughness.  
 The answer is no, this has been shown in \cite{AlbertiDeSimone} for a certain class of surfaces, when $| \partial S \cap \{ z= 0\}| >0$ in every unit cell.  We prove that $|\cos\overline{\theta}_Y|$ is bounded away from $1$ under a slightly relaxed assumption on the solid surface and we give a proof which allows for \emph{chemically textured} rough surfaces ($\sigma_{\textup{SL}}$ and $\sigma_{\textup{SV}}$ also depend on $x$).

 \medskip

We state as our first main result, a collection of results about the cell problems and the effective contact angle.

\begin{mainthm}[see Proposition~\ref{prop: contact angle quantity non-degen} and Section~\ref{sec: values}]\label{thm:main0}
Let $\cos\theta_Y$ be as given in \eqref{young} and let $\cos\bar{\theta}_Y$ be the homogenized contact angle defined via the cell problems in \eqref{hom_energy}. 
\begin{enumerate}[$(a)$]
\setlength\itemsep{.5em}
\item $|\cos \overline{\theta}_Y| <1-c_0$, where $c_0$ depends on $\cos \theta_Y$ and the regularity of $\phi$ near its maximum. 
\item There exist $\integer^d$-periodic sets $L_{\textup{SL}}$ and $L_{\textup{SV}}$ which have energy exactly $\overline{\sigma}_{\textup{SL}}$ and $\overline{\sigma}_{\textup{SV}}$ in every unit cell. 
\item Suppose that $d+1=2$ or $3$, and  $\phi(x) = M(1-\indicator_{P})$, with $P$ a smooth boundary periodic subset of $\real^d$,  i.e.  the surface is made of periodically arrayed flat-topped pillars.  Then the following holds:
\begin{enumerate}[$(i)$]
\item The function $\cos\theta_Y  \mapsto \cos\bar{\theta}_Y$ is a concave function, whose value and slope are bounded by Wenzel and Cassie-Baxter states.
\item The Cassie-Baxter bound is obtained if either $1-|\cos \theta_Y|$ is small or if $M$ is large. On the other hand the Wenzel bound is obtained if $|\cos\theta_Y|$ or $M$ is sufficiently small. 
\end{enumerate}
\end{enumerate}
\end{mainthm}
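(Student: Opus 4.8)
Normalize $\sigma_{\textup{LV}}=1$ and write $t:=\cos\theta_Y\in[-1,1]$ and $R:=\int_{[0,1)^d}\sqrt{1+|D\phi|^2}\,dx$, the measure of $\partial^*S$ per unit cell. For any configuration admissible in either cell problem, $|\partial S\cap\partial L|+|\partial S\cap\partial V|=R$, so its energy density equals $\sigma_{\textup{SV}}R+t\,|\partial S\cap\partial L|+|\partial L\cap\partial V|$; hence $\overline{\sigma}_{\textup{SL}}=\sigma_{\textup{SV}}R+g_{\textup{SL}}(t)$ and $\overline{\sigma}_{\textup{SV}}=\sigma_{\textup{SV}}R+g_{\textup{SV}}(t)$ for infima $g_{\textup{SL}},g_{\textup{SV}}$ of families of affine functions of $t$, and $\cos\overline{\theta}_Y=g_{\textup{SL}}(t)-g_{\textup{SV}}(t)=:\Phi(t)$ depends on $t$ alone. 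Two remarks drive everything: (i) for $t\ge0$ the empty-liquid configuration is optimal for $\overline{\sigma}_{\textup{SV}}$ (drop the last two energy terms, using $\sigma_{\textup{SL}}\ge\sigma_{\textup{SV}}$), so $\overline{\sigma}_{\textup{SV}}=\sigma_{\textup{SV}}R$ and $\Phi=g_{\textup{SL}}$ is \emph{concave} on $[0,1]$; (ii) relabelling $L\leftrightarrow V$ --- which exchanges $\sigma_{\textup{SL}}\leftrightarrow\sigma_{\textup{SV}}$, hence $t\mapsto-t$, and $\overline{\sigma}_{\textup{SL}}\leftrightarrow\overline{\sigma}_{\textup{SV}}$ --- shows $\Phi$ is \emph{odd}, so it suffices to work on the hydrophobic branch $t\in[0,1]$ (the ``concave'' of (c)(i) is meant there; equivalently $|\cos\overline\theta_Y|$ is concave in $|\cos\theta_Y|$).

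\textbf{Part (a).} Knowing $\overline{\sigma}_{\textup{SV}}=\sigma_{\textup{SV}}R$, it remains to bound $\overline{\sigma}_{\textup{SL}}$ above by a Cassie--Baxter type competitor: keep a vapor film following the solid over $\{\phi<-h\}$, fill everything else with liquid, and insert a flat liquid--vapor interface at height $-h$. A direct computation gives $\Phi(t)=\overline{\sigma}_{\textup{SL}}-\sigma_{\textup{SV}}R\le t\,a_h+(1-m_h)$, where $m_h:=|\{\phi>-h\}|>0$ ($\phi$ is u.s.c.\ and attains its maximum) and $a_h$ is the area of the part of the solid graph lying over $\{\phi>-h\}$; by the regularity of $\phi$ near its maximum $a_h/m_h\to1$ as $h\to0$ (for the pillar surface $a_h=m_h=|P|$), so for $h$ small $\Phi(t)\le1-c_0$ with $c_0=c_0(t,\phi)>0$ whenever $t<1$. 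Since $\Phi\ge0$ on $[0,1]$, oddness gives $|\cos\overline{\theta}_Y|\le1-c_0$ for all $t\in(-1,1)$. (For pillars this is exactly the Cassie--Baxter state and $c_0=|P|\,(1-|\cos\theta_Y|)$; a flat-topped maximum set is the clean case, and for general $\phi$ one optimizes in $h$.)

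\textbf{Part (b).} Using $|\partial S\cap\partial L|=\mathrm{Per}(L)-|\partial L\cap\partial V|$, the energy density becomes $\sigma_{\textup{SV}}R+t\,\mathrm{Per}(L)+(1-t)\,|\partial L\cap\partial V|$ on $\mathbb{T}^d\times\mathbb{R}$. For $t\in[0,1]$ both coefficients are $\ge0$; $\mathrm{Per}(L)$ and $|\partial L\cap\partial V|=\mathrm{Per}(L;\{z>\phi(x)\})$ (relative perimeter in the open set $\{z>\phi(x)\}$, $\phi$ u.s.c.) are lower semicontinuous under $L^1$ convergence, so the energy is l.s.c. Along a minimizing sequence the configurations differ only inside the bounded slab $\{\phi(x)<z<0\}$ and have perimeter bounded there, so $BV$-compactness on the torus applies; the direct method yields a minimizer, which is $\integer^d$-periodic since we work on $\mathbb{T}^d\times\mathbb{R}$. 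The relabelled formulation handles $t\in[-1,0]$; for $t\ge0$ one may take $L_{\textup{SV}}=\emptyset$.

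\textbf{Part (c).} For $\phi=M(1-\indicator_P)$ one has $R=1+ML_P$ with $f:=|P|$, $L_P:=\mathcal H^{d-1}(\partial P)$; set $t^*:=\frac{1-f}{R-f}=\frac{1-f}{(1-f)+ML_P}\in(0,1)$. The competitors $V=\emptyset$ (Wenzel: wetted grooves) and ``vapor fills the grooves up to $z=0$'' (Cassie--Baxter) give $\Phi(t)\le\min\{tR,\ tf+(1-f)\}$ on $[0,1]$, the two affine Wenzel/CB profiles; and since the pillar tops are always wet, $|\partial S\cap\partial L|\in[f,R]$ for every admissible configuration, so by the envelope formula $\Phi$ has slope in $[f,R]$, between the CB slope $f$ and the Wenzel slope $R$. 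For the matching lower bound $\Phi(t)\ge\min\{tR,\ tf+(1-f)\}$, observe that a configuration's affine function $t\mapsto t\,|\partial S\cap\partial L|+|\partial L\cap\partial V|$ dominates the concave piecewise-affine profile $\min\{tR,\ tf+(1-f)\}$ on $[0,1]$ iff it does at $t=0$ (automatic), at $t=1$ (this is $\mathrm{Per}(L)\ge1$, automatic), and at $t=t^*$, where it becomes the single geometric inequality
\[
|\partial L\cap\partial V|\ \ge\ t^*\,|\partial S\cap\partial V|\qquad\text{for every admissible vapor configuration,}
\]
i.e.\ the vapor's contact area with the pillar walls is at most $\tfrac{ML_P}{1-f}$ times its free-surface area, with equality for the full groove. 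I would establish this by first reducing (via a rearrangement that replaces the vapor in each vertical column by the interval of the same length at the bottom, which does not increase $|\partial L\cap\partial V|$ nor decrease $|\partial S\cap\partial V|$) to the case where the vapor region is the subgraph $\{-M<z<h(x)\}$ of a function $h\colon P^c\to[-M,0]$, and then estimating directly: wherever $h$ drops from near $0$ to $-M$ the interface acquires both horizontal extent $\gtrsim|\{h>-M\}|$ and vertical drop $\gtrsim M$, which is exactly enough against the wall term (the extremal profile being $h\equiv0$); in $d+1=3$ this is carried out slice by slice, and this is where $d+1\le3$ is used. Granting it, $\Phi(t)=\min\{tR,\ tf+(1-f)\}$ on $[0,1]$: concave, equal to the Wenzel line for $t\le t^*$ and the Cassie--Baxter line for $t\ge t^*$. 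Part (c)(ii) is then immediate from the behaviour of $t^*$: $t^*\to1$ as $M\to0$ (Wenzel on all of $[0,1)$), $t^*\to0$ as $M\to\infty$ (Cassie--Baxter on all of $(0,1]$), and for any fixed $M$ one is in the Cassie--Baxter regime for $t$ near $1$ and the Wenzel regime for $t$ near $0$. \textbf{The main obstacle} is the displayed inequality --- ruling out that some exotic vapor geometry beats \emph{both} classical states --- and the reduction to subgraphs together with the low-dimensional slicing estimate is the technical heart of part (c).
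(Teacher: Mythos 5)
Parts (a) and (c)(i) of your proposal are essentially the paper's arguments: (a) is the Cassie--Baxter-type competitor of Proposition~\ref{prop: contact angle quantity non-degen} (a vapor film over $\{\phi<-h\}$ capped by a flat interface, with the ratio $a_h/m_h\to 1$ playing the role of the modulus $\omega$ in \eqref{eqn: condition at the max}), and (c)(i) is the same infimum-of-affine-functions observation. In part (b) your lower-semicontinuity argument via $t\,\Per(L)+(1-t)\Per(L;S^c)$ is clean, but there is a gap: the direct method on $\mathbb{T}^d\times\real$ only produces a minimizer of the \emph{$1$-periodic} problem, whereas the theorem asserts its energy per cell equals $\overline{\sigma}_{\textup{SL}}$, which is defined by the thermodynamic limit \eqref{eqn: hom energies gen} over non-periodic competitors in large boxes. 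A priori the $r\integer^d$-periodic classes could have strictly smaller energy per cell as $r$ grows; the paper rules this out via the union--intersection inequality (Lemma~\ref{lem: union intersection}), which makes the \emph{maximal} $r$-periodic minimizer unique and hence $\integer^d$-periodic for every $r$ (Proposition~\ref{maximal}). Some such argument is needed and is absent from your write-up.

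The serious problem is part (c)(ii). Your claimed identity $\cos\overline{\theta}_Y=\min\{tR,\,tf+(1-f)\}$ on all of $[0,1]$, equivalently the inequality $|\partial L\cap\partial V|\ge t^*|\partial S\cap\partial V|$ for \emph{every} admissible vapor region, is false, and the paper explicitly warns that the minimizer can be neither Wenzel nor Cassie--Baxter. Concretely, take the wedge $V_\eta$ of triangular cross-section with legs $\eta\le M$ running along the bottom edge $\partial P\times\{-M\}$ of the groove: then $|\partial S\cap\partial V_\eta|\approx 2\eta\,\mathcal{H}^{d-1}(\partial P)$ while $|\partial L\cap\partial V_\eta|\approx\sqrt{2}\,\eta\,\mathcal{H}^{d-1}(\partial P)$, so the ratio is $\tfrac{1}{\sqrt2}$, which is $<t^*$ whenever $M$ is small enough that $t^*>\tfrac{1}{\sqrt2}$. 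Equivalently, $E'(L_W)-E'((S\cup V_\eta)^c)\approx \eta\,\mathcal{H}^{d-1}(\partial P)\,(2t-\sqrt2)>0$ for all $t>\tfrac{1}{\sqrt2}$ and all $M>0$, so Wenzel is never optimal for $\cos\theta_Y>\tfrac{1}{\sqrt2}$; in particular your conclusion that Wenzel holds on all of $[0,1)$ as $M\to0$ cannot be correct, and on $(\tfrac{1}{\sqrt2},t^*)$ (nonempty for small $M$) the true minimum lies strictly below both classical states. Your proposed proof of the key inequality is also only a sketch (the vertical rearrangement need not preserve $|\partial S\cap\partial V|$ on the walls, and the ``slice by slice'' step is not substantiated). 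The paper instead proves only the two one-sided regimes, by different mechanisms: Wenzel for $|\cos\theta_Y|\le\bigl(\sum_i\sqrt{1+K_i^2}\bigr)^{-1}$ via a covering of $\partial S$ by cylinders and projection along their axes (Lemma~\ref{lem: wenzel}), and Cassie--Baxter for $1-\cos\theta_Y\le\frac{c_0M^2\wedge M}{1+c_0M^2\wedge M}$ via the subgraph rearrangement $w$, the Poincar\'e inequality on cells containing a fixed fraction of $P$, and Lemma~\ref{lem: surface area to BV} --- this last step is where $d+1\le 3$ enters, through the choice $a\sim|T\cap\Box_i|^{1/2}$, not through any slicing. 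You would need to replace your exact-formula strategy with arguments of this one-sided type.
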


\medskip

The next step is to understand the behavior of the minimizers of the volume constrained rough surface problem.    Below we state our result on the quantitative homogenization, namely convergence rate of the volume constrained minimizers. This is the more significant part of our paper, both in analytical difficulty and in novelty.
 
  \medskip

In order to compare, quantitatively, the energies $E_\e$ and $\overline{E}$, we need a large-scale regularity theory. More precisely we want to show that the contact line, at least when viewed at a sufficiently large length scale, is $d-1$ dimensional. This would justify the formal arguments that we used to derive the cell problem.  At a very basic level we follow the strategy in Caffarelli and Mellet \cite{CM}, where the authors study capillary drops sitting on flat but chemically textured surface with $\sigma_{\textup{SL}}, \sigma_{\textup{SV}}$ being $\e$-periodic functions of $x$ and $\phi\equiv 0$.  The rough surface presents a significant challenge in the regularity analysis. 

\medskip

The second part of our regularity analysis is to prove the non-degeneracy of the contact angle, again when viewed at a sufficiently large length scale.  This estimate allows us to upgrade convergence in measure to uniform convergence outside of a certain boundary layer.  In particular we get a bound on the length scale one needs to zoom out to (starting from the microscopic length scale $\e$) in order to see the homogenized contact angle appear.  The precise size of this boundary layer is still out of reach to us, but we make the first progress in this direction.

\medskip

 The statement of the Theorem is somewhat imprecise, mainly because the energies $E$ and $\overline{E}$ really need to be evaluated on a compact (at least in $x$) region of $\real^{d+1}$, we gloss over the dependence on the domain.

\begin{mainthm}[see Theorem~\ref{thm: main periodic}]\label{thm: main} 
Let $\rho_0(\cos\overline{\theta}_Y,\textup{Vol}), z_0(\cos\overline{\theta}_Y,\textup{Vol})$ so that $L_0 :=B^+_{\rho_0}(0, z_0)$ is a volume constrained minimizer of \eqref{hom_energy} for every $x \in \real^d$. Let $L_\e$ with $|L_\e| = \textup{Vol}$ be the volume constrained minimizer associated with \eqref{main}, then the following holds:
\begin{enumerate}[(i)]
\item \textup{(Convergence of the energy)}
\[|E_\e(L_\e) - \overline{E}(L_0)| \leq C \e^{1-o(1)}.\]
\item \textup{(Convergence in measure)}  The sets $L_\e$ converge in measure, modulo translation, to the globally minimizing spherical cap,
\begin{equation}
\min_{x \in \real^d} |L_\e \Delta (L_0+x)| \leq C\e^{\alpha(1-o(1))},
\end{equation}
where $ 0 <\alpha \leq 1/2$ is the exponent from the $L^1$ stability estimate Theorem~\ref{stability}.
\item \textup{(The size of the boundary layer)} Call $\beta = \frac{2d}{(d+1)(d+2)}$ and $h_0(\e) = C\e^{\beta+\frac{(1-\beta)\alpha}{d+1}-o(1)}$ then,
 \[ \min_{x \in \real^d} \tfrac{1}{\textup{Vol}^{\frac{1}{d+1}}}d_H\big(L_\e \cap \{ z \geq h_0(\e)\},  (L_0+x) \cap \{ z \geq h_0(\e)\} \big) \leq C \e^{\alpha(1-o(1))}.   \] 
 In words, $L_\e$ converges {uniformly}, outside of a boundary layer of size $h_0(\e)$, to some global minimizer of the homogenized problem. In $d+1=2$ this is $h_0(\e) \sim \e^{(1+\alpha)/3-o(1)}$ and in $d+1 = 3$ this is $h_0(\e) \sim \e^{(3+2\alpha)/9 - o(1)}$.
 \end{enumerate}
\end{mainthm}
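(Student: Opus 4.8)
The plan is to derive all three statements from a quantitative two-sided comparison of $E_\e$ and $\overline E$, then feed the resulting energy gap into the $L^1$-stability estimate of Theorem~\ref{stability}, and finally convert $L^1$-closeness into uniform closeness away from the solid by means of a large-scale non-degeneracy estimate. For the \emph{upper bound} I would build, from the homogenized minimizer $L_0=B^+_{\rho_0}(0,z_0)$, an admissible competitor $L_0^\e$ for $E_\e$: glue the optimal $\integer^d$-periodic cell configurations $L_{\textup{SL}},L_{\textup{SV}}$ furnished by Theorem~\ref{thm:main0}(b) into a layer of height $O(\e|\log\e|)$ over $S_\e$ — wetting below the footprint of $L_0$, non-wetting outside it — match them to the spherical cap, and restore $|L_0^\e|=\textup{Vol}$ by an $O(\e)$ dilation. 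Since the footprint of $L_0$ is a $(d-1)$-rectifiable surface of finite area and bounded curvature, the mismatch of the periodic profiles along the genuinely curved contact line, together with the dilation, costs at most $C(\textrm{area of contact region})\,\e$ up to logarithmic factors, so minimality of $L_\e$ gives $E_\e(L_\e)\le E_\e(L_0^\e)\le\overline E(L_0)+C\e^{1-o(1)}$.

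For the \emph{lower bound}, which is the heart of the matter, I would first develop a large-scale regularity theory for $L_\e$ in the spirit of Caffarelli--Mellet~\cite{CM} but allowing the liquid--vapor interface to reach the oscillating solid $S_\e$: uniform density estimates for $L_\e$ and for $V_\e=\real^{d+1}\setminus L_\e$ at all scales $r$ between a mesoscopic threshold $\bar r(\e)$ and $\diam$, together with an $\e$-regularity (flatness improvement) statement near the contact line. Covering the wetted region by mesoscopic cubes of side $\ell$, $\e\ll\ell\ll 1$, the flatness estimate lets me bound the wetting energy of $L_\e$ over each good cube below by the cell-problem value $\overline\sigma_{\textup{SL}}\ell^d$ (resp.\ $\overline\sigma_{\textup{SV}}\ell^d$), while the liquid--vapor term is handled by lower semicontinuity of the perimeter; summing, optimizing in $\ell$, and absorbing the $O((\rho_0/\ell)^{d-1})$ boundary cubes yields $E_\e(L_\e)\ge\overline E(\tilde L_\e)-C\e^{1-o(1)}$ for a macroscopic set $\tilde L_\e$ with $|\tilde L_\e\,\Delta\, L_\e|\le C\e^{1-o(1)}$. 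Since $L_0$ minimizes $\overline E$, chaining the two bounds gives $\overline E(L_0)\le\overline E(\tilde L_\e)\le E_\e(L_\e)+C\e^{1-o(1)}\le\overline E(L_0)+C\e^{1-o(1)}$, which is (i) and also shows the excess $\overline E(\tilde L_\e)-\overline E(L_0)\le C\e^{1-o(1)}$.

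Claim (ii) is then immediate: by Theorem~\ref{thm:main0}(a) the homogenized cap $L_0$ is non-degenerate (its contact angle is bounded away from $0$ and $\pi$), so Theorem~\ref{stability} applies with its exponent $\alpha\le 1/2$ and gives $\min_x|\tilde L_\e\,\Delta\,(L_0+x)|\le C(\overline E(\tilde L_\e)-\overline E(L_0))^{\alpha}\le C\e^{\alpha(1-o(1))}$ after a volume-correcting dilation, and the triangle inequality with $|\tilde L_\e\,\Delta\, L_\e|\le C\e^{1-o(1)}\le C\e^{\alpha(1-o(1))}$ finishes it. For (iii) I would fix a translation $x$ optimal in (ii) and argue by contradiction: if at some height $h\ge h_0(\e)$ the sets $L_\e$ and $L_0+x$ are not $\eta$-close in Hausdorff distance, there is a point $p$ with $z(p)\ge h$ on one of the two reduced boundaries such that $B_\eta(p)$ lies entirely in one phase of one configuration and in the opposite phase of the other. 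Above the roughness band $\partial L_\e$ is a nearly constant-mean-curvature surface and so enjoys density estimates at every scale; near the contact line it enjoys them only for $r\ge\bar r(\e)$, and $\bar r(\e)$ further deteriorates as $z\downarrow 0$ because a ball of radius $r$ around a point at height $z\lesssim r$ sees the oscillatory solid. Provided $\eta$ exceeds the regularity scale at height $h$, the density bound forces $|B_\eta(p)\cap(L_\e\,\Delta\,(L_0+x))|\ge c\eta^{d+1}$, hence $\eta\le C\big(\min_x|L_\e\,\Delta\,(L_0+x)|\big)^{1/(d+1)}$; combining this with the height-dependence of $\bar r(\e)$ and with the volume constraint, and optimizing the cutoff $h_0(\e)$ so that ``$\eta$ exceeds the regularity scale at height $h_0$'' is just met, produces the stated $\beta=\tfrac{2d}{(d+1)(d+2)}$ and $h_0(\e)=C\e^{\beta+(1-\beta)\alpha/(d+1)-o(1)}$. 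Above $h_0(\e)$ one sharpens the Hausdorff bound to $C\e^{\alpha(1-o(1))}$ by using that there both $\partial L_\e$ and $\partial(L_0+x)$ are (graphs of) nearly minimal surfaces with bounded curvature, so $L^1$-closeness upgrades to $C^0$-closeness with at most a logarithmic loss; specializing to $d=1,2$ gives $h_0\sim\e^{(1+\alpha)/3-o(1)}$ and $h_0\sim\e^{(3+2\alpha)/9-o(1)}$.

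The main obstacle is the large-scale regularity theory underlying the lower bound in (i) and the non-degeneracy in (iii): proving density estimates and flatness improvement for the volume-constrained minimizer $L_\e$ \emph{up to the contact line}, where the liquid--vapor interface meets the $\e$-periodic graph $\phi$. In the flat chemically-textured problem of \cite{CM} one can flatten the solid boundary and reduce to an interior free-boundary regularity statement; here one cannot, so the competitor constructions that prove non-degeneracy and improve flatness must be carried out in the presence of the rough solid $S_\e$, and controlling the interaction of the free boundary with $S_\e$ at the mesoscopic scale $\bar r(\e)$ — as well as threading the volume constraint through all the comparison arguments — is precisely what forces the $o(1)$ losses throughout and ultimately limits how small $h_0(\e)$ can be taken.
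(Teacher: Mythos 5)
Your overall architecture coincides with the paper's: a two-sided energy comparison built from mesoscopic cubes and the periodic cell-problem minimizers, then the $L^1$-stability estimate of Theorem~\ref{stability}, then an upgrade from $L^1$ to Hausdorff distance outside a boundary layer via up-to-the-boundary density estimates. The upper bound (gluing cell configurations onto $L_0$ and correcting the volume with Lemma~\ref{lem: volume change}) and the use of stability in (ii) are exactly as in Section~\ref{sec: hom}.

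The gap lies in the two regularity inputs you defer to, and in the route you sketch toward them. First, for the lower bound you invoke an $\e$-regularity/flatness-improvement statement \emph{up to the contact line}; no such statement is available here, and the paper deliberately avoids needing one (local analysis at the contact line would require solving the free boundary problem for the Euler--Lagrange equation). What is actually required --- and what Proposition~\ref{prop: perim 0 t} provides --- is only the slab bound $\Per(L_\e,\{\tfrac12 t<z<\tfrac32 t\})\lesssim t$ for $t\ge r_0(\e)=\e^{1-o(1)}$, obtained by a \emph{global} energy comparison (shift the drop down, substitute cell-problem solutions under the horizontal pieces of $\partial L_\e$, use $|\cos\overline{\theta}_Y|<1$), combined with a covering by eccentric cylinders and a bootstrap over nested windows; interior flatness (Lemma~\ref{lem: minimal surface interior}) enters only there, to separate vertical from horizontal pieces. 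Once that perimeter bound is in hand, the per-cube lower bound in wetted cubes needs no flatness at all: if $\Box_k\times\{z=r\}\subset L_\e$, then $L_\e$ restricted to $\Box_k\times\real$ is admissible for $\Sigma_{\textup{SL}}(\tfrac1\e\Box_k)$ by Lemma~\ref{reduction}, and Theorem~\ref{thm: cell gen periodic} converts this to $\overline{\sigma}_{\textup{SL}}|\Box_k|$ up to $O(\e\log\tfrac r\e)$ boundary errors. Second, the exponent $\beta=\tfrac{2d}{(d+1)(d+2)}$ in (iii) is not produced by optimizing a height-dependent interior regularity scale; it comes from the contact-angle non-degeneracy of Proposition~\ref{prop: non-degen}, proved by a De Giorgi iteration in which the usual isoperimetric differential inequality is replaced by $|V(r)|\lesssim \e\,(\tfrac{d}{dr}|V(r)|)^{1/2}+(\tfrac{d}{dr}|V(r)|)^{1+1/d}$, the $\e$-term arising from splitting the bottom face into contact and non-contact $\e$-cells and applying a Poincar\'e inequality on the contact cells. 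Without these two propositions the chain of inequalities in (i)--(iii) does not close with the stated rates. A minor further point: Lemma~\ref{lem: uniform convergence} only yields Hausdorff distance of order $|L_\e\Delta(L_0+x)|^{1/(d+1)}$, i.e.\ $\textup{err}(\e)^{\alpha/(d+1)}$ as in Theorem~\ref{thm: main periodic}; the sharpening to the full power $\alpha$ that you propose in the last step of (iii) is not carried out in the paper and would require curvature control that is not established near the boundary layer.
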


\medskip

We remark that the methods of this paper can be generalized to consider the problem with a random surface $S$ satisfying certain stationarity and ergodicity assumptions.  This problem is the topic of a forthcoming work by the authors.

\medskip

All of our arguments can be easily adapted to the case of $x$-dependent $\sigma$'s as long as $\sup_x |\cos \theta_Y|(x) <1$.

 \subsection{Literature}\label{sec: lit}

There is a vast amount of literature in the science and engineering community on wetting phenomena on textured surfaces since the contributions of Wenzel \cite{Wenzel1936} and of Cassie and Baxter \cite{cassie1944}: see \cite{Barthlott1997,NEINHUIS1997,bico1999pearl,patankar, mchale, bormashenko, murakami2014wetting} and see the book by de Gennes et al \cite{deGennes} for further discussion of various aspects of wetting and for more references.  

\medskip

There has also been a large amount of mathematical literature on energy minimizing capillary drops.  We only describe some of the most recent and most relevant to our current work.  Only one mathematical work, that we are aware of, has directly studied the rough surface homogenization problem.  That is the work of Alberti and DeSimone \cite{AlbertiDeSimone} which we have described in detail above.   For flat and chemically textured surfaces, parallel convergence results to our main theorem are shown by Caffarelli and Mellet \cite{CM} for the periodic setting, and by Mellet and Nolen \cite{MelletNolen} for the random case.  A second paper of Caffarelli and Mellet \cite{CMhysteresis} showed the existence of local minimizers which exhibit non-constant large-scale contact angle, this is a manifestation of hysteresis. 

\medskip

For the volume constrained global minimizer of the energy given by \eqref{main} with a smooth hypersurface $S$, De~Philippis and Maggi \cite{DePMag, DePMag14a} proved $C^{1,1/2}$ regularity of the contact line and validity of Young's law \eqref{young} away from a lower dimensional singular set.  

\medskip

As for dynamic description of evolving liquid drops, many different models are available: see for instance  \cite{desimone2006new, dohmen2008micro, alberti2011quasistatic, glasner, grunewald2011variational, kim2014liquid}. In general, regularity near the contact line, or even the topology of the drop, is largely unknown for drops that are not global minimizers except drops with strong geometric properties (for example see Feldman and Kim \cite{feldmankim}).  This is a major challenge for global in time analysis on sliding, spreading or retracting drops beyond one dimension.

\subsection{Outline of the paper} We begin with introducing some notions from geometric measure theory in Section~\ref{sec: notation}. Then Sections~\ref{sec: cell}-\ref{sec: values} analyzes minimizers of the cell problem, while Section~\ref{sec: vol}-\ref{sec: hom} discusses the volume constrained problem.

\medskip

In Section~\ref{sec: cell} we introduce the assumptions for the solid surface and prove well-posedness of the cell problems, existence of a periodic minimizer, and properties of the homogenized contact angle. In particular we show that while the effect of wetting and de-wetting is enhanced by homogenized contact angle, it does not change the problem drastically at least with our assumptions, in the sense that if one starts with hydrophobic or hydrophilic coefficients with $|\cos\theta_Y|<1$, then the homogenized problem stays in the same regime, and furthermore the homogenized contact angle satisfies $|\cos\overline{\theta}_Y|<1$, i.e. it stays away from total wetting/de-wetting (Lemma 3.20). 

\medskip

Section~\ref{sec: values}  discusses specific values of homogenized contact angle in physical (three) dimensions, in the case of a specific surface given by periodic dents. In particular we show that Cassie-Baxter and Wenzel states are achieved as the optimal states, when the roughness is large (Cassie-Baxter) or small (Wenzel).

\medskip

In Section~\ref{sec: vol} we introduce the volume constrained minimizer of the interfacial energy $E_\e$ given in \eqref{main}.  We state some interior regularity results, i.e. away from the solid surface, for (almost) minimal surfaces. Beyond just the basic density estimates, we also need a higher regularity result.  We choose to draw from the flatness results in Caffarelli and Cordoba \cite{caffarellicordoba} since the form of their result is convenient for our purposes.

\medskip

In Section~\ref{sec: reg} we investigate the large-scale regularity of volume-constrained minimizers near the contact line.   Our first estimate states that the (macroscopic) $d-1$ dimensional measure of the ``macroscopic contact line" is bounded down to scale $\e^{1-o(1)}$ (Proposition~\ref{prop: perim 0 t}). It should be pointed out our estimate is near-optimal, since possible vapor bubbles in the grooves of the rough surface or liquid precursor films force us to consider the macroscopic contact line at scales larger than the microscopic length scale $\e$. Our second regularity result is on the, again large-scale, nondegeneracy of the drop near the contact line (Proposition~\ref{prop: non-degen}), which allows us to obtain density estimates almost up to the solid surface (Corollary~\ref{cor: uniform non-degen}). 

\medskip

Finally in Section~\ref{sec: hom} addresses the rate of convergence for the minimizers as the roughness scale $\e$ goes to zero. The convergence of the energies $E_\e(L_\e)$ to $\overline{E}(L_0)$ at rate $\e^{1-o(1)}$ is a consequence of the perimeter estimate and is almost optimal. From the energy convergence we obtain convergence in measure of the $L_\e$ using the stability result of the spherical cap minimizers of the homogenized energy $\overline{E}$ on a flat surface.  Finally we use the up to the boundary density estimates of Section~\ref{sec: reg} to obtain convergence in Hausdorff distance, at least outside of a boundary layer (see Theorem~\ref{thm: main} for the size of the boundary layer).

%%%%%%%%%%%%%%%%%%%%%%%%%%%%%%%%%%%%%%%%

%SECTION: NOTATIONS AND BACKGROUND

%%%%%%%%%%%%%%%%%%%%%%%%%%%%%%%%%%%%%%%%

\section{Notations and background material}\label{sec: notation}

In this section we will explain various notations that will be in place throughout the paper.  We will also give some background material about functions of bounded variation and sets of finite perimeter.  Such spaces are the natural setting for variational problems involving surface area optimization. Then we will discuss some standard normalizations of the energy, and give a precise meaning to the energy in the setting of finite perimeter sets.

\subsection{Constants and parameters}  We will use $C, c>0$ to refer to constants which can change from line to line, typically it will be the case that $C\geq1$ and $c \leq 1$.  We say that a constant $C$ is \emph{universal} if it depends only on the fixed (unitless) parameters of the problem which are $d$, $\cos\theta_Y$ and the function $\phi$ defining the solid surface $S$.  We will sometimes make clear the dependence on these constants anyway when it is important or interesting.  In particular universal constants will not depend on parameters with units like the volume of the liquid drop $\textup{Vol}$, the liquid-vapor energy per unit surface area $\sigma_{\textup{LV}}$, or the size (to be made precise) of the confining region $\Omega$.  Lastly we call a constant \emph{numerical} if it does not depend on any parameter of the problem.  Sometimes it will be preferable to hide the universal constants, for two quantities $A$ and $B$ we write
\[ A \lesssim B \ \hbox{ if there is a universal $C$ so that } \ A \leq CB\]
and we write $A \sim B$ if $A\lesssim B$ and $B \lesssim A$.

\subsection{Notation of basic sets}\label{subsec: notations}  The basic setting of our problem will be in $d+1$-dimensional Euclidean space $\real^{d+1}$.  The axis unit vectors are denoted by $e_1,\dots e_{d+1}$.  We will denote points of $\real^{d+1}$ by $(x,z) \in \real^d \times \real$, $x$ will always refer to a point of $\real^d$ and $z$ to the height in the $e_{d+1}$ direction.  The notation $| x |$ or $|(x,z)|$ will refer to the Euclidean distance on $\real^d$ or on $\real^{d+1}$ respectively.  

\medskip

For any subset $A \subseteq \real^{d+1}$ we call $A^+ := A \cap \{ z >0 \}$.  

\medskip

For $(x,z) \in \real^d \times \real$ we define {\it balls} of $\real^{d+1}$ and {\it disks} of $\real^d$,
\[ B_r(x,z) := (x,z)+\{(y,w) \in \real^{d+1}: \ |(y,w)| <r \} \ \hbox{ and } \ D_r(x) := x+\{ y \in \real^d : \ |y| < r\}.\]
    Similarly we define {\it cubes} of $\real^{d+1}$ and {\it squares} of $\real^d$,
\[ Q_r(x,z) := (x,z)+(-r/2,r/2)^{d+1} \ \hbox{ and } \ \Box_r(x) := x+(-r/2,r/2)^d \]
We may wish sometimes to view $D_r$ and $\Box_r$ as embedded into $\real^{d+1}$, for $t \in \real$ we define,
\[ D_r^t := D_r \times \{ z= t\} \ \hbox{ and } \ \Box_r^t := \Box_r \times \{ z= t\}.\]
Depending on the context, and only if there will be no confusion, the notations $D_r$ (resp. $\Box_r$) may refer either to the subsets of $\real^d$ defined above or to $D_r^0$ (resp. $\Box_r^0$) as a subset of $\real^{d+1}$.  

\medskip

We will also define the cylindrical sets based on $D_r$ and $\Box_r$, for $a <b$ in $\real$,
\[ D_r^{a,b}:= D_r \times (a,b) \ \hbox{ and } \ \Box_r^{a,b} := \Box_r \times (a,b).\]
Evidently the boundaries of $D_r^{a,b}$ and $\Box_r^{a,b}$ are made up by the disjoint unions,
\[ \partial D_r^{a,b} = D_r^a \cup D_r^b \cup \dside D_r^{a,b} \ \hbox{ and } \ \partial\Box_r^{a,b} =\Box_r^a \cup \Box_r^b \cup \dside\Box_r^{a,b},\]
where the the {\it lateral boundaries} are $\dside D_r^{a,b} := \partial D_r \times [a,b]$ and $\dside\Box_r^{a,b}:= \partial \Box_r \times [a,b]$.

\medskip

For balls and cubes we define the intersections with the upper half-space,
\[B_r^+(x,z) = B_r(x,z) \cap \{ z >0\} \ \hbox{ and } \ Q_r^+(x,z) = Q_r(x,z)  \cap \{ z >0\}.\]

\subsection{Notation of measures}  We will need to deal with measures of sets in dimensions $d-1$, $d$ and $d+1$. For a set $A$ of $\real^n$ we will write $\mathcal{H}^k(A)$ for the $k$-dimensional Hausdorff measure.  The $0$-dimensional Hausdorff measure (i.e. the counting measure) will be denoted $\#(A)$.  

\medskip

When it is manifestly clear that $A$ is a set of Hausdorff dimension $k$ will often abuse notation and write $|A|$ instead of $\mathcal{H}^k(A)$.  For example, take $D_r = D_r(0)$ is the disk in $\real^d$ of radius $r$ we will write both,
\[ \mathcal{H}^d(D_r) = |D_r| \ \hbox{ and } \ \mathcal{H}^{d-1}(\partial D_r) = |\partial D_r|,\]
since the meaning of the notation $| \cdot |$ should be unambiguous in this context.

\subsection{Sets of finite perimeter}  The following material can mostly be found in the books \cite{Giusti,MaggiBV}.  We recall that the $BV$-norm of a function $f : \real^{n} \to \real$ on an open set $U$ is defined by,
\[ \int_{U} |Df| \ dy := \sup \left\{ \int_{\real^n} f \div g \ dy :  \ g \in C_c^\infty(U;\real^n) \hbox{ with } |g| \leq 1\right\}.  \]
A function is {\it locally of bounded variation} if the above norm is finite for every bounded open set $U$.  A set $L$ is called a {\it Cacciopoli set} or a {\it set of finite perimeter} if $\indicator_L$ has finite $BV$-norm on $\real^n$.  Similarly one can defined sets of locally finite perimeter.  We define,
\[ \Per(L,U) := \int_{U} |D\indicator_L| \ dy\]
If $f : \real^n \to \real$ is locally of bounded variation then the set $L = \{ (y,w) \in \real^{n+1}: w \leq f(y)\}$ which is the sub-graph of the function $f$ is a set of locally finite perimeter.  Furthermore we can define the surface area of the graph of $f$ over an open region $U$ see \cite[Chapter 14]{Giusti},
\begin{equation}\label{area} \int_U \sqrt{1+|Df|^2} \ dy := \sup \left\{ \int_{\real^n} g_{n+1}+f \sum_{i=1}^n\frac{\partial g_i}{\partial y_i} \ dy :  \ g \in C_c^\infty(U;\real^{n+1}) \hbox{ with } |g| \leq 1\right\}, 
\end{equation}
and we have that,
\[ \Per(L,U \times \real ) =   \int_U \sqrt{1+|Df|^2} \ dy.\]
In what follows we will need several times the following refinement of this same idea,
\begin{lem}\label{lem: surface area to BV}
Let $f$ be a bounded variation function on a bounded open set $U \subset \real^n$, then
\[\int_U \sqrt{1+|Df|^2}-1 \ dy \geq a\int_{U} |Df| \ dy - a^2|U| \ \hbox{ for every } \ a \in [0,1].\]
\end{lem}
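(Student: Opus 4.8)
The plan is to establish the inequality first for smooth functions $f$ by a pointwise argument, then extend to general $BV$ functions by approximation. For $f \in C^1(U)$ the left-hand side is the ordinary integral $\int_U (\sqrt{1+|Df|^2}-1)\,dy$. The key pointwise observation is that for any vector $p \in \real^n$ and any $a \in [0,1]$ one has $\sqrt{1+|p|^2}-1 \geq a|p| - a^2$. To see this, note $\sqrt{1+|p|^2} \geq \sqrt{(a|p|)^2 + (1-a^2)^2} \cdot (\text{something})$ — more cleanly, use the tangent-line/convexity bound: the function $t \mapsto \sqrt{1+t^2}$ is convex, and its graph lies above the line through $(t_0,\sqrt{1+t_0^2})$ with the appropriate slope; choosing the slope to be $a$ corresponds to $t_0 = a/\sqrt{1-a^2}$, giving $\sqrt{1+t^2} \geq at + \sqrt{1-a^2}$ for all $t \geq 0$. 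Then $\sqrt{1+|p|^2} - 1 \geq a|p| + \sqrt{1-a^2} - 1 \geq a|p| - a^2$, since $1 - \sqrt{1-a^2} \leq a^2$ for $a \in [0,1]$ (indeed $1-\sqrt{1-a^2} = \frac{a^2}{1+\sqrt{1-a^2}} \leq a^2$). Integrating this pointwise inequality with $p = Df(y)$ over $U$ gives the claim for $C^1$ functions.

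For general $f \in BV(U)$, I would use the standard approximation theorem (see \cite{Giusti}): there exist $f_k \in C^\infty(U)$ with $f_k \to f$ in $L^1(U)$ and $\int_U |Df_k|\,dy \to \int_U |Df|\,dy$, and moreover — this is the precise statement needed — $\int_U \sqrt{1+|Df_k|^2}\,dy \to \int_U \sqrt{1+|Df|^2}\,dy$, where the latter is the relaxed area functional defined in \eqref{area}. This strict convergence of the area functional along a smoothing sequence is exactly the content of the area-formula discussion in \cite[Chapter 14]{Giusti}. Applying the smooth case to each $f_k$,
\[
\int_U \sqrt{1+|Df_k|^2}-1 \ dy \geq a\int_U |Df_k| \ dy - a^2|U|,
\]
and passing to the limit $k \to \infty$ on both sides (the left side converges to $\int_U \sqrt{1+|Df|^2}\,dy - |U|$ and the first term on the right to $a\int_U|Df|\,dy$) yields the desired inequality.

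The only subtle point — the step I expect to be the main obstacle — is justifying the convergence $\int_U \sqrt{1+|Df_k|^2}\,dy \to \int_U \sqrt{1+|Df|^2}\,dy$ rather than merely lower semicontinuity $\liminf_k \int_U\sqrt{1+|Df_k|^2}\,dy \geq \int_U \sqrt{1+|Df|^2}\,dy$. Lower semicontinuity alone goes the wrong way for the left-hand side of our inequality. One resolves this by invoking the fact that the Anzellotti–Giaquinta / Giusti smoothing (mollification) of a $BV$ function produces a sequence along which the area functional converges strictly, not just lower-semicontinuously; alternatively, if one only has lower semicontinuity available, one can instead prove the inequality directly at the level of the relaxed functional by writing $\sqrt{1+|Df|^2} \geq |(a, \ldots) \cdot (\nabla f, 1)|$-type bounds inside the supremum defining \eqref{area}, choosing the competitor vector field $g = (a \nu, \sqrt{1-a^2})$ with $\nu \in C_c^\infty(U;\real^n)$, $|\nu|\leq 1$ approximating $dDf/|Df|$, which reproduces $a\int_U|Df| + \sqrt{1-a^2}|U|$ inside the sup and hence bounds $\int_U\sqrt{1+|Df|^2}\,dy$ from below by it; then subtract $|U|$ and use $1 - \sqrt{1-a^2} \leq a^2$ as before. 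This direct argument avoids any delicate approximation and is probably the cleanest route.
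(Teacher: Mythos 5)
Your proposal is correct, and the ``direct argument'' you settle on at the end --- restricting the test fields in \eqref{area} to $g=(g',g_{n+1})$ with $|g'|\leq a$ and $|g_{n+1}|\leq\sqrt{1-a^2}$, which bounds the supremum below by $a\int_U|Df|+\sqrt{1-a^2}|U|$, then using $1-\sqrt{1-a^2}\leq a^2$ --- is exactly the paper's proof. Your first route via smooth approximation would also work (strict convergence of the area functional along the mollified sequence is indeed in Giusti), but as you correctly sense, it is an unnecessary detour.
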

The quantity on the left hand side naturally arises when computing the change in surface area under replacing the subgraph $L$ by the flat subgraph $\{(y,w): w \leq 0\}$ over the set $U$.
\begin{proof}
Using \eqref{area}, we restrict the class of $g$ such that  $(g',g_{n+1})$ with $|g'| \leq a$ and $|g_{n+1}| \leq \sqrt{1-a^2}$ to get a lower bound on the supremum, 
\begin{equation*}
\begin{array}{lll}
\int_U \sqrt{1+|Df|^2}-1 \ dy & \geq & \sup \left\{ \int_{\real^n} f \grad \cdot g' \ dy :  \ g' \in C_c^\infty(U;\real^n) \hbox{ with } |g'| \leq a\right\} - \int_U (1-\sqrt{1-a^2}) \ dy \vspace{1.5mm}\\
& = & a\int_{U} |Df| \ dy - a^2|U|.
\end{array}
\end{equation*}
Here we used for the second inequality the definition of the $BV$-norm on $U$ and the fact that $a^2 \geq 1-\sqrt{1-a^2}$ for every $a \in [0,1]$.
\end{proof}

\medskip

For any set $L \subset \real^n$ and $t \in [0,1]$ we define the points of density $t$ for $L$,
\[ L^{(t)}:= \left\{ x \in \real^n: \ \lim_{r \to 0} \frac{|L \cap B_r(x)|}{|B_r|} = t\right\}.\]
The \emph{essential boundary} of $L$ is defined by $\partial_eL = \real^n \setminus (L^{(0)} \cup L^{(1)})$.  For the remainder of the paper, whenever we speak about a set of finite perimeter $L$ we will make the normalization (and abuse of notation) that $L = L^{(1)}$, $L^{C} = L^{(0)}$ and $\partial L =  \real^n \setminus (L^{(0)}\cup L^{(1)}) = \partial_eL$.

\medskip

We will also use the notion of the \emph{trace} of a finite perimeter set.  Let $U$ be a open set in $\real^n$ with Lipschitz boundary $\partial U$ and $f \in BV_{loc}(U)$.  Then there exists a trace $\varphi \in L^1_{loc}(\partial U)$ so that for every $g \in C^1_c(\real^n; \real^n)$,
\[ \int_\Omega f \div g \ dy = -\int_\Omega g \cdot Df \ dy + \int_{\partial \Omega} \varphi g \cdot \nu d\mathcal{H}^{n-1}(y)\]
where $\nu$ is the outer normal to $\Omega$, see \cite[Theorem 2.10]{Giusti}.  For locally finite perimeter sets $L \subset U$ we can take the trace of the indicator function on $\partial \Omega$ which we will refer to as $\varphi_L \in L^1_{loc}(\partial \Omega)$, $\varphi_L$ is itself an indicator function.  We note that if $L$ is not a subset of $U$ then actually it has two traces on $\partial U$, one is the trace of $L \cap U$ and the other is the trace of $L \cap U^C$.  We will sometimes refer to these as $\varphi_L^+$ (for the trace from the outside) and $\varphi_L^-$ (for the trace from the inside).

\medskip

Given the results of the previous paragraph about traces we can make a convenient definition of $\Per(L,\Omega)$ when $A$ is a \emph{closed} set of $\real^n$ with Lipschitz boundary.  In fact we will want to be even more general and taking $U \subset \real^n$ open with Lipschitz boundary and a relatively open subset $\Gamma \subset \partial U$ and define,
\[ \Per(L,U \cup \Gamma) = \Per(L,U) + \int_{\Gamma} \varphi_L(y) d\mathcal{H}^{n-1}(y).\]
We will very often use this notation so the reader should take careful note whether the set $\cdot$ in $\Per(L,\cdot)$ contains some part of its boundary.

\medskip

We will very often use the following standard Lemma, which is essentially just encoding the fact that the surface area of a graph of a function over a flat domain $U \subset \real^n$ is at least the area of the domain.  In particular it is quite close to the idea of Lemma~\ref{lem: surface area to BV}.
\begin{lem}\label{lem: lines through}
Let $U$ be an open bounded domain of $\real^n$ and $L \subset U \times \real$.  Suppose that for some $T>0$, 
\[U \times \{ z =0\}  \subset L^{(1)} \ \hbox{ and } \ U \times \{ z= T\} \subset L^{(0)}.\]
Then,
\[ \Per(L,U \times [0,T]) \geq |U|.\]
\end{lem}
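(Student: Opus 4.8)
The plan is to slice the cylinder $U\times(0,T)$ by vertical lines and, on each such line, count the crossings of $\partial L$ forced by the two density conditions --- the idea behind the lemma's name. We may assume $\Per(L,U\times(0,T))<\infty$, since otherwise there is nothing to prove: in the paper's convention $\Per(L,U\times[0,T])$ exceeds $\Per(L,U\times(0,T))$ only by the trace integrals over $U\times\{0\}$ and $U\times\{T\}$, which are bounded by $|U|$, so $\Per(L,U\times[0,T])=\infty$ forces $\Per(L,U\times(0,T))=\infty$. Under this assumption $\indicator_L\in BV(U\times(0,T))$, and I would invoke the standard one-dimensional slicing theory for $BV$ functions (see \cite{MaggiBV,Giusti}): for a.e.\ $x\in U$ the slice $L_x:=\{z:(x,z)\in L\}$ has finite perimeter in $(0,T)$, its precise representative has one-sided limits $\indicator_{L_x}(0^{+}),\indicator_{L_x}(T^{-})$, necessarily in $\{0,1\}$, and
\[ \Per(L,U\times(0,T))=|D\indicator_L|(U\times(0,T))\;\geq\;|D_z\indicator_L|(U\times(0,T))=\int_U |D\indicator_{L_x}|((0,T))\ \d x, \]
where $D_z$ denotes the $z$-component of the vector measure $D\indicator_L$. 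Thus it suffices to prove that the hypotheses force $\indicator_{L_x}(0^{+})=1$ and $\indicator_{L_x}(T^{-})=0$ for a.e.\ $x\in U$: granting this, any $\{0,1\}$-valued $BV$ function on $(0,T)$ with those boundary values satisfies $|D\indicator_{L_x}|((0,T))\geq|\indicator_{L_x}(T^{-})-\indicator_{L_x}(0^{+})|=1$, and integrating over $U$ gives $\Per(L,U\times(0,T))\geq|U|$; since the extra trace terms in $\Per(L,U\times[0,T])$ are nonnegative, the claim follows.

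Establishing the boundary values of the slices is the only nontrivial step, and I expect it to be the main obstacle: it is a Fubini-type bridge between the density hypothesis, which holds at \emph{every} point of $U\times\{0\}$ and $U\times\{T\}$, and an a.e.\ statement about one-dimensional slices --- and the fact that the hypothesis holds at every point, not merely a.e., is what makes the argument work. I would argue by contradiction at the bottom face, the top being symmetric. If $\{x\in U:\indicator_{L_x}(0^{+})=0\}$ had positive measure, then, writing it as the increasing union over $m\geq1$ of the sets $B_m:=\{x\in U:|L_x\cap(0,1/m)|=0\}$, some $B_m$ would have positive measure; let $x^{*}$ be a point of Lebesgue density $1$ of $B_m$ and let $D_r(x^{*})$ be the open $r$-ball about $x^{*}$ in $\real^n$. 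For $r<\min(1/m,T)$, using $|L_x\cap(0,r)|\leq r$ for all $x$ and $|L_x\cap(0,r)|=0$ for $x\in B_m$,
\[ |L\cap(D_r(x^{*})\times(0,r))|=\int_{D_r(x^{*})\setminus B_m}|L_x\cap(0,r)|\ \d x\;\leq\; r\,|D_r(x^{*})\setminus B_m|\;=\;o(r^{n+1}), \]
since $|D_r(x^{*})\setminus B_m|=o(r^{n})$ by the choice of $x^{*}$. Because $B_r^{+}(x^{*},0)\subset D_r(x^{*})\times(0,r)$ while $B_r(x^{*},0)\cap\{z\leq 0\}$ has measure $\tfrac12|B_r|$, we obtain $|L\cap B_r(x^{*},0)|\leq o(r^{n+1})+\tfrac12|B_r|$, so the density of $L$ at $(x^{*},0)$ is at most $\tfrac12<1$, contradicting $(x^{*},0)\in L^{(1)}$. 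The symmetric argument at $U\times\{T\}$, using $(x,T)\in L^{(0)}$, gives $\indicator_{L_x}(T^{-})=0$ for a.e.\ $x$.

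One could alternatively bypass slicing altogether: the same density argument shows that the interior traces of $\indicator_L$ on $U\times\{0\}$ and on $U\times\{T\}$ equal $1$ and $0$ a.e., and then applying the Gauss--Green formula of \cite{Giusti} to the vector field $\indicator_L\,e_{n+1}$ on $U\times(0,T)$ gives $|D_z\indicator_L|(U\times(0,T))\geq \big|\,D_z\indicator_L(U\times(0,T))\,\big|=|U|$ directly, whence $\Per(L,U\times[0,T])\geq\Per(L,U\times(0,T))\geq|U|$. Either way, the routine parts are the slicing identity and the one-dimensional total-variation estimate, and the genuine work is the measure-theoretic passage from the everywhere-valid density conditions to the a.e.\ behaviour of slices (or traces).
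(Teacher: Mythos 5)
Your proof is correct. The paper's own argument is the one-line Gauss--Green route that you describe as an ``alternative'': integrating $\div(e_{n+1})=0$ over $L\cap(U\times(0,T))$, the flux $-|U|$ through the bottom face (where the trace of $\indicator_L$ is $1$) must be balanced by the flux through $\partial_e L$ inside the cylinder, which is at most $\Per(L,U\times(0,T))$. Your primary route --- Fubini slicing plus the one-dimensional bound $|D\indicator_{L_x}|((0,T))\geq|\indicator_{L_x}(T^-)-\indicator_{L_x}(0^+)|$ --- is a genuinely different and more elementary decomposition, and it is carried out correctly. The two routes share the same crux, which you rightly isolate: passing from the \emph{everywhere} density hypotheses on $U\times\{0\}$ and $U\times\{T\}$ to a.e.\ information on the slices (equivalently, on the interior traces), and your Lebesgue-density-point argument via the sets $B_m$ supplies exactly this; some such step is implicitly needed in the paper's version too, to know the traces are $1$ and $0$ a.e. What slicing buys is self-containedness (only Fubini and one-dimensional $BV$ theory); what Gauss--Green buys is brevity, since the trace formula quoted in Section~2 packages the boundary bookkeeping. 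Your preliminary reductions --- assuming $\Per(L,U\times(0,T))<\infty$ and discarding the nonnegative trace terms in $\Per(L,U\times[0,T])$ --- are consistent with the paper's conventions.
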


The proof is by using the divergence theorem to integrate $0 = \int_{U \times \{ 0 < z < T\}} \textup{div}(e_{n+1})$ see \cite{MaggiBV}.

\medskip

\subsection{Energy normalization}  To emphasize the importance of the parameter $\cos\theta_Y$ we can make a standard normalization of the energy $E$.  First one can divide through by $\sigma_{\textup{LV}}$, the minima of the rescaled energy is of course unchanged. We could further normalize reducing to just one coefficient $\cos\theta_Y$ by rearranging,
\begin{equation}\notag
\frac{1}{\sigma_{\textup{LV}}}{{E}}(L) = | \partial L \cap \partial V|  + \frac{\sigma_{\textup{SL}}-\sigma_{\textup{SV}}}{\sigma_{\textup{LV}}}|\partial S \cap \partial L|+ \frac{\sigma_{\textup{SV}}}{\sigma_{\textup{LV}}}|\partial S|,
\end{equation}
where the last term is just a constant factor and can be ignored.  Thus we are able to define the normalized energy ${{E}}'$, which has the same constrained local minima as the original energy ${{E}}$,
\begin{equation}\label{eqn: normalized energy}
{{E}}'(L) :=  | \partial L \cap \partial V|  + \cos\theta_Y|\partial S \cap \partial L|.
\end{equation}
This is a convenient reduction, where the emphasis is placed on the single parameter $\cos \theta_Y$.  We will generally avoid making this reduction, except when it has an obvious advantage for computations, since reducing to the single parameter $\cos\theta_Y$ obscures the derivation of the cell problems.

\medskip

\subsection{The interfacial energy}\label{sec: energy}  In this section we give a precise meaning to the interfacial energy $E$ and it's normalized form $E'$ for a given configuration $S,L, V$ in some region of $\real^{d+1}$.  At minimum we will suppose that $S$ is a closed set with Lipschitz boundary, and that $L$ is a set of locally finite perimeter.  The set $V$ will be the complement of $S \cup L$ and therefore will also be a set of locally finite perimeter.  With $S$ fixed the configuration can be parametrized entirely by $L$.  It is most convenient to define the interfacial energy by its density with respect to the Hausdorff $d$-dimensional measure,
\begin{equation}\label{eqn: energy density}
dE(L,x,z) = \sigma_{\textup{LV}} \left. d\mathcal{H}^d\right|_{\partial_eL \cap \partial S} +\sigma_{\textup{SL}} \left. d\mathcal{H}^d\right|_{\partial_eL \cap \partial_e V}+\sigma_{\textup{SV}} \left. d\mathcal{H}^d\right|_{\partial_eV \cap \partial S} .
\end{equation}
Then for any region $\Omega \subset \real^{d+1}$ (a Borel set) we can define,
\begin{equation}\notag 
E(L,\Omega) := \int_{\Omega} dE(L,x,z),
\end{equation}
which is,
\[
E(L,\Omega) = \sigma_{\textup{LV}} \mathcal{H}^d({\partial_eL \cap \partial S} \cap \Omega )+\sigma_{\textup{SL}} \mathcal{H}^d({\partial_eL \cap \partial S} \cap \Omega)+\sigma_{\textup{SV}}  \mathcal{H}^d({\partial_eV \cap \partial S} \cap \Omega).
\]
We will usually find it most convenient to write the energy in terms of perimeter,
\[
E(L,\Omega) = \sigma_{\textup{LV}} \Per(L,\Omega \setminus S) + \sigma_{\textup{SL}} \mathcal{H}^d({\partial_eL \cap \partial S} \cap \Omega)+\sigma_{\textup{SV}}  \mathcal{H}^d({\partial_eV \cap \partial S} \cap \Omega),
\]
especially when we work with the normalized energy $E'$ since we can then remove all reference to $V$ in the definition,
\begin{equation}\label{eqn: norm energy defn}
E'(L,\Omega) := \Per(L,\Omega \setminus S) + \cos\theta_Y \mathcal{H}^d({\partial_eL \cap \partial S} \cap \Omega).
\end{equation}
We will usually be formal and write $\partial L$ instead of $\partial_eL$, referring to the essential boundary only when it is especially important.

%%%%%%%%%%%%%%%%%%%%%%%%%%%%%%%%%%%%%%%%

%SECTION: CELL PROBLEMS

%%%%%%%%%%%%%%%%%%%%%%%%%%%%%%%%%%%%%%%%

\section{The Cell Problems}\label{sec: cell}

The determination of $\overline{\sigma}_{\textnormal{SV}}$ and $\overline{\sigma}_{\textnormal{SL}}$ can be reduced to two ``cell problems" without a volume constraint.  The cell problem for $\overline{\sigma}_{\textnormal{SL}}$ identifies the optimal configuration in the bulk of the wetted set, the homogenized coefficient $\overline{\sigma}_{\textnormal{SL}}$ is the total energy per unit cell of this optimal configuration.  The cell problem for $\overline{\sigma}_{\textnormal{SV}}$ identifies the optimal configuration in non-wetted regions, the homogenized coefficient $\overline{\sigma}_{\textnormal{SV}}$ is the total energy per unit cell of this optimal configuration.  We remark that when $\sigma_{\textup{SV}}$ and $\sigma_{\textup{SL}}$ are fixed constants, i.e. no chemical texturing, only one of these two cell problem solutions will be non-trivial (see Proposition~\ref{prop: contact angle quantity non-degen}).  We will introduce the cell problems and their properties in a general setting, which incorporates non-smooth, non-periodic surfaces $S$. Our arguments are easily adaptable to $x$-dependent $\sigma$'s.

\subsection{General assumptions on the solid surface.}  Let us take $S$ to be a closed set of $\real^d \times \real$ with Lipschitz boundary satisfying that for some $0\leq M < \infty$,
\begin{equation}\label{eqn: S basic assumption}
\real^d \times (-\infty,-M) \subseteq S  \subseteq \real^d \times (-\infty,0] \ \hbox{ and furthermore touches both the $-M$ and $0$ levels.}
\end{equation}
 We interpret $S$ as the region occupied by the solid surface when viewed at the microscopic scale where the inhomogeneities are of unit size occurring over a unit length scale.  We will furthermore assume that $S$ is the region below the graph of a function.  Let $\phi:\mathbb{R}^d\to \real$ be a upper-semicontinuous and bounded variation function with $-M = \inf \phi$ and $0 = \sup\phi$. We then consider $S$ of the form, 
\begin{equation}
S = \{(x,z) \in \real^d \times \real : -\infty < z \leq \phi(x) \},
\end{equation}
 where we will need to further impose that this set at least has locally Lipschitz boundary.  This graph property is not necessary for our result and could be replaced by more general conditions, which we will remark on below. 
 
 \medskip
 
To ensure the existence of minimizers for the rough surface problem we will need to assume that $\partial S$ is sufficiently smooth, in particular we will take
 \begin{equation}\label{eqn: smooth assumption}
  \phi \ \hbox{ is $C^{1,1}$.} 
  \end{equation}
  This assumption is used only to prove the lower-semicontinuity of the energy functional (see \cite[Propositions 19.1, 19.3]{MaggiBV}), though such a strong assumption is likely to not be necessary.  Assumption~\eqref{eqn: smooth assumption} will always be in force in Sections~\ref{sec: vol}-\ref{sec: hom}, in Sections~\ref{sec: cell}-\ref{sec: values} we will specify exactly when it is necessary.  In particular we will be interested in some examples of surfaces $S$ where the boundary is only Lipschitz, for example 
 \[ \phi(x) = M(1- \indicator_{B_{1/2}}(x)) \ \hbox{ and extend by $\integer^d$-periodicity to } \ \real^d,\]
 which models surfaces which are used in physical experiments \cite{bico1999pearl}.  We will consider such examples in Section~\ref{sec: values}, in those cases we will be able to show, in certain circumstances, the existence of energy minimizers by explicit construction. 

\medskip

  Lastly we put an assumption which guarantees that the homogenized contact angle is non-degenerate.  We assume that there is a continuous increasing modulus $\omega : [0,M] \to \real_+$ with $\omega(0) = 0$ so that for every $0 \leq \delta \leq M$ and every open $U \subseteq \real^d$,
\begin{equation}\label{eqn: condition at the max}
\Per (S,U \times (-\delta,\infty))\leq (1+\omega(\delta))|\{ \phi > -\delta\} \cap U|.
\end{equation}
This condition is guaranteed for example either when $|\{ \phi = 0 \}| >0$ or when $\phi$ is $C^1$ at its maximum and periodic.  In particular it follows from the assumption \eqref{eqn: smooth assumption} that $\phi$ is smooth, we state it separately since it plays a separate role.  

\medskip

Let us also mention at this stage an important quantity associated with the solid surface which is the {\it roughness}.  For every open set $U \subset\real^d$ we define,
\begin{equation}
\rho(U,S) := \frac{1}{|U|} \int_{U} \sqrt{1+|D\phi|^2} \ dx.
\end{equation}
If the limit exists and does not depend on $U$, which it will when $\phi$ is periodic (or random stationary ergodic), we can also define,
\begin{equation}
\overline{\rho}(S) := \lim_{t \to \infty} \rho(tU,S).
\end{equation}

\medskip

{ \bf $\circ$ Surfaces which are not sub-graphs.}   The assumptions above are tailored to the case of surfaces which are sub-graphs.  Here we explain some natural assumptions allowing for more general non-subgraph surfaces like some which are found in nature \cite{Barthlott1997, NEINHUIS1997}.  A natural set of assumptions generalizing the graph setting would be to take $S$ to be simply connected with smooth boundary and satisfying for some $M>0$,
\[ \{ z \leq -M\} \subset S \subset \{z \leq 0\}.\]

\subsection{The cell problems}  To motivate the form of the cell problems let us begin with a purely formal description of the problem described in the introduction, the minimization of the energy functional
\begin{equation}
{{E}}_\e(L,\Omega) = \sigma_{\textup{LV}} \mathcal{H}^d(\partial L \cap \partial V \cap \Omega) + \sigma_{\textup{SL}}\mathcal{H}^d(\partial S_\e \cap \partial L \cap \Omega)+\sigma_{\textup{SV}} \mathcal{H}^d(\partial S_\e \cap \partial V \cap \Omega),
\end{equation}
over configurations $(L,V,S_\e)$ in some finite region $\Omega$ with $|L| = \textup{Vol}$.  Let $L$ be the global minimizer associated with this problem.

\medskip

Consider computing the energy ${{E}}_\e(L)$ by discretizing at a certain scale $R\e$ with $1 \ll R \ll \e^{-1}$, dividing $\real^d$ up into disjoint squares $\Box_{R\e}$ of $\real^d$ centered at the lattice points of $R\e \integer^d$.    We divide up the squares $\Box_{R\e}$ depending on whether they are underneath the bulk of the liquid region, the bulk of the vapor region, or neither.  Squares underneath the liquid region we called {\it wetted},  underneath the vapor region we called {\it non-wetted} and otherwise they are {\it on the contact line}.  Slightly more precisely we mean to say that $\Box_{R\e}$ is wetted if $\Box_{R\e} + h\e e_{d+1}$ is contained in the liquid region $L$ for some $1 \ll h \ll \e^{-1}$, a similar definition applies to non-wetted squares.

\medskip

Assuming that the volume constrained minimizer $L$ has sufficient large scale regularity, i.e. that the contact line region has dimension lower than $d$ at the scale $R\e$, then the energy contribution of the contact line squares will be negligible.  Now in the wetted region, up to a small error coming from the boundary with the contact line, we can essentially think that the configuration will minimize the energy ${{E}}_\e$ under the constraint that $\Box_{R\e}+h \e e_{d+1} \subseteq L$ for every wetted square.  This motivates us to define the following cell problems which find the minimal energy effective solid-liquid (or solid-vapor) interface over a given wetted (or non-wetted) set.

\medskip

 \begin{figure}[t]
 \centering
 \def\svgwidth{3in}
  \import{figures/}{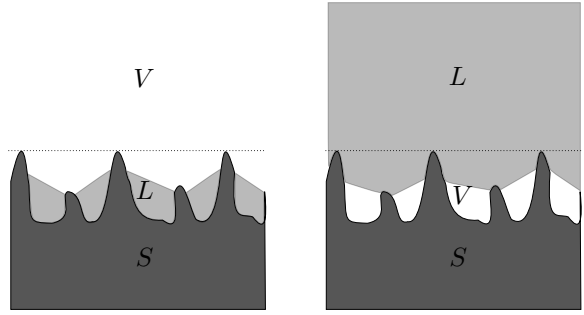}
  \caption{On the left is solid-vapor cell problem solution for a hydrophilic solid, on the right is solid-liquid cell problem solution for a hydrophobic solid.}
 \end{figure}
 
 First of all we will zoom in on the microscopic structure of the minimizer inside the wetted and non-wetted sets.  Rescaling by $\frac{1}{\e}$ we will actually look at the energy $E = E_1$ with the solid surface $S = S_1$.
 
 \medskip

Let $U$ be an open region of $\real^d$.  We compute the minimal energy of a configuration which, macroscopically, looks like a solid-liquid interface above the region $U$:
\begin{equation}\label{eqn: SL quantity}
 \Sigma_{\textup{SL}}(U) := \inf_{L \in \mathcal{A}_{\textup{SL}}(U)}{{E}}(L,U \times \real)  \ \hbox{ with }  \mathcal{A}_{\textup{SL}}(U):= \bigg\{ L \in BV_{loc}(\real^{d+1} \setminus {S}) :     \{ z \geq 0\} \subseteq L \bigg\}.
\end{equation}
and the minimal energy of a configuration which, macroscopically, looks like a solid-vapor interface above the region $U$:
\begin{equation}\label{eqn: SV quantity}
 \Sigma_{\textup{SV}}(U) := \inf_{L \in \mathcal{A}_{\textup{SV}}(U)}{{E}}(L,U \times \real)  \ \hbox{ with }  \mathcal{A}_{\textup{SV}}(U):= \bigg\{ L \in BV_{loc}(\real^{d+1} \setminus {S}) :     L \subseteq \{z \leq 0\} \bigg\}.
\end{equation}
We make note that there is no energy term associated with surface area on $\partial U \times \real$.  It is also notationally convenient to define the macroscopic contact angle associated with the minimal energy configurations on $U$,
\begin{equation}\label{eqn: contact angle quantity}
 \cos \Theta_Y(U) = \frac{\Sigma_{\textup{SL}}(U)-\Sigma_{\textup{SV}}(U)}{\sigma_{\textup{LV}}|U|}
 \end{equation}
We denote $\Theta_Y = \pi$ if the right hand side above is $<-1$ and take $\Theta_Y = 0$ if the right hand side above is $>1$.

\medskip

The homogenized coefficients $\overline{\sigma}_{\textup{SL}}$ and $\overline{\sigma}_{\textup{SV}}$ can then be defined as the minimal energy per unit area at large scales.  We define,
\begin{equation}\label{eqn: hom energies gen}
\overline{\sigma}_{\textup{SL}} := \lim_{t \to \infty} \frac{1}{t^d}\Sigma_{\textup{SL}}(tU) \ \hbox{ and } \ \overline{\sigma}_{\textup{SV}} := \lim_{t \to \infty} \frac{1}{t^d}\Sigma_{\textup{SV}}(tU)
\end{equation}
these limits do not necessarily exist and even if they do they are not necessarily independent of $U$.  In the rest of this section we will justify that, in the periodic setting, these limits do in fact exist and satisfy
(Proposition~\ref{prop: contact angle quantity non-degen})
\begin{equation}\label{eqn: hom contact angle}
 -1 < \cos\overline{\theta}_Y:= \frac{\overline{\sigma}_{\textup{SL}} - \overline{\sigma}_{\textup{SV}}}{{\sigma}_{\textup{LV}}} <1.
 \end{equation}

\medskip

In order to better understand these definitions, and also because it will be useful later, we make a note of several equivalent minimization problems.  
\begin{lem}\label{reduction}
For every $t\geq 0$,
\begin{equation*}
\begin{array}{lll}
\Sigma_{\textup{SL}}(U) &=& \inf\bigg\{ {{E}}(L,U \times \real) : \ L \in BV_{loc}(\real^{d+1} \setminus {S}) \ \hbox{ and } \  \{ z \geq t \} \subseteq L \bigg\}  \vspace{1.5mm}\\
&=& \inf\bigg\{ {{E}}(L,U \times \real) : \ L \in BV_{loc}(\real^{d+1} \setminus {S}) \ \hbox{ and } \  \{ z = t \} \subseteq L \bigg\}  
\end{array}
\end{equation*}
The analogous equivalences hold for $\Sigma_{\textup{SV}}(U)$.
\end{lem}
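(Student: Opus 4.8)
The plan is to prove the chain of equalities by showing each minimization problem has infimum no larger than the one before it, cycling back around. Write $I_0 = \Sigma_{\textup{SL}}(U)$ (the problem with constraint $\{z\geq 0\}\subseteq L$), $I_t^\geq$ for the problem with constraint $\{z\geq t\}\subseteq L$, and $I_t^=$ for the problem with constraint $\{z=t\}\subseteq L$. The easy direction is $I_t^\geq \leq I_t^=$, since any admissible competitor for $I_t^\geq$ in particular contains the slice $\{z=t\}$, so the admissible class for $I_t^=$ contains that for $I_t^\geq$ — wait, it is the reverse: $\{z\ge t\}\subseteq L$ implies $\{z=t\}\subseteq L$, so $\mathcal{A}$ for the $\geq$ problem is a subset of $\mathcal{A}$ for the $=$ problem, hence $I_t^= \leq I_t^\geq$. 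So the content is the reverse inequalities $I_t^\geq \leq I_t^=$ and the reduction from $t$ back to $0$.

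The main tool is a \emph{truncation/filling-in} construction: given any competitor $L$ with $\{z=t\}\subseteq L^{(1)}$, replace it by $\tilde L := (L \cap \{z \leq t\}) \cup \{z \geq t\}$. Because $S \subseteq \{z\leq 0\} \subseteq \{z \leq t\}$ for $t\geq 0$, this modification only alters the configuration in the region $\{z > t\}$, which lies entirely outside $S$ and above the solid; there the interface $\partial_e\tilde L$ is empty while $\partial_e L \cap \{z>t\}$ may be nonempty, so $E(\tilde L, U\times\real) \leq E(L, U\times\real)$. One must check that the modification does not \emph{increase} perimeter across the slice $\{z=t\}$: since $\{z=t\}\subseteq L^{(1)}$, the trace of $L$ from below on $\{z=t\}$ is identically $1$, matching the trace of $\{z\geq t\}$ from above, so no new interface is created on $\{z=t\}$ and one has $\Per(\tilde L, (U\times\real)\setminus S) \leq \Per(L,(U\times\real)\setminus S)$ by the gluing/cut-and-paste formula for sets of finite perimeter (e.g.\ the trace matching criterion in \cite{MaggiBV}). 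This shows $I_t^\geq \leq I_t^=$, hence $I_t^\geq = I_t^=$.

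It remains to show $I_t^\geq = I_0$ for all $t\geq 0$. Given $t > 0$: from a competitor $L$ for $I_0$, the same truncation with $\tilde L = (L\cap\{z\leq t\})\cup\{z\geq t\}$ produces an admissible competitor for $I_t^\geq$ with no greater energy (again the modification is above the solid, outside $S$), giving $I_t^\geq \leq I_0$. Conversely, given a competitor $L$ for $I_t^\geq$, translate it downward: set $L' := L - t e_{d+1}$ restricted appropriately — but translation moves $S$ too, so instead fill in: $\hat L := L \cup \{z\geq 0\}$. Since $\{z\geq 0\}\setminus S = \{z \geq 0\}\cap(\real^{d+1}\setminus S)$ and $L$ already contains $\{z\geq t\}$, the added region is the slab $\{0\leq z\leq t\}\setminus S$; adding it can only \emph{remove} interface (it replaces the portion of $\partial_e L$ inside this slab, together with any solid-vapor interface $\partial_e V \cap \partial S$ there, by solid-liquid interface $\partial_e L \cap \partial S$ and possibly the slice $\{z=0\}$ — here one uses $\cos\theta_Y \leq 1$ so that converting $\sigma_{\textup{SV}}$ area to $\sigma_{\textup{SL}}$ area plus $\sigma_{\textup{LV}}$ area at worst costs nothing, i.e.\ Young's inequality $\sigma_{\textup{SL}} \leq \sigma_{\textup{SV}} + \sigma_{\textup{LV}}$, and that no interface on $\partial U\times\real$ is counted). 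Thus $E(\hat L, U\times\real) \leq E(L,U\times\real)$ and $\hat L$ is admissible for $I_0$, giving $I_0 \leq I_t^\geq$. Combining, all three quantities coincide, and the argument for $\Sigma_{\textup{SV}}(U)$ is identical after exchanging the roles of $L$ and $V$ (equivalently replacing $L$ by its complement and $\cos\theta_Y$ by $-\cos\theta_Y$).

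The step I expect to be the main (if minor) obstacle is the perimeter/energy bookkeeping across the slice $\{z=0\}$ and near $\partial S$ when filling in the slab: one must be careful that $E$ is defined via the essential boundary (as stressed in Section~\ref{sec: energy}) so that gluing along $\{z=0\}$ does not spuriously create a $\sigma_{\textup{LV}}$-interface when the trace from below is $1$, and that the solid-vapor$\to$solid-liquid conversion in the slab is non-increasing — this is exactly where $\cos\theta_Y \leq 1$ (equivalently $\sigma_{\textup{SL}} - \sigma_{\textup{SV}} \leq \sigma_{\textup{LV}}$) is used and where the normalization $\max\phi = 0$, i.e.\ $S\subseteq\{z\leq 0\}$, makes the slab lie outside the solid except on the set $\{z=0\}\cap\partial S$.
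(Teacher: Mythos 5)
Your proposal is correct and follows essentially the same route as the paper's (one-line) proof, which simply cites Lemma~\ref{lem: lines through}: the class inclusions give the easy inequalities, and your truncation and filling-in constructions give the reverse ones. The only step you leave implicit is the quantitative heart of the filling-in direction --- that the liquid--vapor perimeter removed from the slab $\{0<z<t\}$ is at least the area of the set where new interface is created at the $z=0$ level (because over a.e.\ such point the vertical segment from $z=0$ up to $z=t$ must cross $\partial_e L$, since $\{z\geq t\}\subseteq L$), which together with $\sigma_{\textup{SL}}\leq\sigma_{\textup{SV}}+\sigma_{\textup{LV}}$ closes the energy estimate --- and that is precisely Lemma~\ref{lem: lines through}.
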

The proof is a straightforward application of Lemma~\ref{lem: lines through}.

\medskip

{\bf $\circ$ Cell problem minimizers.}  It turns out that for any given constant values of $\sigma_{\textnormal{SL}}$ and $\sigma_{\textnormal{SV}}$ only one of the two cell problems \eqref{eqn: SL quantity} and \eqref{eqn: SV quantity} has a non-trivial minimizer.
\begin{lem}\label{lem: trivial minimizers}
Let $U$ be an open set with piecewise smooth boundary.  In the hydrophilic case when $-1  \leq \cos \theta_Y \leq 0$,
\[  \frac{1}{|U|}\Sigma_{\textnormal{SL}}(U) = {\sigma}_{\textnormal{SL}}\rho(U) = {\sigma}_{\textnormal{SL}}\frac{1}{|U|}\int_{U} \sqrt{1+|D\phi|^2} \ dx \ \hbox{ and } \ L_{\textnormal{SL}}(U) =\real^{d+1} \setminus S\]
and symmetrically in the hydrophobic case $0 \leq \cos\theta_Y \leq 1$,
\[ \frac{1}{|U|}\Sigma_{\textnormal{SV}}(U) = {\sigma}_{\textnormal{SV}}\rho(U) = {\sigma}_{\textnormal{SV}}\frac{1}{|U|}\int_{U} \sqrt{1+|D\phi|^2} \ dx \ \hbox{ and } \ L_{\textnormal{SV}}(U) = \emptyset.  \]
\end{lem}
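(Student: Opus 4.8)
The plan is to prove the hydrophilic case; the hydrophobic case follows by the symmetry swapping the roles of $L$ and $V$ (equivalently, replacing $L$ by its complement in $\real^{d+1}\setminus S$ and $\cos\theta_Y$ by $-\cos\theta_Y$). So assume $-1\leq\cos\theta_Y\leq 0$. I want to show that the ``full liquid'' configuration $L=\real^{d+1}\setminus S$ is a minimizer of $\Sigma_{\textup{SL}}(U)$ and compute its energy.

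\emph{Step 1: Energy of the candidate.} For $L=\real^{d+1}\setminus S$ (so $V=\emptyset$), the only interfacial contribution in \eqref{eqn: energy density} is the liquid-vapor term on $\partial_eL\cap\partial S$, but with $V=\emptyset$ there is no $\partial_eV$ at all, so actually the relevant term is $\sigma_{\textup{SL}}\mathcal H^d(\partial S\cap U\times\real)$ — i.e. the solid-liquid contact along the graph of $\phi$. By the graph–area identity \eqref{area} and the displayed formula $\Per(L,U\times\real)=\int_U\sqrt{1+|D\phi|^2}\,dx$, this equals $\sigma_{\textup{SL}}\int_U\sqrt{1+|D\phi|^2}\,dx=\sigma_{\textup{SL}}\rho(U)|U|$. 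Since $L=\real^{d+1}\setminus S\in\mathcal A_{\textup{SL}}(U)$ (it certainly contains $\{z\geq 0\}$), this gives $\Sigma_{\textup{SL}}(U)\leq\sigma_{\textup{SL}}\rho(U)|U|$.

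\emph{Step 2: Lower bound.} Let $L\in\mathcal A_{\textup{SL}}(U)$ be arbitrary, with $V=(\real^{d+1}\setminus S)\setminus L$. Using the normalized energy \eqref{eqn: norm energy defn} is cleanest: $E'(L,U\times\real)=\Per(L,(U\times\real)\setminus S)+\cos\theta_Y\,\mathcal H^d(\partial_eL\cap\partial S\cap(U\times\real))$, and $E=\sigma_{\textup{LV}}E'+\sigma_{\textup{SV}}\mathcal H^d(\partial S\cap(U\times\real))$ up to the fixed constant $\sigma_{\textup{SV}}\mathcal H^d(\partial S\cap U\times\real)$. I would write $\partial S\cap(U\times\real)$ as the disjoint union of $\partial_eL\cap\partial S$ (solid-liquid) and $\partial_eV\cap\partial S$ (solid-vapor) up to $\mathcal H^d$-null sets, so that, after unwinding, $E(L,U\times\real)=\sigma_{\textup{LV}}\Per(L,(U\times\real)\setminus S)+\sigma_{\textup{SL}}\mathcal H^d(\partial_eL\cap\partial S\cap U\times\real)+\sigma_{\textup{SV}}\mathcal H^d(\partial_eV\cap\partial S\cap U\times\real)$. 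Since $\sigma_{\textup{SL}}\leq\sigma_{\textup{SV}}$ in the hydrophilic case and $\Per\geq 0$, we get
\[
E(L,U\times\real)\geq\sigma_{\textup{SL}}\big(\mathcal H^d(\partial_eL\cap\partial S\cap U\times\real)+\mathcal H^d(\partial_eV\cap\partial S\cap U\times\real)\big)=\sigma_{\textup{SL}}\,\mathcal H^d(\partial S\cap U\times\real),
\]
which is exactly $\sigma_{\textup{SL}}\rho(U)|U|$. This matches the upper bound, so equality holds, and the inequality is strict unless $\Per(L,(U\times\real)\setminus S)=0$ and (if $\sigma_{\textup{SL}}<\sigma_{\textup{SV}}$ strictly) $\mathcal H^d(\partial_eV\cap\partial S)=0$, forcing $L=\real^{d+1}\setminus S$ up to null sets (a locally finite perimeter set with zero perimeter in the connected open set $(U\times\real)\setminus S$ is $\emptyset$ or everything, and $\{z\geq0\}\subseteq L$ rules out $\emptyset$). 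When $\cos\theta_Y=0$ exactly there may be other minimizers, but $L_{\textup{SL}}(U)=\real^{d+1}\setminus S$ is still one of them, which is all that is claimed.

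\emph{Main obstacle.} The only genuinely delicate point is the bookkeeping in Step 2 — justifying rigorously that $\partial S\cap(U\times\real)$ decomposes (up to $\mathcal H^d$-null) into the solid-liquid and solid-vapor traces and that these are the traces of a finite-perimeter set $L\subset(\real^{d+1}\setminus S)$ on the Lipschitz hypersurface $\partial S$, so that the trace identities of Section~\ref{sec: notation} apply. Given $\phi$ is at least Lipschitz this is standard (it is essentially the content of how $E$ was defined in \eqref{eqn: energy density}–\eqref{eqn: norm energy defn}), so I would keep that part brief. Everything else — the candidate energy computation and the termwise comparison using $\sigma_{\textup{SL}}\leq\sigma_{\textup{SV}}$ — is immediate. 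The piecewise-smooth hypothesis on $\partial U$ is used only to make the trace of $L$ on $\partial U\times\real$ well-behaved, but note no energy is charged on $\partial U\times\real$, so in fact it plays essentially no role in this particular lemma.
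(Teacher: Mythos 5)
Your proposal is correct and follows essentially the same route as the paper: the paper proves the symmetric (hydrophobic, $\Sigma_{\textup{SV}}$) case by decomposing $\mathcal H^d(\partial S\cap(U\times\real))$ into its solid-liquid and solid-vapor portions, using the sign of $\sigma_{\textup{SL}}-\sigma_{\textup{SV}}$ termwise, and invoking strict positivity of $\Per(L,(U\times\real)\setminus S)$ for any nontrivial competitor to get uniqueness — exactly your Step 2. The only differences are cosmetic (which of the two symmetric cases is written out, and test-set-plus-lower-bound versus direct strict inequality against the trivial configuration).
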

\begin{rem}
This result would still be true for chemically textured rough surfaces, i.e. when ${\sigma}_{\textnormal{SL}}$ and ${\sigma}_{\textnormal{SV}}$ depending on $x$, as long as ${\sigma}_{\textnormal{SL}}(x) - {\sigma}_{\textnormal{SV}}(x)$ has a fixed sign.  If the sign of ${\sigma}_{\textnormal{SL}}(x) - {\sigma}_{\textnormal{SV}}(x)$ is allowed to change, then it is quite possible that \emph{both} the solid-liquid and the solid-vapor cell problems will have non-trivial solutions at the same time.  In order that our proofs can generalize to the setting of $x$-dependent surface energies as easily as possible we will avoid making use of Lemma~\ref{lem: trivial minimizers} what follows.
\end{rem}
Based on Lemma~\ref{lem: trivial minimizers} we can also make an alternative formulation of the cell problems in terms of the normalized energy ${{E}}'$ introduced in \eqref{eqn: normalized energy} and \eqref{eqn: norm energy defn}.  Recall, for an open set $U \subset \real^d$ with piecewise smooth boundary we have defined,
\[
{{E}}'(L,U \times \real) :=  \Per(L, U \times \real \setminus S)  + \cos\theta_Y \mathcal{H}^d( \partial L \cap \partial S \cap (U \times \real)).
\]
  Let us consider the hydrophobic case, $\cos\theta_Y >0$.  The hydrophilic case and be understood by symmetry.  Then, using Lemma~\ref{lem: trivial minimizers}, for any $U \subset \real^d$ open with piecewise smooth boundary,
\begin{equation}\label{eqn: normalized energy cell}
\cos \Theta_Y(U) = \inf_{L \in \mathcal{A}_{\textup{SL}}(U)} \frac{1}{|U|}{{E}}'(L,U \times \real ) \ \hbox{ and } \ \cos \overline{\theta}_Y = \lim_{t \to \infty}  \cos \Theta_Y(tU),
\end{equation}
if said limit exists independent of $U$.  

\begin{proof}[Proof of Lemma~\ref{lem: trivial minimizers}]
We will just consider a hydrophobic surface where $\sigma_{\textup{SL}} \geq \sigma_{\textup{SV}}$, the other case follows by symmetry. Let $L \in \mathcal{A}_{\textup{SL}}(U)$ be a finite perimeter, positive measure subset of $(U \times \real) \setminus S$ which lies below the $\{z = 0\}$ level.  It suffices to show that 
\[ {{E}}(L,U \times \real) > \sigma_{\textnormal{SV}} |U|\rho (U) = {{E}}(\emptyset,U \times \real)\]
 since this will make the empty set the unique minimizer in $\mathcal{A}_{\textup{SL}}(U)$.  {Since $|L|>0$ and $S$ is the region above a graph $\Per(L,(U \times \real) \setminus S)>0$} so
\[
 {{E}}(L, U \times \real) >   (\sigma_{\textnormal{SL}}-\sigma_{\textnormal{SV}})\mathcal{H}^d( \partial L \cap \partial S \cap U \times \real) + \sigma_{\textnormal{SV}} \mathcal{H}^d( \partial S \cap U \times \real) \geq   \sigma_{\textnormal{SV}} \mathcal{H}^d( \partial S \cap U \times \real),
\]
where we have used the hydrophobicity for the last inequality. 
\end{proof}

By the standard direct method of calculus of variations we can show, under assumption~\eqref{eqn: smooth assumption}, that minimizers exist for \eqref{eqn: SL quantity} and \eqref{eqn: SV quantity}. Despite the possible non-uniqueness of minimizers for the variational problems \eqref{eqn: SL quantity} and \eqref{eqn: SV quantity}, it turns out that the \emph{maximal} (and \emph{minimal}) minimizers are unique.  We say that $L$ satisfying
\begin{equation}\label{eqn: def maximal minimizer}
 {{E}}(L,U \times \real) = \min_{\Lambda \in \mathcal{A}} {{E}}(\Lambda,U \times \real) \ \hbox{is \emph{maximal} if for any $K \in \mathcal{A}$ with } \ {{E}}(K,U \times \real) = \min_{\Lambda \in \mathcal{A}} {{E}}(\Lambda,U \times \real), \ K \subseteq L.
 \end{equation}
The uniqueness of these minimizers is useful to us mainly to prove the existence of a $\integer^d$-periodic cell problem solution (see Lemma~\ref{maximal} below), which is useful (but not absolutely necessary) for the proof of homogenization in Section~\ref{sec: hom}.
\begin{lem}\label{lem: maximal minimal minimizers}
Suppose that $\phi$ is smooth.  For $U \subset \real^d$ open there exists unique maximal (resp. minimal) minimizers $L_{\textup{SL}}(U)$ and $L_{\textup{SV}}(U)$ so that,
\[ {{E}}(L_{\textup{SL}},U \times \real ) = \Sigma_{\textup{SL}}(U) \ \hbox{ and } \ {{E}}(L_{\textup{SV}},U\times \real ) = \Sigma_{\textup{SV}}(U).\]
\end{lem}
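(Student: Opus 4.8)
The plan is to use the standard lattice structure of the energy functional on finite perimeter sets together with the direct method of the calculus of variations. First, I would establish that the admissible classes $\mathcal{A}_{\textup{SL}}(U)$ and $\mathcal{A}_{\textup{SV}}(U)$ are stable under the set operations $L_1 \cup L_2$ and $L_1 \cap L_2$: this is clear since the constraint $\{z \geq 0\} \subseteq L$ is preserved under unions with another admissible set, and the containment $\{z \geq 0\}\subseteq L_1\cap L_2$ holds whenever it holds for each, and symmetrically for $\mathcal{A}_{\textup{SV}}$. Then the key analytic input is the \emph{submodularity} of the energy: for any two finite-perimeter configurations $L_1,L_2$ in $(U\times\real)\setminus S$,
\begin{equation*}
E(L_1\cup L_2, U\times\real) + E(L_1\cap L_2, U\times\real) \leq E(L_1,U\times\real) + E(L_2,U\times\real).
\end{equation*}
This follows from the analogous submodularity of the perimeter $\Per(\cdot, U\times\real\setminus S)$ together with the additivity (not just submodularity) of the solid-contact term $\mathcal{H}^d(\partial_e L\cap\partial S\cap\cdot)$ under the measure-theoretic decomposition $\partial_e(L_1\cup L_2)$, $\partial_e(L_1\cap L_2)$ in terms of $\partial_e L_1$, $\partial_e L_2$ — more precisely, on $\partial S$ the trace of $L_1\cup L_2$ is the max and of $L_1\cap L_2$ is the min of the traces, so the contact areas simply add in pairs. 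One must check that this works regardless of the sign of $\cos\theta_Y$; writing $E$ in the un-normalized form with all three positive coefficients $\sigma_{\textup{LV}},\sigma_{\textup{SL}},\sigma_{\textup{SV}}$ makes each term manifestly additive or submodular, which is cleanest.

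Granting submodularity, existence of \emph{some} minimizer is the direct method: take a minimizing sequence $L_n$, use the uniform energy bound to get a uniform local perimeter bound (the solid-contact area over any bounded region is controlled by $\Per(S,\cdot)$, which is locally finite by the Lipschitz assumption on $\partial S$, and $\Per(L_n,\cdot\setminus S)$ is directly bounded), extract a subsequence converging in $L^1_{loc}$ to a limit $L_\infty$, note the constraint $\{z\geq 0\}\subseteq L_\infty$ (resp. $L_\infty\subseteq\{z\leq 0\}$) passes to the limit, and invoke lower semicontinuity of $E$ — here is precisely where the $C^{1,1}$ regularity \eqref{eqn: smooth assumption} enters, via \cite[Propositions 19.1, 19.3]{MaggiBV}, to handle lower semicontinuity of the solid-contact terms under $L^1_{loc}$ convergence. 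For the \emph{maximal} minimizer: let $m = \Sigma_{\textup{SL}}(U)$ and consider the family $\mathcal{M}$ of all minimizers. If $L_1,L_2\in\mathcal{M}$ then by submodularity $E(L_1\cup L_2) + E(L_1\cap L_2)\leq 2m$, and since each of the two terms is $\geq m$ by minimality, both equal $m$; hence $\mathcal{M}$ is closed under finite unions and intersections. Define $L_{\textup{SL}}(U)$ by taking a sequence $K_j\in\mathcal{M}$ with $|K_j\cap B_j|\to \sup\{|K\cap B_j| : K\in\mathcal{M}\}$ (say, with a common exhaustion $B_j\uparrow\real^{d+1}$), replace $K_j$ by $K_1\cup\cdots\cup K_j\in\mathcal{M}$ so the sequence is increasing, and set $L_{\textup{SL}}(U) = \bigcup_j K_j$; monotone $L^1_{loc}$ convergence plus lower semicontinuity shows $L_{\textup{SL}}(U)\in\mathcal{M}$, and its construction forces $K\subseteq L_{\textup{SL}}(U)$ for every $K\in\mathcal{M}$ (if some $K$ were not contained, $K\cup L_{\textup{SL}}(U)$ would be a strictly larger minimizer, contradicting maximality). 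Uniqueness of the maximal minimizer is immediate from the definition. The minimal minimizer $L_{\textup{SV}}(U)$, and dually the minimal/maximal ones for the other cell problem, are obtained by the symmetric argument with intersections in place of unions.

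I expect the main obstacle to be the careful verification of submodularity of the full energy $E$ at the level of essential boundaries and traces on $\partial S$ — in particular confirming that the identity $\mathcal{H}^d(\partial_e(L_1\cup L_2)\cap\partial S) + \mathcal{H}^d(\partial_e(L_1\cap L_2)\cap\partial S) = \mathcal{H}^d(\partial_e L_1\cap\partial S) + \mathcal{H}^d(\partial_e L_2\cap\partial S)$ holds (rather than merely an inequality), using that on the Lipschitz hypersurface $\partial S$ one has well-defined one-sided traces $\varphi_{L_i}$ and that the contact area is $\int_{\partial S}(1-\varphi_{L_i})\,d\mathcal{H}^d$ or similar — combined with checking that all the convergence and lower-semicontinuity statements genuinely require nothing beyond the standing assumption \eqref{eqn: smooth assumption}. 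A secondary, more bookkeeping-level point is that $U$ is only assumed open (not bounded), so the sets involved have merely \emph{locally} finite perimeter and one must phrase the compactness and the "maximal" construction with respect to a fixed exhaustion of $\real^{d+1}$; this is routine but should be stated explicitly.
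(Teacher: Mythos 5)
Your proposal is correct and follows essentially the same route as the paper: the union--intersection (submodularity) inequality for $E$ proved via perimeter submodularity plus the additive trace identities on $\partial S$ (the paper's Lemma~\ref{lem: union intersection}), the direct method with lower semicontinuity from \cite[Propositions 19.1, 19.3]{MaggiBV} for existence, and the increasing-sequence/measure-maximizing construction of the maximal minimizer (carried out in detail in the paper for the periodic case in Proposition~\ref{maximal}). Your remark about phrasing the measure-maximization via an exhaustion when $U$ is unbounded is a sensible precision but does not change the argument.
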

The proof of this result follows by the direct method of calculus of variations combined with the following union-intersection inequality for the uniqueness:
\begin{lem}\label{lem: union intersection}
For every $L,L' \in BV(\real^{d+1} \setminus S)$ and every open set $\Omega \subset \real^{d+1}$ which is bounded in the $x$ variable, the inequality holds,
\[ {{E}}(L \cup L',\Omega) + {{E}}(L \cap L',\Omega) \leq {{E}}(L,\Omega) + {{E}}(L',\Omega).\]
\end{lem}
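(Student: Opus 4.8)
The plan is to split ${{E}}(L,\Omega)$ into the liquid--vapor perimeter term $\sigma_{\textup{LV}}\Per(L,\Omega\setminus S)$ and the two solid-surface terms $\sigma_{\textup{SL}}\,\mathcal{H}^d(\partial_e L\cap\partial S\cap\Omega)$ and $\sigma_{\textup{SV}}\,\mathcal{H}^d(\partial_e V\cap\partial S\cap\Omega)$ (with $V=(S\cup L)^C$), and to show that the first is \emph{submodular} while the last two are \emph{exactly additive} under $L\mapsto\{L\cup L',\,L\cap L'\}$. Additivity of the $\partial S$-terms is available --- and no triangle inequality among the $\sigma$'s is needed --- precisely because the solid phase $S$ is held fixed, so that only the $L$--$V$ interface genuinely competes; note that replacing $L$ by $L\cup L'$ turns $V$ into $V_L\cap V_{L'}$ and replacing $L$ by $L\cap L'$ turns it into $V_L\cup V_{L'}$.

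For the perimeter term I would invoke the classical localized submodularity $\Per(L\cup L',A)+\Per(L\cap L',A)\le\Per(L,A)+\Per(L',A)$, valid for every Borel set $A$ (I will use $A=\Omega\setminus S$, which is open), whose proof decomposes the reduced boundaries: modulo $\mathcal{H}^d$-null sets $\partial^\ast(L\cup L')$ and $\partial^\ast(L\cap L')$ lie inside $(\partial^\ast L\cap (L')^{(0)})\cup(L^{(0)}\cap\partial^\ast L')\cup\{\nu_L=\nu_{L'}\}$ and $(\partial^\ast L\cap (L')^{(1)})\cup(L^{(1)}\cap\partial^\ast L')\cup\{\nu_L=\nu_{L'}\}$ respectively, while the antipodal part $\{\nu_L=-\nu_{L'}\}$ of $\partial^\ast L\cap\partial^\ast L'$ is counted in neither side; see \cite{MaggiBV}. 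For the solid-surface terms I would pass to the trace $\varphi_K\in L^1(\partial S;\mathcal{H}^d)$ of $\indicator_K$ taken from inside $\real^{d+1}\setminus S$, which exists since $\partial S$ is Lipschitz (\cite[Theorem~2.10]{Giusti}), use the identity $\mathcal{H}^d(\partial_e K\cap\partial S\cap\Omega)=\int_{\partial S\cap\Omega}\varphi_K\,d\mathcal{H}^d$, and then note that linearity of the trace together with the pointwise identity $\indicator_{L\cup L'}+\indicator_{L\cap L'}=\indicator_L+\indicator_{L'}$ forces $\varphi_{L\cup L'}+\varphi_{L\cap L'}=\varphi_L+\varphi_{L'}$ for $\mathcal{H}^d$-a.e.\ point of $\partial S$. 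Integrating over $\partial S\cap\Omega$ --- a set of finite $\mathcal{H}^d$-measure because $\Omega$ is bounded in $x$ --- gives exact additivity of the $\sigma_{\textup{SL}}$-term, and repeating the argument for the complementary phases, using $\varphi_{V_K}=1-\varphi_K$ a.e.\ on $\partial S$, gives it for the $\sigma_{\textup{SV}}$-term. Adding the perimeter inequality to the two boundary equalities, weighted by the positive constants $\sigma_{\textup{LV}},\sigma_{\textup{SL}},\sigma_{\textup{SV}}$, yields ${{E}}(L\cup L',\Omega)+{{E}}(L\cap L',\Omega)\le{{E}}(L,\Omega)+{{E}}(L',\Omega)$.

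The step I expect to need the most care, and would write out in detail, is the reconciliation between the ambient essential boundary $\partial_e K\cap\partial S$ used in the definition of ${{E}}$ and the trace $\varphi_K$ on $\partial S$: namely that $\mathcal{H}^d$-a.e.\ point of $\partial S$ is simultaneously a point where $\partial S$ is approximately flat and a Lebesgue point of $\varphi_K$, at which the ambient density of $K$ equals $\tfrac12\varphi_K$, so that $\partial_e K\cap\partial S$ coincides with $\{\varphi_K=1\}$ up to an $\mathcal{H}^d$-null set; one also checks that the traces of $L\cup L'$ and $L\cap L'$ are the lattice operations $\varphi_L\vee\varphi_{L'}$ and $\varphi_L\wedge\varphi_{L'}$. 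These are standard facts about traces of sets of finite perimeter across a Lipschitz hypersurface (see \cite{MaggiBV}), but they carry all of the measure-theoretic content of the lemma; the rest is bookkeeping.
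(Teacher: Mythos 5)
Your proposal is correct and follows essentially the same route as the paper: the liquid--vapor perimeter term is handled by the classical submodularity of perimeter (the paper cites \cite[Lemma 12.22]{MaggiBV}), and the two solid-surface terms are shown to be exactly additive via the lattice/trace identity $\varphi_{L\cup L'}+\varphi_{L\cap L'}=\varphi_L+\varphi_{L'}$ on $\partial S\cap\Omega$, with the analogous identity for the vapor traces. The only difference is one of emphasis --- you spell out the reconciliation between $\partial_e K\cap\partial S$ and the trace $\varphi_K$, which the paper treats as part of its standing trace conventions from Section~2 --- but the argument is the same.
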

\begin{proof}
Call $V$ and $V' $ to be the vapor regions corresponding to $L$ and $L'$ respectively. From the book \cite{MaggiBV} (see Lemma 12.22) we have,
\begin{equation}\label{eqn: per union ineq}
\Per(L \cup L',\Omega)+\Per(L \cap L',\Omega) \leq \Per(L ,\Omega)+\Per( L',\Omega).
\end{equation}
On the other hand, calling $\varphi_L$, $\varphi_{L'}$, $\varphi_V$ and $\varphi_{V'}$ to be the respective traces of $\indicator_L$, $\indicator_{L'}$, $\indicator_{V}$ and $\indicator_{V'}$ on $\partial S \cap \Omega$,
\begin{equation}\label{eqn: trace union L eq}
 \int_{\partial S \cap \Omega} \varphi_{L} \vee \varphi_{L'} \ d\mathcal{H}^d = \int_{\partial S \cap \Omega} \varphi_{L} + \varphi_{L'}  - \varphi_{L} \wedge \varphi_{L'} \ d\mathcal{H}^d,
\end{equation}
and similarly for the traces of $V,V'$,
\begin{equation}\label{eqn: trace union V eq}
 \int_{\partial S \cap \Omega} \varphi_{V} \vee \varphi_{V'} \ d\mathcal{H}^d = \int_{\partial S \cap \Omega} \varphi_{V} + \varphi_{V'}  - \varphi_{V} \wedge \varphi_{V'} \ d\mathcal{H}^d.
\end{equation}
Multiplying \eqref{eqn: trace union L eq} by $\sigma_{\textnormal{SL}}$, multiplying \eqref{eqn: trace union V eq} by $\sigma_{\textnormal{SV}}$ and summing both with \eqref{eqn: per union ineq} yields the desired inequality for ${{E}}(\cdot,\Omega)$. 
\end{proof}

{\bf $\circ$ Additivity properties of $\Sigma_{\textup{SV}}$ and $\Sigma_{\textup{SL}}$.} Before we consider the more specialized case of periodicity we make two more observations about the properties of $\Sigma_{\textup{SL}}$ and $\Sigma_{\textup{SV}}$.  We will show that $\Sigma_{\textup{SL}}$ and $\Sigma_{\textup{SV}}$ are almost additive, this is a very useful property for us in periodic media, and perhaps it is even more useful in random media.  First we claim that $\Sigma_{\textup{SL}}$ and $\Sigma_{\textup{SV}}$ are super-additive quantities:
\begin{lem}[Super-additivity of $\Sigma_{\textup{SL}}$/$\Sigma_{\textup{SV}}$] \label{lem: super-additivity}
Let $U,V$ open subsets of $\real^d$ and let $W$ be an open set with $U \cup V \subset W$, then
\[  \Sigma_{\textup{SL}}(W) \geq  \Sigma_{\textup{SL}}(U)+ \Sigma_{\textup{SL}}(V).\]
The same result holds for $\Sigma_{\textup{SV}}$.
\end{lem}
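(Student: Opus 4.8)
The plan is to exploit two structural features of the cell problems. First, the admissible classes $\mathcal{A}_{\textup{SL}}(U)$ and $\mathcal{A}_{\textup{SV}}(U)$ appearing in \eqref{eqn: SL quantity}--\eqref{eqn: SV quantity} do not actually depend on the base domain $U$: they are, respectively, the set of all $L \in BV_{loc}(\real^{d+1}\setminus S)$ with $\{z \geq 0\} \subseteq L$, and the set of all such $L$ with $L \subseteq \{z \leq 0\}$. Second, for each fixed such $L$, the functional $A \mapsto {{E}}(L,A)$ is a non-negative Borel measure on $\real^{d+1}$, by its very definition through the density $dE(L,x,z)$ in Section~\ref{sec: energy} as a positive combination of restricted Hausdorff measures. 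I will take for granted, as is surely intended in the statement, that $U$ and $V$ are \emph{disjoint}; without disjointness the inequality is false (take $U=V=W$, forcing $\Sigma_{\textup{SL}}(W)\leq 0$).

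First I would fix an arbitrary competitor $L \in \mathcal{A}_{\textup{SL}}(W)$. By the domain-independence of the admissible class just noted, the same $L$ also lies in $\mathcal{A}_{\textup{SL}}(U)$ and in $\mathcal{A}_{\textup{SL}}(V)$, so directly from the definition of the infimum, ${{E}}(L,U\times\real) \geq \Sigma_{\textup{SL}}(U)$ and ${{E}}(L,V\times\real)\geq\Sigma_{\textup{SL}}(V)$. Next, since $U\cap V = \emptyset$ and $U\cup V \subseteq W$, the sets $U\times\real$ and $V\times\real$ are disjoint Borel subsets of $W\times\real$, so monotonicity and additivity of the measure ${{E}}(L,\cdot)$ give
\[ {{E}}(L,W\times\real) \geq {{E}}(L,(U\cup V)\times\real) = {{E}}(L,U\times\real) + {{E}}(L,V\times\real) \geq \Sigma_{\textup{SL}}(U) + \Sigma_{\textup{SL}}(V).\]
Taking the infimum over $L \in \mathcal{A}_{\textup{SL}}(W)$ yields $\Sigma_{\textup{SL}}(W) \geq \Sigma_{\textup{SL}}(U) + \Sigma_{\textup{SL}}(V)$. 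The argument for $\Sigma_{\textup{SV}}$ is word for word identical, using that $\mathcal{A}_{\textup{SV}}(U)$ is likewise independent of $U$.

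There is essentially no hard step here; the only point that warrants an explicit line is recording that the admissible classes carry no dependence on the base domain, so that a minimizing sequence for $W$ is automatically a competing sequence for each of $U$ and $V$ — this is what converts the measure-theoretic super-additivity of ${{E}}(L,\cdot)$ into super-additivity of the infima. I would also remark that the same proof gives, by a trivial induction, super-additivity over any finite \emph{pairwise disjoint} family $U_1,\dots,U_k \subset W$, which is the form actually invoked in the periodic discretization arguments of Section~\ref{sec: hom}.
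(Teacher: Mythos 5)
Your proof is correct and is essentially the paper's own argument: fix a competitor $L$ for $W$, observe it is admissible for $U$ and $V$ as well, split the energy measure over the disjoint cylinders $U\times\real$ and $V\times\real$, and take the infimum. Your observation that disjointness of $U$ and $V$ must be assumed (it is implicit in the paper's statement) is a fair and correct clarification, but does not change the substance of the argument.
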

\begin{proof} Let $L$ be any set which is admissible for the minimization \eqref{eqn: SL quantity} associated with $W$, i.e. $L$ is a finite perimeter subset of $\real^d \setminus S$ with $\{ z \geq 0\} \subset L$, then  
\[ {{E}}(L, W \times \real) \geq    {{E}}(L, U \times \real) +  {{E}}(L,V \times \real) \geq \Sigma_{\textup{SL}}(U)+\Sigma_{\textup{SL}}(V)\]
since $L$ is admissible for the $\Sigma_{\textup{SL}}(U)$ and $\Sigma_{\textup{SL}}(V)$ minimizations as well. Taking the infimum over $L$ yields the result.
\end{proof}
Not only are $\Sigma_{\textup{SL}}$ and $\Sigma_{\textup{SV}}$ super-additive, but they are close to being sub-additive as well. 
\begin{lem}[Almost additivity of $\Sigma_{\textup{SL}}$/$\Sigma_{\textup{SV}}$] \label{lem: almost additivity}
Suppose that $U$ is the interior of a union of closed squares $ \overline{\Box}_i$ with disjoint interiors over $i$ in some index set $I$, then
$$ \sum_{i\in I} \Sigma_{\textup{SL}}(\Box_i) \leq \Sigma_{\textup{SL}}(U) \leq \sum_{i\in I} \Sigma_{\textup{SL}}(\Box_i) + \sigma_{\textup{LV}}2dM \sum_{i \in I} |\Box_i|^{\frac{d-1}{d}}.$$
\end{lem}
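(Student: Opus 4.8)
The plan is to prove the two inequalities separately, the lower one being essentially free from super-additivity and the upper one requiring an explicit patched competitor. For the lower bound, fix any $L \in \mathcal{A}_{\textup{SL}}(U)$; the same $L$ is admissible for each $\Sigma_{\textup{SL}}(\Box_i)$ (the admissible class does not depend on the base region), and the sets $\Box_i \times \real$ are pairwise disjoint open subsets of $U \times \real$, so since $E(L,\cdot)$ is the integral of the nonnegative measure $dE(L,\cdot)$ we get $E(L, U\times\real) \ge \sum_{i\in I} E(L, \Box_i\times\real) \ge \sum_{i\in I}\Sigma_{\textup{SL}}(\Box_i)$; taking the infimum over $L$ yields $\Sigma_{\textup{SL}}(U) \ge \sum_i \Sigma_{\textup{SL}}(\Box_i)$. (One may alternatively iterate Lemma~\ref{lem: super-additivity}.)

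For the upper bound I would build a competitor for $\Sigma_{\textup{SL}}(U)$ by gluing. Fix $\delta>0$ and choose, for each $i$, a set $L_i \in \mathcal{A}_{\textup{SL}}(\Box_i)$ with $E(L_i, \Box_i\times\real) \le \Sigma_{\textup{SL}}(\Box_i) + \delta_i$ and $\sum_i \delta_i \le \delta$ (when $\phi$ is smooth one may instead use the genuine minimizers $L_{\textup{SL}}(\Box_i)$ of Lemma~\ref{lem: maximal minimal minimizers}). Define $L$ by $L \cap (\Box_i\times\real) := L_i \cap (\Box_i\times\real)$ for each $i$ and $L := \{z\ge 0\}$ elsewhere; since each $L_i \supseteq \{z\ge 0\}$ and $L_i \subseteq \real^{d+1}\setminus S$, the patched set $L$ satisfies $\{z\ge 0\}\subseteq L \subseteq \real^{d+1}\setminus S$. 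The crucial observation is that any jump of $\indicator_L$ across the ``seam'' $\Gamma := \bigcup_i(\partial\Box_i \cap U)$ is confined to the slab $\{-M < z < 0\}$: above $\{z=0\}$ every $L_i$ coincides with $\{z\ge 0\}$ so there is no jump, and at points with $z \le \phi(x)$ one is inside $S$ where perimeter is not counted (and $\phi \ge -M$). Decomposing the perimeter measure across the cube faces therefore gives
\[ \Per\big(L,(U\times\real)\setminus S\big) \;\le\; \sum_{i\in I}\Per\big(L_i,(\Box_i\times\real)\setminus S\big) \;+\; \mathcal{H}^d\big(\Gamma\times(-M,0)\big), \]
and since $\Gamma \subseteq \bigcup_i \partial\Box_i$ and a $d$-cube of volume $|\Box_i|$ has boundary area $2d\,|\Box_i|^{\frac{d-1}{d}}$,
\[ \mathcal{H}^d\big(\Gamma\times(-M,0)\big) = M\,\mathcal{H}^{d-1}(\Gamma) \le M\sum_{i\in I}\mathcal{H}^{d-1}(\partial\Box_i) = 2dM\sum_{i\in I}|\Box_i|^{\frac{d-1}{d}}, \]
the over-counting of faces shared by two cubes only weakening the estimate. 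Moreover $\partial S \cap (\Gamma\times\real)$ is at most $(d-1)$-dimensional hence $\mathcal{H}^d$-null, so the solid-contact term splits exactly as $\mathcal{H}^d(\partial_eL\cap\partial S\cap(U\times\real)) = \sum_i \mathcal{H}^d(\partial_eL_i\cap\partial S\cap(\Box_i\times\real))$; and because $E$ carries no energy on $\partial U\times\real$, combining everything gives $E(L,U\times\real) \le \sum_i E(L_i,\Box_i\times\real) + \sigma_{\textup{LV}}2dM\sum_i|\Box_i|^{\frac{d-1}{d}} \le \sum_i\Sigma_{\textup{SL}}(\Box_i) + \delta + \sigma_{\textup{LV}}2dM\sum_i|\Box_i|^{\frac{d-1}{d}}$. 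Using $\Sigma_{\textup{SL}}(U)\le E(L,U\times\real)$ and letting $\delta\to 0$ closes the case; the $\Sigma_{\textup{SV}}$ case is verbatim, with $\mathcal{A}_{\textup{SV}}$ in place of $\mathcal{A}_{\textup{SL}}$ (the patched set still lies in $\{z\le 0\}$).

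The main obstacle — really the only nontrivial point — is the \emph{slab confinement} of the gluing penalty: recognizing that the boundary-layer competitors all agree with the flat state $\{z\ge 0\}$ above height $0$, so that the extra perimeter created by patching is supported in a strip of height at most $M$ over the $(d-1)$-dimensional network of cube faces, whose total area is $\le 2d\sum_i|\Box_i|^{\frac{d-1}{d}}$ no matter how the cube sizes vary. A minor secondary point is verifying that the patched set is genuinely of locally finite perimeter, so that it is an admissible competitor at all — but this is delivered by exactly the same slab estimate (and by the trivial bound $\Per(L_i,\cdot)<\infty$ inside each cube).
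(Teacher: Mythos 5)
Your proposal is correct and follows essentially the same route as the paper: the lower bound via super-additivity, and the upper bound by gluing per-cube (near-)minimizers and observing that the gluing cost is confined to the lateral faces $\dside\Box_i^{-M,0}$, each contributing at most $2dM|\Box_i|^{\frac{d-1}{d}}$ (with $\mathcal{H}^d(\partial S\cap\dside\Box_i^{-M,0})=0$ handling the solid-contact term). Your use of $\delta$-approximate minimizers and explicit restriction-patching is just a slightly more careful rendering of the paper's ``take the union of the minimizers $L_i$''.
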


\begin{proof}
We only need to check the upper bound due to the previous lemma. To this end we take the minimizers $L_i$ for $\Box_i$ and take the union $L:= \cup_{i\in I} L_i$. Then we have
\[
\Sigma_{\textup{SL}}(U)\leq {{E}}(L, U\times \real) \leq \sum_{i\in I} \Sigma_{\textup{SL}}(\Box_i) + \sigma_{\textup{LV}}\sum_{i\in I}\mathcal{H}^d(\partial L  \cap \dside \Box_i^{-M,0}).
\]
Note that we are using that $S$ is the region under the graph of a smooth function so that 
\[\mathcal{H}^d(\partial S \cap \partial L  \cap \dside \Box_i^{-M,0}) \leq \mathcal{H}^d(\partial S \cap \dside \Box_i^{-M,0}) = 0.\]
  We conclude since 
\[\mathcal{H}^d(\partial L  \cap \dside \Box_i^{-M,0}) \leq \mathcal{H}^d( \dside \Box_i^{-M,0}) = 2dM|\Box_i|^{\frac{d-1}{d}}.\]
We will see this sort of estimate appear many times in the rest of the paper.
\end{proof}

\subsection{Cell problems: periodic media}

In order to justify the existence of the limits in \eqref{eqn: hom energies gen} and their independence on the set $U$ we need to put some kind of stationarity and ergodicity assumptions on the rough surface $S$.  The first such assumption that we will consider is periodicity.  We suppose that the function $\phi$, and therefore also the set $S$, is invariant under $\integer^d$-lattice translations,
\begin{equation}\label{eqn: periodicity assumption}
\phi(\cdot + k ) = \phi(\cdot) \ \hbox{ and hence } \ S + k = S \ \hbox{ for all } k \in \integer^d.
\end{equation}
In the periodic setting we can justify the existence of the limits in \eqref{eqn: hom energies gen} and prove that the homogenized contact angle is non-degenerate \eqref{eqn: hom contact angle}.  Actually we will be able to do even better and show the existence of $\integer^d$-periodic $L_{\textup{SL}}$ and $L_{\textup{SV}}$ which have energy exactly $\overline{\sigma}_{\textup{SL}}$ and $\overline{\sigma}_{\textup{SV}}$ respectively over a unit periodicity cell.

\medskip

First we will show that for periodic media the homogenized coefficients $\overline{\sigma}_{\textup{SL}}$ and $\overline{\sigma}_{\textup{SV}}$ exist in the sense of \eqref{eqn: hom energies gen} with an explicit rate of convergence.

\medskip

\begin{thm}\label{thm: cell gen periodic}
There are $\overline{\sigma}_{\textup{SL}}$ and $\overline{\sigma}_{\textup{SV}}$ so that for any open set $U \subset \real^d$ with smooth boundary, or a square $U = \Box$, there is an depending on the smoothness property of $\partial U$, invariant under translations of $U$, so that for all $r \geq r_0$,
\[ \left| \tfrac{1}{r^d| U|} \Sigma_{\textup{SL}}(r U) - \overline{\sigma}_{\textup{SL}}\right| \leq C(d)\sigma_{\textup{LV}}M\frac{|\partial U|}{|U|}\frac{\lceil\log \frac{r}{r_0}\rceil}{r} \ \hbox{ and } \ \left| \tfrac{1}{r^d| U|} \Sigma_{\textup{SV}}(r U) - \overline{\sigma}_{\textup{SV}}\right| \leq C(d)\sigma_{\textup{LV}}M\frac{|\partial U|}{|U|}\frac{\lceil\log \frac{r}{r_0}\rceil}{r}. \]
For squares $U = \Box$ the logarithmic term can be removed and $r_0(\Box) = |\Box|^{-1/d}$.
\end{thm}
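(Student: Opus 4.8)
The plan is to run a quantitative Fekete-type (subadditivity) argument based on the \emph{exact} super-additivity of $\Sigma_{\textup{SL}}$ (Lemma~\ref{lem: super-additivity}), its near-subadditivity with defect $\sigma_{\textup{LV}}2dM\sum_i|\Box_i|^{(d-1)/d}$ (Lemma~\ref{lem: almost additivity}), and $\integer^d$-periodicity \eqref{eqn: periodicity assumption}; note super-additivity also yields monotonicity $\Sigma_{\textup{SL}}(U)\le\Sigma_{\textup{SL}}(W)$ for $U\subseteq W$. I only treat $\Sigma_{\textup{SL}}$, the argument for $\Sigma_{\textup{SV}}$ being identical. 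It is convenient to use the corner-anchored cubes $C_n:=(0,n)^d$ in place of the centered ones, since for integer side $n$ they subdivide into integer translates of smaller such cubes and periodicity then identifies all the resulting $\Sigma_{\textup{SL}}$ values.

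First, the dyadic scales. Write $a_n:=\Sigma_{\textup{SL}}(C_n)$. Subdividing $C_{2n}$ into its $2^d$ integer translates of $C_n$, super-additivity gives $a_{2n}\ge 2^d a_n$, while Lemma~\ref{lem: almost additivity} (here $\sum_i|\Box_i|^{(d-1)/d}=2^d n^{d-1}$) gives $a_{2n}\le 2^d a_n+\sigma_{\textup{LV}}2dM\,2^d n^{d-1}$. Dividing by $(2n)^d$, the sequence $\beta_k:=2^{-kd}a_{2^k}$ satisfies $\beta_k\le\beta_{k+1}\le\beta_k+\sigma_{\textup{LV}}2dM\,2^{-k}$; hence it is nondecreasing and bounded, so I set $\overline{\sigma}_{\textup{SL}}:=\lim_k\beta_k$, and summing the geometric defects gives $0\le\overline{\sigma}_{\textup{SL}}-\beta_k\le\sigma_{\textup{LV}}4dM\,2^{-k}$. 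Using a crude a priori bound $a_1\le C\sigma_{\textup{LV}}M$ (from the competitor $L=\real^{d+1}\setminus S$ together with the standing regularity of $\phi$) and subdividing a general $C_n$ into its largest dyadic sub-cube plus a unit-cube frame, one promotes this to $|a_n-\overline{\sigma}_{\textup{SL}}n^d|\le C(d)\sigma_{\textup{LV}}Mn^{d-1}$ for every integer $n\ge1$. A short surgery argument — comparing the minimizer over $C_n$ with a competitor over a translated cube, which agrees with it above $\{z=0\}$ and is replaced by $\{z\ge0\}$ inside the slab $\{-M\le z\le0\}$ where the two copies of $S$ can differ — together with monotonicity then extends the estimate to an arbitrary cube $Q$ of side $\ell\ge 1$ in arbitrary position: $|\Sigma_{\textup{SL}}(Q)-\overline{\sigma}_{\textup{SL}}\ell^d|\le C(d)\sigma_{\textup{LV}}M\ell^{d-1}$. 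This settles the case $U=\Box$: take $Q=r\Box$, of side $\ell=rs$ with $s$ the side of $\Box$, so $\ell\ge1$ exactly when $r\ge r_0:=s^{-1}=|\Box|^{-1/d}$, and divide by $r^d|\Box|$; no logarithm appears.

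For a general smooth bounded $U$ I run the same estimate at many scales. Take a Whitney-type decomposition of $rU$ into disjoint cubes $\{Q_i\}$ with $\operatorname{side}(Q_i)\sim\operatorname{dist}(Q_i,\partial(rU))$, sidelengths ranging dyadically from $\sim r_0$ up to $\sim r$. Since $|\partial(rU)|=r^{d-1}|\partial U|$, there are $\sim r^{d-1}|\partial U|\,2^{-j(d-1)}$ cubes of side $2^j$, so $\sum_i|Q_i|^{(d-1)/d}\lesssim r^{d-1}|\partial U|\,\lceil\log(r/r_0)\rceil$ — one factor $r^{d-1}|\partial U|$ per scale and $\lceil\log(r/r_0)\rceil$ scales. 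Super-additivity and the cube estimate then give the lower bound $\Sigma_{\textup{SL}}(rU)\ge\sum_i\Sigma_{\textup{SL}}(Q_i)\ge\overline{\sigma}_{\textup{SL}}r^d|U|-C\sigma_{\textup{LV}}Mr^{d-1}|\partial U|\lceil\log(r/r_0)\rceil$, the cubes exhausting $rU$ up to a boundary layer of measure $\lesssim r^{d-1}|\partial U|$. For the matching upper bound: since $\partial U$ is smooth, for $r\ge r_0$ one can enclose $rU$ in a finite union of cubes $\widetilde U\supseteq rU$ of sidelengths $\ge r_0$ with $|\widetilde U\setminus rU|\lesssim r^{d-1}|\partial U|$; by monotonicity $\Sigma_{\textup{SL}}(rU)\le\Sigma_{\textup{SL}}(\widetilde U)$, and Lemma~\ref{lem: almost additivity} together with the cube estimate bound $\Sigma_{\textup{SL}}(\widetilde U)$ from above by $\overline{\sigma}_{\textup{SL}}r^d|U|+C\sigma_{\textup{LV}}Mr^{d-1}|\partial U|\lceil\log(r/r_0)\rceil$. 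Dividing by $r^d|U|$ yields the theorem.

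The main obstacle is controlling the subadditivity defect $\sigma_{\textup{LV}}2dM\sum_i|\Box_i|^{(d-1)/d}$, which measures the cost of matching cell minimizers across the vertical interfaces of height $M$ between adjacent cubes. Along a dyadic tower these defects form a convergent geometric series, giving the clean $O(1/r)$ rate for cubes; but approximating a curved boundary forces a decomposition into $\sim\lceil\log(r/r_0)\rceil$ dyadic scales, each contributing a defect of order $r^{d-1}|\partial U|$, whence the logarithmic loss — which is absent precisely when $U$ is a square and a single scale suffices. A secondary technical point, handled by the surgery and monotonicity arguments above, is reconciling the integer-anchored cubes needed so that periodicity applies under subdivision with cubes of arbitrary position and non-integer side.
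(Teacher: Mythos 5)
Your architecture is the same as the paper's: exact super-additivity for the lower bound, almost-additivity with lateral defect $\sigma_{\textup{LV}}2dM\sum_i|\Box_i|^{(d-1)/d}$ for the upper bound, a Fekete-type limit along dyadic scales to define $\overline{\sigma}_{\textup{SL}}$, and a Whitney decomposition whose $\sim\lceil\log(r/r_0)\rceil$ dyadic scales, each contributing a defect of order $r^{d-1}|\partial U|$, produce the logarithmic loss. That part is fine. However, there is a genuine gap in the step where you promote the dyadic estimate to $|a_n-\overline{\sigma}_{\textup{SL}}n^d|\le C\sigma_{\textup{LV}}Mn^{d-1}$ for \emph{all} integers $n$ by ``subdividing a general $C_n$ into its largest dyadic sub-cube plus a unit-cube frame.'' If $2^k\le n<2^{k+1}$, the region $C_n\setminus C_{2^k}$ has volume $n^d-2^{kd}$, which for $n$ near $2^{k+1}$ is a fixed fraction of $n^d$; it therefore contains $\sim n^d$ unit cubes, and the accumulated almost-additivity defect is $\sim\sigma_{\textup{LV}}Mn^d$, not $n^{d-1}$. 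As written this step yields only $|a_n-\overline{\sigma}_{\textup{SL}}n^d|\lesssim \sigma_{\textup{LV}}Mn^{d}$, which destroys the $1/r$ rate for general cubes and hence the log-free statement for $U=\Box$ (a cube $r\Box$ generically has non-dyadic side). The repair is the comparison the paper actually makes: for any integers $n,m\ge1$ tile $C_{mn}$ by $m^d$ integer translates of $C_n$ (all with equal $\Sigma_{\textup{SL}}$ by periodicity), obtaining $\bigl|\tfrac{1}{(mn)^d}a_{mn}-\tfrac{1}{n^d}a_n\bigr|\le \tfrac{2dM\sigma_{\textup{LV}}}{n}$; taking $m=2^j\to\infty$ and using the already-established dyadic limit gives the estimate for every integer $n$ in one line.

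A secondary point: the ``surgery'' you invoke to compare $\Sigma_{\textup{SL}}$ over a cube and its non-integer translate (replacing the competitor by $\{z\ge0\}$ inside the slab $\{-M\le z\le 0\}$) changes the energy by $O(\ell^d)$, since the slab contribution is of full volume order; it cannot give an $\ell^{d-1}$ comparison. It is also unnecessary: once $|a_n-\overline{\sigma}_{\textup{SL}}n^d|\le C\sigma_{\textup{LV}}Mn^{d-1}$ holds for all integers $n$, sandwiching an arbitrary cube $Q$ of side $\ell$ between integer-anchored integer cubes of sides $\lfloor\ell\rfloor-1$ and $\lceil\ell\rceil+1$ and using monotonicity (super-additivity with one empty set) already gives $|\Sigma_{\textup{SL}}(Q)-\overline{\sigma}_{\textup{SL}}\ell^d|\lesssim \sigma_{\textup{LV}}M\ell^{d-1}$, which is all you need for both the square case and the Whitney cubes.
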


The essential important property of the homogenized coefficients $\overline{\sigma}_{\textup{SL}}$ and $\overline{\sigma}_{\textup{SV}}$ is that they define a non-degenerate contact angle by \eqref{eqn: hom contact angle}. For our regularity theory we will need even better, that the same non-degeneracy property happens at finite scales in a quantifiable way.  This is the content of our next Lemma.

\begin{prop}\label{prop: contact angle quantity non-degen}
Let $U = \Box_r$ for $r\geq 1$ or $U = D_r$ for $r \geq \frac{\sqrt{d}}{2}$.  There exists $c_0$ depending on $1-|\cos\theta_Y|$ and the modulus $\omega$ from \eqref{eqn: condition at the max} so that,
\[ -1 + c_0 \leq \cos \Theta_Y(U) \leq 1 - c_0.
\]
In particular the same holds for $\bar{\theta}_Y$ due to Theorem~\ref{thm: cell gen periodic}.
 \end{prop}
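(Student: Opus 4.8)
The plan is to reduce to the hydrophobic case $\cos\theta_Y\in[0,1)$. Swapping the roles of the liquid and vapor phases interchanges the two cell problems, $\Sigma_{\textup{SL}}(U)\leftrightarrow\Sigma_{\textup{SV}}(U)$, and sends $\cos\theta_Y\mapsto-\cos\theta_Y$, hence $\cos\Theta_Y(U)\mapsto-\cos\Theta_Y(U)$; so it is enough to prove $0\le\cos\Theta_Y(U)\le 1-c_0$ when $\cos\theta_Y\in[0,1)$, both hydrophilic inequalities then following from this symmetry (and $0\ge-1+c_0$ once $c_0\le1$). The lower bound is immediate: $L=\emptyset$ is admissible for the $\Sigma_{\textup{SV}}$ problem, so $\Sigma_{\textup{SV}}(U)\le\sigma_{\textup{SV}}\mathcal{H}^d(\partial S\cap(U\times\real))$, while for any $L\in\mathcal{A}_{\textup{SL}}(U)$ one has ${{E}}(L,U\times\real)\ge\sigma_{\textup{SV}}\mathcal{H}^d(\partial S\cap(U\times\real))$ because $\sigma_{\textup{SL}},\sigma_{\textup{SV}}\ge\sigma_{\textup{SV}}$ and the perimeter term is nonnegative; hence $\Sigma_{\textup{SL}}(U)\ge\Sigma_{\textup{SV}}(U)$.

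For the upper bound I would test with a single explicit competitor. For $\delta\in(0,M]$ set $L_\delta:=(\real^{d+1}\setminus S)\cap\{z>-\delta\}$, i.e. the region above the graph of $\max(\phi,-\delta)$; it belongs to $\mathcal{A}_{\textup{SL}}(U)$. Tallying its interfaces: the only liquid--vapor interface is the flat piece $\{z=-\delta\}\cap(\{\phi<-\delta\}\times\real)$, of area $|\{\phi<-\delta\}\cap U|$; the portion of $\partial S$ at heights above $-\delta$ is in contact with $L_\delta$ and the portion below $-\delta$ is in contact with vapor. Choosing $\delta$ among the cocountably many levels with $|\{\phi=-\delta\}|=0$,
\[
{{E}}(L_\delta,U\times\real)=\sigma_{\textup{LV}}|\{\phi<-\delta\}\cap U|+\sigma_{\textup{SL}}\Per(S,U\times(-\delta,\infty))+\sigma_{\textup{SV}}\Per(S,U\times(-\infty,-\delta)).
\]
Subtracting $\Sigma_{\textup{SV}}(U)=\sigma_{\textup{SV}}\mathcal{H}^d(\partial S\cap(U\times\real))=\sigma_{\textup{SV}}\bigl[\Per(S,U\times(-\delta,\infty))+\Per(S,U\times(-\infty,-\delta))\bigr]$ and dividing by $\sigma_{\textup{LV}}|U|$ — the solid--vapor area below level $-\delta$ cancels, so deep rough features of $S$ are for free — gives
\[
\cos\Theta_Y(U)\ \le\ \frac{|\{\phi<-\delta\}\cap U|}{|U|}+\cos\theta_Y\,\frac{\Per(S,U\times(-\delta,\infty))}{|U|}.
\]

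Now I would invoke the nondegeneracy hypothesis \eqref{eqn: condition at the max}, $\Per(S,U\times(-\delta,\infty))\le(1+\omega(\delta))|\{\phi>-\delta\}\cap U|$, and write $q=q(U,\delta):=|\{\phi>-\delta\}\cap U|/|U|$. Using $|\{\phi<-\delta\}\cap U|\le|U|-|\{\phi>-\delta\}\cap U|$,
\[
\cos\Theta_Y(U)\ \le\ 1-q\bigl(1-\cos\theta_Y-\cos\theta_Y\,\omega(\delta)\bigr).
\]
Pick $\delta_0=\delta_0(\omega,\cos\theta_Y)\in(0,M]$ so small that $\cos\theta_Y\,\omega(\delta_0)\le\tfrac12(1-\cos\theta_Y)$ — possible since $\omega$ is continuous, increasing, and $\omega(0)=0$ (take $\delta_0=M$ if $\cos\theta_Y=0$) — so that $\cos\Theta_Y(U)\le1-\tfrac12\,q(U,\delta_0)(1-\cos\theta_Y)$. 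Finally one bounds $q(U,\delta_0)$ below, uniformly over $U=\Box_r$ with $r\ge1$ and $U=D_r$ with $r\ge\sqrt{d}/2$: each such $U$ contains a translate of the unit cell (for $D_{\sqrt{d}/2}$ this is the ball circumscribing a unit cube), and contains of order $r^d$ disjoint cells when $r$ is large, so by $\integer^d$--periodicity $q(U,\delta_0)\ge c(d)\,|\{\phi>-\delta_0\}\cap[0,1)^d|=:c(d)\,m_0$ with $m_0>0$, the positivity of $m_0$ being exactly where the behavior of $\phi$ near its maximum enters (under the standing regularity of $S$, $\sup\phi=0$ cannot be approached only on a null set). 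Setting $c_0:=\min\{1,\tfrac12 c(d)\,m_0(1-|\cos\theta_Y|)\}$ finishes the hydrophobic case and hence both inequalities; the bound for $\overline{\theta}_Y$ follows by letting $r\to\infty$ in $\cos\Theta_Y(\Box_r)$, which tends to $\cos\overline{\theta}_Y$ by Theorem~\ref{thm: cell gen periodic}.

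The step I expect to be most delicate is the exact interface accounting for $L_\delta$ when $\partial S$ carries vertical walls or $\phi$ has a nontrivial singular part — that is, checking that truncating $\phi$ at level $-\delta$ splits $\Per(S,U\times\real)$ with no loss between the slabs $\{z>-\delta\}$ and $\{z<-\delta\}$ — together with the elementary but assumption-dependent point that $m_0>0$. Everything else reduces to a short computation.
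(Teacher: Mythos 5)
Your proof is correct and follows essentially the same route as the paper's: the competitor $L_\delta$ obtained by truncating at level $-\delta$ is exactly the paper's Cassie--Baxter-type test set $\Lambda_\delta=\{z\ge(-\delta)\vee\phi(x)\}$, the hypothesis \eqref{eqn: condition at the max} is used in the same way to control the solid--liquid area gained above level $-\delta$, and the lower bound on $|\{\phi\ge-\delta\}\cap U|/|U|$ via periodic unit cells is the paper's Step 1. The one difference is that you compute $\Sigma_{\textup{SV}}(U)=\sigma_{\textup{SV}}\mathcal{H}^d(\partial S\cap(U\times\real))$ exactly (i.e.\ you use the triviality of the solid--vapor minimizer, Lemma~\ref{lem: trivial minimizers}) and subtract, whereas the paper glues $\Lambda_{\textup{SL}}=\Lambda_{\textup{SV}}\cup\Lambda_\delta$ onto an \emph{arbitrary} competitor $\Lambda_{\textup{SV}}\subseteq\{z<-\delta\}$ precisely to avoid that lemma so the argument extends to $x$-dependent surface energies --- this is the "slight simplification" the paper itself flags as available in the constant-coefficient case.
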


\begin{rem}\label{rem: cond at max}
Proposition~\ref{prop: contact angle quantity non-degen} and/or its proof will be used several times throughout the paper, in particular in Section~\ref{sec: reg}.  These are the only (but very important) places where the assumption~\eqref{eqn: condition at the max} comes into play.  These are also the only places where having an $x$-dependent surface energy requires a non-trivial change in proof.

\medskip

In the proof below we will make use of the more general condition \eqref{eqn: condition at the max} to demonstrate how it is used, in the rest of the paper when the result is re-proven in some form we will use the condition that $|\{\phi = 0\} \cap \Box_1| >0$, which is simpler to work with.  Furthermore we give a slightly more complicated proof, which avoids using Lemma~\ref{lem: trivial minimizers}, so that the proof will generalize easily to the case of $x$-dependent surface energies.
\end{rem}

We now proceed with the proofs.  First of Theorem~\ref{thm: cell gen periodic}. 
\begin{proof}[Proof of Theorem~\ref{thm: cell gen periodic}]
We first prove the result for $\Box_1$ and dyadic $r$.  Then we prove the result for a general $U$ by a dyadic decomposition.  We just do the argument for the solid-liquid cell problem, then the solid-vapor case follows by symmetry.

\medskip

1. By the $\integer^d$-periodicity of $S$, for any $k \in \integer^d$ and any $\Box$,
\[ \Sigma_{\textup{SL}}(\Box+k) = \Sigma_{\textup{SL}}(\Box).\]
Let $ \ell >r \geq 1$ be dyadics, we view $\ell \Box_1$ as a union of $(\ell/r)^d$ (almost) disjoint $\integer^d$-translates of $ r\overline{\Box}_1$,
\[ \ell \Box_1 = \bigcup_{|k|_\infty \leq \ell/r} r(\overline{\Box}_1 + k).\]
Then by the almost additivity of $\Sigma_{\textup{SL}}$ Lemma~\ref{lem: almost additivity},
\[ |\Sigma_{\textup{SL}}(\ell\Box_1) - (\ell/r)^d\Sigma_{\textup{SL}}(r \Box_1)| \leq \sigma_{\textup{LV}}2dM(\ell/r)^dr^{(d-1)}.\]
Then dividing through by $\ell^d$,
\begin{equation}\label{eqn: dyadic conv}
 |\tfrac{1}{\ell^d}\Sigma_{\textup{SL}}(l\Box_1) - \tfrac{1}{r^d}\Sigma_{\textup{SL}}(r\Box_1)| \leq \frac{\sigma_{\textup{LV}}2dM}{ r}.
 \end{equation}
This establishes that $\tfrac{1}{r^d}\Sigma_{\textup{SL}}(r\Box_1)$ is a Cauchy sequence over dyadic $r \geq 1$,  call $\overline{\sigma}_{\textup{SL}}$ to be the limit.  Then sending $\ell \to \infty$ along dyadics in the previous inequality yields the rate of convergence.  

\medskip

2. Now for an arbitrary bounded $U$ with smooth boundary, we do a standard Whitney type decomposition writing $U$ as a union, over some countable index set $I$, of closed dyadic squares $\overline{\Box}_i$ with disjoint interiors.  These squares satisfy that,
\begin{equation}\label{eqn: whitney prop}
 \sqrt{d} |\Box_i|^{1/d}\leq \textup{dist}(\Box_i, U^C) \leq 4\sqrt{d} |\Box_i|^{1/d}  .
 \end{equation}
For a $\delta$ dyadic with $|U|^{1/d}\geq \delta >0$ call $I_\delta$ the subset of the $\Box_i$ that have side length $|\Box_i|^{1/d} = \delta$.  Let us count how many boxes of side length $\delta$ there can be.  

\medskip

Since $\partial U$ is assumed to be smooth,
\begin{equation}\label{eqn: t0 def}
\hbox{there exists $t_0>0$ so that for every $x \in \partial U$ it holds,} \ \frac{1}{4} \leq \frac{|D_{t_0}(x) \cap U| }{|D_{t_0}|}\leq \frac{3}{4}.
 \end{equation}
For $ t >0 $ call 
\[ U_{t}:= \{ x \in \real^d: \ d(x, U^C) >t\}.\] 
Since $U$ has smooth boundary, in particular it has finite perimeter and so, using the above density estimate, we have the estimate (see \cite{Kraft2015} Theorem 4 for a proof),
\begin{equation}\label{eqn: d func est}
 |U \setminus U_t| \leq C(d) |\partial U|t \ \hbox{ for all } \ t \in (0,t_0).
 \end{equation}
For the $\Box_i$ with $i \in I_\delta$ we have,
\[ \textup{dist}(\Box_i, U^C) \leq 4\sqrt{d} \delta  \ \hbox{ and so } \ \Box_i \subseteq U \setminus U_{5\sqrt{d}\delta}.\]
Thus for every $\delta \leq \frac{1}{5\sqrt{d}}t_0$
\[ \#(I_\delta) \delta^d = \sum_{i \in I_\delta} |\Box_i| \leq |U \setminus U_{5\sqrt{d}\delta}| \leq C(d)\mathcal{H}^{d-1}(\partial U)\delta.\]
Thus we have the bounds,
\begin{equation}\label{eqn: Idelta upper bounds}
\left\{
\begin{array}{lll}
\#(I_\delta) \leq C|\partial U|\delta^{1-d} & \hbox{for}  &\delta \leq \frac{1}{5\sqrt{d}}t_0 \vspace{1.5mm}\\
\#(I_\delta) \leq \lfloor \delta^{-d}|U| \rfloor & \hbox{for}  & \delta \geq \frac{1}{5\sqrt{d}}t_0  .
\end{array}\right.
\end{equation}
\medskip

Now we define $r_0 := 5\sqrt{d}/t_0$ so that for every $r \geq r_0$,
\begin{equation*}
\begin{array}{lll}
 \left|\Sigma_{\textup{SL}}(rU) -\displaystyle\sum_{r|\Box_i|^{1/d} \geq 1} \Sigma_{\textup{SL}}(r\Box_i)\right| &\leq& \Sigma_{\textup{SL}}\left(rU \setminus \displaystyle\bigcup_{r|\Box_i|^{1/d} \geq 1} r\overline\Box_i\right)+ CM\sigma_{\textup{LV}}\displaystyle\sum_{r|\Box_i|^{1/d} \geq 1} r^{d-1}|\Box_i|^{\frac{d-1}{d}} \vspace{1.5mm}\\
 & \leq & \sigma_{\textup{LV}}\left|r\left(U \setminus \displaystyle\bigcup_{r|\Box_i|^{1/d} \geq 1} \overline\Box_i\right)\right| + CM\sigma_{\textup{LV}}\displaystyle\sum_{\substack{\delta \textup{ dyadic,}\\ \delta r \geq 1}} \sum_{i \in I_{\delta}}r^{d-1}|\Box_i|^{\frac{d-1}{d}}  \vspace{1.5mm}\\
 &\leq& C\sigma_{\textup{LV}}|\partial U| r^{d-1}+ CM\sigma_{\textup{LV}}\displaystyle\sum_{\substack{\delta \textup{ dyadic,}\\ \delta r \geq 1}}\#(I_\delta) r^{d-1}\delta^{d-1}  \vspace{1.5mm}\\
 &\leq& C\sigma_{\textup{LV}}|\partial U|(1 + M \log_2 \frac{r}{r_0} + t_0^{-1})r^{d-1}
 \end{array}
 \end{equation*}
  where we have used for the last inequality the upper bounds of \eqref{eqn: Idelta upper bounds}. Then using the convergence rate for the dyadic squares \eqref{eqn: dyadic conv},
\begin{equation*}
\begin{array}{lll}
 \left|\displaystyle\sum_{r|\Box_i|^{1/d} \geq 1} \Sigma_{\textup{SL}}(r\Box_i) - \overline{\sigma}_{\textup{SL}}|rU|\right| &\leq& C\sigma_{\textup{LV}}|\partial U| r^{d-1} + CM\sigma_{\textup{LV}}\sum_{r|\Box_i|^{1/d} \geq 1}\frac{1}{r|\Box_i|^{1/d}}r^d|\Box_i| 
 \end{array}
 \end{equation*}
 which can be estimated in exactly the same way as before.  Thus, putting the previous two estimates together, we get the desired estimate on $|\Sigma_{\textup{SL}}(rU) - \overline{\sigma}_{\textup{SL}}|rU||$.
\end{proof}

\begin{proof}[Proof of Proposition~\ref{prop: contact angle quantity non-degen}]

The following proof can be slightly simplified by using the assumption of constant coefficients via Lemma~\ref{lem: trivial minimizers}.  We avoid using Lemma~\ref{lem: trivial minimizers} so that our proof could be generalized to the case of $x$-dependent surface energies.

\medskip

By symmetry, we can just prove the upper bound. We will proceed under the assumption that $|\{ \phi = 0\} \cap \Box_1| = 0$.  The argument is even easier when $|\{ \phi = 0\} \cap \Box_1|  >0$.

\medskip

The proof is essentially to construct a Cassie-Baxter-like state for comparison.

\medskip

First we fix $0 < \delta \leq M$ so that,
\[ \cos\theta_Y(1+\omega(\delta)) = \min \left\{ \frac{1}{2}(1+\cos \theta_Y) ,\cos\theta_Y(1+\omega(M))\right\} <1,\]
where the modulus $\omega$ is from assumption~\eqref{eqn: condition at the max}.  The reason for this choice will be made clear in the proof.

\medskip

1. The requirement $r \geq 1$ for $U = \Box_r$ (or $r \geq \frac{\sqrt{d}}{2}$ for $U = D_r$) is just to guarantee that a constant fraction of the area of $U$ is taken up by $\{ \phi \geq -\delta\}$.  We first consider the case of $\Box_r$.  Let  integer $n\geq 0$ such that $2^n \leq r < 2^{n+1}$,
\[ |\Box_r \cap \{ \phi \geq -\delta\}| \geq |\Box_{2^n} \cap \{ \phi \geq -\delta\}|= 2^{nd}|\Box_1 \cap\{ \phi \geq -\delta\}|\geq 2^{-d}|\Box_1 \cap\{ \phi \geq -\delta\}||\Box_r|. \]
Note that by assumption~\eqref{eqn: S basic assumption} $|\Box_1 \cap\{ \phi \geq -\delta\}|>0$. 

\medskip

Now we do the case of $D_r$.  Let integer $n \geq 0$ such that $\frac{\sqrt{d}}{2}2^n \leq r \leq \frac{\sqrt{d}}{2}2^{n+1}$,
\[ |D_r \cap \{ \phi \geq -\delta\}| \geq |\Box_{2^n} \cap \{ \phi \geq -\delta\}|= 2^{nd}|\Box_1 \cap\{ \phi \geq -\delta\}|\geq d^{-d/2}|\Box_1 \cap\{ \phi \geq -\delta\}||D_r|. \]
This estimate will be sufficient for later.

\medskip

2. The following part is not necessary for the constant coefficient problem we are studying, because we know that $L_{\textup{SV}} = \emptyset$.  Now we to show that for $\delta$ chosen as above,
\[ \Sigma_{\textup{SV}}(U) = \inf \bigg\{ {{E}}(L,U \times \real) : L \in \mathcal{A}_{\textup{SV}}(U) \ \hbox{ and } \ L \subseteq \{ z \leq -\delta\} \bigg\}.\]
For an arbitrary $L \in \mathcal{A}_{\textup{SV}}(U)$ consider the replacement $L' = L \cap \{ z \leq - \delta\}$, the change in energy is given by,
\[  {{E}}(L',U \times \real) - {{E}}(L,U \times \real) \leq \sigma_{\textup{LV}}[1-\cos\theta_Y(1+\omega(\delta))]|U| <0. \]

\medskip

3. Let $\Lambda_{\textup{SV}} \in \mathcal{A}_{\textup{SV}}(U)$ arbitrary with $\Lambda_{\textup{SV}} \subseteq \{ z < -\delta\}$,  call $\Lambda_\delta = \{ z \geq (-\delta) \vee  \phi(x)\}$.  We will take $\Lambda_{\textup{SL}}:= \Lambda_{\textup{SV}} \cup \Lambda_\delta$ as a test set for the solid-liquid problem. Since $\Lambda_{\textup{SV}} \cap \{ z < -\delta\} = \emptyset$ and $\Lambda_\delta \cap \{ z \geq \delta\} = \emptyset$ we can compute,
\begin{equation*}
\begin{array}{lll}
{{E}}(\Lambda_{\textup{SL}},U \times \real) & \leq& {{E}}(\Lambda_{\textup{SV}},U \times (-\infty,-\delta]) + {{E}}(\Lambda_\delta,U \times [-\delta,\infty))\vspace{1.5mm}\\
&=& {{E}}(\Lambda_{\textup{SV}},U \times \real) + \sigma_{\textup{LV}}|\{\phi(x) <-\delta\} \cap U| +\int_{U} (\sigma_{\textup{SL}} - \sigma_{\textup{SV}})\indicator_{\phi \geq -\delta}  \sqrt{1+|D\phi|^2} \ dx \vspace{1.5mm}\\
&\leq& {{E}}(\Lambda_{\textup{SV}},U \times \real)+\sigma_{\textup{LV}}|\{ \phi < - \delta\} \cap U|+\sigma_{\textup{LV}}\cos\theta_Y(1+\omega(\delta))|\{\phi \geq -\delta\} \cap U|  \vspace{1.5mm}\\
& = & {{E}}(\Lambda_{\textup{SV}},U \times \real)+ \left(1-\big[1-\cos\theta_Y(1+\omega(\delta))\big]\frac{|\{\phi \geq -\delta\} \cap U|}{|U|}\right)\sigma_{\textup{LV}}|U| \vspace{1.5mm}\\
& \leq & {{E}}(\Lambda_{\textup{SV}},U \times \real)+(1-c_0)\sigma_{\textup{LV}}|U|
\end{array}
\end{equation*}
where $c_0 = \frac{1}{2d^{d/2}}(1-\cos\theta_Y)|\Box_1 \cap\{ \phi \geq -\delta\}|$.  Taking the infimum over the allowed $\Lambda_{\textup{SL}}$ and $\Lambda_{\textup{SV}}$ yields the result.

\end{proof}

{\bf $\circ$ Existence of $\integer^d$-periodic minimizers.} In order to establish the existence of $\integer^d$-periodic cell problem solutions we will define a slightly different minimization problem on an appropriate class of periodic sets.  A-priori we need to consider the possibility that the optimal configuration may depend on the length scale.  For this reason we will start by looking at the class of $r\integer^d$-periodic minimizers for integer values of $r \geq 1$. 

\medskip

For the solid-liquid cell problem we define the admissibility class of $r\integer^d$-periodic sets which correspond to a macroscopic solid-liquid interface,
\begin{equation}
\mathcal{A}_{\textup{SL}}^{r\textup{-}per} = \bigg\{ L  \in  BV_{loc}(\real^{d+1} \setminus S) :  L \hbox{ is $r\integer^d$-periodic and} \  \{ z\geq 0\} \subseteq L \bigg\}.
\end{equation}
Analogously, for the solid-vapor cell problem we define for every integer $r \geq 1$ the class of $r\integer^d$-periodic sets which correspond to a macroscopic solid-vapor interface,

\begin{equation}
\mathcal{A}_{\textup{SV}}^{r\textup{-}per} = \bigg\{ L  \in  BV_{loc}(\real^{d+1} \setminus S) :  L \hbox{ is $r\integer^d$-periodic and} \  L \subseteq \{ z< 0\} \bigg\}.
\end{equation}
We point out that when $L \in \mathcal{A}_{\textup{SL}}^{r\textup{-}per}$ is admissible for the solid-liquid cell problem the region occupied by the vapor $V(S,L) = \real^{d+1} \setminus (L \cup S)$ is an admissible region for the solid-vapor cell problem $V \in \mathcal{A}_{\textup{SV}}^{r\textup{-}per}$.

\medskip

The total energy per $r\integer^d$ cell associated with a given $r\integer^d$-periodic solid-liquid-vapor configuration is given by,
\begin{equation}
 {{E}}_{r\textup{-}per}(L) := \sigma_{\textup{LV}} \mathcal{H}^d_{r\textup{-}per}(\partial L \cap \partial V) + \sigma_{\textnormal{SL}} \mathcal{H}^d_{r\textup{-}per}( \partial L \cap \partial S) + \sigma_{\textnormal{SV}} \mathcal{H}^d_{r\textup{-}per}( \partial V \cap \partial S).
\end{equation}

\begin{prop}\label{maximal}
There exist unique $1$-periodic sets $L_{\textnormal{SL}} \in \mathcal{A}_{\textup{SL}}^{1\textup{-}per}$ and $L_{\textnormal{SV}} \in \mathcal{A}_{\textup{SL}}^{1\textup{-}per}$ so that for every integer $r \geq 1$,
\[ {{E}}_{r\textup{-}per}(L_{\textnormal{SL}}) = \inf_{ L \in  \mathcal{A}_{\textup{SL}}^{r\textup{-}per}}{{E}}_{r\textup{-}per}(L) \ \hbox{ and } \ {{E}}_{r\textup{-}per}(L_{\textnormal{SV}}) = \inf_{ L \in  \mathcal{A}_{\textup{SV}}^{r\textup{-}per}}{{E}}_{r\textup{-}per}(L).\]
Furthermore,
\[ {{E}}_{1\textup{-}per}(L_{\textnormal{SL}}) = \overline{\sigma}_{\textup{SL}} \ \hbox{ and } \ {{E}}_{1\textup{-}per}(L_{\textnormal{SV}}) = \overline{\sigma}_{\textup{SV}}.\]
\end{prop}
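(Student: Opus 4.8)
The plan is to obtain $L_{\textnormal{SL}}$ (and, symmetrically, $L_{\textnormal{SV}}$) as the \emph{maximal} minimizer among $r\integer^d$-periodic admissible configurations, to show that this maximal minimizer is automatically $\integer^d$-periodic and independent of the scale $r$, and finally to identify its per-cell energy with $\overline{\sigma}_{\textup{SL}}$ by comparison with the cube cell problem $\Sigma_{\textup{SL}}(\Box_R)$ and Theorem~\ref{thm: cell gen periodic}. I work under assumption~\eqref{eqn: smooth assumption}, which is what gives lower semicontinuity of the energy (cf.\ Lemma~\ref{lem: maximal minimal minimizers}); only the solid--liquid case is described, the solid--vapor case being identical after switching the roles of $L$ and $V$. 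First, for each integer $r\ge 1$ the class $\mathcal{A}_{\textup{SL}}^{r\textup{-}per}$ is nonempty (it contains $\real^{d+1}\setminus S$) and any minimizing sequence has uniformly bounded perimeter in $\Box_r\times(-M,0)$, so the direct method produces an $r$-periodic minimizer. Lemma~\ref{lem: union intersection}, applied on a bounded-in-$x$ fundamental domain, shows the set of $r$-periodic minimizers is closed under finite unions and intersections; taking an increasing sequence of minimizers whose per-cell volumes tend to the supremal value and passing to the limit (lower semicontinuity) produces a maximal $r$-periodic minimizer $L^{(r)}$, unique modulo null sets, since any other minimizer $L'$ gives $L'\subseteq L'\cup L^{(r)}$, again a minimizer of maximal volume, hence $L'\subseteq L^{(r)}$.

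Next I show $L^{(r)}$ is $\integer^d$-periodic and equals $L^{(1)}$. For $k\in\integer^d$ the translate $L^{(r)}+k$ is admissible and, by $\integer^d$-periodicity of $S$ and translation invariance of the energy, again an $r$-periodic minimizer; hence so is $L^{(r)}\cup(L^{(r)}+k)$, and maximality forces $L^{(r)}+k\subseteq L^{(r)}$ for every $k\in\integer^d$, whence $L^{(r)}+k=L^{(r)}$. Since an $\integer^d$-periodic set is also $r\integer^d$-periodic, $E_{r\textup{-}per}(L)=r^d E_{1\textup{-}per}(L)$ for $1$-periodic $L$, and $\mathcal{A}_{\textup{SL}}^{1\textup{-}per}\subseteq\mathcal{A}_{\textup{SL}}^{r\textup{-}per}$, this $1$-periodicity of $L^{(r)}$ yields $\inf_{\mathcal{A}_{\textup{SL}}^{r\textup{-}per}}E_{r\textup{-}per}=r^d\inf_{\mathcal{A}_{\textup{SL}}^{1\textup{-}per}}E_{1\textup{-}per}$, and in turn that $L^{(r)}$ realizes the $1$-periodic infimum. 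Comparing maximalities at scales $1$ and $r$ — each of $L^{(1)},L^{(r)}$ is a periodic minimizer at both scales, hence contained in the other — gives $L^{(r)}=L^{(1)}=:L_{\textnormal{SL}}$, which therefore minimizes $E_{r\textup{-}per}$ over $\mathcal{A}_{\textup{SL}}^{r\textup{-}per}$ for every integer $r\ge 1$. The uniqueness in the Proposition is the uniqueness of this maximal minimizer (note that, for a $1$-periodic set, minimizing $E_{r\textup{-}per}$ for all $r$ is the same as minimizing $E_{1\textup{-}per}$).

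It remains to show $E_{1\textup{-}per}(L_{\textnormal{SL}})=\overline{\sigma}_{\textup{SL}}$. For the inequality ``$\ge$'': $L_{\textnormal{SL}}$ is admissible for $\Sigma_{\textup{SL}}(\Box_R)$, and for integer $R$ and a generic lattice translate (so that no interface of $L_{\textnormal{SL}}$ sits on a cell face) one has $E(L_{\textnormal{SL}},\Box_R\times\real)=R^d E_{1\textup{-}per}(L_{\textnormal{SL}})$, since the cell problems carry no lateral-boundary energy (see \eqref{eqn: SL quantity}); hence $\Sigma_{\textup{SL}}(\Box_R)\le R^d E_{1\textup{-}per}(L_{\textnormal{SL}})$, and dividing by $R^d$, letting $R\to\infty$, and using Theorem~\ref{thm: cell gen periodic} gives $\overline{\sigma}_{\textup{SL}}\le E_{1\textup{-}per}(L_{\textnormal{SL}})$. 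For ``$\le$'': fix an even integer $R$, let $M$ minimize $\Sigma_{\textup{SL}}(\Box_{R+2})$, and periodize by setting
\[\widetilde M:=\big(M\cap(\Box_R\times\real)\big)\ \cup\ \big(\{z\ge0\}\cap((\Box_{R+2}\setminus\Box_R)\times\real)\big),\]
extended $(R+2)\integer^d$-periodically; this is an admissible set. Gluing $M$ to the flat reference state $\{z\ge0\}$ across $\partial\Box_R\times[-M,0]$ and filling the collar with the reference configuration costs at most $C(R+2)^{d-1}$ of energy, controlled by the area of the lateral faces of $\Box_R$ and of $\partial S$ inside the unit-thickness collar (using $\phi\in BV$). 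Hence $(R+2)^d E_{1\textup{-}per}(L_{\textnormal{SL}})=\inf_{\mathcal{A}_{\textup{SL}}^{(R+2)\textup{-}per}}E_{(R+2)\textup{-}per}\le E_{(R+2)\textup{-}per}(\widetilde M)\le \Sigma_{\textup{SL}}(\Box_{R+2})+C(R+2)^{d-1}$; dividing by $(R+2)^d$, letting $R\to\infty$, and invoking Theorem~\ref{thm: cell gen periodic} once more yields $E_{1\textup{-}per}(L_{\textnormal{SL}})\le\overline{\sigma}_{\textup{SL}}$. (For $L_{\textnormal{SV}}$ the same argument applies with the reference state $\emptyset$ in place of $\{z\ge0\}$.)

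The maximal-minimizer construction and the translation/maximality step are routine; the delicate part is the periodization in the last step, i.e.\ extracting from a cube-cell minimizer an admissible $(R+2)\integer^d$-periodic competitor whose energy exceeds it by only $O(R^{d-1})$. The transition-layer construction must be arranged so that (a) the modified set is genuinely periodic after extension, (b) it still contains $\{z\ge 0\}$, and — the real point — (c) the interface created by joining two a priori unrelated configurations along the cut $\partial\Box_R\times\real$ is bounded purely in terms of $M$ and the per-cell perimeter of $S$. That such a bound is available is precisely where the $BV$ bound on $\phi$ and the absence of any lateral-boundary energy in the definition of the cell problems are used.
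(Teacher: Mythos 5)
Your proof is correct and follows essentially the same route as the paper: existence by the direct method, the union--intersection inequality to build the unique maximal minimizer, $\integer^d$-periodicity and $r$-independence from maximality, and the two-sided comparison with $\Sigma_{\textup{SL}}(\Box_r)$ via Theorem~\ref{thm: cell gen periodic}. The only cosmetic difference is in the final periodization step, where you glue a flat collar onto the cube minimizer while the paper simply extends a competitor $r\integer^d$-periodically and pays the lateral-face cost $\sigma_{\textup{LV}}2dMr^{d-1}$ directly; both yield the same $O(r^{d-1})$ error.
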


The main tool in the proof is the union-intersection inequality of Lemma~\ref{lem: union intersection} for the periodic energies:
\begin{lem}\label{lem: S union ineq}
For every $L,L' \in \mathcal{A}_{\textup{SL}}^{r\textup{-}per}$ (or alternatively in $\mathcal{A}_{\textup{SV}}^{r\textup{-}per}$) the inequality holds,
\[ {{E}}_{r\textup{-}per}(L \cup L') + {{E}}_{r\textup{-}per}(L \cap L') \leq {{E}}_{r\textup{-}per}(L) + {{E}}_{r\textup{-}per}(L').\]
\end{lem}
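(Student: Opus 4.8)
The plan is to mimic the proof of Lemma~\ref{lem: union intersection} in the periodic setting; the only genuinely new ingredient is the union-intersection inequality for the \emph{periodic} perimeter, which I would obtain by descending to a compact quotient. Concretely: every $L\in\mathcal{A}_{\textup{SL}}^{r\textup{-}per}$ contains $\{z\geq 0\}$, is disjoint from $S$, and hence (since $\real^d\times(-\infty,-M)\subseteq S$) lies in $\real^d\times[-M,\infty)$; the same is true of $L'$, $L\cup L'$ and $L\cap L'$. Therefore every interface contributing to $E_{r\textup{-}per}$ of any of these four sets lies in the slab $\real^d\times[-M,0]$, so $E_{r\textup{-}per}$ is unchanged if we restrict attention to $\real^d\times[a,b]$ with $a<-M<0<b$ fixed. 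Quotienting by the $r\integer^d$-action in the $x$ variable, we may then work on the compact manifold-with-boundary $N_r:=(\real^d/r\integer^d)\times[a,b]$, on which $L,L',\dots$ descend to sets of finite perimeter, $\partial S$ descends to a compact hypersurface, and $\mathcal{H}^d_{r\textup{-}per}$ restricted to the relevant interfaces coincides with $\mathcal{H}^d$ on $N_r$. (Equivalently one can stay in $\real^{d+1}$ and run the estimates on boxes $\Box_{Nr}\times[a,b]$, which consist of $N^d$ periodicity cells, using that the liquid-vapor perimeter on the lateral boundary $\dside\Box_{Nr}^{a,b}$ is $O(N^{d-1})$ and so negligible after dividing by $N^d$ and letting $N\to\infty$ --- the same bookkeeping as in Lemma~\ref{lem: almost additivity}.)

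On $N_r$ I would invoke \cite[Lemma 12.22]{MaggiBV} for the sets with the solid region removed, obtaining
\[ \Per_{r\textup{-}per}(L\cup L')+\Per_{r\textup{-}per}(L\cap L')\leq \Per_{r\textup{-}per}(L)+\Per_{r\textup{-}per}(L'),\]
where $\Per_{r\textup{-}per}(\cdot):=\mathcal{H}^d_{r\textup{-}per}(\partial(\cdot)\cap(\real^{d+1}\setminus S))$ is the periodic liquid-vapor interfacial area. Next, exactly as in \eqref{eqn: trace union L eq}--\eqref{eqn: trace union V eq}, one uses the set identities $V(L\cap L')=V\cup V'$ and $V(L\cup L')=V\cap V'$ (with $V=\real^{d+1}\setminus(L\cup S)$, $V'=\real^{d+1}\setminus(L'\cup S)$) together with the pointwise relation $\alpha\vee\beta+\alpha\wedge\beta=\alpha+\beta$ for the $\{0,1\}$-valued periodic traces of $\indicator_L,\indicator_{L'}$ on $\partial S$, and likewise for $\indicator_V,\indicator_{V'}$, to obtain \emph{equalities} for the solid-liquid and solid-vapor interface terms. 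Multiplying the $L$-trace identity by $\sigma_{\textup{SL}}$, the $V$-trace identity by $\sigma_{\textup{SV}}$, and adding both to the perimeter inequality above yields the asserted inequality for $E_{r\textup{-}per}$. The case of $\mathcal{A}_{\textup{SV}}^{r\textup{-}per}$ is identical, since $L\cup L',L\cap L'\subseteq\{z<0\}$ whenever $L,L'$ are.

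The only step that is not a line-by-line repetition of the proof of Lemma~\ref{lem: union intersection} is the transfer of \cite[Lemma 12.22]{MaggiBV} and of the trace identities to the periodic (quotient) setting; since periodic sets of finite periodic perimeter descend to genuine finite-perimeter sets on the compact $N_r$, this is routine once the reduction above is in place, and I expect it to be the only point requiring any care.
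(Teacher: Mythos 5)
Your proposal is correct and follows exactly the route the paper intends: the authors omit the proof with the remark that it is ``almost identical to that of Lemma~\ref{lem: union intersection},'' and your argument is precisely that proof (submodularity of the perimeter plus the additive trace identities for $\indicator_L,\indicator_{L'}$ and $\indicator_V,\indicator_{V'}$), with the only new point being the routine transfer to the periodic setting via the quotient or the large-box bookkeeping of Lemma~\ref{lem: almost additivity}. No gaps.
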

The proof is almost identical to that of Lemma~\ref{lem: union intersection} and is omitted.  We will also need the following result about the lower semi-continuity of ${{E}}$ with respect to $L^1$ convergence of indicator functions.
\begin{lem}\label{lem: lsc}
Let $r\geq 1$ an integer and assume that $\phi$ is smooth.  Suppose that $L_n$ is a sequence of sets of $\mathcal{A}_{\textup{SV}}^{r\textup{-}per}$ so that the indicator functions converge in $L^1$ to the indicator function of a set $L$, then it holds,
\[{{E}}_{r\textup{-}per}(L) \leq \liminf_{n \to \infty} {{E}}_{r\textup{-}per}(L_n).\]
\end{lem}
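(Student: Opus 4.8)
The plan is to rewrite $E_{r\textup{-}per}$ in its normalized (single–parameter) form, in which it becomes a capillary energy, and then to conclude via the lower semicontinuity theory for such functionals, which is exactly \cite[Propositions 19.1 and 19.3]{MaggiBV}; assumption~\eqref{eqn: smooth assumption} will enter the argument at precisely one point, in the construction of an auxiliary vector field.

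First I would do the reduction. Since the traces of $\indicator_L$ and $\indicator_V$ on $\partial S$ from the side of $\real^{d+1}\setminus S$ are indicator functions summing to $1$, one has $\mathcal{H}^d_{r\textup{-}per}(\partial L\cap\partial S)+\mathcal{H}^d_{r\textup{-}per}(\partial V\cap\partial S)=\mathcal{H}^d_{r\textup{-}per}(\partial S)=:\kappa$, a finite constant (finite because $\phi$ is Lipschitz) independent of $L$. Hence
\[ E_{r\textup{-}per}(L)=\sigma_{\textup{LV}}\,\mathcal{H}^d_{r\textup{-}per}(\partial L\cap\partial V)+(\sigma_{\textup{SL}}-\sigma_{\textup{SV}})\,\mathcal{H}^d_{r\textup{-}per}(\partial L\cap\partial S)+\sigma_{\textup{SV}}\kappa, \]
and by \eqref{young}, after dividing by $\sigma_{\textup{LV}}$, the claim is equivalent to the lower semicontinuity of
\[ \mathcal{E}(L):=\mathcal{H}^d_{r\textup{-}per}(\partial L\cap\partial V)+\cos\theta_Y\,\mathcal{H}^d_{r\textup{-}per}(\partial L\cap\partial S) \]
along $\indicator_{L_n}\to\indicator_L$. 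If $\liminf_n\mathcal{E}(L_n)=\infty$ there is nothing to prove, so I would pass to a subsequence with $\sup_n\mathcal{E}(L_n)<\infty$; since $|\cos\theta_Y|\le1$ and the trace term is bounded by $\kappa$, this gives a uniform bound on $\mathcal{H}^d_{r\textup{-}per}(\partial L_n)$, so $L$ is $r\integer^d$-periodic with finite perimeter per cell, $\mathcal{E}(L)<\infty$, and $D\indicator_{L_n}\rightharpoonup D\indicator_L$ weakly-$*$ on bounded sets. Moreover the hypothesis places $L_n\subseteq\{z<0\}$, hence $L_n\subseteq\{-M<z<0\}$ by \eqref{eqn: S basic assumption}, so each $L_n$, and $L$, has finite measure per cell.

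Next I would use \eqref{eqn: smooth assumption} to build a vector field carrying the contact-angle information. As $\partial S$ is $C^{1,1}$, the signed distance $d_S$ to $\partial S$ (positive on $\real^{d+1}\setminus S$) is $C^{1,1}$ on a tubular neighborhood $\mathcal{N}$ of $\partial S$, with $|\nabla d_S|=1$ there and $\nabla d_S$ equal to the outer unit normal $\nu_S$ of $S$ on $\partial S$. Fixing a cutoff $\eta=\eta(d_S)\in C^\infty_c(\mathcal{N})$ with $0\le\eta\le1$ and $\eta\equiv1$ near $\partial S$ (automatically $r\integer^d$-periodic and supported in a slab $\{|z|\le M+1\}$), I would set $X:=-\cos\theta_Y\,\eta\,\nabla d_S$. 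Then $X$ is $r\integer^d$-periodic, $X\in W^{1,\infty}(\real^{d+1};\real^{d+1})$, $|X|\le|\cos\theta_Y|\le1$, and $\div X\in L^\infty$ — this last property is precisely where $C^{1,1}$ is used — while $X\cdot\nu_L=\cos\theta_Y$ at $\mathcal{H}^d$-a.e.\ point of $\partial L\cap\partial S$, where $\nu_L=-\nu_S$. Applying the Gauss--Green theorem for sets of finite perimeter to $L\cap C$ with $C:=[0,r)^d\times\real$ a period cell — the lateral fluxes across $\partial([0,r)^d)\times\real$ cancel by periodicity, and there is no contribution from $|z|\to\infty$ since $X$ has bounded support in $z$ and $L$ has finite measure per cell — and splitting $\partial L\cap C$ into the part lying in $\real^{d+1}\setminus S$ and the part lying on $\partial S$, I would arrive at
\[ \mathcal{E}(L)=\int_{(\partial L\setminus S)\cap C}\big(1-X\cdot\nu_L\big)\,d\mathcal{H}^d+\int_{L\cap C}\div X\,dx\,dz, \]
and the same identity for each $L_n$.

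Finally I would pass to the limit. The second term is continuous along $\indicator_{L_n}\to\indicator_L$ in $L^1$ because $\div X\in L^1$ and $|L_n\,\Delta\,L|\to0$; the first term is lower semicontinuous because its integrand $f(y,p):=|p|-X(y)\cdot p$ is continuous, convex and positively $1$-homogeneous in $p$ and, thanks to $|X|\le1$, nonnegative, so $E\mapsto\int_{(\partial E\setminus S)\cap C}f(y,\nu_E)\,d\mathcal{H}^d$ is weakly-$*$ lower semicontinuous (lower semicontinuity of convex $1$-homogeneous integrands, applied to the measures $D\indicator_{E}$; see \cite{MaggiBV}), and the relevant weak-$*$ convergence was secured above. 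Combining the two pieces gives $\mathcal{E}(L)\le\liminf_n\mathcal{E}(L_n)$, which by the reduction is the assertion. The only genuinely delicate point — and the only place the regularity of $\phi$ is used — is the boundary trace term $\mathcal{H}^d_{r\textup{-}per}(\partial L\cap\partial S)$, which is not lower semicontinuous by itself (a vanishingly thin liquid film on $\partial S$ can be lost in the limit with no decrease of the trace); the vector-field identity above is exactly the device, available because $|\cos\theta_Y|\le1$ and $\partial S\in C^{1,1}$, that turns $\mathcal{E}$ into a manifestly lower-semicontinuous functional. One could equally well simply invoke \cite[Propositions 19.1 and 19.3]{MaggiBV}.
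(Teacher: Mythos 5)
Your argument is correct and is essentially the same one the paper relies on: the paper simply cites \cite[Propositions 19.1, 19.3]{MaggiBV}, and what you have written out (normalize the energy, use the $C^{1,1}$ regularity of $\partial S$ to build a bounded vector field $X$ with $X\cdot\nu_L=\cos\theta_Y$ on $\partial S$ and $\operatorname{div}X\in L^\infty$, rewrite the energy via Gauss--Green as a nonnegative convex $1$-homogeneous boundary integrand plus a volume term, and apply Reshetnyak-type lower semicontinuity) is precisely the content of those propositions. The one point worth being explicit about is that, for $r\integer^d$-periodic sets, ``$L^1$ convergence'' must be read per period cell, which is how you have in fact used it.
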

The proof of this Lemma can be found in the book \cite{MaggiBV} Propositions 19.1 and 19.3.

\begin{proof}[Proof of Proposition~\ref{maximal}]
The existence of a minimizer over the class $\mathcal{A}_{\textup{SV}}^{r\textup{-}per}$ is by the standard direct method of calculus of variations using the lower semi-continuity of the energy functional from Lemma~\ref{lem: lsc}, see for example \cite{CM} or \cite{MaggiBV} chapter 19.

\medskip

Let us show that if $L$ and $K$ are both minimizers of ${{E}}_{r\textup{-}per}$ over $\mathcal{A}_{\textup{SV}}^{r\textup{-}per}$ then $L \cup K$ is a minimizer as well.  This follows in a straightforward way from Lemma~\ref{lem: S union ineq},
$${{E}}_{r\textup{-}per}(L\cup K) - {{E}}_{r\textup{-}per}(L)  \leq  {{E}}_{r\textup{-}per}(K) - {{E}}_{r\textup{-}per}(L \cap K)\leq 0.$$
Similarly one could show that $L \cap K$ is a minimizer.    

\medskip

Call $\mathcal{M}$ to be the set of $L \in \mathcal{A}_{\textup{SV}}^{r\textup{-}per}$ which minimize ${{E}}_{r\textup{-}per}$.  We would like to define $L_{\textnormal{SV}}$ to be the union of all the $F \in \mathcal{M}$, but this union may very well be uncountable.  Let $L_n \in \mathcal{M}$ be a sequence such that,
$$|L_n| \nearrow \sup_{ L \in \mathcal{M}} | L| \ \hbox{ as } \ n \to \infty.$$
  Without loss we can assume that $L_n$ are increasing since we can always replace by $\cup_{1}^n L_j$ which are in $\mathcal{M}$ by the first part of this proof.  As an increasing sequence the indicator functions of $L_n$ converge in $L^1(\real^{d+1})$ to another indicator function of a set $L_{\textnormal{SV}}$. Since the $BV$ semi-norm is lower semicontinuous with respect to $L^1$ convergence of indicators of finite perimeter sets, we know that $L_{\textnormal{SV}} \in \mathcal{A}_{\textup{SV}}^{r\textup{-}per}$.   By Lemma~\ref{lem: lsc}, ${{E}}_{r\textup{-}per}$ is lower semi-continuous with respect to $L^1$ convergence so we obtain,
\[ {{E}}_{r\textup{-}per}(L_{\textnormal{SV}}) \leq \liminf_{n \to \infty} {{E}}_{r\textup{-}per}(L_n) = \inf_{L \in \mathcal{A}_{\textup{SV}}^{r\textup{-}per}} {{E}}_{r\textup{-}per}(L) \leq {{E}}_{r\textup{-}per}(L_{\textnormal{SV}}).\]
Thus $L_{\textnormal{SV}}$ is a minimizer of ${{E}}_{r\textup{-}per}$ over $\mathcal{A}_{\textup{SV}}^{r\textup{-}per}$.  

\medskip

Now we show that $L_{\textnormal{SV}}$ is maximal.  Suppose that $K \in \mathcal{M}$ is another minimizer, then $K \cup L_{\textnormal{SV}}$ is a minimizer as well.  If $| K\cup L_{\textnormal{SV}}| > | L_{\textnormal{SV}}| = \sup_{ L \in \mathcal{M}} |L|$ we obtain a contradiction so it must hold that $|K\setminus L_{\textnormal{SV}}| = 0$.  This is the desired result.

\medskip

Now since the maximal $r\integer^d$-periodic minimizer $L_{\textnormal{SV}}$ is unique, it must share the periodicity lattice of $S$ and be $\integer^d$-periodic.  Thus we can define a single $L_{\textnormal{SV}}$, independent on $r$, which is the $\integer^d$ periodic minimizer of ${{E}}_{r\textup{-}per}$ over $\mathcal{A}_{\textup{SV}}^{r\textup{-}per})$ for every integer $r \geq 1$.

\medskip

Lastly we explain why $E_{1\textup{-}per}(L_{\textup{SL}}) = \overline{\sigma}_{\textup{SL}}$ where $\overline{\sigma}_{\textup{SL}}$ was defined in Theorem~\ref{thm: cell gen periodic}.  From Theorem~\ref{thm: cell gen periodic},
\[ \overline{\sigma}_{\textup{SL}} = \lim_{r \to \infty} \frac{1}{r^d} \Sigma_{\textup{SL}}(\Box_r).\]
Let $r \in \mathbb{N}$, then $L_{\textup{SL}}$ is admissible for the minimization problem for $\Sigma_{\textup{SL}}(\Box_r)$ and so,
\[ \Sigma_{\textup{SL}}(\Box_r) \leq E(L_{\textup{SL}},\Box_r) \leq r^dE_{1\textup{-}per}(L_{\textup{SL}}).\]
sending $r \to \infty$ yields $E_{1\textup{-}per}(L_{\textup{SV}}) \geq \overline{\sigma}_{\textup{SL}}$.  For the other direction, take any $L\in \mathcal{A}_{\textup{SL}}(\Box_r)$, it can be extended in the natural way to an $r\integer^d$-periodic set locally finite perimeter set on $\real^{d+1}$ which is in $\mathcal{A}^{r\textup{-}per}_{\textup{SL}}$ and,
\[ r^dE_{1\textup{-}per}(L_{\textup{SV}}) \leq E_{r\textup{-}per}(L) \leq E(L,\Box_r) + \sigma_{\textup{LV}}2dMr^{d-1}.\]
Taking the infimum over $L \in \mathcal{A}_{\textup{SL}}(\Box_r)$ admissible and sending $r \to \infty$ yields the other inequality $E_{1\textup{-}per}(L_{\textup{SV}}) \leq \overline{\sigma}_{\textup{SL}}$.

\end{proof}

%%%%%%%%%%%%%%%%%%%%%%%%%%%%%%%%%%%%%%%%

%SECTION: The Values of the Homogenized Surface Energies

%%%%%%%%%%%%%%%%%%%%%%%%%%%%%%%%%%%%%%%%

\section{The Values of the Homogenized Surface Energies}\label{sec: values}

In this section we will consider solid surfaces of a special form, with $\phi$ given by 
\[ \phi(x) := M(\indicator_P(x)-1) \ \hbox{ with } \ \{\phi = 0 \} = P.\]
 Here $P\subseteq \real^d$ is a set of locally finite perimeter, which is invariant under $\integer^d$ translations.

\medskip

Geometrically $S$ is made up of periodically repeating flat-topped pillars with shape $P$ and height $M>0$.  We call
\[ f := |P  \cap \Box_1| \ \hbox{ which is the area fraction of $P$ in a unit periodicity cell.}\]
Two classical models have been used by physicists and engineers to describe the macroscopic contact angle of droplets sitting on a rough surface $S_\e = \e S$ of the form described here.  The first is the Wenzel law \cite{Wenzel1936}
\[ \cos \theta_W = \cos\theta_Y\int_{\mathbb{T}^d} \sqrt{1+|D\phi|^2} \ dx, \]
and the second is the Cassie-Baxter law \cite{cassie1944}
\[\cos \theta_{CB} = \cos\theta_Y f + (1-f).\]
As described in \cite{AlbertiDeSimone}, in the context of the homogenization theory, these models can be understood simply as the energy per unit area of possible test minimizers for the solid-vapor and solid-liquid cell problems.  The {\it Wenzel Model} corresponds, for hydrophobic solids, to testing 
\begin{equation}\label{eqn: W state}
\Lambda_{\textup{SV}} = \emptyset\quad \hbox{ and } \ \Lambda_{\textup{SL}} = \R^{n+1}\setminus S =: L_{\textup{W}}.
\end{equation}

  The {\it Cassie-Baxter Model} corresponds, for hydrophobic solids, to testing 
  \begin{equation}\label{eqn: CB state}
  \Lambda_{\textup{SV}} =\emptyset\quad \hbox{ and } \ \Lambda_{\textup{SL}} =\{x_{d+1}\geq 0\} =: L_{\textup{CB}}.\end{equation}
    In this way we see that, always, the Wenzel and Cassie-Baxter Models serve as upper bounds for the true macroscopic contact angle of the global minimizer, 
\[ |\cos \overline{\theta}_Y| \leq \min\{|\cos \theta_{CB}|, |\cos \theta_W|\}  . \]

\medskip

\begin{figure}[t]
 \centering
 \def\svgwidth{3in}
  \import{figures/}{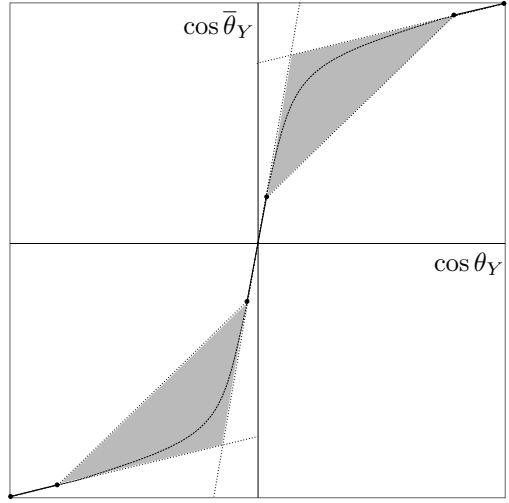}
  \caption{An artistic rendering of the graph of $\cos\overline{\theta}_Y$ as a function of $\cos\theta_Y$, it is symmetric with respect to the $\cos \theta_Y = 0$ axis and is a concave function in $0 \leq \cos\theta_Y \leq 1$ which must lie inside the shaded region. }
  \label{fig: cos theta graph}
\end{figure}

Let us consider the graph of $\cos\overline{\theta}_Y$ as a function of $\cos\theta_Y$, see Figure~\ref{fig: cos theta graph}.  Of course it is symmetric with respect to sending $\cos \theta_Y \mapsto -\cos\theta_Y$ by the symmetry discussed previously so we can restrict to discussing the hydrophobic setting.  Recall from \eqref{eqn: normalized energy cell},
\[ 
\begin{array}{lll}
\cos \overline{\theta}_Y &=& \lim_{|\Box| \to \infty}\inf_{L \in \mathcal{A}_{\textup{SL}}(\Box)} \frac{1}{|\Box|}{{E}}'(L,\Box) \vspace{1.5mm}\\
&=&  \lim_{|\Box| \to \infty}\inf_{L \in \mathcal{A}_{\textup{SL}}(\Box)} \frac{1}{|\Box|}\left[ \Per(L,\Box \times \real \setminus S) + \cos\theta_Y \mathcal{H}^d(\partial L \cap \partial S  \cap (\Box \times \real))\right] 
\end{array}
\]
we see that $\cos\overline{\theta}_Y$ is a concave function of $\cos\theta_Y$.  The infimum of linear functions is concave, and the pointwise limit of concave functions is concave.  From this we can guarantee that the graph of $\cos\overline{\theta}_Y$ lies above its secant lines, this is reason for the lower boundary of the shaded region in Figure~\ref{fig: cos theta graph}.

\medskip

Let us call $s(\cos\theta_Y)$ to be the slope of this graph, then we claim,
\begin{equation}\label{eqn: slope bounds}
   f \leq s(\cos\theta_Y) \leq \int_{\mathbb{T}^d} \sqrt{1+|D\phi|^2} \ dx,
   \end{equation}
i.e. the maximal slope is the slope of the Wenzel state, and the minimal slope is that of the Cassie-Baxter state.  If the graph is not smooth, which is quite possible, then consider \eqref{eqn: slope bounds} to be a claim about the slopes of supporting lines at $\cos\theta_Y$.  The inequality \eqref{eqn: slope bounds} follows from the concavity of the graph $\cos \theta_Y \mapsto \cos\overline{\theta}_Y$ and the fact that the Wenzel state $\cos\theta_Y \mapsto \cos\theta_Y\int_{\mathbb{T}^d} \sqrt{1+|D\phi|^2} \ dx$ is a supporting line to the graph at $\cos\theta_Y = 0$ and the Cassie-Baxter state is a supporting line to graph at $\cos\theta_Y = 1$.

\medskip

It was also proven in \cite{AlbertiDeSimone} that when $\phi$ is Lipschitz, which is not the case for our flat topped pillars, and $|\cos\theta_Y|$ is sufficiently small depending on $\|D\phi\|_\infty$, then actually $\theta_Y=\theta_W$.  In this section we make a slight extension of that argument to obtain the similar result about the exactness of the Wenzel for the important case of flat-topped pillars.  We also prove a new result, which is that when $d+1 = 3$ for $|\cos\theta_Y|$ sufficiently close to $1$, the Cassie-Baxter model is exact.

\subsection{Exact solutions in one dimension}  When $d=1$ the problem can, in many situations, be solved explicitly.  This is due to the fact that the liquid-vapor boundary of the cell problem solution will just be a union of line segments.  Such situations are considered in \cite{AlbertiDeSimone}, wherein they point out simple situations for which the cell problem solution is neither Cassie-Baxter nor Wenzel.  On the other hand, one can also compute that for $|\cos\theta_Y|$ small enough (depending on the shape of the surface) the Wenzel state solves the cell problem, while for $1-|\cos\theta_Y|$ small the Cassie-Baxter state solves the cell problem. One can also check, in certain special situations, that Wenzel and Cassie-Baxter are the only possible situations.

\subsection{The Wenzel model is exact for $|\cos\theta_Y|$ close to $0$}  

 We will need to make some assumptions about the regularity of the pillar shapes in order to show that the cell problem minimizer is the Wenzel state that fills in all the holes.  Such assumptions are necessary as for an arbitrary bounded variation set $P$ there may be arbitrarily thin holes which would not be filled in by the droplet even for very small $\cos \theta_Y>0$.  

\medskip

We suppose that one can cover $\partial S$ in the unit periodicity cell by $N$ cylinders with different axes and, in addition, in each cylinder $\Gamma_i$, $S$ is a Lipschitz graph with Lipschitz constant $K_i$ with respect to the axis of the cylinder.  We denote $e_{\Gamma_i}$ to be the axis of the $i$th cylinder and $\Pi_{i}$ to be the projection onto the orthogonal complement of $e_{\Gamma_i}$. The following Lemma says that Wenzel state is achieved if $\cos \theta_Y$ is sufficiently small.

\begin{lem} \label{lem: wenzel}
With the above assumption, suppose that,
 \[
 |\cos\theta_Y| \leq \dfrac{1}{\sum_{i=1}^N\sqrt{1+K_i^2}},
 \]
 then $\cos\bar{\theta}_Y = \cos\theta_W$.
\end{lem}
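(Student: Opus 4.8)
The plan is to show that the Wenzel state $L_{\textup{W}} = \real^{d+1}\setminus S$ is the (maximal) minimizer of the solid-liquid cell problem when $|\cos\theta_Y|$ is small. By the reduction in \eqref{eqn: normalized energy cell} and the symmetry $L\leftrightarrow V$, it suffices to work with the normalized energy $E'$ in the hydrophobic case and prove that for any competitor $L \in \mathcal{A}_{\textup{SL}}(\Box_r)$,
\[
E'(L_{\textup{W}},\Box_r\times\real) \leq E'(L,\Box_r\times\real),
\]
with a deficit, since then dividing by $|\Box_r|$ and sending $r\to\infty$ gives $\cos\bar\theta_Y = \cos\theta_W$. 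Recall $E'(L_{\textup{W}},\Box_r\times\real) = \cos\theta_Y\,\mathcal{H}^d(\partial S\cap(\Box_r\times\real)) = \cos\theta_Y\int_{\Box_r}\sqrt{1+|D\phi|^2}\,dx$, so the content is the inequality
\[
\cos\theta_Y\int_{\Box_r}\sqrt{1+|D\phi|^2}\,dx \;\leq\; \Per(L,\Box_r\times\real\setminus S) + \cos\theta_Y\,\mathcal{H}^d(\partial L\cap\partial S\cap(\Box_r\times\real)).
\]
Writing $A := \partial S\cap(\Box_r\times\real)$ and letting $\Gamma$ be the portion of $\partial S$ on which $L$'s trace is vapor (i.e. $A\setminus(\partial L\cap\partial S)$ up to null sets), this rearranges to the claim $\cos\theta_Y\,\mathcal{H}^d(\Gamma) \leq \Per(L,\Box_r\times\real\setminus S)$: the liquid-vapor surface area must dominate $\cos\theta_Y$ times the area of the solid boundary it "shadows."

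The key geometric step is a projection/shadowing argument carried out chart by chart. On each cylinder $\Gamma_i$, the solid boundary is a Lipschitz graph over $\Pi_i$ with constant $K_i$, so the area element satisfies $d\mathcal{H}^d|_{\partial S\cap\Gamma_i} \leq \sqrt{1+K_i^2}\,d\mathcal{H}^d_{\Pi_i}$, where $d\mathcal{H}^d_{\Pi_i}$ is the pushforward under $\Pi_i$. For a.e. line $\ell$ parallel to $e_{\Gamma_i}$ whose projected base point lies under a vapor-trace piece of $\partial S\cap\Gamma_i$, the line starts in the solid, must reach $\{z\geq 0\}\subset L$ (the liquid reservoir), and passes through a vapor-trace point of $\partial S$; hence by the theory of one-dimensional slices of finite-perimeter sets, $\ell$ meets $\partial L$ (in the open region above $S$) in at least one point. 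Integrating this pointwise bound over $\Pi_i(\Gamma_i)$ and using the coarea/slicing inequality $\Per(L,\Box_r\times\real\setminus S)\geq \mathcal{H}^d_{\Pi_i}(\Pi_i(\partial L\cap\{\text{above }S\}))$ gives
\[
\mathcal{H}^d_{\Pi_i}\big(\Pi_i(\Gamma\cap\Gamma_i)\big) \;\leq\; \Per(L,\Box_r\times\real\setminus S).
\]
Summing over $i=1,\dots,N$, using $\mathcal{H}^d(\Gamma\cap\Gamma_i)\leq\sqrt{1+K_i^2}\,\mathcal{H}^d_{\Pi_i}(\Pi_i(\Gamma\cap\Gamma_i))$ and $\mathcal{H}^d(\Gamma)\leq\sum_i\mathcal{H}^d(\Gamma\cap\Gamma_i)$, yields $\mathcal{H}^d(\Gamma)\leq\big(\sum_{i=1}^N\sqrt{1+K_i^2}\big)\Per(L,\Box_r\times\real\setminus S)$, which is exactly the required inequality under the hypothesis $\cos\theta_Y\leq\big(\sum_i\sqrt{1+K_i^2}\big)^{-1}$.

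The main obstacle I expect is making the slicing argument rigorous at the level of finite-perimeter sets rather than smooth sets: one must handle the traces $\varphi_L$ on $\partial S$ (as set up in the trace discussion of Section~\ref{sec: notation}), verify that for $\mathcal{H}^d_{\Pi_i}$-a.e. projected point the corresponding vertical slice of $L$ has the claimed one-dimensional structure (finite perimeter in the slice, correct endpoint behavior), and control the double-counting coming from overlapping cylinders $\Gamma_i$ — which the statement absorbs crudely into the sum $\sum_i\sqrt{1+K_i^2}$, so no sharpness is needed there. A secondary technical point is the interface between the "above $S$" region and $\partial S$ itself: since $\partial S$ may be genuinely Lipschitz (flat-topped pillars have vertical walls, $K_i=\infty$ is not allowed, but corners occur), one should choose the cylinders so their axes avoid the bad directions, and check that the slicing lines can be taken transverse to the vertical walls — this is where the covering hypothesis on $\partial S$ does its work. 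Everything else is a routine application of the coarea formula and the one-dimensional slicing theory for $BV$ functions as in \cite{MaggiBV}.
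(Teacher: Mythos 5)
Your proposal is correct and follows essentially the same route as the paper's proof: the chart-by-chart projection onto the cylinder bases, the $\sqrt{1+K_i^2}$ Lipschitz-graph area comparison, the ``shadowing'' argument that each axis-parallel line over a wetted (resp.\ vapor-trace) piece of $\partial S$ must cross the liquid--vapor interface, and the crude summation over the $N$ cylinders. The only cosmetic difference is that you argue on the hydrophobic/solid-liquid side while the paper argues on the hydrophilic/solid-vapor side, which are equivalent by the $L\leftrightarrow V$ symmetry.
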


\begin{proof}
The proof is a slight extension of Section 4.5 of \cite{AlbertiDeSimone} which considers the case when $N=1$ and the cylinder has axis $e_{d+1}$. Following their proof we discuss the hydrophilic case, with $L$ an admissible set for the solid-vapor cell problem. 

\medskip

It is sufficient to show that $\cos\theta_Y| \partial S\cap \partial L | \geq -|\partial L \cap \partial V|$. Let $\Gamma_i$ be one of the $N$ cylinders describe above. Due to our assumption $S$ is a Lipschitz graph $x+f_i(x) e_{\Gamma_i}$, with $f$ having Lipschitz constant at most $K_i$, over $x$ in the base of the cylinder. Then we have the following:
\[
\begin{array}{lll}
|\partial S \cap \partial L\cap \Gamma_i| &\leq & \int_{\Pi_i(\partial S \cap \partial L\cap \Gamma_i)} \sqrt{1+|Df_i|^2}\vspace{1.5mm} \\
&\leq& \sqrt{1+K_i^2} |\Pi_i(\partial S \cap \partial L\cap \Gamma_i)| \vspace{1.5mm} \\
&\leq &\sqrt{1+K_i^2}  |\Pi_i(\partial V \cap \partial L\cap \Gamma_i)|  \vspace{1.5mm}\\
&\leq& \sqrt{1+K_i^2}|\partial V \cap \partial L \cap \Gamma_i| \end{array}
\]
Here in the third inequality we use the fact that at each point of $\partial S \cap \partial L$ in $\Gamma$ one can move above the surface along the axis direction of the cylinder to reach a point in $\partial V \cap \partial L \in \Gamma$. Hence we conclude that $\cos\theta_Y |\partial S \cap \partial L| \geq -|\partial V \cap \partial L|$ if 
\[
|\cos\theta_Y| \leq \frac{1}{\sum_{i=1}^N\sqrt{1+K_i^2}}.
\]
\end{proof}

\subsection{The Cassie-Baxter model is exact for $|\cos\theta_Y|$ close to $1$ in $d+1 = 3$} 

We next show that when $\cos\theta_Y$ is sufficiently close to $1$ the minimal energy state for the solid-liquid cell problem is Cassie-Baxter, i.e. vapor fills in the entire region $\{ \phi(x) < z< 0\}$.  By symmetry this also means that when $\cos\theta_Y$ is sufficiently close to $-1$, the minimal energy state for the solid-vapor cell problem is Cassie-Baxter, liquid fills in the entire region $\{ \phi(x) < z< 0\}$.  The argument given here only works when $d+1 = 2,3$, but these are the dimensions which are physically relevant.  We will restrict to $d+1 = 3$ since in the $d+1=2$ case the cell problem solutions can be computed explicitly making this slightly more complicated argument unnecessary.

\medskip

Because the surface $S = \{ z \leq \phi(x)\}$ is not a set with smooth boundary the existence result of the previous section for the solid-liquid cell problem does not apply. Nonetheless we will simply argue that for any admissible set $L$ the energy is larger than the energy of the Cassie-Baxter state $L_{CB} := \{ z \geq 0\}$.  This will imply simultaneously that a minimizer exists and that said minimizer is $L_{CB}$.

 \begin{lem}
 Let $d=2$ and $\phi(x) = M(\indicator_P(x)-1)$ where $P$ is a non-trivial, smooth boundary, $\integer^d$-periodic set which is the location of the pillars of height $M$ and we call $f := |P \cap \Box_1|$.  There exists a constant $c_0(f) = c (1-(1-f)^{1/2})^2$ (with $c$ a constant independent of $f$) so that if 
 \begin{equation}
 1-\cos\theta_Y \leq  \frac{c_0M^2 \wedge M}{1+c_0M^2 \wedge M}
 \end{equation}
 then the solid-liquid cell problem solution exists and is the Cassie-Baxter state $L_{CB} = \{ z \geq 0\}$ and so
   \begin{equation}
\cos\overline{\theta}_Y = \cos\theta_Y f + (1-f).
 \end{equation}
 \end{lem}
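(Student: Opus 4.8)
The plan is to prove directly that $E'(L_{CB},\Box_r\times\real)\le E'(L,\Box_r\times\real)$ for every $r\ge 1$ and every admissible $L\in\mathcal{A}_{\textup{SL}}(\Box_r)$, where $E'$ is the normalized energy of \eqref{eqn: norm energy defn} (legitimate here since we are in the hydrophobic regime, by \eqref{eqn: normalized energy cell}). This one inequality does everything: it exhibits $L_{CB}$ as a minimizer — hence a minimizer exists, even though $\phi$ is not smooth and the general existence result of Section~\ref{sec: cell} does not apply — and dividing by $r^2$ and letting $r\to\infty$ identifies $\cos\overline\theta_Y=\tfrac1{|\Box_1|}E'(L_{CB},\Box_1\times\real)=\cos\theta_Y f+(1-f)$. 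Since $\{z\ge 0\}\subseteq L$ and $L\subseteq\real^{d+1}\setminus S$, write $L=\{z\ge 0\}\cup L^-$ with $L^-\subseteq W:=(\real^d\setminus\overline P)\times(-M,0)$. Let $|A|$, $|G|$, $|\Sigma|$ denote the $\mathcal H^d$-measures of the parts of $\partial L^-$ lying on $\{z=0\}$ (the ``top face''), on $\{z=-M\}$ (the ``floor''), and on $\partial P\times(-M,0)$ (the ``pillar walls''), and $\mathcal F$ the remaining free part of $\partial L^-$ inside $\Box_r\times\real$. Since the pillar tops $P\times\{0\}$ lie on $\partial L$ for every such $L$, and the ceiling over $A$ becomes an interior liquid--liquid interface costing nothing, a direct accounting of interfaces gives the identity
\[
E'(L,\Box_r\times\real)-E'(L_{CB},\Box_r\times\real)=\mathcal F-|A|+\cos\theta_Y\big(|G|+|\Sigma|\big),
\]
so everything reduces to the \emph{blob inequality} $\mathcal F+\cos\theta_Y\big(|G|+|\Sigma|\big)\ge|A|$.

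I would first dispose of the easy cases by vertical projection. Discarding the connected components of $L^-$ whose closure misses $\{z=0\}$ only decreases the left side and leaves $|A|$ unchanged, so we may assume every component of $L^-$ reaches the ceiling. Projecting each point of $A$ straight down, the vertical ray through it either leaves $L^-$ at an interior point of $\mathcal F$, or stays in $L^-$ down to the floor (a ``full liquid column''); counting these gives $|A|\le\mathcal F+|\{\text{full columns}\}|\le\mathcal F+|G|$. In particular, if $L^-$ does not touch the floor ($|G|=0$) then $|A|\le\mathcal F$ and the blob inequality is immediate. The real issue is the full liquid columns, whose projected area is ``lost'' in this crude argument and must be paid for out of the $\cos\theta_Y$-terms — and this is precisely where the geometry of $P$ and the height $M$ enter.

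For the general estimate I would slice $L^-$ by height, setting $\omega_t=\{x:(x,t)\in L^-\}\subseteq\real^d\setminus\overline P$ for $t\in(-M,0)$. Fubini in the horizontal directions gives $\mathcal F\ge\int_{-M}^0\Per_{\textup{free}}(\omega_t)\,dt$ and $|\Sigma|=\int_{-M}^0\Per_{\textup{pil}}(\omega_t)\,dt$, where $\Per_{\textup{free}}(\omega_t)+\Per_{\textup{pil}}(\omega_t)=\Per(\omega_t,\mathbb{T}^2)\ge I(|\omega_t|)$ with $I$ the isoperimetric profile of the flat $2$-torus; since $\omega_t\subseteq\real^d\setminus\overline P$ one has the porosity constraint $|\omega_t|\le1-f$, and because $v\mapsto I(v)/v$ is decreasing this produces a factor bounded below by $c_0=c\,(1-(1-f)^{1/2})^2$ with $c$ universal. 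Combining this with the vertical slicing (which controls $|A|$ through the liquid/vapor transitions along each column) and using $\cos\theta_Y<1$, one arrives at
\[
\mathcal F+\cos\theta_Y\big(|G|+|\Sigma|\big)\ \ge\ \cos\theta_Y\int_{-M}^0 I(|\omega_t|)\,dt+\cos\theta_Y\,|\{\text{full columns}\}|.
\]
The height $M$ enters because $L^-$ has vertical extent at most $M$: converting a per-slice perimeter bound into the needed area bound costs one power of $M$ for deep pillars ($M\ge1$) and, since the isoperimetric deficit is only $I(v)\sim v^{1/2}$ in $d=2$, a second power of $M$ for shallow pillars ($M<1$) — this is exactly the $M\wedge M^2$ in the hypothesis. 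Organizing the inequality, the blob inequality holds as soon as $c_0(M^2\wedge M)\cos\theta_Y\ge1-\cos\theta_Y$, i.e. as soon as $1-\cos\theta_Y\le\frac{c_0(M^2\wedge M)}{1+c_0(M^2\wedge M)}$. The extremal configuration for this estimate is $L^-=(\real^d\setminus\overline P)\times(-M,0)$, the Wenzel state, for which the slicing is an equality and the bound reduces to $\cos\theta_Y\,M\,\Per(P,\mathbb{T}^2)\ge(1-\cos\theta_Y)(1-f)$; the dependence on $P$ has been absorbed into $f$ using $\Per(P,\mathbb{T}^2)\ge I(f)$ and the smoothness of $\partial P$.

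The main obstacle is the last point: fitting together the vertical slicing (which ``sees'' the ceiling loss $|A|$) and the horizontal slicing plus the torus isoperimetric inequality (which ``see'' $\mathcal F$ and $|\Sigma|$, each weighted by the available height $M$) into a single clean inequality whose constant depends only on $f$ — in particular tracking how the estimate degenerates as the porosity $f\to0$ and keeping the two surface energies separated with their correct heights, so that the bound interpolates correctly between the shallow ($M<1$) and deep ($M\ge1$) regimes.
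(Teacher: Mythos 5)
Your reduction to the ``blob inequality'' $\mathcal F+\cos\theta_Y(|G|+|\Sigma|)\ge|A|$ is correct and is essentially the same normalization the paper uses, and your two ingredients (vertical projection of the ceiling set $A$, horizontal slicing plus a relative isoperimetric/Poincar\'e inequality with zero set $P$) are the right raw materials. But the step you yourself flag as ``the main obstacle'' is not a technicality to be organized away --- it is the entire content of the proof, and your sketch does not close it. The vertical projection bound $|A|\le\mathcal F_{\mathrm{exit}}+|\{\text{full columns}\}|$ consumes the \emph{horizontal} component $\int_{\mathcal F}|\nu_{d+1}|\,d\mathcal H^d$ of the free surface, while the slicing bound $\int_{-M}^0\Per(\omega_t)\,dt\le \int_{\mathcal F}|\nu'|\,d\mathcal H^d+|\Sigma|$ consumes the \emph{vertical} component; since $1=\sqrt{\nu_{d+1}^2+|\nu'|^2}$ is not $\ge|\nu_{d+1}|+|\nu'|$, the same piece of $\mathcal F$ (e.g.\ a $45^\circ$ cap) gets double-counted, and your displayed ``final'' inequality, whose right-hand side is $\cos\theta_Y\int I(|\omega_t|)\,dt+\cos\theta_Y|\{\text{full columns}\}|$, does not imply $\ge|A|$. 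The paper resolves exactly this by rearranging $L^-$ into the subgraph of $w(x)=\int_{-M}^0\varphi_L(x,t)\,dt$ (so the energy deficit is controlled by the \emph{excess area} $\int_\Box(\sqrt{1+|Dw|^2}-1)\,dx$) and then invoking Lemma~\ref{lem: surface area to BV}, $\sqrt{1+|Dw|^2}-1\ge a|Dw|-a^2$, which is precisely the device that trades between the two normal components; combined with the Poincar\'e inequality $\int|Dw|\ge c(f)M|T\cap\Box_i|^{1/2}$ and the optimal choice $a\sim c(f)(M\wedge1)|T\cap\Box_i|^{1/2}\le1$, this yields the linear lower bound $c(f)(M^2\wedge M)|T\cap\Box_i|$. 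Note that both the restriction $d=2$ (the exponent $\tfrac{d-1}{d}=\tfrac12$ is needed to linearize in $|T|$) and the $M^2$ for shallow pillars come out of this optimization over $a$, not out of the isoperimetric profile as you suggest.

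Two smaller points. First, your claimed constant does not follow from your own mechanism: monotonicity of $v\mapsto I(v)/v$ with $v\le1-f$ gives a factor $\sim I(1-f)/(1-f)\sim f^{1/2}$ for small $f$, whereas $c\,(1-(1-f)^{1/2})^2\sim f^2/4$; in the paper the constant comes from squaring the Poincar\'e constant $C/(1-(1-s)^{1/d})$ of \eqref{moser} through the choice of $a$. Second, when you carry out the argument you should work per unit periodicity cell (as the paper does, partitioning $\Box_r$ into squares of side in $[1,2]$): the inequality $|T\cap\Box_i|^{1/2}\ge|T\cap\Box_i|$ and the bound $a^2|\Box_i|\lesssim1$ are only useful at unit scale, and the global estimate is obtained by summing over cells.
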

In the proof we will maintain an arbitrary dimension $d \geq 1$ until the point where it is necessary to require $d = 2$.
\begin{proof}
Let $\Box\subset \real^d$ be an open square of side length at least $2$ and let $L$ admissible for the solid-liquid cell problem, i.e.
\[ \Box \times \{ z >0 \} \subseteq L.\]
We will show that ${{E}}(L,\Box) \geq {{E}}(L_{CB},\Box)$ as long as $1-\cos\theta_Y \leq \delta(|P|,M)$, in particular $\delta$ does not depend on the choice of $\Box$.  It is useful to note that for any square $\Box$ with side length $\geq 1$ it holds that $|\Box \cap P| \geq c(d)f|\Box|$ where we recall that $f = |P \cap \Box_1|$ is the area fraction of $P$ in a unit period cell.

\medskip

For this proof it will be simpler to work with the normalized energy,
\[ {{E}}'(L,\Box) = \Per(L, \real^{d+1} \setminus S) + \cos \theta_Y \mathcal{H}^d(\partial L \cap \partial S).\]
As previously described in the introduction the minimizers of ${{E}}'$ are the same as the minimizers of ${{E}}$.
\medskip

It is useful to rearrange $L$ to be the super-graph of a bounded variation function on $\real^d$.  This type of rearrangement will come up several times throughout the article.  For each $x \in \Box$ define,
 \begin{equation}\label{eqn: w defn sec 4}
 w(x) := \int_{-M}^0 \varphi_{L}(x,t) \ dt.
 \end{equation}
Let $T := \{ w = M\}$, this is exactly the subset of $\Box$ where the liquid fills all the way from $z = -M$ to $z=0$.   This function $w$ is in $BV(\Box)$ and furthermore satisfies that, 
  \begin{equation}\label{eqn: w prop sec 4}
 P \cap \Box \subseteq \{w(x) = 0 \} \ \hbox{ and } \ \int_{\Box} \sqrt{1+|Dw|^2} \ dx \leq \Per(L,\Box \times \real).
 \end{equation}
 
This is the result of \cite[Lemma 14.7]{Giusti}.  Let us keep in mind that the measure $Dw$ may have a singular part so the integral needs to be defined in the sense of \eqref{area}. We do not claim that $L$ is exactly the region above the graph of $-w$ in general, it will not be necessary to prove that for our purposes.   Now consider the change in energy by removing the part of the droplet below the level $z=0$, i.e. we replace $L$ by $L_{CB}$, 
\begin{equation}\label{eqn: 1 energy in sigma to 1}
\begin{array}{lll}
 {{E}}'(L_{CB},\Box) - {{E}}'(L,\Box) &=&  \cos\theta_Y|P| + |\Box \setminus P| -\cos\theta_Y(|P|+|T| + \mathcal{H}^d(\partial L \cap (\partial P \times (-M,0)))  \vspace{1.5mm}\\
 & &  \quad - \Per(L, \real^{d+1} \setminus S). \vspace{1.5mm}\\
 & \leq & \left[(1-\cos\theta_Y) |T| - \cos \theta_Y\int_{\Box } \big(\sqrt{1+|Dw|^2}-1\big) \  dx\right]
 \end{array}
 \end{equation}
Here we have used \eqref{eqn: w prop sec 4} and that $1 \geq \cos \theta _Y$.  Thus our proof amounts to being able to show that 
\[
\int_{\Box } (\sqrt{1+|Dw|^2}-1) \  dx \geq c(d,f,M)|T|.
\] 

\medskip

 Essentially we would like to use some kind of Sobolev type inequality to bound from below 
 \[\int_{\Box} (\sqrt{1+|Dw|^2}-1) \  dx \geq c\int_{\Box} |w| \ dx, \]
  and then use that $w=M$ on $T$ to lower bound that $L^1$ norm. 
 
 \medskip
 
 We start by dividing up $\Box$ into disjoint squares $(\Box_i)_{i \in I}$ for some index set $I$ of equal side length $r \in [1,2]$ so that 
 \[ \Box = \cup_{i \in I} \Box_i.\]
 These squares will be of the form $\Box_i = \Pi_{n=1}^d I_n$ where the intervals $I_n$ are either of the half open half closed form $[a_n,b_n)$ or open $(a_n,b_n)$ depending whether the square is at the boundary of $\Box$.  Since $|\{ w = 0 \} \cap \Box_i| \geq | P \cap \Box_i| \geq c(d)f|\Box|$ the following Poincare-type inequality holds (see for instance \cite{EvansGariepy}):
\begin{equation}
( \int_{\Box_i} |w|^{\frac{d}{d-1}} \ dx )^{\frac{d-1}{d}} \leq C \int_{\Box_i} |Dw| \ dx,
\end{equation}
with constant $C$ depending on $d$ and the lower bound for $| \{ w = 0 \}|/|\Box_i|$ (i.e. $c(d)f$ from above).  We argue separately in each $\Box_i$ and split the integral based on a parameter $a \in (0,1]$ to be chosen depending on $i$, applying Lemma~\ref{lem: surface area to BV},
\begin{align*}
 \int_{\Box_i } (\sqrt{1+|Dw|^2} - 1) \ dx & \geq a\int_{\Box_i}|Dw| \ dx - a^2|\Box_i|  \\
 & \geq C^{-1}a\left(\int_{\Box_i} |w|^{\frac{d}{d-1}} \ dx\right)^{\frac{d-1}{d}}-2^da^2 \\
 & \geq C^{-1}Ma|T \cap \Box_i|^{\frac{d-1}{d}} - 2^da^2
 \end{align*}
  in the second inequality we used the Moser-type inequality and that $|\Box_i| \leq 2^d$ and in the third we used that $w=M$ on the set $T$.  Let us give the name  $c_1:=\frac{1}{2}C^{-1}$ to keep track of that constant.

\medskip

   When $d=2$ we can choose $a = \frac{1}{5}c_1(M \wedge 1)|T \cap \Box_i|^{\frac{1}{2}}$ so that 
 \[ \int_{\Box_i } (\sqrt{1+|Dw|^2} - 1) \ dx \geq \tfrac{1}{25}c_1^2 (M^2 \wedge M)|T \cap \Box_i| .\]
 Note that for $d>2$ there is no choice of $0<a\leq 1$ to get above linear lower bound in terms of $|T|$. Now summing this estimate over $\Box_i$ we obtain,
 \[ \int_\Box (\sqrt{1+|Dw|^2} - 1) \ dx \geq \tfrac{1}{25}c_1^2 (M^2 \wedge M)\sum_{i \in I}|T \cap \Box_i| =  \tfrac{1}{25}c_1^2 (M^2 \wedge M)|T|.\]
Now finally we can plug this estimate back into \eqref{eqn: 1 energy in sigma to 1} to obtain,
\[ {{E}}'(L_{CB},\Box) - {{E}}'(L,\Box) \leq \left[(1-\cos\theta_Y) - \tfrac{1}{25}c_1^2 (M^2 \wedge M)\cos\theta_Y\right]|T| \]
which is non-positive for $1-\cos\theta_Y \leq \frac{ \tfrac{1}{25}c_1^2 (M^2 \wedge M)}{1+ \tfrac{1}{25}c_1^2 (M^2 \wedge M)}$.
\end{proof}

% 
%%%%%%%%%%%%%%%%%%%%%%%%%%%%%%%%%%%%%%%%%
%
%%SECTION: FINITE SCALE MINIMIZERS
%
%%%%%%%%%%%%%%%%%%%%%%%%%%%%%%%%%%%%%%%%%
%
% 
%

%%%%%%%%%%%%%%%%%%%%%%%%%%%%%%%%%%%%%%%%

%SECTION: INTERIOR REGULARITY

%%%%%%%%%%%%%%%%%%%%%%%%%%%%%%%%%%%%%%%%

\section{Volume Constrained Minimizers and their Interior Regularity}\label{sec: vol}
From this section we go back to the original volume constrained minimization problem for ${{E}}_\e$ described in the introduction.  Fix a positive volume $\textup{Vol}>0$ and bounded closed set $\Omega \subset \real^{d+1}$ with smooth (or piecewise smooth) boundary and $\Omega^+$ contains a ball of volume at least $\textup{Vol}$. For concreteness, and because it makes the estimates more clear, take $\Omega = \overline{U} \times [-T,T]$ for some $T>0$ and a bounded domain $U$ with piecewise smooth boundary.  Recall the energy ${{E}}_\e(L, \Omega)$ given \eqref{main}. We consider the problem of finding ${L} \subset \Omega $ the volume constrained global minimizer of the capillary energy $E_\e$,
\begin{equation}\label{eqn: vol prob}
L  = \argmin \bigg\{ {{E}}_\e(\Lambda,\Omega): \Lambda \in \mathcal{A}(\textup{Vol},\Omega) \bigg\} 
\end{equation}
\[
\hbox{ in the admissible class } \ \mathcal{A}(\textup{Vol},\Omega) = \bigg \{ \Lambda \in BV(\Omega \setminus S_\e) \ \hbox{ with } \ |\Lambda| = \textup{Vol} \bigg\}
\]
and where $S_\e = \{ (x,z) : z \leq \e \phi(\tfrac{x}{\e})\}$. 

\medskip

We will present in this section some preliminary results about existence, stability and regularity properties of the minimizer. The results of this section, as they are not much related to the nature of the solid surface, can mostly be found in other works, in particular \cite{CM, caffarellicordoba}.  Some small modifications are necessary and we explain them here.

\medskip

Note that we consider ${{E}}_\e(\cdot, \Omega)$ which includes the area $\partial L \cap \partial \Omega$ (as a liquid-solid or liquid-vapor interface) in the energy.  If we were to ignore this part of the energy, for example by considering ${{E}}_\e(\cdot, \Omega)$, the cost of surface area on $\partial \Omega$ would be zero, and the minimizing droplet (at least in the hydrophobic case) would stick to the walls of $\Omega$.  This is not desirable as $\Omega$ is not intended to represent a physical surface, rather it is a constraint which is motivated by mathematical considerations, in particular it is necessary for the initial compactness argument establishing the existence of an energy minimizer. When the meaning is clear from context we will write $E(\cdot) = E_\e(\cdot,\Omega)$ for brevity.

\medskip

In Section~\ref{sec: height constraint} we will show that one can replace $\Omega$ bounded by $\Omega = \overline{U} \times \real$ which is unbounded in the $z$-direction, but still bounded in the $x$ directions.  The constraint in the $x$-directions cannot in general be removed for two reasons.  The first is that the energy would always be infinite if the domain $U$ was taken to be $\real^d$.  The second problem is that in the hydrophilic case the solid-vapor cell problem solution may be non-trivial, in that case we generally expect that the volume constrained minimizer $L$ will fill in the region $\{\e \phi(x/\e) \leq z \leq 0\}$ (in the form of the solid-vapor cell problem solution).  If $U$ is taken to tend to $\real^d$ for a fixed $\e$ then all of the volume of $L$ will want to lie below the $z=0$ level.  We consider this behavior to be non-physical and thus the spatial constraint in the $x$-directions.  The first problem can be fixed (somewhat) by considering the normalized energy ${{E}}'$ but the second cannot. 

\subsection{Existence of constrained minimizers}  We first prove the existence of minimizers of our constrained variational problem \eqref{eqn: vol prob}.  We follow the usual direct method of calculus of variations. We remark that the results of this section do not require $|\cos\theta_Y|<1$.

\medskip

First we consider the lower semi-continuity of the energy functional.  This is where we use the assumption that $\phi$ is smooth (see \cite[Proposition 19.1, 19.3]{MaggiBV})
\begin{lem}\label{lem: lsc omega}
Assume that $\phi$ is at least $C^{1,1}$. Suppose that $L_n$ is a sequence of sets of $BV(\Omega)$ so that the indicator functions converge in $L^1$ to the indicator function of a set $L$. Then it follows
\[{{E}}(L,\Omega) \leq \liminf_{n \to \infty} {{E}}(L_n,\Omega).\]
\end{lem}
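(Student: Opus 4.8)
The plan is to recast $E(\cdot,\Omega)$ in its normalized form and then recognize the result as a standard capillary functional over the $C^{1,1}$ hypersurface $\Gamma:=\partial S_\e\cap\Omega$, for which lower semicontinuity is classical once the wall is regular. Since $\phi\in C^{1,1}$, the solid $S_\e=\{z\le\e\phi(x/\e)\}$ has $C^{1,1}$ boundary with no vertical portion, so $\Gamma$ is a genuine $C^{1,1}$ graph-hypersurface and $\mathcal{O}:=\{z>\e\phi(x/\e)\}$ is open with $\partial\mathcal{O}=\Gamma$. For a competitor $L\subseteq\Omega\setminus S_\e$ one has $\partial_e L\subseteq\overline{L}\subseteq\Omega$; moreover $\partial_e L$ agrees, up to an $\mathcal{H}^d$-null set, with $(\partial_e L\cap\partial_e V)\cup(\partial_e L\cap\Gamma)$, and $\mathcal{H}^d(\partial_e L\cap\Gamma)=\int_\Gamma\varphi_L\,d\mathcal{H}^d$, where $\varphi_L$ is the trace of $\indicator_L$ on $\Gamma$ taken from $\mathcal{O}$ (itself an indicator function up to $\mathcal{H}^d$-null sets). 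Feeding these identities, together with $\sigma_{\textup{SL}}-\sigma_{\textup{SV}}=\sigma_{\textup{LV}}\cos\theta_Y$, into the definition of $E$ gives
\[
E(L,\Omega)=\sigma_{\textup{SV}}\,\mathcal{H}^d(\Gamma)\;+\;\sigma_{\textup{LV}}\Big(\Per(L,\mathcal{O})+\cos\theta_Y\!\int_{\Gamma}\varphi_L\,d\mathcal{H}^d\Big),
\]
the first term being a constant independent of $L$; the relative perimeter over the open set $\mathcal{O}$ already accounts for the interface of $L$ on the top and sides of $\partial\Omega$, since those lie in $\mathcal{O}$. It therefore suffices to show that $\mathcal{F}(L):=\Per(L,\mathcal{O})+\cos\theta_Y\int_{\Gamma}\varphi_L\,d\mathcal{H}^d$ is lower semicontinuous under $L^1$-convergence of indicators.

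\medskip

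For this last point I would invoke \cite[Propositions~19.1 and~19.3]{MaggiBV}, whose mechanism I recall. Given $\indicator_{L_n}\to\indicator_L$ in $L^1$, one passes to a subsequence realizing $\liminf_n E(L_n,\Omega)$, so that it is enough to bound $\mathcal{F}(L)$ by the limit along that subsequence. Cover $\Gamma$ by finitely many coordinate cylinders in each of which $\Gamma$ is a $C^{1,1}$ graph with $\mathcal{O}$ on one side, and straighten $\Gamma$ there to a piece of a hyperplane by a $C^1$ diffeomorphism. Such a map changes both $\Per(\cdot,\cdot)$ and the surface measure $\mathcal{H}^d$ on $\Gamma$ by Jacobian factors that converge to $1$ as the cylinder radius shrinks, so it transports $\mathcal{F}$, with no loss of constant in the limit, to the flat model functional $\Per(F,\{z>0\})+\beta\int_{\{z=0\}}\varphi_F\,d\mathcal{H}^d$ with $|\beta|=|\cos\theta_Y|\le1$ (the bound $|\cos\theta_Y|\le1$ being built into the definition \eqref{young}). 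The flat model functional is $L^1_{loc}$-lower semicontinuous by the classical reflection argument, which crucially uses $|\beta|\le1$; patching the resulting local estimates with a partition of unity subordinate to the cylinders, and letting the localization scale tend to $0$, gives lower semicontinuity of $\mathcal{F}$. Adding back the constant term finishes the proof.

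\medskip

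The whole weight of the argument sits in this straighten-and-patch step. The two points I expect to need care are that the $C^1$ flattening does not degrade the lower-semicontinuity inequality --- it does not, since its distortion is $1+o(1)$ at small scales --- and that the hypothesis $|\cos\theta_Y|\le1$ cannot be dropped: a thin precursor film of liquid spread along the wall shows $\mathcal{F}$ fails to be lower semicontinuous once $|\cos\theta_Y|>1$. The regularity hypothesis $\phi\in C^{1,1}$ enters precisely here, to guarantee $\Gamma$ is a hypersurface admitting the straightening charts with no vertical portion; this is why, for the merely Lipschitz pillar surfaces of Section~\ref{sec: values}, the present argument is unavailable and existence of minimizers there has to be obtained by explicit construction instead.
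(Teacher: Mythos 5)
Your argument is correct and is essentially the paper's own: the paper gives no proof beyond reducing (implicitly, via the normalized energy \eqref{eqn: norm energy defn}) to the capillary functional $\Per(L,\Omega\setminus S_\e)+\cos\theta_Y\int_{\partial S_\e\cap\Omega}\varphi_L\,d\mathcal{H}^d$ and citing \cite[Propositions~19.1, 19.3]{MaggiBV}, which is exactly the reduction and citation you make. Your additional sketch of the flattening/reflection mechanism behind those propositions, and of why $|\cos\theta_Y|\le 1$ and the $C^{1,1}$ regularity of $\phi$ are needed, is accurate.
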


From this result, using the simple a-priori bound of the perimeter,
\begin{equation}\label{eqn: perim a priori}
 \sigma_{\textup{LV}}\textup{Per}(L) \leq E(L,\Omega) + \sigma_{\textup{LV}}|\cos\theta_Y| \mathcal{H}^d(\partial S \cap \Omega)
 \end{equation}
with the usual compactness result for a sequence of sets with uniformly bounded perimeter one gets the existence of a minimizer (see \cite[Theorem 19.5]{MaggiBV}).
\begin{prop}[]\label{prop: existence}
There exists a minimizer $L \in \mathcal{A}(\textup{Vol},\Omega)$ of the problem \eqref{eqn: vol prob}.
\end{prop}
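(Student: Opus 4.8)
The plan is to apply the direct method of the calculus of variations. The first step is to record that the admissible class is nonempty and the infimum $m := \inf\{E_\e(\Lambda,\Omega): \Lambda \in \mathcal{A}(\textup{Vol},\Omega)\}$ is finite: by assumption $\Omega^+$ contains a ball of volume at least $\textup{Vol}$, so one can place a truncated ball $\Lambda_0 \subset \Omega \cap \{z > \e\phi(x/\e)\}$ with $|\Lambda_0| = \textup{Vol}$, and since $\phi$ is $C^{1,1}$ and $\Omega$ is bounded we have $\mathcal{H}^d(\partial S_\e \cap \Omega) < \infty$, hence $E_\e(\Lambda_0,\Omega) < \infty$. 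Thus $0 \le m < \infty$.

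Next I would choose a minimizing sequence $L_n \in \mathcal{A}(\textup{Vol},\Omega)$ with $E_\e(L_n,\Omega) \to m$, and assume without loss that $E_\e(L_n,\Omega) \le m+1$. The a priori bound \eqref{eqn: perim a priori} then gives
\[ \sigma_{\textup{LV}}\,\textup{Per}(L_n) \le E_\e(L_n,\Omega) + \sigma_{\textup{LV}}|\cos\theta_Y|\,\mathcal{H}^d(\partial S_\e \cap \Omega) \le m + 1 + \sigma_{\textup{LV}}\,\mathcal{H}^d(\partial S_\e \cap \Omega), \]
so the perimeters of the $L_n$ in $\real^{d+1}$ are uniformly bounded. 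Since every $L_n$ is contained in the fixed bounded set $\Omega$, the compactness theorem for sets of finite perimeter (see \cite[Chapter 12]{MaggiBV}) yields a subsequence, not relabeled, and a finite-perimeter set $L \subseteq \Omega$ with $\indicator_{L_n} \to \indicator_L$ in $L^1(\real^{d+1})$.

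It then remains to check that $L$ is admissible and minimal. The volume constraint passes to the limit directly from $L^1$ convergence, $|L| = \lim_n |L_n| = \textup{Vol}$; and since $|L_n \cap S_\e| = 0$ for all $n$ this persists in the limit, so $L \in BV(\Omega \setminus S_\e)$, i.e.\ $L \in \mathcal{A}(\textup{Vol},\Omega)$. Finally, Lemma~\ref{lem: lsc omega} gives $E_\e(L,\Omega) \le \liminf_n E_\e(L_n,\Omega) = m$, whence $E_\e(L,\Omega) = m$ and $L$ is a minimizer. The one step I expect to carry real content is the lower semicontinuity asserted in Lemma~\ref{lem: lsc omega}: the energy contains the interface terms $\mathcal{H}^d(\partial_e L \cap \partial S_\e)$ and $\mathcal{H}^d(\partial_e V \cap \partial S_\e)$ supported on the curved solid boundary, and it is the control of these traces under weak convergence that forces the $C^{1,1}$ hypothesis on $\phi$; since that lemma is already in hand, the remainder of the argument is routine.
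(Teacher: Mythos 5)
Your proposal is correct and follows essentially the same route as the paper: the paper's proof is exactly the direct method, combining the a priori perimeter bound \eqref{eqn: perim a priori}, the standard compactness theorem for sets of uniformly bounded perimeter in a bounded region, and the lower semicontinuity of Lemma~\ref{lem: lsc omega} (which is where the $C^{1,1}$ assumption on $\phi$ enters, as you note). Your write-up merely spells out the steps the paper leaves to the cited references.
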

Note we are using also the assumption that $\Omega \setminus S$ contains some ball of volume $\textup{Vol}$ which also gives the a-priori bound on the energy minimum,
\begin{equation}\label{eqn: energy a-priori}
 E(L,\Omega) \leq C_d\sigma_{\textup{LV}}\textup{Vol}^{\frac{d}{d+1}}.
 \end{equation}
 We also remark that, arguing similarly to \eqref{eqn: perim a priori}, we get a useful a-priori bound on the perimeter of $L$ in $z>0$ of the form,
 \begin{equation}\label{eqn: perim a-priori 2}
 \Per(L,\{ z>0\}) \leq \sigma_{\textup{LV}}^{-1}E(L,\Omega) \leq C_d\textup{Vol}^{\frac{d}{d+1}}.
 \end{equation}

 \subsection{Removing the constraint on the droplet height}\label{sec: height constraint} Now we would like to remove the constraint that the confining region $\Omega$ is bounded in the $z$ direction, to consider regions of the form $\overline{U} \times \real$.  Such domains are particularly natural since the total energy $E$ of the minimizer as well as the volume of the water droplet below the $z=0$ level naturally scale with $|U|$.  The result is from Caffarelli and Mellet \cite{CM} based on an argument by Barozzi \cite{Barozzi}.  The arguments involve only modifications of the minimizer away from the $z=0$ level and therefore are not affected by the rough surface.  
 \begin{lem}[Proposition 3 of \cite{CM}]
 Let $\Omega_T = \overline{U} \times [-T,T]$, there exists $T_1$, a universal constant multiplied by $\textup{Vol}^{\frac{1}{d+1}}$, so that for $T \geq T_1$ there exists $L \in \mathcal{A}(\textup{Vol},\Omega_{T_1})$ satisfying,
 \[ E(L,\Omega_{T}) = \min_{\Lambda \in \mathcal{A}(\textup{Vol},\Omega_{T})} E(\Lambda,\Omega_T).\]
 \end{lem}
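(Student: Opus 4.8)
The plan is to reduce the statement to a height bound on minimizers that is uniform in $T$, and then to obtain that bound by the slicing argument of Caffarelli--Mellet and Barozzi. First I would observe that it suffices to produce a universal constant $C$ and to set $T_1:=(C\,\textup{Vol}^{\frac{1}{d+1}})\vee M$ so that \emph{every} minimizer $\Lambda$ of $E(\cdot,\Omega_T)$, for any $T\geq T_1$, is contained in $\overline{U}\times[-T_1,T_1]$. The lower bound $\Lambda\subseteq\{z\geq -T_1\}$ is automatic, since $\Lambda\subseteq\real^{d+1}\setminus S_\e\subseteq\{z>-M\}$; and once the upper bound is known, $\Lambda$ has no interface inside $\Omega_T\setminus\Omega_{T_1}$, so $E(\Lambda,\Omega_{T_1})=E(\Lambda,\Omega_T)$ and $\Lambda\in\mathcal{A}(\textup{Vol},\Omega_{T_1})$ is the required competitor. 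I would also keep at hand the a priori bounds $E(\Lambda,\Omega_T)\leq C_d\sigma_{\textup{LV}}\textup{Vol}^{\frac{d}{d+1}}$ and $\Per(\Lambda,\{z>0\})\leq C_d\textup{Vol}^{\frac{d}{d+1}}$ from \eqref{eqn: energy a-priori}--\eqref{eqn: perim a-priori 2}, the first obtained by comparison with a ball of volume $\textup{Vol}$ inside $\Omega^+$.

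For the height bound, the crucial point making the rough surface irrelevant is that $S_\e\subseteq\{z\leq0\}$: all of the competitors below are obtained by modifying $\Lambda$ only in $\{z>0\}$, where $\Omega_T\setminus S_\e$ coincides with a flat-bottomed cylinder, so the solid--liquid and solid--vapor terms of $E$ never change. For $t>0$ I would set $V(t):=|\Lambda\cap\{z>t\}|$ and $\mu(t):=\mathcal{H}^d(\Lambda^{(1)}\cap\{z=t\})$, note that $-V'(t)=\mu(t)$ for a.e.\ $t$, and use the truncation $\Lambda_t:=\Lambda\cap\{z\leq t\}$, which for a.e.\ $t>0$ satisfies
\[
E(\Lambda_t,\Omega_T)=E(\Lambda,\Omega_T)-\sigma_{\textup{LV}}\big(\Per(\Lambda,\Omega_T\cap\{z>t\})-\mu(t)\big).
\]
Truncation costs the volume $V(t)$, which I would restore by dilating a bulk ball of the drop lying in $\{z>0\}$; using the interior density/regularity estimates of Section~\ref{sec: vol} this costs energy at most $C\sigma_{\textup{LV}}\textup{Vol}^{-\frac{1}{d+1}}V(t)$ once $V(t)\leq\delta_0\textup{Vol}$ for a small universal $\delta_0$. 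Comparing with $\Lambda$ gives $\Per(\Lambda,\Omega_T\cap\{z>t\})\leq\mu(t)+C\textup{Vol}^{-\frac{1}{d+1}}V(t)$, and then the relative isoperimetric inequality applied to $\Lambda\cap\{z>t\}$ in the half-space $\{z>t\}$ — after checking, again via the density estimates, that $\Lambda$ stays off the lateral wall $\partial U\times(t,T)$ when $V(t)$ is small — yields $c_d\,V(t)^{\frac{d}{d+1}}\leq 2\mu(t)+C\textup{Vol}^{-\frac{1}{d+1}}V(t)$. Since $\textup{Vol}^{-\frac{1}{d+1}}V(t)\leq\delta_0^{1/(d+1)}V(t)^{\frac{d}{d+1}}$ on that range, choosing $\delta_0$ small forces $-V'(t)=\mu(t)\geq\tfrac14 c_d\,V(t)^{\frac{d}{d+1}}$, i.e.\ $-\tfrac{d}{dt}\big(V(t)^{\frac{1}{d+1}}\big)\geq\tfrac{c_d}{4(d+1)}$.

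Integrating this differential inequality, $V$ vanishes within a universal multiple of $\textup{Vol}^{\frac{1}{d+1}}$ of the first level $t_\ast$ at which $V(t_\ast)\leq\delta_0\textup{Vol}$. The remaining task — and the step I expect to be the main obstacle — is to bound $t_\ast$ by a universal multiple of $\textup{Vol}^{\frac{1}{d+1}}$, i.e.\ to rule out that $\Lambda$ keeps a definite fraction of its volume spread over a very tall range of heights. Here I would use the a priori perimeter bound $\Per(\Lambda,\{z>0\})\leq C_d\textup{Vol}^{\frac{d}{d+1}}$ together with $\int_0^\infty\mu(t)\,dt=|\Lambda\cap\{z>0\}|\leq\textup{Vol}$ and the essential connectedness of minimizers: a set carrying mass $\gtrsim\textup{Vol}$ up to height $h$ necessarily has perimeter $\gtrsim h^{1/d}\textup{Vol}^{\frac{d-1}{d}}$ in $\{0<z<h\}$, contradicting the bound once $h$ exceeds a large enough universal multiple of $\textup{Vol}^{\frac{1}{d+1}}$. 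Since this argument and the lateral-boundary bookkeeping are standard and, crucially, do not involve the solid surface, in the paper I would simply invoke \cite[Proposition 3]{CM} and the method of Barozzi \cite{Barozzi}.
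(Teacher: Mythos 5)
Your proposal is correct and takes essentially the same route as the paper: the paper gives no proof beyond citing \cite[Proposition 3]{CM} (via Barozzi's method) together with exactly your key observation that every competitor is obtained by modifying the minimizer only in $\{z>0\}$, where the rough surface plays no role. One caveat on the portion you reconstructed yourself: the heuristic for bounding the first level at which $V(t)\le\delta_0\textup{Vol}$ (``mass $\gtrsim\textup{Vol}$ above every level up to height $h$ forces perimeter $\gtrsim h^{1/d}\textup{Vol}^{\frac{d-1}{d}}$'') fails for a general minimizer without first proving essential connectedness (a floating component ruins the slab-by-slab perimeter count), and the lemma in any case only asks for \emph{some} minimizer lying in $\Omega_{T_1}$ rather than all of them --- but since you, like the paper, ultimately delegate this step to \cite{CM} and \cite{Barozzi}, it does not affect the match.
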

 From this it is easy to see that this $L$ is a minimizer of $E$ over the admissible class $\mathcal{A}(\textup{Vol},\overline{U} \times \real)$.  From now on we can and will take $\Omega = \overline{U} \times \real$.

\subsection{Perturbing the minimizer}  

For our perturbation arguments later, it is very useful for this reason to compare the minimal energies corresponding to volume constrained minimization with slightly perturbed constraint.  

\begin{lem}\label{lem: volume change}
Let $\Omega:= \overline{U}\times\real$. There is a dimensional constant $C(d)$ so that for every $\delta>0$, as long as $\e < \textup{Vol}/(2M|U|)$,
\begin{equation}\label{eqn: volume change}
 \min_{\Lambda \in \mathcal{A}(\textup{Vol},\Omega)} {{E}}(\Lambda,\Omega)\leq  \min_{ \Lambda \in \mathcal{A}(\textup{Vol}+\delta,\Omega)} {{E}}(\Lambda,\Omega) \leq \min_{ \Lambda \in \mathcal{A}(\textup{Vol},\Omega)} {{E}}(\Lambda,\Omega)+C\sigma_{\textup{LV}}\textup{Vol}^{-\frac{1}{d+1}}\delta .
 \end{equation}
 \end{lem}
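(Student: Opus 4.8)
The two inequalities are of quite different character. The left one is just monotonicity of the minimal energy in the volume constraint, which I would prove by a horizontal slicing argument; the right one is an upper bound obtained by slightly enlarging a minimizer, for which I would use a vertical dilation. Both directions use the hypothesis $\e < \textup{Vol}/(2M|U|)$ only through the elementary observation that a drop of volume $\textup{Vol}$ cannot hide below the level $\{z=0\}$: since any admissible $\Lambda \subseteq \Omega \setminus S_\e = (\overline U \times \real)\setminus\{z \le \e\phi(x/\e)\}$ and $-M \le \phi \le 0$, one has $|\Lambda \cap \{z \le 0\}| \le \e M|U| < \tfrac12\textup{Vol}$ whenever $|\Lambda| = \textup{Vol}$ (and hence $|\Lambda \cap \{z>0\}| > \tfrac12\textup{Vol}$).

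For the left inequality I would take a minimizer $L \in \mathcal A(\textup{Vol}+\delta,\Omega)$ and cut it by a horizontal plane chosen to leave exactly volume $\textup{Vol}$. The function $h(t) := |L \cap \{z<t\}|$ is continuous and nondecreasing, runs from $0$ to $\textup{Vol}+\delta$, and satisfies $h(0) \le \e M|U| < \textup{Vol}$, so there is some $t^* > 0$ with $h(t^*) = \textup{Vol}$. Set $L' := L \cap \{z<t^*\}$; then $|L'| = \textup{Vol}$, so $L' \in \mathcal A(\textup{Vol},\Omega)$, and since $t^*>0$ the solid interface is untouched ($L' \cap \{z\le0\} = L \cap \{z\le0\}$ and $\partial S_\e \subseteq \{z\le0\}$). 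The only extra surface created is the flat cap on $\{z=t^*\}$: by the standard slicing identity $\Per(L',\Omega\setminus S_\e) = \Per(L,(\Omega\setminus S_\e)\cap\{z<t^*\}) + \mathcal H^d\big(L^{(1)} \cap \{z=t^*\}\big)$, and applying the divergence theorem to the field $e_{d+1}$ on the finite perimeter set $L \cap \{z>t^*\}$ (exactly as in Lemma~\ref{lem: lines through}) gives $\mathcal H^d(L^{(1)}\cap\{z=t^*\}) \le \Per(L,\{z>t^*\})$. Hence $\Per(L',\Omega\setminus S_\e) \le \Per(L,\Omega\setminus S_\e)$, so $E(L',\Omega) \le E(L,\Omega)$, and taking the infimum over minimizers $L$ gives $\min_{\mathcal A(\textup{Vol},\Omega)} E \le \min_{\mathcal A(\textup{Vol}+\delta,\Omega)} E$.

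For the right inequality I would instead enlarge a minimizer $L \in \mathcal A(\textup{Vol},\Omega)$ by the vertical dilation $\Psi_\lambda$, $\lambda \ge 1$, defined by $\Psi_\lambda(x,z) = (x,z)$ for $z \le 0$ and $\Psi_\lambda(x,z) = (x,\lambda z)$ for $z \ge 0$: a bi-Lipschitz homeomorphism of $\real^{d+1}$ that maps $\Omega$ onto itself and fixes $S_\e$ pointwise. Writing $L_\lambda := \Psi_\lambda(L)$, the volume $|L_\lambda| = |L\cap\{z\le0\}| + \lambda|L\cap\{z>0\}|$ is affine and strictly increasing in $\lambda$, equals $\textup{Vol}$ at $\lambda=1$, so there is a unique $\lambda^* = 1 + \delta/|L\cap\{z>0\}|$ with $|L_{\lambda^*}| = \textup{Vol}+\delta$; moreover $|L\cap\{z>0\}| \ge \textup{Vol}-\e M|U| > \tfrac12\textup{Vol}$, so $\lambda^* - 1 < 2\delta/\textup{Vol}$. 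Because $\Psi_\lambda$ is the identity on $\{z\le0\} \supseteq S_\e$, the set $L_\lambda$ coincides with $L$ there and the solid-interface energies $\mathcal H^d(\partial_e L_\lambda \cap \partial S_\e)$, $\mathcal H^d(\partial_e V_\lambda \cap \partial S_\e)$ are unchanged; on $\{z>0\}$ the linear map $\mathrm{diag}(1,\dots,1,\lambda)$ multiplies the $d$-dimensional area of any hypersurface by at most $\lambda$, so $\Per(L_\lambda,\Omega\setminus S_\e) \le \Per(L,\Omega\setminus S_\e) + (\lambda-1)\Per(L,\{z>0\})$. Combining this with the a-priori bound $\Per(L,\{z>0\}) \le C_d \textup{Vol}^{\frac{d}{d+1}}$ from \eqref{eqn: perim a-priori 2} and with $\lambda^*-1 < 2\delta/\textup{Vol}$ yields $E(L_{\lambda^*},\Omega) \le E(L,\Omega) + C(d)\sigma_{\textup{LV}}\textup{Vol}^{-\frac{1}{d+1}}\delta$; since $L_{\lambda^*} \in \mathcal A(\textup{Vol}+\delta,\Omega)$, this is exactly the right inequality.

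The computation is elementary, and the only real obstacle is bookkeeping at the critical levels: one must verify that the solid-interface term is genuinely preserved by $\Psi_\lambda$ even when $\partial S_\e$ meets $\{z=0\}$ on a set of positive $\mathcal H^d$-measure (i.e. when $|\{\phi=0\}|>0$), and that the slicing identity together with the divergence-theorem cross-section bound used for the left inequality hold in the finite-perimeter sense; these are all standard facts about sets of finite perimeter found in \cite{MaggiBV}. Note that the argument uses no structure of $\phi$ beyond $-M \le \phi \le 0$, which is why the threshold on $\e$ depends only on $\textup{Vol}$, $M$, and $|U|$.
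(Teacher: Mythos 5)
Your proof is correct and follows essentially the same route as the paper's: the left inequality by truncating a minimizer of the $(\textup{Vol}+\delta)$-problem at a horizontal level $t^*>0$ (with the added cap controlled by the divergence-theorem argument of Lemma~\ref{lem: lines through}), and the right inequality by vertically dilating $L^+$ by the factor $1+\delta/|L^+|$, using $|L^+|\geq \tfrac12\textup{Vol}$ and the a-priori bound \eqref{eqn: perim a-priori 2}. The only difference is that you spell out the bookkeeping (slicing identity, invariance of the solid-interface terms under $\Psi_\lambda$) slightly more explicitly than the paper does.
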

 The proof needs to be a bit different from \cite{CM} due to the rough surface.  From now on we will always assume that $\e <\textup{Vol}/(2M|U|)$ so that the monotonicity result of Lemma~\ref{lem: volume change} will always apply.  We remark that the same result  holds for the normalized energy $E'$ without the factor of $\sigma_{\textup{LV}}$ appearing on the right.
 
 \begin{proof}[Proof of Lemma~\ref{lem: volume change}]
\medskip

First let us prove the inequality to the left. Let $L_\delta$ be the minimizer of $E$ over the admissible class $\mathcal{A}(\textup{Vol}+\delta,\Omega)$.  We modify $L$ to have volume $\textup{Vol}$.  Due to the assumption that $\e < \textup{Vol}/(2M|U|)$ we have,
\[ |L_\delta^+| \geq \textup{Vol}+\delta - |(\Omega \setminus S) \cap \{ z \leq 0\}| \geq \textup{Vol}+\delta - M\e|U| > \delta\]
Then by continuity there exists a $t> 0$ so that,
\[ L_\delta' := L_\delta \cap \{ z \leq t\} \ \hbox{ has } \ |L_\delta'| = \textup{Vol}.\]
Then $L_\delta'$ is in the admissible class $\mathcal{A}(\textup{Vol},\Omega)$ and so, by the minimization property,
\[ \min_{\Lambda \in \mathcal{A}(\textup{Vol},\Omega)}E(\Lambda,\Omega) \leq E(L_\delta',\Omega) \leq E(L_\delta,\Omega),\]
where for the last inequality we have used that our modification $L_\delta \mapsto L_\delta'$ does not increase the energy, see Lemma~\ref{lem: lines through}.

\medskip

For the second inequality, let $L$ be a minimizer of $E$ over the admissible class $\mathcal{A}(\textup{Vol},\Omega)$.  We modify $L$ to have volume $\textup{Vol}+\delta$.  Note that from the assumption $\e < \textup{Vol}/(2M|U|)$ we have,
\[ |L^+| \geq \textup{Vol} - |(\Omega \setminus S) \cap \{ z \leq 0\}| \geq \textup{Vol} - M\e|U| \geq \tfrac{1}{2}\textup{Vol}. \]
With $t = \delta/|L^+|$ we dilate the part of $L$ above the level $z=0$,
\[ L_t = ((1+t)L^+) \cup L^- \ \hbox{ which has } \ |L_t| = \textup{Vol}+\delta.\]
This modification increases the area of the liquid vapor interface (above level $z=0$) but the surface interfaces stay the same leading to,
\[E(L_t,\Omega) \leq E(L,\Omega) + \sigma_{\textup{LV}}t\Per(L,\{z>0\}) \leq E(L,\Omega) + C_d\sigma_{\textup{LV}}t \textup{Vol}^{\frac{d}{d+1}}\]
where we used the a-priori bound \eqref{eqn: energy a-priori} for the second inequality.  Using that $t \leq 2\delta/\textup{Vol}$ we conclude. 
 \end{proof}

\subsection{Non-degeneracy of the free surface}  Next we state the measure theoretic regularity of the liquid boundary which we refer to as \emph{interior density estimates} since it requires to be away from the solid surface.
\begin{lem}[Interior density estimates]\label{lem: meas reg}
There exist universal constants $C,c>0$ so that for any $(x,z) \in \partial_e L$ and every $r <z$,
\begin{equation}
 c\leq \frac{|L \cap B_r(x,z)|}{|B_r|} \leq 1-c \ \hbox{ and } \ C^{-1}r^d\leq  \Per(L,B_r(x,z)) \leq  Cr^d.
 \end{equation}
 
\end{lem}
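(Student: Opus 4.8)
The plan is to observe that the hypothesis $r<z$ places $B_r(x,z)$ entirely in $\{z>0\}$, which is disjoint from $S_\e\subseteq\{z\le 0\}$; hence inside $B_r(x,z)$ the energy $E_\e$ coincides with $\sigma_{\textup{LV}}$ times perimeter, so $L$ is an \emph{almost-minimizer} of perimeter there, the only nontrivial point being that the volume constraint must be accounted for. Once that is set up, all four estimates are the standard density bounds for almost-minimal surfaces. Throughout I will use the height bound $z\lesssim\textup{Vol}^{1/(d+1)}$ from Section~\ref{sec: height constraint} (so $|B_z|\lesssim\textup{Vol}$) and the a priori perimeter bound $\Per(L)\lesssim\textup{Vol}^{d/(d+1)}$ from \eqref{eqn: perim a priori}--\eqref{eqn: energy a-priori}.

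First I would record the two one-sided comparison inequalities. Set $m(r)=|L\cap B_r(x,z)|$ and $\bar m(r)=|B_r(x,z)\setminus L|$; by the coarea formula these are absolutely continuous with $m'(r)=\mathcal H^d(L^{(1)}\cap\partial B_r(x,z))$, $\bar m'(r)=\mathcal H^d(L^{(0)}\cap\partial B_r(x,z))$ for a.e.\ $r$, and $\Per(L\cap B_r)=\Per(L,B_r)+m'(r)$, $\Per(B_r\setminus L)=\Per(L,B_r)+\bar m'(r)$ for a.e.\ $r$. Comparing $L$ with $L\cup B_r(x,z)$, which has volume $\textup{Vol}+\bar m(r)$, the left inequality of Lemma~\ref{lem: volume change} gives $E_\e(L)\le E_\e(L\cup B_r)$, whence $\Per(L,B_r(x,z))\le\bar m'(r)$ for a.e.\ $r$. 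Comparing $L$ with $L\setminus B_r(x,z)$, which has volume $\textup{Vol}-m(r)$, the right inequality of Lemma~\ref{lem: volume change} (valid once $m(r)\le\tfrac12\textup{Vol}$, using $\e$ small so that Lemma~\ref{lem: volume change} applies) gives $\Per(L,B_r(x,z))\le m'(r)+C\textup{Vol}^{-1/(d+1)}m(r)$.

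The four estimates then follow routinely. Since $(x,z)\in\partial_eL=\real^{d+1}\setminus(L^{(0)}\cup L^{(1)})$, both $m(r)>0$ and $\bar m(r)>0$ for every $r>0$. Combining $\Per(B_r\setminus L)\le 2\bar m'(r)$ with the isoperimetric inequality $\bar m(r)^{d/(d+1)}\le C_d\Per(B_r\setminus L)$ gives $\bar m(r)^{d/(d+1)}\le C\bar m'(r)$; integrating $\tfrac{d}{dr}\bar m(r)^{1/(d+1)}\ge c$ from $0$ yields $|B_r\setminus L|\ge cr^{d+1}$, i.e.\ the upper bound $|L\cap B_r|/|B_r|\le 1-c$. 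The lower bound is the same argument applied to $m$: from $\Per(L\cap B_r)\le 2m'(r)+C\textup{Vol}^{-1/(d+1)}m(r)$ and the isoperimetric inequality one gets $c_dm(r)^{d/(d+1)}\le 2m'(r)+C\textup{Vol}^{-1/(d+1)}m(r)$, and so long as $m(r)\le c'\textup{Vol}$ the last term is absorbed into the left side, giving $m(r)^{d/(d+1)}\lesssim m'(r)$ and hence $m(r)\ge cr^{d+1}$ (for those $r<z$ with $m(r)>c'\textup{Vol}$ the conclusion $m(r)\gtrsim r^{d+1}$ is automatic since $r\lesssim\textup{Vol}^{1/(d+1)}$). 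The perimeter upper bound comes from integrating $\Per(L,B_r)\le m'(r)+Cr^{d+1}\textup{Vol}^{-1/(d+1)}$ over $r\in(\rho,\tfrac32\rho)$ and using that $\rho\mapsto\Per(L,B_\rho)$ is nondecreasing, giving $\rho\Per(L,B_\rho)\lesssim\rho^{d+1}$ when $\tfrac32\rho<z$, while for $\rho\in[\tfrac23 z,z)$ one simply uses $\Per(L,B_\rho)\le\Per(L)\lesssim\textup{Vol}^{d/(d+1)}\sim\rho^d$. Finally the perimeter lower bound follows from the relative isoperimetric inequality in $B_r(x,z)$, $\min\{m(r),\bar m(r)\}^{d/(d+1)}\le C_d\Per(L,B_r(x,z))$, together with the two volume density bounds just established.

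The only step needing any care is turning the volume-constrained minimality of $L$ into the two displayed local comparison inequalities, and this is precisely what Lemma~\ref{lem: volume change} provides; because the volume correction there modifies $L$ only in $\{z>0\}$, it is insensitive to the micro-structure of $S_\e$, which is why the rough surface plays no role in this lemma. Everything else is the standard differential-inequality / De Giorgi argument for almost-minimal surfaces, so I would only sketch it and refer to \cite{MaggiBV} or \cite{CM} for the routine details.
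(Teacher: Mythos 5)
Your overall strategy coincides with the paper's: turn the volume constraint into local one‑sided comparison inequalities via Lemma~\ref{lem: volume change}, then run the standard differential‑inequality and (relative) isoperimetric arguments. However, there are two concrete defects. First, you never address the confining region $\Omega=\overline{U}\times\real$: the admissible class consists of subsets of $\Omega$, so when $(x,z)$ is close to $\partial U\times\real$ the competitor $L\cup B_r(x,z)$ is \emph{not admissible}, and the inequality $\Per(L,B_r)\le \bar m'(r)$ is not justified. This is exactly the ``minor technical difference'' that led the paper to include a proof rather than cite \cite{CM}: one must compare with $L\cup(B_r\cap\Omega)$, which changes the added boundary term to $\mathcal{H}^d(\partial(B_r\cap\Omega)\setminus L)$ (still $\le Cr^d$ by smoothness of $\partial\Omega$, so the perimeter upper bound survives), and for the lower bound on $|B_r\setminus L|$ one needs a case split: if $d((x,z),\partial\Omega)<r/2$ one uses $|B_r\setminus L|\ge |B_r\setminus\Omega|\ge cr^{d+1}$ directly, and only otherwise runs the ODE argument on a smaller ball. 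Your remark that the modifications are ``insensitive to the micro-structure of $S_\e$'' is correct but does not dispose of the constraint $\Lambda\subseteq\Omega$.

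Second, your treatment of the perimeter upper bound for radii $\rho\in[\tfrac23 z,z)$ is wrong as written: you invoke $\Per(L)\lesssim \textup{Vol}^{d/(d+1)}\sim\rho^d$, but $(x,z)\in\partial_eL$ can have $z\ll\textup{Vol}^{1/(d+1)}$ (points near the contact line are exactly where the lemma is used), in which case $\textup{Vol}^{d/(d+1)}\gg\rho^d$ and the bound fails. The repair is immediate and is what the paper does: the union comparison gives pointwise (for a.e.\ $r$, hence for all $r$ by monotonicity of $\rho\mapsto\Per(L,B_\rho)$) the bound $\Per(L,B_r)\le \mathcal{H}^d(\partial(B_r\cap\Omega)\setminus L)\le Cr^d$, with no integration over an enlarged range of radii needed. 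With these two points fixed, the rest of your argument (the absorption of the $\textup{Vol}^{-1/(d+1)}m(r)$ term, the De Giorgi iteration for the volume densities, and the relative isoperimetric inequality for the perimeter lower bound) matches the paper's proof.
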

The proof is based only on modifications of the minimizer localized to $B_r(x,z)$ which does not involve the solid surface and so it is almost the same as \cite[Lemma 7]{CM}.  There is a minor technical difference which has to do with the confining region $\Omega$ and so we provide in the Appendix.
%on page~\pageref{proof: meas reg}.

\subsection{Higher regularity of the free surface}
  In the next section we make use of the interior regularity result for minimal surfaces in $\real^n$, in particular the almost-flatness statement in \cite{caffarellicordoba}. While stronger regularity results should apply for our setting, we find that the argument presented in \cite{caffarellicordoba} is relatively straightforward to modify for our situation. First of all we define an almost minimizer of the perimeter functional.  
\begin{DEF}
We say $\Lambda \subset \real^n$ is an \emph{almost minimizer} of the perimeter functional in an open ball $B \subset \real^n$ if, for some $h>0$ and every $K$ with $\Lambda \Delta K \subset \subset B$,
  \[ \Per(\Lambda,B) \leq \Per(K,B) + h|\Lambda \Delta K|.\]
  \end{DEF}
\begin{DEF}\label{flat}
We say that $\left.\partial \Lambda \right|_{B_\lambda(0)}$ is $\delta$-\emph{flat} if in some coordinate system we have,
\[ \partial \Lambda \cap B_\lambda(0) \subset \{ |x_{n}| \leq \delta \lambda \}.\]
\end{DEF}
We make note that, by Lemma~\ref{lem: volume change}, our volume constrained minimizer $L \subset \real^{d+1}$ is an almost minimizer of the perimeter function in $B_r(x,z)$ for any $r<z$ with constant $h = C_d\textup{Vol}^{-\frac{1}{d+1}}$.

\medskip

\begin{lem}\label{lem: minimal surface interior}
Suppose that $\Lambda \subset \real^n$ is an almost minimizer of the perimeter in $B_1(0)$ with constant $h>0$.  There exists $C(n)>0$ so that for all $\delta>0$, $\frac{1}{4}>\lambda>0$ and any covering of 
\[\{x \in B_{1/2}(0) : \left.\partial \Lambda \right|_{B_\lambda(x)}  \hbox{ is not $\delta$-flat} \}\]
 by $N$ balls of radius $\lambda$ with finite overlapping,
\[ N \leq C(1+h) \delta^{-1} \lambda^{2-n}.\]
\end{lem}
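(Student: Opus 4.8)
The plan is to bound $N$ by estimating the $\mathcal H^{n-1}$-measure of a slight enlargement of the bad set and then converting this into a volume count. For $x\in\real^n$, $r>0$ write $\mathbf f(x,r)$ for the \emph{flatness defect} of $\partial\Lambda$ in $B_r(x)$, i.e. the infimum over affine hyperplanes $\pi$ of $r^{-1}\sup\{\mathrm{dist}(y,\pi):y\in\partial\Lambda\cap B_r(x)\}$, with $\mathbf f(x,r)=0$ if $\partial\Lambda\cap B_r(x)=\emptyset$; thus ``$\partial\Lambda|_{B_\lambda(x)}$ is $\delta$-flat'' means $\mathbf f(x,\lambda)\le\delta$. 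Put $G:=\{x\in B_{1/2}(0):\mathbf f(x,\lambda)>\delta\}$. Since $G=\emptyset$ when $\delta\ge1$, and since the perimeter density estimates for almost minimizers ($\Per(\Lambda,B_r(x))\lesssim(1+h)r^{n-1}$ for $r\le\tfrac12$) already yield the crude bound $N\lesssim(1+h)\lambda^{1-n}$, which is $\le C(1+h)\delta^{-1}\lambda^{2-n}$ as soon as $\lambda\ge\delta$, we may assume $\lambda<\delta<1$; in this range $h\lambda$ will be treated as $\lesssim\delta$ (this is the case of interest since $h$ is a fixed constant in the applications, and otherwise one is again in the regime covered by the crude bound).

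Two facts drive the argument. The first is a quasi-monotonicity of $\mathbf f$ in scale and base point: because $B_\lambda(x)\subset B_{2\lambda}(y)$ for every $y\in B_\lambda(x)$, one has $\mathbf f(y,2\lambda)\ge\tfrac12\mathbf f(x,\lambda)$, so if we set
\[ G^{*}:=\bigl\{y\in\partial\Lambda\cap B_{3/4}(0):\mathbf f(y,2\lambda)>\tfrac\delta2\bigr\}, \]
then every $x\in G$ lies within $\lambda$ of $G^{*}$. The second is the interior regularity theory for almost minimizers, in the elementary form of Caffarelli and Cordoba \cite{caffarellicordoba} with the $h$-dependence retained: besides the density estimates, their one-step excess/flatness improvement implies that, outside a closed singular set $\Sigma$ with $\mathcal H^{n-8}(\Sigma)\lesssim(1+h)$ (and $\Sigma=\emptyset$ for $n\le7$), $\partial\Lambda$ is a hypersurface along which the flatness defect decays \emph{linearly in the radius},
\[ \mathbf f(y,r)\ \lesssim\ \bigl(\kappa(y)+h\bigr)\,r \qquad\text{for }0<r\le\tfrac12\,d_\Sigma(y), \]
where $d_\Sigma(y):=\mathrm{dist}(y,\Sigma\cup\partial B_{7/8}(0))$ and the local curvature scale $\kappa(y):=\limsup_{r\to0}r^{-1}\mathbf f(y,r)$ obeys $\kappa(y)\lesssim d_\Sigma(y)^{-1}+h$ (in the smooth part, $\kappa$ is comparable to the norm of the second fundamental form of $\partial\Lambda$).

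Granting these, fix $y\in G^{*}$. If $4\lambda\le d_\Sigma(y)$, the linear decay at $r=2\lambda$ gives $\tfrac\delta2<\mathbf f(y,2\lambda)\lesssim(\kappa(y)+h)\lambda$, whence $\kappa(y)\gtrsim\delta/\lambda$ after absorbing the $h$-term; consequently
\[ G^{*}\ \subseteq\ \bigl\{y\in\partial\Lambda\cap B_{3/4}:d_\Sigma(y)<4\lambda\bigr\}\ \cup\ \bigl\{y\in\partial\Lambda\cap B_{3/4}:\kappa(y)\gtrsim\delta/\lambda\bigr\}. \]
The first set is a $4\lambda$-tube of the low-dimensional $\Sigma$ inside the Ahlfors-regular hypersurface $\partial\Lambda$, hence has $\mathcal H^{n-1}$-measure $\lesssim\mathcal H^{n-8}(\Sigma)\lambda^{7}\lesssim(1+h)\lambda^{7}\ll(1+h)\delta^{-1}\lambda$. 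For the second, Chebyshev's inequality gives
\[ \mathcal H^{n-1}\bigl(\{\kappa\gtrsim\delta/\lambda\}\cap\partial\Lambda\cap B_{3/4}\bigr)\ \lesssim\ \frac\lambda\delta\int_{\partial\Lambda\cap B_{3/4}}\kappa\ d\mathcal H^{n-1}, \]
and $\int_{\partial\Lambda\cap B_{3/4}}\kappa\lesssim\int_{\partial\Lambda\cap B_{3/4}}(d_\Sigma^{-1}+h)\,d\mathcal H^{n-1}\lesssim(1+h)$, since $d_\Sigma^{-1}$ is $\mathcal H^{n-1}$-integrable on $\partial\Lambda\cap B_{3/4}$ (there $\mathrm{dist}(\cdot,\partial B_{7/8})\ge\tfrac18$, while $\Sigma$ has codimension $7$ in $\partial\Lambda$ with $\mathcal H^{n-8}(\Sigma)+\mathcal H^{n-1}(\partial\Lambda\cap B_{7/8})\lesssim(1+h)$; integrate the tube bound $\mathcal H^{n-1}(\{d_\Sigma<s\}\cap\partial\Lambda\cap B_{3/4})\lesssim(1+h)s^{7}$). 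Hence $\mathcal H^{n-1}(G^{*})\lesssim(1+h)\delta^{-1}\lambda$. Finally, for any covering of $G$ by balls $B_\lambda(x_i)$, $i=1,\dots,N$, of bounded overlap $m_0$, we have $\bigcup_iB_\lambda(x_i)\subset\{z:\mathrm{dist}(z,G^{*})<2\lambda\}$, and by the tube formula for $\partial\Lambda$ the latter has Lebesgue measure $\lesssim\lambda\,\mathcal H^{n-1}(G^{*})+\lambda^{8}$; therefore $N\,|B_\lambda|=\int\sum_i\mathbf1_{B_\lambda(x_i)}\le m_0\bigl|\bigcup_iB_\lambda(x_i)\bigr|\lesssim m_0(1+h)\delta^{-1}\lambda^{2}$, i.e. $N\le C(n)(1+h)\delta^{-1}\lambda^{2-n}$, with $C(n)$ absorbing the dimensional constant $m_0$.

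The main obstacle is the regularity input --- the \emph{linear} decay of the flatness defect together with the bound $\kappa\lesssim d_\Sigma^{-1}+h$, equivalently $\int_{B_{3/4}}\kappa\,d\mathcal H^{n-1}\lesssim(1+h)$. This is essentially the content of the elementary flatness-improvement scheme of Caffarelli and Cordoba, which must be carried out for almost minimizers keeping the error constant $h$ explicit, and combined with the classical Hausdorff-dimension estimate for the singular set of (almost) area-minimizing boundaries. Everything else --- the scale/base-point quasi-monotonicity of $\mathbf f$, Chebyshev's inequality, and the tube and volume-counting estimates --- is routine.
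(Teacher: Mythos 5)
Your argument hinges on a regularity input that is not available in the generality of the lemma. The pointwise bound $\kappa(y)\lesssim d_\Sigma(y)^{-1}+h$ is false for $n\geq 8$: the Hardt--Simon foliation of the Simons cone produces perimeter minimizers in $B_1\subset\real^8$ that are entirely smooth (so $\Sigma=\emptyset$ and $d_\Sigma\geq 1/8$ on $B_{3/4}$) yet have arbitrarily large second fundamental form at an interior point. A blow-up argument cannot rescue the estimate there, precisely because non-flat complete area-minimizing hypersurfaces with bounded curvature exist in $\real^8$. Since the integral bound $\int_{\partial\Lambda\cap B_{3/4}}\kappa\,d\mathcal{H}^{n-1}\lesssim 1+h$ --- the engine of your Chebyshev step --- is derived from this pointwise bound, the proof collapses in high dimensions, and the lemma is stated for all $n$. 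Even for $n\leq 7$, where curvature estimates for minimizers do hold by compactness plus the Bernstein theorem, you would need to prove them for almost minimizers with the stated $(1+h)$ dependence (the naive route gives $\sup\kappa\lesssim 1+h$ and hence $\int\kappa\lesssim(1+h)^2$), and you would need the linear flatness decay with $\kappa$ evaluated at the point rather than its supremum over $B_r(y)$. A secondary, repairable issue: the Lebesgue measure of the $2\lambda$-neighborhood of $G^{*}$ is not controlled by $\lambda\,\mathcal{H}^{n-1}(G^{*})$ unless $G^{*}$ itself is lower Ahlfors regular at scale $\lambda$; the correct counting uses the lower perimeter density in each covering ball together with the fact that all of $\partial\Lambda$ near a bad center belongs to (a slightly enlarged) $G^{*}$.

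For comparison, the paper follows Lemma 5 of Caffarelli and Cordoba and uses no regularity theory at all: the distance function $d(\cdot)=\textup{dist}(\cdot,\partial\Lambda\cap B)$ satisfies $\Delta d\leq h$ in the viscosity sense away from $\partial\Lambda$, which is proved by a one-line comparison with competitors $K$ via $\int_{\Lambda\triangle K}\Delta d\leq \Per(K,B)-\Per(\Lambda,B)\leq h|\Lambda\triangle K|$. The count of non-flat balls then comes from the total Riesz mass of this superharmonic function, each bad ball forcing a definite amount of mass. That route is elementary, works in every dimension, and carries the constant $h$ through linearly, which is exactly what the lemma asserts. If you want to salvage your approach, you would have to replace the curvature estimate by a dimension-independent, integrated measure of bending --- and the natural such object is precisely $-\Delta d$.
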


{\it Sketch of the proof.} We will only outline the proof of above lemma, since the arguments are parallel to that of  Lemma 5 in \cite{caffarellicordoba}. Their proof is based on the usage of distance function $d$ the (almost) minimal surface $B \cap \partial \Lambda $. More precisely the main step in the proof of Lemma 5 in \cite{caffarellicordoba} is to show that the distance function is superharmonic away from $\partial \Lambda$. In our case the statement would be that the distance function $d(x) := \textup{dist}(x,B \cap \partial \Lambda)$ satisfies 
\begin{equation}\label{superharmonic}
\Delta d \leq h\hbox{ in  }  B \setminus \partial \Lambda \hbox{ in the sense of viscosity solutions,}
\end{equation}
 where $h$ is the constant from the almost minimizer condition.

\medskip

Let $K$ be a perturbation of $\Lambda$ with $\Lambda \Delta K \subset \subset B$.  When $\Lambda$ and $K$ are smooth then, calling $A:= \Lambda\triangle K$, the distance function $d=d(x,\partial L^+)$ satisfies 

\[
 \int_A \Delta d \,dx = \int_{\partial A} \nu_A \cdot Dd\, dS_A \leq \Per(K,B) - \Per ( L,B) \leq h|A|,
\]
where $\nu$ denotes the outward normal and we have used the fact that $\nu_A \cdot Dd = -|Dd|=-1$ on $\partial L  \cap \partial A$.

\medskip

Using this fact, one can still prove Lemma 5 in \cite{caffarellicordoba}.

%%%%%%%%%%%%%%%%%%%%%%%%%%%%%%%%%%%%%%%%

%SECTION: MACROSCOPIC HAUSDORFF REGULARITY OF THE CONTACT LINE

%%%%%%%%%%%%%%%%%%%%%%%%%%%%%%%%%%%%%%%%

\section{Macroscopic regularity of the contact line} \label{sec: reg}

This section addresses the large-scale regularity of minimizing drops near the solid-liquid vapor contact line. We present two regularity results which, as explained below, describes different aspects of the drop surface near the surface. Parallel statements have been established \cite{CM} in the case of a flat chemically textured surface. In our case we face the possible irregularities of the drop boundary, such as air pockets, close to the rough surface.  This unknown near-surface geometry of the minimizer generates considerable challenges in the analysis.  

\medskip

We first proceed to show Proposition~\ref{prop: perim 0 t} which bounds the $(d-1)$ dimension Hausdorff measure of the ``contact line", at least when measured above a certain scale $r_0(\e) \gg \e$.  This result states that one can ignore the behavior of the minimizer near the contact line to measure its energy as $\e\to 0$. Indeed we observe that this estimate alone is sufficient to prove the convergence of the energies as $\e \to 0$ (to the energy of the global minimizer of the homogenized problem): see  Section~\ref{sec: hom} (Theorem~\ref{thm: main periodic}). 

\medskip

Our second result is Proposition~\ref{prop: non-degen}, which can be viewed either as a large scale estimate of the non-degeneracy of the contact angle or as a density estimate (like Lemma~\ref{lem: meas reg}) which holds (almost) up to the solid surface.  The (almost) boundary density estimate in turn yields the convergence estimate in Hausdorff distance of the liquid boundary as $\e\to 0$, at least down to a certain length scale $z \sim h_0(\e)$ (see Theorem~\ref{thm: main periodic}(ii)). This estimate puts a bound on the size of the ``boundary layer" near the solid surface where the influence of the small scale irregularities is felt.  At the moment it is not clear to the authors whether this estimate is optimal in terms of the size of $h_0(\e)$.  We believe, however, that our proof provides a useful framework which could potentially be improved to achieve a sharp result.

\medskip

\subsection{Upper bound of the total perimeter near the surface}
Our goal in this section is to obtain a bound of the following form
\begin{equation}\label{eqn: perimeter bound}
 `` \quad \Per(  L , \{ 0 < z< t \} ) \leq C t \ \hbox{ for all } \ t >0. \quad "
 \end{equation}
 Such result does hold for the flat boundary with $x$-dependent surface energies (see \cite{CM}), but we need to be a bit more careful with the rough boundary.  We do expect, at least when the cell problem solutions are not the Wenzel state, that there is a significant amount of free surface area (i.e. liquid-vapor interface), of order the size of the macroscopic wetted set, near the level $z=O(\e)$.  Therefore, we do not rule out the possibility that $\mathcal{H}^d( \partial L \cap \{ 0 < z< t \})$ is non-vanishing as $t \to 0$.  Nonetheless the following estimate, which stays away from $z\leq O(\e)$, turns out to be sufficient for the homogenization result in section 7.

 \begin{prop}\label{prop: perim 0 t}
There exist constants $C_0 ,C_1, R_0 \geq 1$ universal so that,
 \begin{equation}\notag\label{eqn: perimeter bound delta t} 
 \Per\big( L , \{ \tfrac{1}{2} t < z< \tfrac{3}{2}t \} \big) \leq C_0(\textup{Vol}^{-\frac{1}{d+1}}|U| +|\partial U|) t \quad \hbox{for all} \quad t \geq  r_0(\e)
 \end{equation}
 where the scale $r_0(\e)$ is defined,
 \[ r_0(\e):=R_0\e\exp(C_1 |\log (\textup{Vol}^{-\frac{1}{d+1}}\e)|^{1/2}).\]
 \end{prop}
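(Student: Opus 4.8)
The plan is a comparison-and-iteration scheme in the height variable $z$: bound $\Per(L,\{\tfrac12 t<z<\tfrac32 t\})$ by testing the minimality of $L$ against competitors obtained from $L$ by a localized vertical rearrangement, and then iterate the resulting one-scale inequality dyadically, descending from the macroscopic scale $\textup{Vol}^{1/(d+1)}$ down to $r_0(\e)$.

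\emph{One-scale estimate.} Fix $t\geq r_0(\e)$. Since $r_0(\e)\gg\e$ and $\max\phi=0$, the slab $\{\tfrac12 t<z<\tfrac32 t\}$ lies strictly above $S_\e$, so there $L$ is an almost minimizer of the perimeter with constant $h=C_d\textup{Vol}^{-1/(d+1)}$ (as recorded after Lemma~\ref{lem: volume change}). Using the coarea formula together with the a priori bound \eqref{eqn: perim a-priori 2} I would first pick two ``good'' levels $a\in[\tfrac12 t,\tfrac58 t]$ and $b\in[\tfrac{11}8 t,\tfrac32 t]$ at which the horizontal slices $A_a:=\{x:(x,a)\in L^{(1)}\}$ and $A_b$ are finite-perimeter subsets of $\real^d$ with $\mathcal{H}^{d-1}(\partial A_a)+\mathcal{H}^{d-1}(\partial A_b)$ controlled by $\tfrac{C}{t}\Per(L,\{\tfrac12 t<z<\tfrac32 t\})$. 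Then I would compare $L$ with the competitor $\widetilde L$ that agrees with $L$ outside $\{a<z<b\}$ and, inside that slab, equals the cylindrical interpolation between $A_a$ and $A_b$, namely $(A_a\cap A_b)\times(a,b)\ \cup\ (A_a\setminus A_b)\times(a,\tfrac{a+b}2)\ \cup\ (A_b\setminus A_a)\times(\tfrac{a+b}2,b)$. Its perimeter in the slab is at most $C(b-a)\bigl(\mathcal{H}^{d-1}(\partial A_a)+\mathcal{H}^{d-1}(\partial A_b)\bigr)+|A_a\,\Delta\,A_b|+|\partial U|(b-a)$, the last term being the usual seam cost along $\partial U\times\real$ already met in Lemma~\ref{lem: almost additivity}, and the penultimate one being reabsorbed into the trace perimeters by a further BV-slicing estimate. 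Since $|L\,\Delta\,\widetilde L|\leq|U|(b-a)$ and the surgery lies inside $\{z>0\}$, Lemma~\ref{lem: volume change} gives $\Per(L,\{a<z<b\})\leq\Per(\widetilde L,\{a<z<b\})+C\textup{Vol}^{-1/(d+1)}|U|(b-a)$, which already has the shape claimed in the Proposition, except that the right-hand side still contains $\Per(L,\cdot)$ on a comparable slab through the trace perimeters; hence a single application is circular and one is forced to iterate.

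\emph{Iteration.} Put $t_k:=2^{-k}\textup{Vol}^{1/(d+1)}$ and $P_k:=t_k^{-1}\Per(L,\{\tfrac12 t_k<z<2t_k\})$ for $0\leq k\leq N$, with $N\sim|\log(\textup{Vol}^{-1/(d+1)}\e)|$ chosen so that $t_N\sim R_0\e$. I would organize the one-scale estimate into an inequality of the form $P_k\leq(1+\eta_k)P_{k-1}+C(\textup{Vol}^{-1/(d+1)}|U|+|\partial U|)$, where $\eta_k\geq 0$ is a \emph{defect} measuring how far $\partial L$ is, at scale $t_k$, from a single Lipschitz graph. The a priori bound \eqref{eqn: perim a-priori 2} gives both $P_0\lesssim\textup{Vol}^{-1/(d+1)}|U|+|\partial U|$ and — in the spirit of the excess-decay bookkeeping of Caffarelli--C\'ordoba behind Lemma~\ref{lem: minimal surface interior}, using also almost-minimality — a square-summability bound $\sum_{k}\eta_k^2\leq C$. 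By Cauchy--Schwarz, $\sum_{k\leq m}\eta_k\leq\sqrt m\,(\sum\eta_k^2)^{1/2}\leq C\sqrt m$, so telescoping the recursion keeps the accumulated factor $\exp(\sum_{j\leq m}\eta_j)$ below the fixed energy budget precisely as long as $m\lesssim\sqrt N$, i.e. as long as one stays above the scale $R_0\e\exp(C_1|\log(\textup{Vol}^{-1/(d+1)}\e)|^{1/2})=r_0(\e)$; this balance of ``number of dyadic scales traversed'' against ``total square-summable defect'' is exactly what pins the threshold at $r_0(\e)$ and yields $\Per(L,\{\tfrac12 t<z<2t\})\leq C_0(\textup{Vol}^{-1/(d+1)}|U|+|\partial U|)\,t$ for every dyadic $t\geq r_0(\e)$. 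Arbitrary $t\geq r_0(\e)$ then follows by covering $\{\tfrac12 t<z<\tfrac32 t\}$ by a bounded number of dyadic slabs.

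\emph{Main obstacle.} The crux, and the reason one gets $t\geq r_0(\e)$ rather than $t\geq C\e$, is the iteration step: there is no a priori regularity of $L$ next to the solid, so estimating the defects $\eta_k$ and establishing $\sum_k\eta_k^2\lesssim 1$ must be done in a way robust to air pockets and precursor films sitting just above $S_\e$, and it is exactly the breakdown of this control at scales approaching $\e$ that determines the stopping scale. A secondary but genuine technical point — responsible for the $|\partial U|\,t$ term and handled as in Lemma~\ref{lem: almost additivity} — is keeping the competitor admissible (contained in $\real^{d+1}\setminus S_\e$, contained in $\overline U\times\real$) and controlling its contribution along the lateral boundary $\partial U\times\real$, which is arranged by performing the surgery strictly inside $\{z>0\}\cap(U\times\real)$.
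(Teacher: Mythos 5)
Your scheme has two genuine gaps, and together they mean the argument does not close. First, the one-scale estimate. Your competitor $\widetilde L$ only rearranges $L$ inside a slab $\{a<z<b\}$ sitting strictly above the solid, so the comparison only exploits almost-minimality of the perimeter in the interior; it never engages the solid--liquid/solid--vapor terms of the energy. But interior almost-minimality alone cannot rule out the dangerous configuration (a dense forest of order-$t$ features of $\partial L$ filling the slab), which is consistent with the density estimates of Lemma~\ref{lem: meas reg} and would give $\Per(L,\{\tfrac12 t<z<\tfrac32 t\})\sim |U|$ rather than $|U|t$. Concretely, your added-perimeter bound contains the term $|A_a\,\Delta\,A_b|$, and the proposed ``reabsorption into the trace perimeters'' fails: $|A_a\Delta A_b|$ is a $d$-dimensional quantity and is not controlled by $t\bigl(\mathcal{H}^{d-1}(\partial A_a)+\mathcal{H}^{d-1}(\partial A_b)\bigr)$ (take $A_a$ a unit disk and $A_b=\emptyset$). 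In fact $|A_a\Delta A_b|$ can be a fixed fraction of $\Per(L,\{a<z<b\})$, so the inequality degenerates to $\Per\leq \Per+\dots$. The paper's proof is forced, for exactly this reason, to compare with a competitor that modifies $L$ all the way down to the solid: it deletes $L\cap\{0<z<2t\}$, shifts down by $2t$, and replaces the region under the ``horizontal'' pieces of $\partial L$ by cell-problem minimizers; the quantitative convergence $|\cos\Theta_Y(\tfrac1\e D)-\cos\overline\theta_Y|\lesssim \tfrac{\e}{r}\log\tfrac{r}{\e}$ from Theorem~\ref{thm: cell gen periodic} and the non-degeneracy $|\cos\overline\theta_Y|<1$ are what produce the definite energy gain proportional to the removed perimeter. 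None of this machinery appears in your proposal, and it is the heart of the proposition.

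Second, the iteration. The recursion $P_k\leq(1+\eta_k)P_{k-1}+C$ with $\sum_k\eta_k^2\leq C$ is asserted, not derived; Lemma~\ref{lem: minimal surface interior} counts non-$\delta$-flat balls at a single scale and gives no dyadic square-summable excess decay, least of all near the rough surface where you yourself note there is no a priori regularity. Even granting it, the arithmetic does not produce the stated threshold: keeping $\exp\bigl(\sum_{j\leq m}\eta_j\bigr)\leq\exp(C\sqrt m)$ bounded requires $m\lesssim 1$, and even your weaker claim $m\lesssim\sqrt N$ only reaches $t_m\sim \textup{Vol}^{\frac{1}{d+1}}2^{-c\sqrt N}=\e^{o(1)}$, far above $r_0(\e)=R_0\e\exp(C_1|\log\e|^{1/2})=\e^{1-o(1)}$. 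In the paper the iteration is not a dyadic descent in the height $t$ at all: it runs in a flatness/eccentricity parameter $\lambda$ over nested windows $W_k=\{2^{-k}t<z<2(1-2^{-k})t\}$ around the \emph{fixed} height $t$, yielding $\Per(L,W_k)\lesssim(|U|+|\partial U|)t+\lambda\Per(L,W_{k+1})$ subject to $\lambda 2^{-k}t\geq R_0\e$; optimizing $k\sim\tfrac12\log_2(t/R_0\e)$ and $\lambda\sim(R_0\e/t)^{1/2}$ gives the residual error $\exp(-c(\log\tfrac{t}{R_0\e})^2)\leq\e$ exactly when $t\geq r_0(\e)$. That balance, not a square-summability of defects, is the source of the $\exp(C_1|\log\e|^{1/2})$ factor.
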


 We remark that our methods would also allow for a correctly scaling localized version of this estimate.  See \cite[Lemma 2.10]{DePMag} where a similar result is proved in the case of a smoothly varying boundary with smoothly varying interfacial energies.

\medskip

Let us briefly discuss the outline of the proof for Proposition~\ref{prop: perim 0 t}.  For the flat surface case which was done in \cite{CM}, one achieves \eqref{eqn: perimeter bound} by comparing the energy of $L$ with the set $L$ shifted down by t and cut off at level $z = 0$.  In this process, since the surface is flat, we replace any regions of solid-liquid-vapor layers (ordered by increasing height) by solid-vapor interfaces at level $z=0$ and any regions of solid-vapor-liquid layers by solid-liquid interfaces at level $z=0$.   This decreases the energy, proportional to the perimetor of $L$ that was removed (since, respectively, $\sigma_{\textup{LV}}+\sigma_{\textup{SL}} > \sigma_{\textup{SV}}$ and $\sigma_{\textup{LV}}+\sigma_{\textup{SV}} > \sigma_{\textup{SL}}$). The volume also changes, but if the energy change is large enough i.e. the perimeter of $L$ up to $t$-level is too large, then we can contradict the monotonicity formula \eqref{eqn: volume change}.   
 
 \medskip
 
 In our case, if we try naively to make the same argument shifting down $L$ and cutting off at level $z=0$, it is not clear whether the energy decreases proportionally to the total perimeter of $L$ that was removed.

\medskip

  Intuitively, if the drop boundary was sufficiently regular (i.e. at a scale $\gg \e$) in the region $z \sim t$, then $(1)$ the area of the ``vertical" parts of $\partial L$ would be eliminated by shifting down the drop resulting in a strict decrease of the energy while $(2)$ for the ``horizontal" parts of the liquid boundary we could replace by an appropriate cell problem solution at the $z=0$ level and use that $|\cos\overline{\theta}_Y|<1$ to get a strict decrease of the energy.  The actual computation is rather delicate and involves covering arguments by eccentric cylinders (which are useful to distinguish between vertical and horizonta parts of the liquid boundary) due to the fact that the liquid boundary is only ``mostly flat" from Lemma~\ref{lem: minimal surface interior}. 
 
 \medskip
 
The above simple idea, unfortunately, translates into rather lengthy proof as we will see below. To carry out the argument we need to first localize at a larger scale to apply Lemma~\ref{lem: minimal surface interior} and deal with the boundary $\partial U$ and then at a smaller scale to use the result of Lemma~\ref{lem: minimal surface interior}.  All of the covering arguments are just a (necessary) precursor to delicate energy arguments which deal separately with the vertical and horizontal parts of the liquid boundary.  Finally, to get the (almost) optimal result, there is a bootstrap argument which comes up (which is difficult to describe outside of the context of the proof).

\begin{proof}[Proof of Proposition~\ref{prop: perim 0 t}]
For the purposes of the proof we can assume that $\textup{Vol} = 1$.  The result of the Proposition can be obtained by scaling.

\medskip

  Let us define the cylinder centered at a point $(x_0,z_0)$ with radius $r$ and height $h$ by,
 \[ \Gamma_{r,h}(x_0,z_0) := \{ (x,z) \in \real^{d} \times \real : |x-x_0| \leq r \ \hbox{ and } \ |z-z_0| \leq h\}.\]
 For $\lambda >0$ and a cylinder $\Gamma$ in the class above we call $\lambda \Gamma$ to be the cylinder with the same center with height and radius scaled by $\lambda$.  Let $\gamma > 0$ and consider the class of cylinders with eccentricity $\gamma$, $\Gamma_{r,\gamma r}(x,z)$ for $r>0$ and $(x,z) \in \real^d \times \real$.  We will need to use the Vitali covering lemma with this class, for any finite collection $\{\Gamma_j\}_{1 \leq j \leq N}$ of cylinders with eccentricity $\gamma$, there is a disjoint subcollection $J \subset \{1, \dots ,N\}$ so that 
 \[\cup_{1\leq j \leq n} \Gamma_j \subset \cup_{j\in J} 3 \Gamma_j.\]
 We only consider $t \leq \frac{1}{36}$ -- when $t \geq \frac{1}{36}$ the result of the Proposition is immediate from the bound of the total perimeter of the volume constrained energy minimizer \eqref{eqn: perim a-priori 2}. 
 
 \medskip
 
 As described above, a sub-optimal bound on the number of bad cylinders where $\partial L$ is not $\delta$-flat can be used in a bootstrap type argument to obtain a better bound on the number of said bad cylinders.  This requires a slightly larger window of $z$ at each iteration which is why we will start by considering bounding the area of $\partial L$ in,
 \[ W_k := \{ 2^{-k} t < z < 2(1-2^{-k})t\} \ \hbox{ for } \ k = 1,2,3 \dots\]
 We will need to be careful about the dependence of the estimates on $k$, for now consider $k$ to be fixed.  
 
 \medskip
 
 We cover $\partial_e L \cap W_k$ by cylinders twice, the first covering is mostly just a localization so that we can apply the minimal surface regularity result Lemma~\ref{lem: minimal surface interior}.  By compactness we can cover $\partial_eL \cap W_k$ by cylinders $G_i$, for $1 \leq i \leq N_1$, of height $\tfrac{1}{6}2^{-k}t$ and radius $\tfrac{1}{6}2^{-k}t$ centered at points of $\partial_eL \cap W_k$. We call $s:= \tfrac{1}{6}2^{-k}t$ to be the radius/height of the cylinders $G_i$.  By Vitali's covering lemma there is a subcollection subcollection $I \subset \{1,\dots,N_1\}$ so that the $\{G_i\}_{i \in I}$ are disjoint and so that $\{3G_i \}_{i \in I}$ covers $\partial L \cap W_k$.

 \medskip

 Next we cover by a collection of smaller cylinders (with an eccentricity) which will be used in the energy argument.  As above we can cover $\partial_eL \cap W_k$ by cylinders $\Gamma_j$, for $1 \leq j \leq N_2$, centered at points $(x_j,z_j) \in \partial_e L \cap W_k$ of height $ \lambda 2^{-k} t$ and radius $\gamma^{-1}\lambda 2^{-k} t$. Here the parameter $0 < \lambda \leq \frac{1}{36}$ will be chosen later (depending on $t$), and we will eventually choose $\gamma >4 $ depending at most on the dimension $d$.  For simplicity we call 
 \[ r := \gamma^{-1} \lambda 2^{-k} t \ \hbox{ the {\it radius} of the cylinders $\Gamma_j$.}\]
 By Vitali's covering lemma there is a subcollection $J \subset \{1,\dots,N_2\}$ so that the $\{3\Gamma_j\}_{j \in J}$ are disjoint and so that $\{9\Gamma_j \}_{j \in J}$ covers $\partial L \cap W_k$.  We also denote the `disk' $D_j\subset \real^d$ which is the projection of $\Gamma_j$ onto $\{ z= 0\}$,
 \[ D_j := \{ x \in \real^d : |x-x_j| \leq r\}.\]
 We remark since it will be relevant later that although the $3\Gamma_j$ are disjoint, the $3D_j$ may overlap.
 
 \medskip
 
 Now we make our first of several subdivisions of $I$ and $J$, separating out the large scale cylinders which near the boundary $\partial U \times \real$.  We call,
 \[ I_{\partial U} = \{ i \in I : 6G_i \cap (\partial U \times \real) \neq \emptyset \}.\]
 Then we also separate out the small scale cylinders $\Gamma_j$ which are too close to the boundary,
 \[ J_{\partial U} = \{ j \in J : 9\Gamma_j \cap 3G_i = \emptyset \ \hbox{ for all } \ i \in I \setminus I_{\partial U}\}.\]
 Since the $\{3G_i\}_{i \in I}$ cover $\partial_e L \cap W_k$ every $9\Gamma_j$ intersects at least one $G_i$.  The purpose of our definition is so that every $j \in J \setminus J_{\partial U}$ the corresponding $9\Gamma_j$ intersects at least one of the ``interior" $G_i$ i.e. with $i \in I \setminus I_{\partial U}$, and also $\{9\Gamma_j\}_{j \in J \setminus J_{\partial U}}$ covers $\partial_eL \cap G_i$ for every $i \in I \setminus I_{\partial U}$.  In particular we get,
 \[ \partial_e L \cap W_k \subset (\bigcup_{i \in I_{\partial U}} 3G_i ) \cup (\bigcup_{j \in J \setminus J_{\partial U}} 9\Gamma_j ).\]
 Based on the above set up, which in particular guarantees $9\Gamma_j, 3G_i \subseteq \{ z>0\}$, Lemma~\ref{lem: meas reg} yields
 \begin{equation}\label{eqn: covering per ineq}
  \begin{array}{ll}
  \Per( L , W_k ) &\leq \sum_{i \in I_{\partial U}}\Per(L, 3G_i) + \sum_{j \in J \setminus J_{\partial U}}\Per(L, 9\Gamma_j) \vspace{1.5mm}\\
  &\lesssim |I_{\partial U}|s^d + |J \setminus J_{\partial U}|r^d \vspace{1.5mm}\\
  &\lesssim \sum_{i \in I_{\partial U}}\Per(L, G_i) + \sum_{j \in J \setminus J_{\partial U}}\Per(L, \Gamma_j) \vspace{1.5mm}\\
  & = \Per(L, \cup_{i \in I_{\partial U}}G_i) + \Per(L, \cup_{j \in J \setminus J_{\partial U}} \Gamma_j)
  \end{array}
  \end{equation}
 Thus, besides dealing with the boundary term $\Per(L, \cup_{i \in I_{\partial U}}G_i)$, to prove \eqref{eqn: perimeter bound delta t} it suffices to obtain the desired bound for the total perimeter of $L$ in the union of the $\Gamma_j$ for $j \in J\setminus J_{\partial U}$.  
 
 \medskip
 
 Notice, making almost the same argument as in \eqref{eqn: covering per ineq} and using that $G_i \subset W_{k+1}$, that in particular we have,
 \begin{equation}\label{eqn: number count}
  |I| \lesssim s^{-d} \Per(L, \cup_{i \in I} G_i)  \leq s^{-d} \Per(L, W_{k+1})  \sim (2^{-k} t)^{-d} \Per(L, W_{k+1}).
  \end{equation}
  This, in some sense, is the reason for looking the windows $W_k$, it is not quite true that $|I|s^d \sim \Per(L,W_k)$, we need to put $\Per(L,W_k) \lesssim |I|s^d \lesssim \Per(L,W_{k+1})$.

\medskip
  
  {\bf $\circ$ The perimeter bound near $\partial \Omega$.}  Our first order of business is to bound the perimeter of $L$ near the boundary of the confining domain $U \times \real$. This will follow from the interior density estimates combined with the fact that $\partial U$ has dimension $d-1$.  Using that the $G_i$ are disjoint and have radius $s$, for $i \in I_{\partial U}$ we have $G_i \subset (\partial U + 2D_{s})\times\{0 < z<3t\}$ and so,
  \[  |( \partial U + 2D_{s}) \times \{0 < z<3t\}| \geq \sum_{i \in I_{\partial U}} |G_i| \gtrsim   |I_{\partial U}| s^{d+1}.\]
On the other hand, from the smoothness property of $\partial U$, we have $|\partial U + 2D_s| \lesssim |\partial U| s$ and so,
\[ |I_{\partial U}| \lesssim s^{-d}|\partial U|t.\]
Finally using the upper density estimate of the perimeter from Lemma~\ref{lem: meas reg} (which does hold up to $\partial U$),
\begin{equation}\label{eqn: boundary terms}
 \sum_{i \in I_{\partial U}}\Per(L,G_i) \lesssim s^d | I_{\partial U}| \lesssim |\partial U|t.
 \end{equation}
  This is the desired bound for the perimeter in the cylinders of $I_{\partial U}$, from now one we can work with cylinders which are away from $\partial U$.

  \medskip

{\bf $\circ$ The cylinders where $\partial L$ is not flat.}  Let $\delta > 0 $, which is a dimensional constant to be chosen later, and call,
 \[ J_{bad} = \{ j \in J \setminus J_{\partial U} : \left. \partial L\right|_{3\Gamma_j} \hbox{ is not $\delta$-flat}\}.\]
Here $\delta$-flat is as defined in Definition~\ref{flat}. We will apply Lemma~\ref{lem: minimal surface interior} in each $2G_i$ for $i \in I \setminus I_{\partial U}$ to count $J_{bad}$.  Exactly as above in \eqref{eqn: number count} we can bound the total number of cylinders of $I$ by,
\begin{equation}\label{eqn: P count}
 |I \setminus I_{\partial U}|  \leq |I| \lesssim (2^{-k} t)^{-d} \Per(L, W_{k+1}).
 \end{equation}
Now since $\{3G_i\}_{i \in I}$ cover $\partial L \cap W_k$ we can guarantee that every cylinder $9\Gamma_j$ intersects at least one of the $3G_i$.  Since $j \in J_{bad}$ implies $j \not\in J_{\partial U}$ this means that $9\Gamma_j$ must intersect at least one $G_i$ with $i \in I \setminus I_{\partial U}$, i.e. we know,
\begin{equation}\label{eqn: jbad count}
 |J_{bad}| \leq |I \setminus I_{\partial U}| \max_{ i \in I \setminus I_{\partial U}} |\{ j \in J_{bad}: 9\Gamma_j \cap 3G_i \neq \emptyset\}|. 
 \end{equation}
Now $|\{ j \in J_{bad}: 9\Gamma_j \cap 3G_i \neq \emptyset\}|$ is exactly suited to be counted by the interior (partial) regularity Lemma~\ref{lem: minimal surface interior} applied in $6G_i$ (scaled properly).  For all $i \in I$ we obtain,
 \[ |\{ j \in J_{bad}: 9\Gamma_j \cap 3G_i \neq \emptyset\}| \lesssim (1+s)\delta^{-1}\lambda^{1-d} \lesssim \delta^{-1}\lambda^{1-d}.\]
 We combine this with \eqref{eqn: P count} and \eqref{eqn: jbad count} to obtain,
 \[ |J_{bad}| \lesssim \delta^{-1}\lambda^{1-d}(2^{-k} t)^{-d} \Per(L, W_{k+1}).\]
 Due to Lemma~\ref{lem: meas reg} it follows that
 \begin{equation}\label{eqn: main est nonflat}
 \sum_{j \in J_{bad}} \Per(L,\Gamma_j) \lesssim |J_{bad}| (\lambda2^{-k} t)^d \lesssim \delta^{-1} \lambda \Per(L, W_{k+1}).
 \end{equation}
 As $\delta$ will be a fixed dimensional constant, this is a bound which is good for small $\lambda$.

 \medskip

 {\bf $\circ$ The cylinders where $\partial L$ is $\delta$-flat.}  What remains after removing the boundary cylinders and the cylinders where $\partial L$ is not $\delta$-flat we will call,
 \[ J_{good} =   J \setminus (J_{\partial U} \cup J_{bad}) .\]
In particular $\partial L$ is $\delta$-flat in each cylinder $3 \Gamma_j$ for $j \in J_{good}$ and also $3\Gamma_j \subset U \times \real$. We will prove the following lemma for the parts of the drop boundary which are flat:
 
 \begin{lem}\label{middle_lemma}
 There exist constants $C$ and $R_0$ depending on $1-|\cos\overline{\theta}_Y|$ and $\|\phi\|_\infty$ so that
 \begin{equation}\label{eqn: main estimate flat}
 \sum_{j \in J_{good}} \Per(L,\Gamma_j) \leq C|U|t \ \hbox{ as long as } \  \gamma r= \lambda 2^{-k} t \geq R_0\e.
 \end{equation}
 \end{lem}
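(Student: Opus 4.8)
The plan is a global energy comparison. Starting from $L$, I would build a single competitor $\widetilde L$ which differs from $L$ only inside $U\times(0,2t)$ (and possibly below $z=0$), whose energy is smaller than $E(L,\Omega)$ by an amount $\gtrsim(1-|\cos\overline{\theta}_Y|)\sum_{j\in J_{good}}\Per(L,\Gamma_j)$, while $\big||\widetilde L|-|L|\big|\lesssim(|U|+|\partial U|)t$; then the minimality of $L$, together with the volume--adjustment estimate of Lemma~\ref{lem: volume change} used to restore the volume to $\textup{Vol}$, forces $\sum_{j\in J_{good}}\Per(L,\Gamma_j)\lesssim(1-|\cos\overline{\theta}_Y|)^{-1}(|U|+|\partial U|)t$, which is \eqref{eqn: main estimate flat} up to absorbing the $|\partial U|$ term. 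The first step is to split $J_{good}=J_h\cup J_v$ according to the direction $\nu_j$ of the $\delta$-flat sheet $\partial L\cap 3\Gamma_j$: call $j$ \emph{horizontal} if $|\nu_j\cdot e_{d+1}|\ge 1-\sqrt{\delta}$ and \emph{vertical} otherwise. For horizontal $j$, $\partial L\cap 2\Gamma_j$ is the graph of a function of $x$ with slope $\lesssim\sqrt{\delta}$, so $2\Gamma_j\setminus\partial L$ has exactly one component reaching the top and one reaching the bottom of the cylinder, and $\Per(L,\Gamma_j)\le(1+C\sqrt{\delta})|D_j|$; for vertical $j$ the sheet is nearly vertical and carries perimeter $\gtrsim\gamma r\cdot r^{\,d-1}$.

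The heart of the matter is the horizontal cylinders, and this is where both the hypothesis $\gamma r\ge R_0\e$ and the non-degeneracy $|\cos\overline{\theta}_Y|<1$ of Proposition~\ref{prop: contact angle quantity non-degen} enter. Fix a horizontal $j$ and suppose the phase just below the graph at height $z_j$ is liquid and just above is vapor (the ``wetted'' case; the other is symmetric). On the one hand $L$ restricted to $D_j\times(-M,z_j)$ is admissible for the solid--liquid cell problem over $D_j$ in the sense of Lemma~\ref{reduction}, so by Theorem~\ref{thm: cell gen periodic} (rescaled, using that the cylinder has radius $r\ge R_0\e/\gamma$, hence width $\gg\e$),
\[
E\big(L,\,D_j\times(-M,z_j)\big)\;\ge\;\Sigma_{\textup{SL}}(D_j)\;\ge\;\big(\overline{\sigma}_{\textup{SL}}-C\tfrac{\e}{r}\big)|D_j|,
\]
and in addition the essentially horizontal interface $\partial L$ near height $z_j$ contributes a further $\ge\sigma_{\textup{LV}}(1-C\sqrt{\delta})|D_j|$. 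On the other hand I replace the configuration in the column $D_j\times\real$ by: vapor everywhere in $D_j\times(0,\infty)$ together with the periodic solid--vapor cell minimizer $L_{\textup{SV}}$ of Proposition~\ref{maximal} in $D_j\times(-\infty,0]$, whose energy over $D_j$ is at most $(\overline{\sigma}_{\textup{SV}}+C\tfrac{\e}{r})|D_j|$; crucially this creates \emph{no} new interface at level $z=0$ because the replacement matches the vapor phase that $L$ already has above $z_j$. The net change of energy in this column is thus at most
\[
-\Big[\sigma_{\textup{LV}}-(\overline{\sigma}_{\textup{SL}}-\overline{\sigma}_{\textup{SV}})-C\sqrt{\delta}-C\tfrac{\e}{r}\Big]|D_j|\;\le\;-\tfrac12\sigma_{\textup{LV}}\big(1-\cos\overline{\theta}_Y\big)|D_j|\;\le\;-c\big(1-|\cos\overline{\theta}_Y|\big)\Per(L,\Gamma_j),
\]
once $\delta$ is chosen small depending only on $1-|\cos\overline{\theta}_Y|$ and $R_0$ is chosen large. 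For the vertical cylinders I would argue instead, as in the flat--surface case of \cite{CM}, that a downward translation of $L$ followed by a cut at $z=0$ eliminates the nearly vertical sheets of $\partial L$ inside the window, giving an energy decrease $\gtrsim\sum_{j\in J_v}\Per(L,\Gamma_j)$ at the cost of a volume change $\lesssim|U|t$ from the cut--off slab; the interior density estimates of Lemma~\ref{lem: meas reg} bound all incidental perimeter terms.

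One then performs all of these modifications at once and sums, on $J_h$ using $\Per(L,\Gamma_j)\lesssim|D_j|$, and using the disjointness of the $3\Gamma_j$ to control the total displaced volume and the overlap of the disks $D_j$; the outcome is a competitor $\widetilde L$ with the two properties stated above, and the argument closes as indicated. I expect the main obstacle to be turning the column--by--column description of $\widetilde L$ into a genuinely admissible finite--perimeter set: the disks $D_j$ overlap, the heights $z_j$ vary, the replacements on distinct columns must be reconciled along their common vertical boundaries without creating uncontrolled new interface, and the volume budget must stay $O((|U|+|\partial U|)t)$ throughout. I would handle this by carrying out the replacements only inside $\bigcup_j 3\Gamma_j$ (where the $3\Gamma_j$ are disjoint by the Vitali selection) and on slightly shrunken disks, gluing via the graph structure of $\partial L$ in the horizontal cylinders and absorbing the resulting $(d-1)$--dimensional seams into the $O(\cdot)\,t$ error using Lemma~\ref{lem: meas reg}; this bookkeeping, rather than any single estimate, is the crux of the proof.
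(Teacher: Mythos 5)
Your overall strategy --- column-by-column replacement by cell-problem minimizers under the horizontal sheets, shift-down-and-cut for the vertical ones, closed by minimality plus Lemma~\ref{lem: volume change} --- is the same as the paper's, but your classification of the cylinders has a genuine gap. You declare $\Gamma_j$ horizontal when $|\nu_j\cdot e_{d+1}|\ge 1-\sqrt{\delta}$ and vertical otherwise, and for the vertical class you assert the sheet ``carries perimeter $\gtrsim \gamma r\cdot r^{d-1}$.'' That fails for intermediate tilts: if $|\nu_j\cdot e_{d+1}|$ is, say, $1/2$, or anything just below your threshold, the $\delta$-flat sheet is a graph of bounded slope over $D_j$ whose area in $\Gamma_j$ is only $O(r^d)$ --- in the borderline case $(1+O(\sqrt{\delta}))\,\omega_d r^d$, barely more than the disk $|D_j|=\omega_d r^d$ of new interface you create at $z=0$ after cutting --- so the shift-down argument yields no quantitative energy gain with which to absorb the error terms. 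Such cylinders are also excluded from your horizontal argument, which genuinely needs small slope (both for $\Per(L,\Gamma_j)\le(1+C\sqrt{\delta})|D_j|$ and for the ``no new interface'' matching). The paper closes exactly this gap by making the cylinders eccentric (height $\gamma r$, radius $r$) and placing the threshold at $|n_j\cdot e_{d+1}|\ge 2\gamma^{-1}$: then every ``vertical'' sheet sweeps essentially the full height of the cylinder and has area $\ge c_0\gamma r^d$, which beats $\omega_d r^d$ once $\gamma$ is a large dimensional constant (Lemma~\ref{lem: vertical horizontal}(a)); and every ``horizontal'' sheet, however tilted, still exits through the lateral boundary, which is all that is needed --- via Lemma~\ref{lem: lines through} --- to get $\Per(L,\Gamma_j)\ge|D_j|$ and run the cell-problem comparison. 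No graph structure of $\partial L$ is ever used.

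Two further places where the plan would not go through as described. First, the projections $D_j$ of distinct horizontal cylinders can overlap even though the $3\Gamma_j$ are disjoint (the cylinders sit at different heights), and since your replacement necessarily extends from the solid surface up to $z=2t$ --- far outside $3\Gamma_j$ --- restricting the surgery to $\bigcup_j 3\Gamma_j$ or shrinking the disks does not reconcile two conflicting column replacements; the paper runs a second Vitali selection on the disks themselves and a separate energy argument, \eqref{eqn: overlapping bound}, to discard the overlapping cylinders. Second, capping the replaced column at $z=2t$ creates a trace mismatch there of area up to $|D_j|$ per column --- a $d$-dimensional term of the same order as your gain $(1\pm\cos\overline{\theta}_Y)|D_j|$, not a lower-dimensional seam --- and it must be compensated by the perimeter of $L$ between the top cap of $\Gamma_j$ and $\{z=2t\}$ over the mismatch set, as in \eqref{eqn: top to 2t}; your proposal does not account for it.
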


 Although we have suppressed the dependence on $k$ the collection of $\Gamma_j$, $J_{good}$ and $r$ depend on $k$, but, importantly, the constants $C$ and $R_0$ in the estimate \emph{do not} depend on $k$.   Notice that, although we would like to choose $\lambda  \to 0$ to improve the estimate of \eqref{eqn: main est nonflat}, the condition of Lemma~\ref{middle_lemma} gives a limit on how small $\lambda$ can be.
 
 \medskip

 If we combine the result of Lemma~\ref{middle_lemma} with \eqref{eqn: boundary terms} and \eqref{eqn: main est nonflat} and we plug into \eqref{eqn: covering per ineq} we will obtain,
 \begin{equation}\label{eqn: main iterate}
 \Per(L,W_k) \lesssim (|U|+|\partial U|)t + \lambda \Per(L,W_{k+1}) \  \hbox{ as long as } \  \lambda \geq\tfrac{2^kR_0\e}{t}.
 \end{equation}
 This bound can be iterated with a good choice of $\lambda$ to obtain the result of the Proposition.  This will be carried out below.  First we need to prove Lemma~\ref{middle_lemma}.
 
 \medskip
 
 To prove Lemma~\ref{middle_lemma} we will make a further classification of the cylinders depending on whether $\partial L$ is ``horizontal" or ``vertical" in $3\Gamma_j$.
 
 \medskip

For each $j \in J\setminus J'$ we know that $\partial L$ is $\delta$-flat in $3\Gamma_j$, that is there exists a unit vector $n_j \in S^d$ so that,
 \begin{equation}\label{eqn: eta flat}
  \bigg\{[(x,z)-(x_j,z_j)] \cdot n_j \leq -3\delta r\bigg\} \subseteq L \cap 3\Gamma_j \subseteq \bigg\{[(x,z)-(x_j,z_j)] \cdot n_j \leq 3\delta r\bigg\}.
  \end{equation}
   We say that (see Figure~\ref{fig: vertical horizontal}):
 \[ \partial L \hbox{ is \emph{horizontal} in $\Gamma_j$ if } \ |n_j \cdot e_{d+1}| \geq 2\gamma^{-1}; \]
 \[ \hbox{$\partial L$ is \emph{vertical} in $\Gamma_j$ if }  \ |n_j \cdot e_{d+1}| < 2\gamma^{-1}.\]
 Let us denote
 \[ J_h := \{ j \in J_{good}: \hbox{ $\partial L$ is horizontal in $\Gamma_j$}\} \ \hbox{ and } \ J_v := \{ j \in J_{good}: \hbox{ $\partial L$ is vertical in $\Gamma_j$}\}.\]

 \begin{figure}[t]
 \centering
 \def\svgwidth{3in}
 \begingroup%
  \makeatletter%
  \providecommand\color[2][]{%
    \errmessage{(Inkscape) Color is used for the text in Inkscape, but the package 'color.sty' is not loaded}%
    \renewcommand\color[2][]{}%
  }%
  \providecommand\transparent[1]{%
    \errmessage{(Inkscape) Transparency is used (non-zero) for the text in Inkscape, but the package 'transparent.sty' is not loaded}%
    \renewcommand\transparent[1]{}%
  }%
  \providecommand\rotatebox[2]{#2}%
  \ifx\svgwidth\undefined%
    \setlength{\unitlength}{595.27558594bp}%
    \ifx\svgscale\undefined%
      \relax%
    \else%
      \setlength{\unitlength}{\unitlength * \real{\svgscale}}%
    \fi%
  \else%
    \setlength{\unitlength}{\svgwidth}%
  \fi%
  \global\let\svgwidth\undefined%
  \global\let\svgscale\undefined%
  \makeatother%
  \begin{picture}(1,.7)%
    \put(0,0){\includegraphics[clip, trim=0cm 0in 0cm 0in, width=\unitlength]{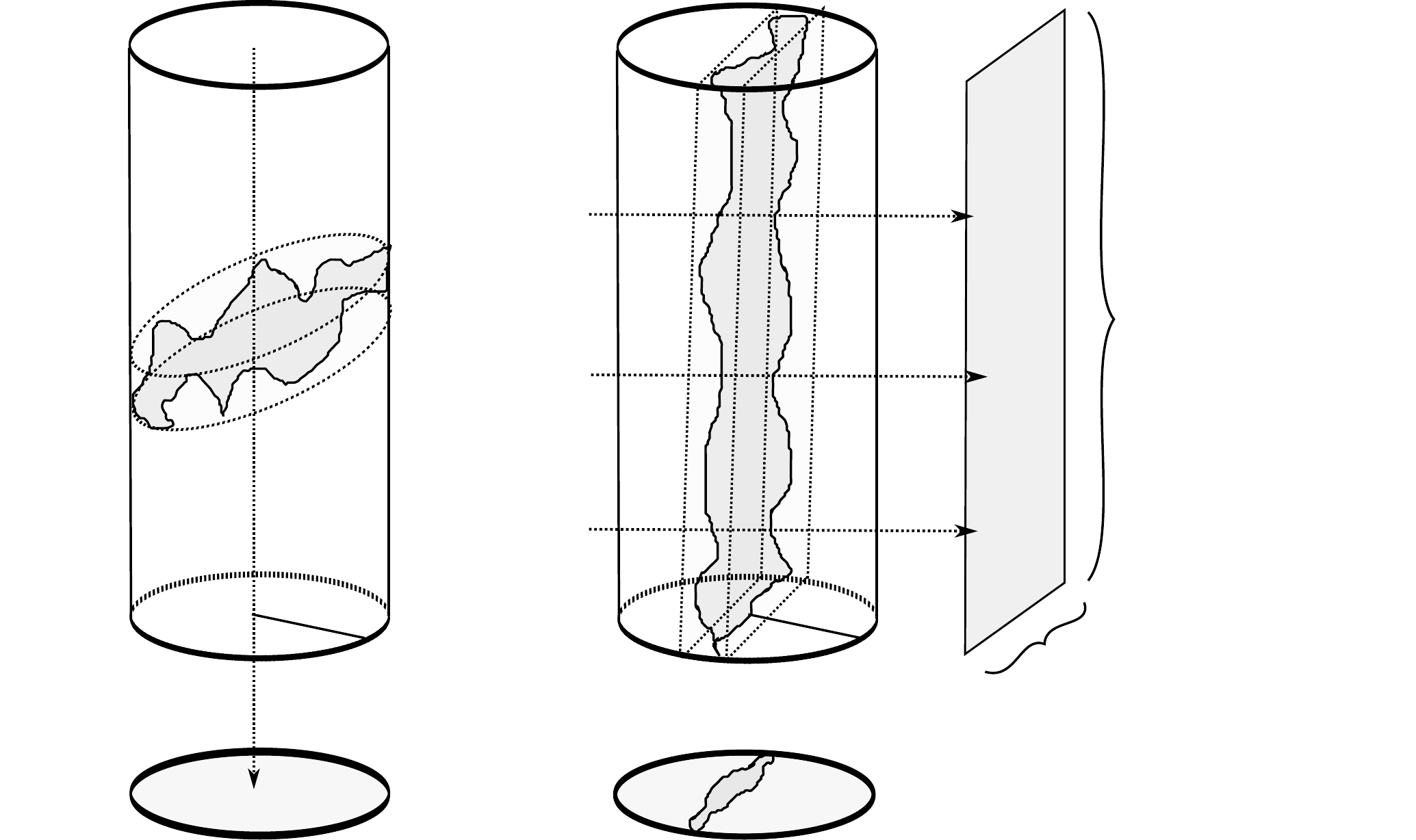}}%
    \put(-0.04,.025){\color[rgb]{0,0,0}\makebox(0,0)[lb]{\smash{$z=0$}}}%
    \put(0.75,.1){\color[rgb]{0,0,0}\makebox(0,0)[lb]{\smash{$r$}}}
    \put(0.8,.357){\color[rgb]{0,0,0}\makebox(0,0)[lb]{\smash{$\gamma r$}}}%
    \put(0.2,.095){\color[rgb]{0,0,0}\makebox(0,0)[lb]{\smash{$r$}}}%
  \end{picture}%
\endgroup%
\caption{Cylinders where the liquid boundary is flat and horizontal or flat and vertical respectively.}\label{fig: vertical horizontal}
 \end{figure}
 
 \medskip
 
 \begin{lem}\label{lem: vertical horizontal}
 There exists a dimensional constant $c_0>0$ such that if $\delta \leq \frac{2}{3}\gamma^{-1}\wedge c_0$, then the following holds.
 \begin{itemize}
 \item[(a)] If $\partial L$ is vertical in $\Gamma_j$ then 
 \[ \Per( L , \Gamma_j) \geq c_0 \gamma r^{d}.\]
\item[(b)]$\partial L$ is horizontal in $\Gamma_j$ then $\partial L$ does not touch the top or bottom of $\Gamma_j$, i.e., 
\[  D_j \times \{z = z_j - \gamma r\} \subset L \ \hbox{ and } \  D_j \times \{z = z_j + \gamma r\} \subset L^C \ \hbox{ or vice versa.} \]
The same holds for the top and bottom caps of the cylinder $3\Gamma_j$.
\end{itemize}
 \end{lem}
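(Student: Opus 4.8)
The plan is to derive both statements directly from the explicit slab description \eqref{eqn: eta flat}; no minimality or PDE input is needed beyond the flatness that is already assumed. Introduce the affine function $f(p):=(p-(x_j,z_j))\cdot n_j$ and decompose $n_j=n_j'+\nu_j e_{d+1}$ with $n_j'\in\real^d\times\{0\}$ and $\nu_j=n_j\cdot e_{d+1}$, so $|n_j'|^2+\nu_j^2=1$; by \eqref{eqn: eta flat} one has, inside $3\Gamma_j$, that $\{f<-3\delta r\}\subseteq L$ and $\{f>3\delta r\}\subseteq L^C$.

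For part (b) I would argue that the top and bottom caps cannot even reach the transition slab $\{|f|\le 3\delta r\}$. On the bottom cap $D_j\times\{z_j-\gamma r\}$, using $|x-x_j|\le r$ and $|\nu_j|\ge 2\gamma^{-1}$ (the horizontality condition),
\[ f=(x-x_j)\cdot n_j'-\gamma r\,\nu_j,\qquad |f|\ \ge\ \gamma r|\nu_j|-r\ \ge\ 2r-r\ =\ r\ >\ 3\delta r, \]
the last step holding as soon as $\delta<\tfrac13$, which is guaranteed by $\delta\le\tfrac23\gamma^{-1}$. Since $f$ is continuous and the cap is connected, $f$ keeps a constant sign on it, hence the cap lies entirely in $\{f<-3\delta r\}\subseteq L$ or entirely in $\{f>3\delta r\}\subseteq L^C$; the sign of $\nu_j$ then forces the top cap into the opposite set. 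Repeating the computation with $r,\gamma r$ replaced by $3r,3\gamma r$ gives $|f|\ge 3r>3\delta r$ on the caps of $3\Gamma_j$ as well, which is the remaining assertion of (b).

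For part (a) the plan is a slicing argument. Here $|\nu_j|<2\gamma^{-1}\le\tfrac12$, so $|n_j'|^2=1-\nu_j^2>\tfrac34$; set $e:=n_j'/|n_j'|$, a \emph{horizontal} unit vector with $e\cdot n_j=|n_j'|\ge\tfrac{\sqrt3}{2}$. Parametrize points as $(x_j+ae+b,\,z_j+w)$ with $a\in\real$, $b\perp e$ in $\real^d$, $w\in\real$. The standard one-dimensional slicing bound for perimeter gives $\Per(L,\Gamma_j)\ge \mathcal H^{d}(G)$, where $G$ is the set of $(b,w)$ such that the horizontal line $a\mapsto(x_j+ae+b,z_j+w)$ meets both $\{f<-3\delta r\}\cap\Gamma_j$ and $\{f>3\delta r\}\cap\Gamma_j$ in sets of positive length (both sets are subsets of the respective sides of $\partial L$ by \eqref{eqn: eta flat}). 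Along such a line $f=a|n_j'|+w\nu_j$, and $a$ ranges over $[-\sqrt{r^2-|b|^2},\sqrt{r^2-|b|^2}]$ while the point stays in $\Gamma_j$, so $f$ sweeps an interval of half-length $|n_j'|\sqrt{r^2-|b|^2}$ centred at $w\nu_j$. This interval strictly contains $[-3\delta r,3\delta r]$ once $|w\nu_j|+3\delta r<|n_j'|\sqrt{r^2-|b|^2}$; restricting to $|b|\le r/2$ makes the right-hand side $\ge\tfrac34 r$, so for $\delta$ below a dimensional constant (say $\delta\le\tfrac1{12}$) this holds whenever $|w|\le\gamma r/4$. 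Hence $G\supseteq\{|b|\le r/2\}\times\{|w|\le\gamma r/4\}$, which has measure $c(d)\gamma r^d$, proving (a) with $c_0$ taken to be the smaller of $c(d)$ and the thresholds above.

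The only genuinely non-routine point is the lower bound in (a): one has to choose the slicing direction adapted to $n_j$ — namely the horizontal direction $e$, which is available precisely because verticality forces $|n_j'|$ to be bounded below — and then verify that a full $d$-dimensional family of horizontal chords of $\Gamma_j$ actually crosses the flat interface. This is where both the thinness of the slab (smallness of $\delta$) and the large eccentricity $\gamma\ge4$ of the cylinder enter. Part (b) and the slicing inequality itself are entirely standard.
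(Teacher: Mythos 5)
Your proposal is correct. Part (b) is essentially the paper's argument: both proofs plug the caps (or the points of $\partial L$) into the affine function $(p-(x_j,z_j))\cdot n_j$ and use $|n_j\cdot e_{d+1}|\ge 2\gamma^{-1}$ together with the slab inclusion \eqref{eqn: eta flat} to see that the caps lie strictly outside the transition slab, with the same smallness threshold on $\delta$. Part (a) is where you genuinely diverge. The paper integrates $\grad\cdot(n'\psi)$ over $L\setminus H_-$ for a cutoff $\psi$ supported in $\Gamma_j$, so that the divergence theorem converts the flux through $\partial L$ into a lower bound by $\tfrac12\mathcal{H}^d(\partial H_-\cap\tfrac12\Gamma_j)\sim\gamma r^d$ minus an error $C\delta\gamma r^d$ coming from $|\grad\psi|\le C/r$ and the thinness of the slab. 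You instead slice $\indicator_L$ along the horizontal direction $e=n_j'/|n_j'|$ (available precisely because verticality gives $|n_j'|\ge\sqrt{3}/2$) and count, via the one-dimensional BV slicing inequality $\Per(L,\Gamma_j)\ge|D_e\indicator_L|(\Gamma_j)$, the $d$-dimensional family of chords that cross the slab; your verification that $f=a|n_j'|+w\nu_j$ sweeps an interval strictly containing $[-3\delta r,3\delta r]$ for $|b|\le r/2$, $|w|\le\gamma r/4$ is correct and produces the same $c(d)\gamma r^d$. The two arguments are morally dual (both measure the directional variation of $\indicator_L$ in a nearly horizontal direction adapted to $n_j$, exploiting the eccentricity $\gamma$), but your slicing version is the more elementary implementation: it avoids the cutoff error term and the unproved-in-the-paper lower bound on the area of the plane $\partial H_-$ inside $\tfrac12\Gamma_j$, at the cost of invoking the standard slicing theorem for sets of finite perimeter.
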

 For both parts the proof is essentially contained in the picture Figure~\ref{fig: vertical horizontal}, we will write out the details below in the Appendix on page~\pageref{proof: vertical horizontal}.

\medskip

Given the above set-up we are now ready to apply energy arguments to show \eqref{eqn: main estimate flat}.  In order to elucidate the arguments we will make several separate energy arguments, putting the emphasis on one part of the energy comparison at a time.  We will use the normalized energy $E'$ (see \eqref{eqn: normalized energy}) to reduce the notation, recall that $L$ is a constrained minimizer for $E$ if and only if it is a constrained minimizer for the normalized energy $E'$.  

\medskip

   {\bf $\circ$ Perimeter bound for the vertical cylinders.}  First we make the perimeter bound for the cylinders $\{\Gamma_j\}_{j \in J_v}$ where $\partial L$ is vertical.  We argue based on an energy comparison with the modified minimizer,
  
  \begin{equation}\label{alternative}
  L_t = L \setminus \{ (x,z) : \ 0< z < 2t\} - 2t e_{d+1}.
   \end{equation}
We shift down by $2t$ just to be sure that all of that cylinders $\Gamma_j$ are completely contained in $0 < z < 2t$.  Now, since $L$ was a minimizer with volume $1$ and $|L_t| \geq |L| - C|U|t$, Lemma~\ref{lem: volume change} yields that  \begin{equation}\label{eqn: volume change Ft 1}
    {{E}}'(L) \leq {{E}}'(L_t)+C|U|t.
    \end{equation}
We now show a quantitative decrease of the energy in the vertical cylinders $\{ \Gamma_j\}_{j \in J_v}$ using the eccentricity of the cylinder. The idea is that, via Lemma~\ref{lem: vertical horizontal}, the removed perimeter is at least $\gamma r \omega_{d-1}r^{d-1}$ while the added perimeter at the $z=0$ level is at most $ \omega_{d} r^d$ where $r$ is the radius of the cylinder, see Figure~\ref{fig: vertical horizontal} for a depiction.  This is a strict decrease in the total energy when $\gamma$ is chosen large enough. More precisely
   
      \begin{equation}\label{eqn: energy comp vertical}
   \begin{array}{lll}
   {{E}}'(L) - {{E}}'(L_t) &\geq& \Per(L,\{0 < z< 2t\} \setminus \cup_{j \in J_v} \Gamma_j) +  \sum_{j \in J_v} \Per(L,\Gamma_j) \vspace{1.5mm}\\
   & & - \sum_{j \in J_v}|D_j| - \int_{\real^d \setminus \cup_{j \in J_v}D_j}|\varphi_L^+(x,2t) - \varphi_L^+(x,0)| \ dx \vspace{1.5mm}\\
   &\geq& \sum_{j \in J_v}\left( c_0\gamma r^{d} - (\omega_{d} +c_1(d))r^d\right) \vspace{1.5mm}\\
   &\geq& \sum_{j \in J_v} r^d \geq c\sum_{j \in J_v} \Per(L,\Gamma_j)
   \end{array}
   \end{equation}
   
   Here we have used in the first inequality that where $|\varphi_L^+(x,0) - \varphi_L^+(x,2t)| = 1$ we have $(x,s)\in \partial L$ for some $0<s<2t$, and then we use Lemma~\ref{lem: meas reg} . We also used  Lemma~\ref{lem: vertical horizontal} to bound from below $\Per(L,\Gamma_j)$ for $j \in J_v$. For the third inequality we used the choice of $\gamma = 4 \vee (1+\omega_d)/c_0 $, for the last inequality we used the interior measure theoretic regularity of $\partial L$ from Lemma~\ref{lem: meas reg}.  Now combining the result of \eqref{eqn: energy comp vertical} with \eqref{eqn: volume change Ft 1} we obtain \eqref{eqn: main estimate flat} for the vertical part of $\partial L$.

   \medskip

 {\bf $\circ$ Making the projections disjoint.} We will now show how to remove the overlapping of the $\{D_j\}_{j \in J_h}$.  
 The purpose of doing this is to simplify energy computations for horizintal cylinders.  By Vitali covering lemma we can find a subcollection $J_h' \subset J_h$ so that $\{D_j\}_{j \in J_h'}$ are mutually disjoint and $\cup_{J_h} D_j \subseteq \cup_{j \in J_h'} 3D_j$. We claim that,
 \begin{equation}\label{eqn: overlapping bound}
  \sum_{ J_h \setminus J_h'} \Per(L, \Gamma_j) \leq Ct.
  \end{equation}
  
   This is proved by a relatively simple energy computation. The idea is that any part of the area of $\partial L$ which is overlapping with another portion of $\partial L$ directly above or below it can be removed by shifting down without having to balance against any added perimeter. We will make use of the following fact, which is the reason for our set-up,
  \begin{equation}\label{eqn: h property}
  \sum_{j \in J_h'} \Per(L,3\Gamma_j) \geq  \sum_{j \in J_h'} \mathcal{H}^d( 3D_j) \geq  \mathcal{H}^d(\cup_{j \in J_h} D_j).
  \end{equation}
  The second inequality is immediate.  Since $\partial L$ is horizontal in $3 \Gamma_j$, from Lemma~\ref{lem: vertical horizontal}, for any $x\in D_j$ there is some $s$ such that $(x,s)\in 3\Gamma_j \cap \partial L$. Thus  $\Per(L,3\Gamma_j) \geq \mathcal{H}^d( 3D_j), $
 which yields \eqref{eqn: h property}. 
 
 \medskip
 
  Now we proceed with the energy argument to prove \eqref{eqn: overlapping bound}. As before we consider $L_t$ given by \eqref{alternative}.    On the other hand, using that the $\{3\Gamma_j\}_{j \in J_h}$ are disjoint, we have also decreased the energy by,
  
   \begin{equation}\label{eqn: energy comp overlap}
   \begin{array}{lll}
   {{E}}'(L) - {{E}}'(L_t) &\geq&   \Per(L,\{0 < z< 2t\})  - \int_{\real^{d}} |\varphi_L^+(x,0) - \varphi_L^+(x,2t)| \ dx \vspace{1.5mm}\\
   &\geq& \sum_{j \in J_h} \Per(L,3\Gamma_j) - \mathcal{H}^d(\cup_{j \in J_h} D_j) \vspace{1.5mm}\\
   &\geq& \sum_{j \in J_h \setminus J_h'} \Per(L,3\Gamma_j) +\sum_{j \in J_h'} \Per(L,3\Gamma_j)-\mathcal{H}^d(\cup_{j \in J_h} D_j) \vspace{1.5mm} \\
   &\geq& \sum_{j \in J_h \setminus J_h'} \Per(L,3\Gamma_j)
   \end{array}
   \end{equation}
   Here we have used in the second inequality that where $|\varphi_L^+(x,0) - \varphi_L^+(x,2t)| = 1$  we have $(x,s)\in \partial L$ for some $0<s<2t$ and thus $x\in D_j$ for some $j\in J$. For the last inequality we used \eqref{eqn: h property}.  Now combining the result of \eqref{eqn: energy comp overlap} with \eqref{eqn: volume change Ft 1} we obtain \eqref{eqn: overlapping bound}.

\medskip

  {\bf $\circ$ Perimeter bound for the horizontal cylinders.} Finally we consider the cylinders where $\partial L$ is horizontal, now having also reduced to a situation where the projections $\{D_j\}_{j \in J_h'}$ of the $\{ \Gamma_j\}_{j \in J_h'}$ onto the $z=0$ level are mutually disjoint.  This is the most involved part of the argument, we use the cell problem solutions $L_{\textup{SV}}, L_{\textup{SL}}$ in our energy comparison because it is convenient, actually using the Cassie-Baxter state \eqref{eqn: CB state} as we did in Proposition~\ref{prop: contact angle quantity non-degen} would also work.
  
  \medskip
  
  We make a further division of ${J_h'}$ depending whether $L$ is above $\partial L$ in $\Gamma_j$, heuristically this should generally be the case when the solid surface is hydrophobic, or $L$ is below $\partial L$ in $\Gamma_j$, which is natural with hydrophilic surface.  Let 
\[ J_{h,\textup{SL}}' := \{j \in {J_h'} : n_j \cdot e_{d+1} < 0 \} \ \hbox{ and } \ J_{h,\textup{SV}}' := \{j \in {J_h'} : n_j \cdot e_{d+1} > 0 \}.\]
For $j \in J_{h,\textup{SL}}'$, if we look in the extended cylinder $D_j \times \real$ moving from $z=-\infty$ upward we will see a macroscopic solid-vapor interface followed by the vapor-liquid interface in $\Gamma_j$.  When we shift the droplet down bringing $\Gamma_j$ down to the $\{z = 0 \}$ level we will replace these two interfaces by a single macroscopic solid-liquid interface. 
(possibly interposed microscopically by vapor via the solid-liquid cell problem solution).  
 
 \medskip
 
  We present only the argument for $J_{h,\textup{SL}}'$,  since symmetric arguments applies for $J_{h,\textup{SV}}'$.  We remove $L \cap \{0< z < 2t\}$ from the optimal droplet and shift down by $2t$ while also replacing, below each $\{D_j\}_{j \in J_{h,\textup{SL}}'}$, by the solid-liquid cell problem solution, 
 \begin{equation}\label{eqn: shift down}
 L_t = \left[L - 2t e_{d+1}\right]^+\cup \left[L_- \setminus  \bigcup_{j \in J_{h,\textup{SL}}'} D_j \times \real \right] \cup \left[\bigcup_{j \in J_{h,\textup{SL}}'} \e L_{\textup{SL}}(\tfrac{1}{\e}D_j) \cap (D_j\times(-\infty,0])\right] .
 \end{equation}
 Recall that $L_{\textup{SL}}(\frac{1}{\e}D_j)$ an optimizer for the solid-liquid cell problem in $\tfrac{1}{\e}D_j$ from Lemma~\ref{lem: maximal minimal minimizers}.  We remark that we have chosen to use $L_{\textup{SL}}(\frac{1}{\e}D_j)$ instead of the $\integer^d$-periodic cell problem solution to emphasize how the argument will generalize to the random setting.  Again from the monotonicity formula \eqref{eqn: volume change} for the volume change,
   \begin{equation}\label{eqn: volume change Et}
    {{E}}'(L) \leq {{E}}'(L_t)+Ct.
    \end{equation}
   Using that the $\{D_j\}_{j \in J_h'}$ are mutually disjoint we claim that the energy change is given by,   
   \begin{equation}\label{eqn: energy SVL}
   \begin{array}{lll}
   {{E}}'(L) - {{E}}'(L_t) &\geq& \Per(L,\{0 < z<2t\} \setminus \cup_{j \in J_{h,\textup{SL}}'} \Gamma_j) \vspace{1.5mm}\\
   &+ & \sum_{j \in J_{h,\textup{SL}}'} \Per(L,\Gamma_j) + \sum_{j \in J_{h,\textup{SL}}'} |D_j|\cos\Theta_Y(\tfrac{1}{\e}D_j)\vspace{1.5mm}\\
   & &   - M\e\sum_{j \in J_h}|\partial D_j|- \int_{\real^d \setminus \bigcup_{j \in J_{h,\textup{SL}}'}D_j}|\varphi_L^+(x,2t) - \varphi_L^+(x,0)| \ dx.
   \end{array}
   \end{equation}

    For now we take \eqref{eqn: energy SVL} for granted and complete the perimeter estimate.
  
  \medskip
  
  As we did in the previous segments of the argument we can bound,
  \[ \Per(L,\{0 < z<2t\} \setminus \cup_{j \in J_{h,\textup{SL}}'} \Gamma_j) - \int_{\real^d \setminus \cup_{j \in J_{h,\textup{SL}}'}D_j}|\varphi_L^+(x,2t) - \varphi_L^+(x,0)| \ dx \geq 0.\]
  Using Lemma~\ref{lem: vertical horizontal} and the definition of $J_{h,\textup{SL}}'$ we know that the bottom boundary caps of the cylinders $\{\Gamma_j\}_{j \in J_{h,\textup{SL}}'}$ are contained in $L^C$ and the top boundary caps are contained in $L$, and thus (by Lemma~\ref{lem: lines through})
  \[ \Per(L,\Gamma_j) \geq |D_j| \ \hbox{ for all } \ j \in  J_{h,\textup{SL}}'.\]
  On the other hand we know from Theorem \ref{thm: cell gen periodic}, taking the difference of the two estimates there, 
    \[|D_j|\cos\Theta_Y(\tfrac{1}{\e}D_j) \geq (\cos\overline{\theta}_Y-C(d)M\frac{\e}{r}\log\frac{r}{\e})|D_j|. \]
  
  Using the previous three inequalities and plugging into \eqref{eqn: energy SVL} we obtain, as long as $r \geq C\e$ for some constant $C$ depending on $M$ and $1-|\cos\overline{\theta}_Y|$,
     \begin{equation}\label{eqn: energy change SVL}
   \begin{array}{lll}
   {{E}}'(L) - {{E}}'(L_t) &\geq& \sum_{j \in J_{h,\textup{SL}}'} (1 - \cos\overline{\theta}_Y -C\tfrac{\e}{r}\log\tfrac{r}{\e})|D_j| \vspace{1.5mm}\\
   &\geq& c\sum_{j \in J_{h,\textup{SL}}'} |D_j| \geq c\sum_{j \in J_{h,\textup{SL}}'} \Per(L,\Gamma_j),
   \end{array}
   \end{equation}
 Lemma~\ref{lem: meas reg} is used for the final inequality.  This completes the proof of \eqref{eqn: main estimate flat}.

   \medskip

 It remains to carefully derive \eqref{eqn: energy SVL}.  We compute the energy change separately in each of the disjoint infinite cylinders $D_j \times \real$,
     \begin{equation}\label{eqn: energy change SVL1}
   \begin{array}{lll}
   {{E}}'(L) - {{E}}'(L_t) &=& {{E}}'(L, \{ z \leq 2t\}) - {{E}}'(L_t, \{z \leq 0\}) \vspace{1.5mm}\\
   &=& \bigg[{{E}}'(L, \{ z \leq 2t\} \setminus \bigcup_{j \in J_{h,\textup{SL}}'}D_j \times \real) -{{E}}'(L_t, \{ z \leq 0\} \setminus \bigcup_{j \in J_{h,\textup{SL}}'}D_j \times \real)\bigg] \vspace{1.5mm}\\
   & &  \quad + \sum_{j \in J_{h,\textup{SL}}'}\bigg[ {{E}}'(L, \{ z \leq 2t\}  \cap (D_j \times \real))- {{E}}'(L_t, \{ z \leq 0\}  \cap (D_j \times \real))\bigg] \vspace{1.5mm}\\
 &&\quad  - M\e\sum_{j \in J_h}|\partial D_j| \vspace{1.5mm}\\
   &=: &  [I] + \sum_{j \in J_{h,\textup{SL}}'} [II]_j - M\e\sum_{j \in J_h}|\partial D_j| 
   \end{array}
   \end{equation}
This gives us naturally two separate types of energy difference -- the computation of $[I]$ has come up before,
 \begin{equation}\label{eqn: energy change SVL2}
   \begin{array}{lll}
    [I]  & & \geq \Per(L, \{ z \leq2 t\} \setminus \bigcup_{j \in J_{h,\textup{SL}}'}D_j \times \real) - \int_{ \real^d \setminus \cup_{j \in J_{h,\textup{SL}}'} D_j} |\varphi_{L}^-(x,0)-\varphi_L^+(x,2t) | \ dx.
      \end{array}
   \end{equation}
   This leaves us to compute $\sum_j [II]_j$. Due to the definition of $J_{h,\textup{SL}}'$ and Lemma~\ref{lem: vertical horizontal}, for every $j \in J_{h,\textup{SL}}'$, the top cap of $\Gamma_j$,  which is $D_j \times \{ z = z_j + \frac{1}{2}\gamma r\}$, is contained in $L$. 
  Hence if $x \in D_j$ and $\varphi^+_{L^C}(x,2t) = 1$, then $(x,s)\in\partial L$ for some $s\in ( z_j + \frac{1}{2}\gamma r,2t)$. It follows that 
   \begin{equation}\label{eqn: top to 2t}
    \Per(L,D_j \times\{ z_j+\tfrac{1}{2}\gamma r \leq z \leq 2t\}) \geq \int_{D_j} \varphi_{L^C}^+(x,2t) \ dx. 
   \end{equation}
   
Now we have
   \begin{equation*}
   \begin{array}{lll}
    {{E}}'(L,D_j \times \{  z\leq 2t\}) &= & \Per(L,\Gamma_j) +\Per(L,D_j \times\{ z_j+\tfrac{1}{2}\gamma r \leq z \leq 2t\})+ {{E}}'(L, D_j \times\{  z \leq z_j-\tfrac{1}{2}\gamma r\})  \vspace{1.5mm} \\
     &\geq & \Per(L,\Gamma_j) +\int_{D_j} \varphi_{L^C}^+(x,2t) \ dx+ \e^d\Sigma_{\textup{SV}}'(\tfrac{1}{\e}D_j)
    \end{array}
    \end{equation*}
   where we have used \eqref{eqn: top to 2t} and Lemma~\ref{reduction}. Lastly, one can easily check that 
      \[ {{E}}'(L_t,D_j \times \{ z \leq 0\})  = \e^d\Sigma_{\textup{SL}}'(\tfrac{1}{\e}D_j) +  \int_{D_j} \varphi_{L^C}^+(x,2t) \ dx.\]
   
    After subtracting the previous two equations we get the desired result for $[II]_j$ to show \eqref{eqn: energy SVL}.

   \medskip

 \medskip

 {\bf $\circ$ Iteration argument to prove Proposition~\ref{prop: perim 0 t}.}  Now that we have proven Lemma~\ref{middle_lemma} we can pick up from \eqref{eqn: main iterate}:
 \begin{equation}\label{eqn: iteration per}
 \begin{array}{lll}
 \Per(L,W_k) \leq  C_1 t+C_2\lambda \Per(L,W_{k+1})   \ \hbox{ for any } \  \lambda 2^{-k}t \geq R_0\e ,
 \end{array}
 \end{equation}
 with $C_2$ universal and $C_1$ a universal constant times $|U| + |\partial U|$. This is the form of the bootstrapping, by using an inferior estimate for $\Per(L,W_{k})$ and then iterating with \eqref{eqn: iteration per} we will obtain an (almost) optimal estimate for $\Per(L,W_1)$. Suppose that $t \geq 8C_0 R_0\e$ and let
 \begin{equation}\label{eqn: k lambda}
  k := [ \tfrac{1}{2} \log_2\tfrac{t}{(2C_2)^{1/2} R_0\e}] \ \hbox{ and } \ \lambda := 2^k \tfrac{R_0\e}{t}.
  \end{equation}
  Note that we have  $k \geq 1$ and $\lambda \leq \tfrac{1}{2C_2}(\tfrac{R_0\e}{t})^{1/2}$.
   Applying the estimate \eqref{eqn: iteration per} repeatedly we obtain,
 \[ \Per(L,W_1) \leq C_1(1+C_2\lambda + \cdots (C_2\lambda)^{k-1})t+(C_2\lambda)^{k-1}\Per(L,W_{k}).\]
 Then using \eqref{eqn: iteration per} one last time to bound $\Per(L,W_{k})$, and using the simple bound (from \eqref{eqn: perim a-priori 2}),
 \[ \Per(L,W_{k+1}) \leq \Per(L, \{ z>0\}) \lesssim 1,\]
 yields,
 \[ \Per(L,W_1) \leq 2C_1t +C(C_2\lambda)^k \leq 2C_1t +C(\tfrac{R_0\e}{t})^{k/2} \leq 2C_1t+C\exp(-c(\log\tfrac{t}{R_0\e})^2) .\]
 Now supposing that, moreover, $t \geq R_0\e \exp(\frac{1}{c^{1/2}}(\log\frac{1}{\e})^{1/2})$ we obtain that,
 \[ \exp(-c(\log\tfrac{t}{R_0\e})^2) \leq \exp(-\log\tfrac{1}{\e}) \leq \e \leq t,\]
 so that we finally have the desired result,
 \[ \Per(L,W_1) \leq C (|U| + |\partial U|) t \ \hbox{ for any } \ t \geq R_0\e \exp(\frac{1}{c^{1/2}}(\log\frac{1}{\e})^{1/2}).\]
 This completes the proof of Proposition~\ref{prop: perim 0 t}.

\end{proof}

\subsection{Non-degeneracy of the large-scale contact angle} The goal of this section is to show, quantitatively, that the large-scale  contact angle of the volume constrained minimizer along its contact line is not too close to $0$ or $\pi$.

\medskip

Recall that  $Q_r$ refers to a $d+1$ dimensional cube of side length $r$, $\Box_r^t$ refers to a $d$ dimension cube of side length $r$ at height $t$,
\[Q_r(x,z) = (-r/2,r/2)^{d+1}+(x,z) \ \hbox{ and } \ \Box_r^t(x) = (-r/2,r/2)^{d} \times \{ z = 0\}+(x,t).\]
We will write $\Box_r$ to refer both to $\Box_r^0$ and to the corresponding subset of $\real^d$, and
\[\Box_r^{s,t}(x) := \Box_r \times \{s < z< t\}.\]
The dependence on the center point will be omitted when the center is not relevant.

  \begin{prop}[Contact angle non-degeneracy]\label{prop: non-degen} 
Let $Q_r^+ \subset \Omega$ be the upper half of a cube centered on the $z=0$ level.  There are universal $\delta_0>0$ and $C\geq 1$, depending only on $1-|\cos \theta_Y|$ and $\phi$, so that, 
\begin{equation}\label{eqn: non-degen SL}
 Q_r^+ \cap\{  z = \delta_0 r\} \subset L \ \hbox{ implies } \  \frac{|Q_{r/2}^+ \cap L^C|}{|Q_{r/2}^+|} \leq C \left(\frac{\e}{r}\right)^{\frac{2d}{d+2}} ,
 \end{equation}
 and for $r \leq \textup{Vol}^{\frac{1}{d+1}}$,
 \begin{equation}\label{eqn: non-degen SV}
 Q_r^+ \cap\{  z = \delta_0 r\} \subset L^C \ \hbox{ implies } \  \frac{|Q_{r/2}^+ \cap L|}{|Q_{r/2}^+|} \leq C \left(\frac{\e}{r}\right)^{\frac{2d}{d+2}} .
 \end{equation}
\end{prop}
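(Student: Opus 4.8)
The plan is an energy‑comparison argument of the type underlying Proposition~\ref{prop: contact angle quantity non-degen} and the non‑degeneracy estimates of Caffarelli--Mellet \cite{CM}, the favorable exchange rate being the constant $c_0$ from Proposition~\ref{prop: contact angle quantity non-degen}. By the symmetry $L\leftrightarrow V$ (which exchanges the solid–liquid and solid–vapor cell problems and sends $\cos\theta_Y\mapsto-\cos\theta_Y$) it suffices to prove \eqref{eqn: non-degen SL}. Rescale so that $r=1$ and write $\eta=\e/r$; since the conclusion is immediate when $\eta$ is not small (using \eqref{eqn: perim a-priori 2} and the density estimates, or simply enlarging the constant $C$), we may assume $\eta\ll1$. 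Fix $\delta_0\in(\tfrac14,\tfrac12)$, so that the all‑liquid slice $\{z=\delta_0\}$ over $\Box_1$ lies above the half–cube $Q_{1/2}^+$ (we will handle the alternative small‑$\delta_0$ normalisation separately at the end), and set $V:=|Q_{1/2}^+\cap L^C|$, which must be bounded by $C\eta^{2d/(d+2)}$.

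First I would reduce the vapor region near the surface to a graph. Letting $w(x):=\int_0^{\delta_0}\varphi_{L^C}(x,z)\,dz$ for $x\in\Box_{1/2}$ be the vapor thickness under the slice, \cite[Lemma 14.7]{Giusti} gives $w\in BV$ with its graph area controlled by $\Per(L,\Box_{1/2}\times(0,\delta_0))$, and $V\le\int_{\Box_{1/2}}w$. I would then build a competitor $\tilde L$ as follows: over a region $D\subset\Box_1$ chosen to contain (a controlled neighborhood of) the footprint $\{w>0\}$, replace $L$ inside $D\times(-\infty,\delta_0)$ by liquid above $z=0$ together with the rescaled solid–liquid cell minimizer $\e L_{\textup{SL}}(\tfrac1\e D)$ below $z=0$ (Lemma~\ref{lem: maximal minimal minimizers}), and keep $\tilde L=L$ elsewhere; the resulting volume change (concentrated below $z=0$, where it is $O(M\e|D|)$, plus the filled column) is compensated and charged via Lemma~\ref{lem: volume change}. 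Because $L$ is all‑liquid on the slice $\{z=\delta_0\}$ over $\Box_1\supseteq D$, no liquid–vapor interface is created at the top of the modification, and the only created interface is a lateral term on $\partial D\times(0,\delta_0)$.

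The energy balance, in the normalized energy $E'$, then schematically reads
\[
0 \;\ge\; E'(L)-E'(\tilde L)-(\text{volume penalty}) \;\gtrsim\; \mathcal G(w) \;+\; c_0\,|D| \;-\; C\Big(M\tfrac{\e}{\ell}\log\tfrac{\ell}{\e}+\tfrac1\ell\Big)|D| \;-\; C\,\textup{Vol}^{-\frac1{d+1}}(|D|\delta_0+M\e|D|),
\]
where $\mathcal G(w)$ is the BV–energy $\int_{\Box_{1/2}}(\sqrt{1+|Dw|^2}-1)$ of the thickness function, which by Lemma~\ref{lem: surface area to BV} combined with a Poincaré/Sobolev inequality on $\Box_{1/2}$ (exactly as in the proof of the Cassie--Baxter Lemma above) is bounded below by a power of $\|w\|_{L^1}\ge V$; the term $c_0|D|$ comes from replacing a solid–vapor interface plus the overlying liquid–vapor interface by the solid–liquid cell state, using $\cos\overline\theta_Y\le1-c_0$ (Proposition~\ref{prop: contact angle quantity non-degen}), and the error $O(M\tfrac\e\ell\log\tfrac\ell\e)$ is the cell–problem discretisation error at the covering scale $\ell$ (Theorem~\ref{thm: cell gen periodic}). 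One then optimises the scale $\ell$ of the covering, which trades the cell error ($\propto \eta/\ell$) and the lateral cost ($\propto1/\ell$) against the efficiency $|D|\approx\|w\|_{L^1}$ of the cover and against the Sobolev exponent in $\mathcal G$; the balance is arranged to produce the stated exponent $\frac{2d}{d+2}$, hence $V\lesssim\eta^{2d/(d+2)}$. The estimate \eqref{eqn: non-degen SV} follows by the symmetric construction with $L_{\textup{SV}}$ in place of $L_{\textup{SL}}$, and the hypothesis $r\le\textup{Vol}^{1/(d+1)}$ there ensures the volume penalty incurred by removing liquid stays subdominant.

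The hard part is the geometry of the vapor region against the rough surface, namely selecting the region $D$ (a smoothed sub‑level set of the vapor footprint density, or a Vitali‑type covering at the scale $\ell$) so that simultaneously: (i) every vertical fibre over the relevant part of $D$ meets $\partial L$ (so that the old free surface really is $\gtrsim|D|$, using that liquid caps the column), (ii) $|\partial D|$ is small enough, $\lesssim|D|/\ell$, that the created lateral interface does not swallow the $c_0|D|$ gain, and (iii) $|D|$ is comparable to the true footprint $|\{w>0\}|$ so that the cell cost $(1-c_0)|D|$ is actually covered; reconciling (i)--(iii) is what forces the optimised scale $\ell$ and its interplay with the threshold $r_0(\e)$ of Proposition~\ref{prop: perim 0 t}, below which the near‑surface perimeter control is unavailable. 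A secondary technical point — relevant only if one insists on a small $\delta_0$ so that $Q_{r/2}^+$ protrudes above the all‑liquid slice — is to dispose of vapor lying above height $\delta_0 r$ inside $Q_{r/2}^+$ using the interior density estimates of Lemma~\ref{lem: meas reg}; with the choice $\delta_0>\tfrac14$ above this does not arise.
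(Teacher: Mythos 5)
Your proposal has the right general flavor (fill the vapor with liquid, pay for a new solid--liquid interface, win because the effective contact angle is non-degenerate, and control the vapor volume through a rearranged thickness function $w$), but it is missing the two mechanisms that actually make the paper's proof close, and as written it does not produce the exponent $\frac{2d}{d+2}$. First, you have no iteration. The paper runs a De~Giorgi scheme on shrinking cubes $\Box_{r_k}^{0,\delta}$, using the coarea formula to select a good radius $r_k^*$ at which the \emph{lateral} vapor area $\mathcal{H}^d(L^C\cap\dside\Box_{r_k^*}^{0,\delta})$ is bounded by $2^kV_k$; the competitor is simply $L\cup\Box_{r_k^*}^{0,\delta}$, and the energy comparison yields a recursion $V_{k+1}\le C(\e (2^kV_k)^{1/2}+(2^kV_k)^{1+1/d})$ whose fixed point gives $\e^{2d/(d+2)}$. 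In your single-shot comparison the created lateral interface on $\partial D\times(0,\delta_0)$ is not controlled by the vapor volume (there is no mean-value selection of a good slice), and the tension you yourself flag in (ii) --- you need $|\partial D|\delta_0\ll c_0|D|$, forcing $\ell\gtrsim\delta_0\sim 1$, while (iii) wants $|D|$ comparable to a possibly tiny footprint --- cannot be reconciled at a single scale. Second, your lower bound on $\mathcal{G}(w)$ via a Poincar\'e inequality on $\Box_{1/2}$ is not available: the Poincar\'e constant requires $\{w=0\}$ to occupy a definite fraction of the domain, which is exactly what is unknown a priori. The paper resolves this with the contact/non-contact dichotomy at scale $\e$: in contact cells $\{w=0\}$ fills a fraction $s$ of an $\e$-square so Poincar\'e applies \emph{at scale $\e$} and yields $|L^C\cap\F_C^\delta|\lesssim\e(2^kV_k)^{1/2}$ (this is the source of the $\e s^{1/2}$ term and hence of the exponent), while in non-contact cells $w>0$ on most of the cell, which forces $\Per(L,\Box^{0,\delta})\ge(1-s)|\Box|$ and lets one run the flat-surface $|\cos\theta_Y|<1$ cancellation cell by cell, plus a separate estimate along the interface between the two families.

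A further structural objection: you propose to quote Proposition~\ref{prop: contact angle quantity non-degen} together with the rate in Theorem~\ref{thm: cell gen periodic} at a mesoscale $\ell\gg\e$, but the paper explicitly remarks that this route (which is essentially how one would prove the result from Proposition~\ref{prop: perim 0 t} and interior regularity) gives a strictly worse boundary layer; the correct exponent requires working directly at scale $\e$ where the liquid boundary may touch the solid in every periodicity cell, using the \emph{proof} of the contact-angle non-degeneracy (the flat-top fraction $\alpha=|\{\phi=0\}|$ and $|\cos\theta_Y|<1$) rather than its statement. So the gap is not a technicality to be patched: the iteration on shrinking cubes with coarea selection of the lateral slice, and the $\e$-scale contact/non-contact decomposition with the scale-$\e$ Poincar\'e inequality, are the core of the argument and are absent from your plan.
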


  There is an asymmetry between the two parts of the result due to the volume constraint being on the liquid region $L$ and not on the vapor region $V$. We will only show the first part of Proposition~\ref{prop: non-degen} which says essentially that the macroscopic contact angle is bounded away from $0$.  This direction is slightly simpler since it does not need the requirement $r \leq \textup{Vol}^{-\frac{1}{d+1}}$. We will point out where this asymmetry comes up in the proof. 

\medskip

 Let us first state some simple consequences.  The first follows from Lemma~\ref{lem: meas reg} combined with Proposition~\ref{prop: non-degen}.
\begin{cor}\label{cor: uniform non-degen}
 Let $Q_r^+$ be the upper half of a cube centered on the $z=0$ level.  There are universal $\delta_0> 0$ and $C\geq1$ depending in particular on $1-|\cos \theta_Y|$ and $\phi$,  so that, calling $\beta = \frac{2d}{(d+1)(d+2)}$, 
 \[  Q_r^+ \cap\{  z = \delta_0 r\} \subset L\ \hbox{ implies } \ Q_{r/2}^+ \cap \left\{ \frac{z}{r} \geq C\left(\frac{\e}{r}\right)^{\beta}\right\} \subset L,\]
 and for $r \leq \textup{Vol}^{\frac{1}{d+1}}$,
 \begin{equation}\notag
 Q_r^+ \cap\{  z = \delta_0 r\} \subset L^C \ \hbox{ implies } \  Q_{r/2}^+ \cap \left\{ \frac{z}{r} \geq C\left(\frac{\e}{r}\right)^{\beta}\right\} \subset L^C.
 \end{equation}
\end{cor}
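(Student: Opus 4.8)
The plan is to combine the \emph{volume} bound of Proposition~\ref{prop: non-degen} with the \emph{interior} non-degeneracy of Lemma~\ref{lem: meas reg}. Assume the hypothesis $Q_r^+\cap\{z=\delta_0 r\}\subset L$. First I would invoke Proposition~\ref{prop: non-degen} (with a harmless modification of its proof so that a definite fraction of $Q_r^+$, say $Q_{3r/4}^+$, is controlled rather than just $Q_{r/2}^+$ — such density statements are always of interior type) to obtain
\[
|L^C\cap Q_{3r/4}^+|\;\le\; C\Big(\tfrac{\e}{r}\Big)^{\frac{2d}{d+2}}|Q_r^+|\;\le\; C\,\e^{\frac{2d}{d+2}}\,r^{\,d+1-\frac{2d}{d+2}} .
\]
Now suppose, towards a contradiction, that some $p=(x_0,z_0)\in Q_{r/2}^+$ with $z_0\ge C_\ast\, r\,(\e/r)^{\beta}$ fails to lie in $L$ (i.e.\ $p\notin L^{(1)}$), where $C_\ast$ is the constant to be pinned down and $\beta=\frac{2d}{(d+1)(d+2)}$. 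Since $z_0<r/4$, the ball $B_{z_0/4}(p)$ is contained in $Q_{3r/4}^+$.

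The next step is a routine consequence of Lemma~\ref{lem: meas reg}: every $p\notin L^{(1)}$ of height $z_0>0$ satisfies $|L^C\cap B_{z_0/4}(p)|\ge c\,z_0^{d+1}$ for a universal $c$. If $p\in\partial_e L$ this is immediate from the upper bound $|L\cap B_\rho(p)|\le(1-c)|B_\rho|$ of Lemma~\ref{lem: meas reg} with $\rho=z_0/4<z_0$; if $p\in L^{(0)}$ is an interior point of $L^C$ it is trivial; and in the remaining case a continuity argument on $s\mapsto|L^C\cap B_s(p)|$ produces a point $q\in\partial_e L$ with $|q-p|<z_0/8$, whence Lemma~\ref{lem: meas reg} at $q$ (of height $>7z_0/8$) applied on $B_{z_0/8}(q)\subset B_{z_0/4}(p)$ gives the claim.

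Combining the two estimates yields $c\,z_0^{d+1}\le C\,\e^{\frac{2d}{d+2}}r^{\,d+1-\frac{2d}{d+2}}$, i.e.
\[
z_0\;\le\; C'\,\e^{\beta}\,r^{\,1-\beta}\;=\;C'\, r\,(\e/r)^{\beta},
\]
which contradicts the assumption on $p$ once $C_\ast>C'$; this proves the first implication (and the value of the constant $C$ in the statement is then this $C'$). For the second implication I would run the same argument with $L$ and $L^C$ interchanged, using the lower density bound for $L$ in Lemma~\ref{lem: meas reg}; the restriction $r\le\textup{Vol}^{1/(d+1)}$ is inherited verbatim from Proposition~\ref{prop: non-degen} and enters only there. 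I do not expect a genuine obstacle — this corollary is bookkeeping on top of the two hard results — but the two points deserving a line of care are (i) arranging the volume bound on a cube large enough to contain $B_{z_0/4}(p)$, and (ii) passing from free‑boundary points to arbitrary points of $L^C$ of positive height in (ii) above, i.e.\ the continuity/connectedness step.
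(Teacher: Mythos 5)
Your argument is correct and is exactly the route the paper intends: the paper offers no written proof beyond the remark that the corollary "follows from Lemma~\ref{lem: meas reg} combined with Proposition~\ref{prop: non-degen}", and your combination of the volume bound with the interior density estimate at a putative point of $L^{C}$ at height $z_0$, giving $c\,z_0^{d+1}\le C(\e/r)^{2d/(d+2)}r^{d+1}$ and hence the exponent $\beta$, is precisely that. Your patch for the containment issue (running the De Giorgi iteration of Proposition~\ref{prop: non-degen} between $Q_r^+$ and $Q_{3r/4}^+$ instead of $Q_{r/2}^+$) is legitimate and harmless, and the measure-theoretic step reducing an arbitrary point of $L^{(0)}\cup\partial_eL$ to a nearby point of $\partial_eL$ is standard (e.g.\ via the relative isoperimetric inequality in a small ball where $|L^C\cap B_s(p)|=\tfrac12|B_s|$).
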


From the above Corollary in combination with Lemma~\ref{lem: meas reg}, we obtain \emph{near boundary density estimates} for the constrained minimizer $L$.  This will be precisely the result that we use in the proof of Theorem~\ref{thm: main periodic} below to measure the size of the boundary layer around $z = 0$ outside of which uniform convergence holds.

\begin{prop}[Near boundary density estimates]\label{prop: boundary density estimates}
 Let $x \in \real^d$ and $r \leq \textup{Vol}^{\frac{1}{d+1}}$.  There are $C \geq 1$ and $c>0$ universal so that, with $\beta := \frac{2d}{(d+1)(d+2)}$, 
 \begin{equation*}
\begin{array}{llll}
(i) & x+ t e_{d+1} \in L^{(1)} \cup \partial_eL & \hbox{ implies } & |L \cap Q_r^+(x)| \geq cr^{d+1}  \ \hbox{ for all } \ t \geq C \e^{\beta}r^{1-\beta}\vspace{1.5mm} \\
(ii) &  x+ t e_{d+1} \in L^{(0)} \cup \partial_eL & \hbox{ implies } & |L^C \cap Q_r^+(x)| \geq cr^{d+1}  \ \hbox{ for all } \ t \geq C \e^{\beta}r^{1-\beta} \vspace{1.5mm} \\ 
(iii) & x+ t e_{d+1} \in \partial_eL & \hbox{ implies } & \Per(L,Q_r^+(x)) \geq cr^{d}  \ \hbox{ for all } \ t \geq C \e^{\beta}r^{1-\beta}
\end{array} 
\end{equation*}
\end{prop}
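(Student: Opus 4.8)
The plan is to deduce all three statements from the interior density estimates of Lemma~\ref{lem: meas reg} together with the up-to-the-surface propagation of Corollary~\ref{cor: uniform non-degen}; I carry out (i) in detail, obtain (ii) by the symmetry $L \leftrightarrow L^C$, and obtain (iii) from the relative isoperimetric inequality. Throughout take $x=0$ and assume, as is implicit from Proposition~\ref{prop: non-degen}, that $Q_r^+(0)\subset\Omega$. For (i) fix $t\in[C\e^\beta r^{1-\beta},\tfrac1{12}r)$ (the range $t\gtrsim r$ is addressed below) and argue by contradiction: suppose $(0,t)\in L^{(1)}\cup\partial_e L$ but $|L\cap Q_r^+(0)|<c\,r^{d+1}$ for a small universal $c$ to be chosen. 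Consider the open slab $A:=\Box_{r/2}\times(\tfrac{\delta_0}{4}r,\tfrac{\delta_0}{2}r)$, with $\delta_0$ the constant of Corollary~\ref{cor: uniform non-degen}. If $A$ met $\partial_e L$ at a point $(y,s)$, then Lemma~\ref{lem: meas reg} at $(y,s)$ with radius $s/2<s$ gives $|L\cap B_{s/2}(y,s)|\gtrsim(\delta_0 r)^{d+1}$; since $s\sim\delta_0 r$ and $y\in\Box_{r/2}$ one checks $B_{s/2}(y,s)\subset Q_r^+(0)$, contradicting the smallness of $|L\cap Q_r^+(0)|$ once $c$ is small enough. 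Hence $\partial_e L\cap A=\emptyset$, so $\Per(L,A)=\mathcal{H}^d(\partial^* L\cap A)=0$ by Federer's theorem; as $A$ is connected, $\indicator_L$ is a.e.\ constant on $A$, and smallness forces this constant to be $0$, i.e.\ $A\subset L^C$ up to a null set.

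Now put $\rho:=r/3$, so that $\Box_\rho\times\{z=\delta_0\rho\}=\Box_{r/3}\times\{z=\tfrac{\delta_0}{3}r\}\subset A\subset L^C$, which is precisely the hypothesis of the second implication of Corollary~\ref{cor: uniform non-degen} at scale $\rho$ (note $\rho\le r\le\textup{Vol}^{1/(d+1)}$ and $Q_\rho^+(0)\subset\Omega$). That Corollary yields $Q_{\rho/2}^+(0)\cap\{z\ge C\e^\beta\rho^{1-\beta}\}\subset L^C$. Since $t<\tfrac1{12}r=\rho/4$, the point $(0,t)$ lies in $Q_{\rho/2}^+(0)$, and since $C\e^\beta\rho^{1-\beta}=C\,3^{\beta-1}\e^\beta r^{1-\beta}\le t$ (this is why the threshold in the Proposition is $C\e^\beta r^{1-\beta}$ with $C$ taken $\ge 3^{\beta-1}$ times the Corollary's constant), in fact $(0,t)\in Q_{\rho/2}^+(0)\cap\{z\ge C\e^\beta\rho^{1-\beta}\}\subset L^C=L^{(0)}$ — contradicting $(0,t)\in L^{(1)}\cup\partial_e L$. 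When $t$ exceeds a fixed fraction of $r$ and $r$ exceeds a fixed multiple of $\e$, (i) is immediate from Lemma~\ref{lem: meas reg}: a ball of radius comparable to $\min(t,\tfrac r2-t)$ about $(0,t)$ (or, if $t$ is near the top of $Q_r^+(0)$, about a nearby boundary point) sits inside $Q_r^+(0)$ and carries mass $\gtrsim r^{d+1}$; the residual degenerate range $r\lesssim\e$ is handled after rescaling by $\e^{-1}$, where $L/\e$ is a perimeter almost-minimizer at unit scale.

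Part (ii) is identical with the roles of $L$ and $L^C$ exchanged: using the two-sided bounds of Lemma~\ref{lem: meas reg}, $|L^C\cap Q_r^+(0)|<c\,r^{d+1}$ forces $A\subset L$ up to a null set, and then the first implication of Corollary~\ref{cor: uniform non-degen} puts $(0,t)$ in $L^{(1)}$, contradicting $(0,t)\in L^{(0)}\cup\partial_e L$; the restriction $r\le\textup{Vol}^{1/(d+1)}$ enters only through the Corollary. For (iii), since $\partial_e L\subset(L^{(1)}\cup\partial_e L)\cap(L^{(0)}\cup\partial_e L)$, if $x+te_{d+1}\in\partial_e L$ then (i) and (ii) give $|L\cap Q_r^+(x)|\ge c\,r^{d+1}$ and $|L^C\cap Q_r^+(x)|\ge c\,r^{d+1}$ simultaneously; the relative isoperimetric inequality on the convex box $Q_r^+(x)$, applied at scale $r$, then gives $\Per(L,Q_r^+(x))\gtrsim\min\big(|L\cap Q_r^+(x)|,|L^C\cap Q_r^+(x)|\big)^{d/(d+1)}\gtrsim r^d$.

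The one place that needs genuine care is the slab step of (i): the height band $(\tfrac{\delta_0}{4}r,\tfrac{\delta_0}{2}r)$, the smallness constant $c$, and the auxiliary scale $\rho$ must be chosen consistently so that (a) the density estimate both excludes $\partial_e L$ from the relevant sub-box of $Q_r^+(0)$ and excludes the slab from being entirely liquid, and (b) the rescaled half-cube $Q_\rho^+(0)$ at which the Corollary is invoked still sits inside $Q_{r/2}^+(0)$ while its propagation threshold $C\e^\beta\rho^{1-\beta}$ stays below the prescribed $t$. This bookkeeping, together with isolating the complementary ranges of $t$ and $r$, is routine but must be tracked; no idea beyond Lemma~\ref{lem: meas reg} and Corollary~\ref{cor: uniform non-degen} is needed.
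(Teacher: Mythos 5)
Your proof is correct and rests on the same two ingredients as the paper's (the interior density estimates of Lemma~\ref{lem: meas reg} and Corollary~\ref{cor: uniform non-degen}), arranged in the same contrapositive way: the paper deduces directly from the hypothesis on $x+te_{d+1}$ that $L$ must meet the slice $\{z=\delta_0 r/2\}$ inside $Q_{r/2}^+$ and applies the interior density estimate there, while you assume the volume bound fails, show the slab is vapor, and run the Corollary forward — logically the same argument. The only (harmless) variation is in (iii), where you combine (i)--(ii) with the relative isoperimetric inequality on $Q_r^+$ instead of invoking the perimeter lower bound of Lemma~\ref{lem: meas reg} at an interior boundary point.
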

The proof is straightforward and is provided in the Appendix.
%on page~\pageref{proof: boundary density estimates}

\medskip

We remark that one can use the result of Proposition~\ref{prop: perim 0 t} to prove a version of Proposition~\ref{prop: non-degen}, but the estimate of the boundary layer achieved by that proof is strictly worse. In particular using Proposition~\ref{prop: perim 0 t} requires using an interior regularity result, which will only apply away from the solid surface, whereas we believe that a more precise argument needs to deal directly (as we do in our proof of Proposition~\ref{prop: non-degen}) with a situation where the liquid boundary is very close to the solid surface touching the solid in every $\e\integer^d$-periodicity cell.

\medskip

Now we return to proving Proposition~\ref{prop: non-degen}.  Let us begin with the outline of the proof, by which we hope to illustrate the challenges coming from the non-flat solid surface $S$.

\medskip

By scaling it suffices to prove the result when $r =1$ and $Q_r^+ = Q_r^+(0,0)$. For simplicity let us assume that  
 \begin{equation}\label{flat_top}
 |\{ \phi = 0\}| = \alpha> 0.
 \end{equation}
 This assumption is not required for the proof, for example assumption \eqref{eqn: S basic assumption} would be sufficient.  Assumptions \eqref{flat_top} and \eqref{eqn: S basic assumption} are both essentially designed to prove the non-degeneracy of the homogenized contact angle Proposition~\ref{prop: contact angle quantity non-degen}.  Proposition~\ref{prop: contact angle quantity non-degen} plays an important role in the current proof, but for technical reasons we cannot just quote it, we need to use the proof.  To see how to use the weaker assumption \eqref{eqn: S basic assumption} in the current proof one just needs to look carefully at the proof of Proposition~\ref{prop: contact angle quantity non-degen}.

\medskip

Suppose that $Q_1^+ \cap \{ z= \delta\} \subset L$.  We focus on the region $\Box_r^{0,\delta}$ with $1/2 \leq r <1$. The boundary of $\Box_r^{0,\delta}$ is naturally divided into three parts, the top, the sides and the bottom, we will refer to these by,
\begin{equation}
\hbox{the top is } \Box_r^\delta, \ \hbox{ the bottom is } \Box_r^0, \hbox{ and the sides we call } \  \partial _{\textup{side}}\Box_r^{0,\delta} := \partial\Box_r \times \{ 0<z < \delta\}.
\end{equation}

\medskip

$\circ$ {\bf Heuristics} The most basic idea of the proof is the same as Lemma 8 in \cite{CM}, which is based on a de Giorgi type iteration. Let us define
\[ V({r}) :=  L^C \cap \Box_{{r}}^{0,\delta} \ \hbox{ for } \ {r} \in (1/2,1),\]
then
\[\tfrac{d}{dr}|V(r)| = \mathcal{H}^d(L^C \cap \partial_{\textup{side}}\Box_r^{0,\delta}).\]
 The idea is to derive a differential inequality for $V$ of the form,
\begin{equation}\label{eqn: de giorgi ode}
  |V(r)| \leq (\tfrac{d}{dr}|V(r)|)^{1+1/d}\ \hbox{ which will imply } \ |V(\tfrac{1}{2})| = 0 \ \hbox{ for } \ |V(1)| <<1.
  \end{equation}
  
  Clearly, given the statement of Proposition~\ref{prop: non-degen}, we will not be able to prove that $|V(\frac{1}{2})| = 0$. 
The inequality we actually obtain in the proof is a modified version, which, in terms of ODEs, reads as 
\begin{equation}\label{discrete_continuum}
|V(r)| \leq C F(\frac{d}{dr}|V(r)|) \quad \hbox{ with } \ F(s) := [\e s^{1/2} + s^{1+1/d}].
\end{equation}

Now we explain how we will establish \eqref{discrete_continuum}. From isoperimetric inequality 
$$
 |V(r)| \leq C_d\mathcal{H}^d(\partial V(r))^{\frac{d+1}{d}}
 $$
where we can decompose the boundary of $V(r)$ into its component parts
\begin{equation}\label{eqn: vapor region decomp}
 \mathcal{H}^d(\partial V(r)) = \mathcal{H}^d(L^C \cap \dside \Box_r^{0,\delta})+ \Per(L^C, \Box_r^{0,\delta}) + \mathcal{H}^d(L^C \cap \Box_r^0).
 \end{equation}

\medskip

Thus we obtain \eqref{discrete_continuum} with $\e\to 0$ if we can bound the last two terms by the first term.  Since $\partial L^C \cap \Box_r^{0,\delta}$ contains the graph of a function over the set $L^C \cap \Box_r$, Lemma~\ref{lem: surface area to BV} yields 
\begin{equation}\label{eqn: ordering of areas by f}
\mathcal{H}^d(L^C \cap \Box_r^0) \leq \Per(L^C, \Box_r^{0,\delta}).
 \end{equation}

\medskip

Thus the main bulk of the proof is spent to obtain a bound similar to 
\begin{equation}\label{eqn: est ideal}
\Per(L^C, \Box_r^{0,\delta}) \leq C\mathcal{H}^d(L^C \cap \dside \Box_r^{0,\delta}) 
\end{equation}

To attempt to show \eqref{eqn: est ideal} we begin the same as in \cite{CM}, i.e. by considering the test set $L \cup \Box_r^{0,\delta}$ and computing the energy difference. In the flat boundary case studied in \cite{CM} this computation yields \eqref{eqn: est ideal} in a straightforward way, using again \eqref{eqn: ordering of areas by f} and that $|\cos \theta_Y| <1$.  The rough boundary case faces significant additional difficulties.  The essential problem is similar to what we faced in the proof of Proposition~\ref{prop: perim 0 t}.  In order to use the fact that $|\cos\overline{\theta}_Y| < 1$ one needs a certain amount of regularity at the $\e$-scale.

\medskip

To deal with this difficulty, we divide up the bottom boundary $\Box_{r}^0$ into $\e$-size squares and then we further divide those squares into {\it contact cells} where the liquid drop $L$ touches the $\{z=0\}$ level in at least a small portion of the cell, and {\it non-contact cells} where the liquid drop $L$ only touches the $\{z=0\}$ level in at most a very small portion of the cell.   We will show that \eqref{eqn: est ideal} can be still obtained in non-contact cells, meanwhile over the contact cells we will be able to use a Poincare-type inequality to bound the total volume of $L^C$. Arguing in this way we will naturally have to deal as well with the (unknown) boundary between the contact and non-contact regions.

\medskip

{\bf $\circ$ Setting of the problem.} In the proof we will work with the discretized version of \eqref{discrete_continuum}.  Define $r_k := \frac{1}{2} + 2^{-k}$ and $ V_k := | V(r_k)|$.  The discrete version of \eqref{discrete_continuum} then becomes 
 \begin{equation}\label{discrete}
  V_{k+1} \leq C(d,\alpha,\cos\theta_Y)F(2^kV_k).
   \end{equation}
Note that $V_k \to |V(\frac{1}{2})|$ as $k\to\infty$.  
 
 \medskip
Note that  by the area formula
\[
 V_k \geq \int_{r_{k+1}}^{r_k} \mathcal{H}^d( L^C \cap \dside \Box_r^{0,\delta})  \ dr,
 \]
so there is some $r_{k+1} \leq r_k^* \leq r_k$ so that
\begin{equation}\label{eqn: 2k vk}
 \mathcal{H}^d(L^C \cap \dside \Box_{r_k^*}^{0,\delta}) \leq 2^kV_k.
 \end{equation}
We define
 \[
 \Gamma_k:=\Box_{r_k^*}^{0,\delta}\hbox{ and } \Lambda:= L\cup \Gamma_k.
 \]  
 Our goal is to estimate the energy difference $E'(L) - E'(\Lambda)$ to achieve \eqref{discrete}.  
 
 \medskip
 
 In the proof below, we will often replace $L^C \cap \Gamma_k$ by a subgraph in $\Gamma_k$ with the same area and decreased perimeter as follows.  A full account of the following construction and resulting properties can be found in the book \cite[Lemma 14.7 and Theorem 14.8]{Giusti}.  For every $x \in \Box_{r_k^*}$ we define,
 \begin{equation}\label{eqn: w defn}
 w(x) := \int_0^\delta \indicator_{L^C}(x,t) \ dt.
 \end{equation}
 This function $w$ is in $BV(\Box_{r_k^*})$ and furthermore satisfies that, for any square $\Box \subseteq \Box_{r_k^*}$,
 \begin{equation}\label{eqn: w prop}
 \int_{\Box} w(x) \ dx = |L^C \cap \Box^{0,\delta}| \ \hbox{ and } \ \int_{\Box} \sqrt{1+|Dw|^2} \ dx \leq \Per(L^C,\Box^{0,\delta}).
 \end{equation}

\medskip
 
 Recall that $L$ is a volume constrained minimizer with volume $\textup{Vol}$. Since $|\Lambda|\geq |L|$, by the minimization property of $L$ and the monotonicity formula for comparing minima at different volumes Lemma~\ref{lem: volume change} we can check that 
\begin{equation}\label{energy}
{{E}}'(L) \leq {{E}}'(\Lambda).
\end{equation}
We remark that this is the place where the proof of \eqref{eqn: non-degen SV} is not exactly symmetric, since the set modification will decrease the volume in that case there would be an additional term $r\textup{Vol}^{-\frac{1}{d+1}}$ on the right hand side above coming from Lemma~\ref{lem: volume change}.  Under the assumption that $r\textup{Vol}^{-\frac{1}{d+1}} \leq 1$ this will not affect the rest of the proof significantly.

\medskip
Computing the change in energy under the perturbation,
\[
0\geq {{E}}'(L)-{{E}}'(\Lambda)\geq \Per(L, \Gamma_k) -(\cos\theta_Y)\mathcal{H}^d(L^C\cap\partial S \cap \Box_{r_k^*}) - \mathcal{H}^d (L^C \cap S^C\cap \Box_{r_k^*})  -\mathcal{H}^d(L^C \cap \dside \Gamma_k)
\]
and thus by  \eqref{eqn: 2k vk} we have

\begin{equation}\label{energy_diff}
  \Per(L,\Gamma_k) - (\cos\theta_Y)\mathcal{H}^d(L^C\cap\partial S \cap \Box_{r_k^*})- \mathcal{H}^d (L^C \cap S^C\cap \Box_{r_k^*}) \leq 2^kV_k.
  \end{equation}

This estimate give us different local information depending on how much $L$ touches $\{z = 0\}$ in each $\e$ sized square.  For this purpose we define two classes of squares, $\F_{C}$ and $\F_{NC}$ standing for the {\it contact} cells and the {\it non-contact} cells as follows. Since $r_k^* \geq r/2 \geq 2\e$ we can choose $R_k \in [1,2]$ so that $r_k^*/R_k \e$ is an integer. Let $w$ be given in \eqref{eqn: w defn},  $s:= \frac{1}{2}\alpha(1- \cos\theta_Y)$ and define
\begin{equation}\label{contact}
 \F_C:= \{\Box = \Box_{R_k\e}(j), j \in R_k\e\integer^d \cap \Box_{r_k^*}: |\{ w = 0\} \cap \Box | \geq s|\Box|\}
 \end{equation}
 and
 
\begin{equation}\label{noncontact}
\F_{NC}:=\{\Box = \Box_{R_k\e}(j), j \in R_k\e\integer^d \cap \Box_{r_k^*}: |\{ w = 0\} \cap \Box | < s|\Box|\}.
\end{equation}
   We define $\mathcal{F}_{C}^\delta$ (resp. $\mathcal{F}_{NC}^\delta$) to be the collection of $\Box^{0,\delta}$ with $\Box \in \F_C$ (resp. $\mathcal{F}_{NC}$).  We will abuse notation and use $\mathcal{F}_{C}$ et. al. to refer both to the collection of squares/cubes and to set which is the union of those squares/cubes.  
 
 \medskip

Now we proceed to measure $V_{k+1}$ by using the fact that 
\[
V_{k+1} \leq |L^C\cap \Gamma_k| = |L^C\cap \mathcal{F}_C^\delta| + |L^C\cap \mathcal{F}_{NC}^\delta |.
\]
The volume of $L^C$ over the contact cells will be measured first, where we replace the isoperimetric inequality by a Poincare-type inequality. Over the non-contact cells we bound the volume by the perimeter using the isoperimetric inequality.  The perimeter above the non-contact cells can be bounded using the energy estimate \eqref{energy_diff}, this is where we can really take advantage of $\cos\overline{\theta}_Y <1$.  Finally we will need to estimate the perimeter of $L$ along the boundary cells between $\mathcal{F}_C$ and $\mathcal{F}_{NC}$.  An additional geometric argument is needed for this.

\medskip

\medskip

{\bf $\circ$ Volume bound over the contact cells. }  We will make use of the following Poincare-type inequality, which can be found for example in the book \cite[Section 5.6.1 Theorem 1]{EvansGariepy}:
For every $ s\in (0,1)$ there exists $C(s,d)$ finite so that for any square $\Box \subset \real^d$ and any $f : \Box \to \real$ which is in $BV(\Box)$ and satisfies that $|\{ f = 0 \}| \geq s |\Box|$,
\begin{equation}\label{moser}
 \left( \int_\Box |f|^{\frac{d}{d-1}} \ dx \right)^{\frac{d-1}{d}}  \leq C\frac{1}{1-(1-s)^{1/d}} \int_\Box |Df| \ dx.
 \end{equation}
 The same result holds in $d=1$ with $\|f\|_{L^\infty(\Box)}$ on the left hand side.  We emphasize that these are not integral averages, the inequality is already scale invariant.

 \begin{lemma}[Estimates over the contact cells]\label{lem:contact}
\begin{equation}\label{est_contact_0}
|L^C\cap \F_C^\delta| \leq CR_k\e2^{k/2}V_k^{1/2}.
\end{equation}
\end{lemma}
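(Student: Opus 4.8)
The plan is to estimate $|L^C \cap \F_C^\delta|$ cell-by-cell using the Poincar\'e inequality \eqref{moser} and then sum. Fix a contact cell $\Box = \Box_{R_k\e}(j) \in \F_C$. By definition $|\{w = 0\} \cap \Box| \geq s|\Box|$, so \eqref{moser} applies to $f = w$ on $\Box$, with constant depending on $s$ and $d$; recalling $s = \tfrac12\alpha(1-\cos\theta_Y)$ is a fixed parameter of the problem, this constant is universal. Combining with H\"older's inequality on the cube $\Box$ (of sidelength $R_k\e \le 2\e$) to pass from the $L^{d/(d-1)}$ norm to the $L^1$ norm, and using \eqref{eqn: w prop} which says $\int_\Box w\,dx = |L^C \cap \Box^{0,\delta}|$ and $\int_\Box \sqrt{1+|Dw|^2}\,dx \le \Per(L^C,\Box^{0,\delta})$, one gets
\[
|L^C \cap \Box^{0,\delta}| = \int_\Box w\,dx \le |\Box|^{1/d}\Big(\int_\Box w^{d/(d-1)}\Big)^{(d-1)/d} \le C(R_k\e)\int_\Box |Dw|\,dx \le C R_k\e\,\Per(L^C,\Box^{0,\delta}),
\]
where I have dropped the harmless $-1$ in $\sqrt{1+|Dw|^2}$ (the bound $\int |Dw| \le \int \sqrt{1+|Dw|^2}$ suffices).

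Next I would sum this estimate over all $\Box \in \F_C$. Since the cells $\Box^{0,\delta}$ have pairwise disjoint interiors and are contained in $\Gamma_k = \Box_{r_k^*}^{0,\delta}$, superadditivity of perimeter gives
\[
|L^C \cap \F_C^\delta| = \sum_{\Box \in \F_C} |L^C \cap \Box^{0,\delta}| \le C R_k \e \sum_{\Box \in \F_C}\Per(L^C,\Box^{0,\delta}) \le C R_k\e\,\Per(L^C,\Gamma_k) = C R_k\e\,\Per(L,\Gamma_k),
\]
using $\Per(L^C,\cdot) = \Per(L,\cdot)$. It remains to bound $\Per(L,\Gamma_k)$. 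Here I would invoke the energy inequality \eqref{energy_diff}: since $|\cos\theta_Y| < 1$ (so the two surface-area terms on the left of \eqref{energy_diff} are dominated, up to the factor $|\cos\theta_Y|$, by $\mathcal{H}^d(L^C \cap \partial S \cap \Box_{r_k^*})$ and $\mathcal{H}^d(L^C \cap S^C \cap \Box_{r_k^*})$, both of which are controlled by $\Per(L,\Gamma_k)$ plus the side contribution via a trace/Lemma~\ref{lem: surface area to BV}-type argument), one extracts $\Per(L,\Gamma_k) \le C 2^k V_k$. Actually the cleanest route is: \eqref{energy_diff} together with $|\cos\theta_Y|<1$ and the inequality $\mathcal{H}^d(L^C \cap \Box_{r_k^*}^0)\le \Per(L^C,\Gamma_k)$ from \eqref{eqn: ordering of areas by f} yields $c\,\Per(L,\Gamma_k) \le 2^k V_k$, hence $\Per(L,\Gamma_k) \le C 2^k V_k$.

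Plugging this in gives $|L^C \cap \F_C^\delta| \le C R_k \e\, 2^k V_k$, which is \emph{not quite} \eqref{est_contact_0}: the target has $2^{k/2}V_k^{1/2}$, not $2^k V_k$. The resolution — and what I expect to be the main subtlety — is that one should not bound $\Per(L,\Gamma_k)$ by the full energy inequality but rather observe that when $V_k$ is small (which is the regime of interest in the de Giorgi iteration, $|V(1)| \ll 1$), the better bound $\Per(L,\Gamma_k)^{1/2} \lesssim \Per(L,\Gamma_k)$ fails, so instead one interpolates: use $\sum_{\Box \in \F_C}|L^C\cap\Box^{0,\delta}| \le C R_k\e \sum (\Per(L^C,\Box^{0,\delta}))$ combined with Cauchy-Schwarz against $\#\F_C \lesssim (r_k^*/\e)^d \sim \e^{-d}$ is too lossy; the right move is that the energy bound actually controls $\Per(L,\Gamma_k)$ by $2^k V_k$ only linearly, but one also has the trivial bound $|L^C \cap \F_C^\delta| \le \delta |\F_C| \le \delta r_k^{*d}$, and—more to the point—the Poincar\'e step can be run with the $L^1$ norm replaced so as to produce $\int_\Box w \le C(R_k\e)^{1/2}\big(\int_\Box |Dw|\big)^{1/2}|L^C\cap\Box^{0,\delta}|^{1/2}$ by splitting $w = w^{1/2}\cdot w^{1/2}$ and applying Poincar\'e to $w^{1/2}$ -- wait, that is not legitimate for BV. The honest fix: apply \eqref{moser} to obtain $\|w\|_{L^{d/(d-1)}(\Box)} \le C R_k\e \int_\Box|Dw|$, then by H\"older with the exponent split $1 = \tfrac{1}{2}\cdot\tfrac{2}{d/(d-1)} + (\text{remainder})$... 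The cleanest correct argument: sum $\|w\|_{L^1(\Box)} \le C R_k\e\,\Per(L^C,\Box^{0,\delta})$ over $\F_C$, apply Cauchy--Schwarz as $\sum_\Box \Per(L^C,\Box^{0,\delta}) \le (\#\F_C)^{1/2}\big(\sum_\Box \Per(L^C,\Box^{0,\delta})^2\big)^{1/2}$ is wrong; rather use $\sum_\Box \Per \le \big(\#\{\Box: \Per(L^C,\Box^{0,\delta})>0\}\big)^{1/2} \Per(L,\Gamma_k)^{1/2}$ -- no. I will therefore follow \cite{CM}: write $|L^C\cap\F_C^\delta| \le C R_k\e\,\mathcal{H}^d(\partial L \cap \F_C^\delta)$, then note $\mathcal{H}^d(\partial L \cap \F_C^\delta) \le \Per(L,\Gamma_k) \le C\,2^kV_k$ from \eqref{energy_diff}; to upgrade to the square-root one uses that $|L^C\cap \F_C^\delta|$ is also bounded by $\|w\|_{L^\infty}\cdot|\{w>0\}\cap\F_C|$ with $\|w\|_\infty \le \delta$ and then an $L^1$--$L^\infty$ interpolation, $|L^C\cap\F_C^\delta| = \int w \le \|w\|_\infty^{1/2}\|w\|_{L^1}^{1/2}|\F_C|^{1/2}$... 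I expect the genuine argument, as in \cite{CM}, combines the cell Poincar\'e estimate with the fact that $\mathcal{H}^d(L^C\cap\dside\Gamma_k)\le 2^kV_k$ from \eqref{eqn: 2k vk} and a discrete Cauchy--Schwarz over the $\lesssim R_k^{-1}\e^{-1}$ cells meeting the side boundary, yielding the factor $2^{k/2}V_k^{1/2}$ and the single power of $\e$; filling in this combinatorial/interpolation step is the main obstacle, the rest being the routine Poincar\'e and energy bookkeeping above.
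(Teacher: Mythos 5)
There is a genuine gap, and it sits exactly where you suspected. Your cell-wise Poincar\'e step is the same as the paper's and is fine: $|L^C\cap\Box^{0,\delta}|=\int_\Box w\le |\Box|^{1/d}\|w\|_{L^{d/(d-1)}(\Box)}\le CR_k\e\int_\Box|Dw|$. The problem is the next step. Your fallback bound $c\,\Per(L,\Gamma_k)\le 2^kV_k$ does not follow from \eqref{energy_diff}: the two subtracted terms there sum to at most $\mathcal{H}^d(L^C\cap\Box_{r_k^*}^0)$, but the second carries coefficient $1$, and by \eqref{eqn: ordering of areas by f} that bottom area can itself be as large as $\Per(L,\Gamma_k)$ (e.g.\ when $\partial L$ is nearly horizontal and sits over the grooves $\{\phi<0\}$ at $z=0$). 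So \eqref{energy_diff} only gives $0\le 2^kV_k$ in the worst case; it controls the perimeter \emph{excess} over the horizontal projection, not the perimeter. This is precisely why the paper splits into contact and non-contact cells, and even granting your bound it would only yield the linear estimate $R_k\e\,2^kV_k$, not $R_k\e\,2^{k/2}V_k^{1/2}$. Your subsequent repair attempts (splitting $w=w^{1/2}w^{1/2}$, $L^1$--$L^\infty$ interpolation, Cauchy--Schwarz over boundary cells) are all, as you yourself note, either illegitimate or abandoned, and the final appeal to a ``combinatorial/interpolation step'' is not the mechanism the paper uses.

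The missing idea is a one-parameter optimization via Lemma~\ref{lem: surface area to BV}, not a counting argument. From \eqref{energy_diff}, bound both subtracted terms by $\mathcal{H}^d(L^C\cap\Box_{r_k^*}^0)$ and discard the non-contact contribution using $\Per(L,\F_{NC}^\delta)-\mathcal{H}^d(\F_{NC}\cap L^C)\ge 0$; what remains is
\[
\Per(L,\F_C^\delta)-\mathcal{H}^d(L^C\cap\F_C)\;=\;\int_{\F_C\cap\{w>0\}}\big(\sqrt{1+|Dw|^2}-1\big)\,dx\;\le\;C\,2^kV_k .
\]
Lemma~\ref{lem: surface area to BV} gives, for every $a\in(0,1]$, the lower bound $a\int_{\F_C}|Dw|\,dx-a^2|\F_C|$ for the left-hand side, and since $|\F_C|\le 1$ this yields $\int_{\F_C}|Dw|\le Ca^{-1}2^kV_k+a$. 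Choosing $a=2^{k/2}V_k^{1/2}$ (legitimate when this is $\le 1$, which is the relevant regime of the iteration) gives $\int_{\F_C}|Dw|\le C2^{k/2}V_k^{1/2}$, and combined with your Poincar\'e step this is exactly \eqref{est_contact_0}. In short: the square root comes from optimizing the slope $a$ in the sub-linear inequality $\sqrt{1+t^2}-1\ge at-a^2$, which converts control of the area excess into control of $\int|Dw|$ with a square-root gain.
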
 

\begin{proof}
  We will make use of the function $w$ defined in \eqref{eqn: w defn} which rearranges $L^C \cap \Box_{r_k*}$ into a subgraph with the same volume and decreased perimeter via \eqref{eqn: w prop}.   From \eqref{eqn: w prop}, taking a sum over the squares constituting $\F_C$, we have,
\[
 |L^C \cap \F_C^\delta| = \int_{\F_C} w(x) \ dx.
 \]

Using H\"{o}lder inequality as well as the Poincare-type inequality \eqref{moser} in every $\e$ square $\Box$ of $\F_C$ using the definition of contact cells, we have for $d \geq 2$,
\[ \int_{\F_C} w \ dx = \sum_{\Box \in \F_C} \int_\Box w \ dx \leq  \sum_{\Box \in \F_C} |\Box|^{1/d}\left(\int_\Box w^{\frac{d}{d-1}}  \ dx\right)^{\frac{d-1}{d}}\leq  CR_k\e \int_{\F_C} |Dw|  \ dx.\]
When $d=1$ the computation is essentially the same,
\[ \int_{\F_C} w \ dx = \sum_{\Box \in \F_C} \int_\Box w \ dx \leq  \sum_{\Box \in \F_C} |\Box| \| w \|_{L^\infty(\Box)} \leq  CR_k\e \int_{\F_C} |Dw|  \ dx.\]
Note that 
\begin{align}
 \Per(L^C, \F^{\delta}_C) -\mathcal{H}^d(L^C\cap \F_C)&\geq \int_{\F_C \cap \{w >0\}} (1+|Dw(x)|^2)^{1/2} \  dx - |\{w >0\} \cap \mathcal{F}_C| \notag \\
 &=  \int_{\F_C \cap \{w >0\}} (1+|Dw(x)|^2)^{1/2}-1 \  dx \notag \\
 &\geq   a\int_{\mathcal{F}_C } |Dw(x)| \ dx -a^2|\mathcal{F}_C|. \label{eqn: contact est 2}
 \end{align}

for any $0 < a \leq 1$, using Lemma~\ref{lem: surface area to BV} for the last line. Using \eqref{energy_diff}, replacing $\cos \theta_Y$ by $1$, as well as the fact $\Per(L,\F_{NC}^\delta) - \mathcal{H}^d(\F_{NC} \cap L^C) \geq 0$ yields
\begin{equation}\label{eqn:est_contact_3}
a \int_{\F_C } |Dw| \ dx \leq C2^kV_k+a^2|\mathcal{F}_C| \leq C2^kV_k+a^2
 \end{equation}
since $\mathcal{H}^d(\mathcal{F}_C) \leq \mathcal{H}^d(\Box_{r_k^*}) \leq 1$.  Combining all the above we get
$$
 |L^C \cap \F_C^\delta| \leq CR_k\e \int_{\F_C} |Dw|  \ dx \leq C(Ca^{-1}2^kV_k+a)R_k\e \leq CR_k\e2^{k/2}V_k^{1/2},
$$
when we choose $a = 2^{k/2}V_k^{1/2}$, as long as that quantity is $\leq 1$. 
\end{proof}

{\bf $\circ$ Perimeter bound over the non-contact cells. }  Above the non-contact cells the perimeter has a better bound since we only see $\cos\theta_Y$ averaged over entire unit cells and so we can effectively argue as in the flat surface $|\cos\theta_Y| <1$ case.  
\begin{lemma}\label{contact}
There is $C \geq1$ depending on $1-\cos\theta_Y$ and $\alpha$ so that,
\begin{equation}\label{main1}
 \Per(L,\F_{NC}^\delta) \leq C2^kV_k.
\end{equation}
\end{lemma}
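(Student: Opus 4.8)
The plan is to read \eqref{main1} off the single energy comparison already in place, \eqref{energy_diff}, by localizing it to the $\e$--cells. First I would write $\Per(L,\Gamma_k)=\sum_{\Box\in\F_C}\Per(L,\Box^{0,\delta})+\sum_{\Box\in\F_{NC}}\Per(L,\Box^{0,\delta})$ and split the two interface terms of \eqref{energy_diff} cell by cell. Over a contact cell the solid--interface terms at the level $z=0$ are absorbed by the local liquid--vapor perimeter $\Per(L,\Box^{0,\delta})$ --- via the subgraph rearrangement \eqref{eqn: w defn} and the area/$BV$ comparison of Lemma~\ref{lem: surface area to BV}, in the same spirit as the proof of Lemma~\ref{lem:contact} --- so the contact cells do not obstruct the bound; the remaining contribution coming from the cells straddling the interface between $\F_C$ and $\F_{NC}$ is set aside, to be controlled separately. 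This reduces \eqref{main1} to the coercive non--contact estimate
\[
\cos\theta_Y\,\mathcal H^d(L^C\cap\partial S\cap\Box^0)+\mathcal H^d(L^C\cap S^C\cap\Box^0)\ \le\ (1-c)\,\Per(L,\Box^{0,\delta})\qquad(\Box\in\F_{NC}),
\]
with $c=c(\alpha,1-\cos\theta_Y)>0$; summing this over $\F_{NC}$ and feeding it into \eqref{energy_diff} gives $c\,\Per(L,\F_{NC}^\delta)\le 2^kV_k$, the side term $\mathcal H^d(L^C\cap\dside\Gamma_k)$ having already been absorbed into $2^kV_k$ by the choice of $r_k^*$ in \eqref{eqn: 2k vk}.

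The heart of the matter is this non--contact estimate, and it balances two competing bounds. On one hand, the defining property $|\{w=0\}\cap\Box|<s|\Box|$ with $s=\tfrac12\alpha(1-\cos\theta_Y)$, together with $w<\delta$ a.e. (the top face $\{z=\delta\}$ lies in $L$), forces $|\{0<w<\delta\}\cap\Box|>(1-s)|\Box|$: over all but an $s$--fraction of the cell the column meets both $L$ and $L^C$, so the liquid--vapor interface projects onto $\{0<w<\delta\}\cap\Box$ and hence $\Per(L,\Box^{0,\delta})\ge|\{0<w<\delta\}\cap\Box|>(1-s)|\Box|$; this is exactly the ``detachment'' of the boundary from $z=0$ that is absent in the contact cells. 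On the other hand, the two solid terms are bounded by the corresponding pieces of $\Box^0$ --- the $\partial S$ term by $|\{\phi(\cdot/\e)=0\}\cap\Box|$ and the $S^C$ term by $|\{\phi(\cdot/\e)<0\}\cap\Box|$ --- so their weighted sum is at most $|\Box|-(1-\cos\theta_Y)\,|\{\phi(\cdot/\e)=0\}\cap\Box|$, and $\integer^d$--periodicity gives $|\{\phi(\cdot/\e)=0\}\cap\Box|\gtrsim\alpha|\Box|$. Once the boundary is detached one only ever sees $\cos\theta_Y$ averaged over full periods --- this is the flat--surface mechanism behind Proposition~\ref{prop: contact angle quantity non-degen} --- and the threshold $s=\tfrac12\alpha(1-\cos\theta_Y)$ is calibrated so that the resulting deficit $\sim\alpha(1-\cos\theta_Y)|\Box|$ beats the $s|\Box|$ loss in the perimeter lower bound, yielding $c$.

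The main obstacle is the non--contact estimate itself, and within it the perimeter lower bound $\Per(L,\Box^{0,\delta})\gtrsim|\Box|$: one must exclude $L^C$ hiding in thin, nearly flat layers whose vertical projection misses most of $\Box$, which is precisely what the non--contact condition prevents, but turning $|\{0<w<\delta\}\cap\Box|>(1-s)|\Box|$ into a perimeter bound needs the subgraph machinery of \eqref{eqn: w defn} and some care about whether $\Box^{0,\delta}$ carries its bottom face in the perimeter. Alongside this, the measure--theoretic bookkeeping at $z=0$ must be done carefully --- distinguishing the liquid--vapor interface newly created over the grooves (weight $1$) from the change in solid--liquid cost over the pillar tops (weight $\cos\theta_Y$), which is what fixes the precise form of the two terms in \eqref{energy_diff} --- and the mismatch near cell edges between the $R_k\e$--cells and the $\e$--periodicity lattice has to be accounted for, the latter being exactly the boundary--cell contribution deferred above.
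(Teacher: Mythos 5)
Your proposal is correct and follows essentially the same route as the paper: the energy comparison \eqref{energy_diff}, absorbing the contact-cell interface terms into $\Per(L,\F_C^\delta)$ via the subgraph rearrangement, averaging $\cos\theta_Y$ over full periods on the non-contact cells to get the factor $1-\alpha(1-\cos\theta_Y)$, the detachment bound $\Per(L,\Box^{0,\delta})\geq(1-s)|\Box|$ from \eqref{est_NC_0}, and the calibration $s=\tfrac12\alpha(1-\cos\theta_Y)$ yielding $c=\tfrac{s}{1-s}$. The only differences are cosmetic — you localize cell by cell where the paper sums globally, and you flag the $R_k\e$-cell versus $\e$-period mismatch, which the paper glosses over but which is harmless since each cell contains at least one full period.
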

\begin{proof}

Here we use the energy difference more carefully. From  \eqref{energy_diff} we have 
\begin{align*} 
 2^kV_k  &\geq \Per(L, \mathcal{F}^{\delta}_{NC})+\Per(L, \mathcal{F}^{\delta}_{C}) - \mathcal{H}^d(\mathcal{F}_C\cap L^C) - ((1-\alpha)+\alpha \cos\theta_Y) \mathcal{H}^d(\mathcal{F}_{NC})  \\ 
&\geq  \Per(L, \mathcal{F}^{\delta}_{NC})- ((1-\alpha)+\alpha\cos\theta_Y)\mathcal{H}^d(\mathcal{F}_{NC}) 
\end{align*}

where the first inequality uses assumption \eqref{flat_top}, and the second inequality uses the fact $\Per(L,\F_C^\delta) - \mathcal{H}^d(\F_C \cap L^C) \geq 0$.  As mentioned above, with a more careful set modification in the mode of Proposition~\ref{prop: contact angle quantity non-degen} we could also use the more general condition \eqref{eqn: condition at the max} here. 

\medskip

For $\Box \in \F_{NC}$ we have,
\begin{equation}\label{est_NC_0}
 (1-s)|\Box|\leq |\Box\cap \{ w>0\}|\leq \int_{\Box \cap \{ w>0\}}  \sqrt{1+|Dw|^2}  \ dx \leq \Per(L, \Box^{0,\delta}).
 \end{equation}
As a result, using our choice of $s = \frac{1}{2}\alpha(1- \cos\theta_Y)$, we can control the change in energy above the non-contact set,
\begin{align*}
\Per(L, \F_{NC}^\delta) - (1-\alpha(1- \cos\theta_Y)) \mathcal{H}^d(\mathcal{F}_{NC}) &\geq \Per(L, \F_{NC}^\delta) - \frac{1}{1-s}(1-\alpha(1- \cos\theta_Y))\sum_{\Box \in \mathcal{F}_{NC}}\Per(L, \Box^{0,\delta}) \\
&\geq \frac{\alpha(1- \cos\theta_Y) -s}{1-s}\Per(L, \F_{NC}^\delta) \\
&=\frac{s}{1-s}\Per(L, \F_{NC}^\delta)
\end{align*}
Combining above estimates with \eqref{eqn: 2k vk} and \eqref{energy}, we have
\begin{equation}\label{perimeter}
 c \Per(L,\F_{NC}^\delta)
 \leq {{E}}(L)-{{E}}(F)+\mathcal{H}^d(L^C \cap \partial_{\textup{side}} \Gamma_k) \leq  0+2^kV_k,
 \end{equation}
 where $c =\frac{s}{1-s} = \frac{\frac{1}{2}\alpha(1- \cos\theta_Y)}{1-\frac{1}{2}\alpha(1- \cos\theta_Y)}>0$, and we have used \eqref{eqn: 2k vk} to conclude.
\end{proof}

\vspace{20pt}

Recall that to achieve \eqref{discrete} we need to bound $|L^C\cap \mathcal{F}_{NC}^\delta|$ using its perimeter. Note that
$$
\Per(L^C\cap \F_{NC}^\delta) = \Per(L,\F_{NC}^\delta) + \mathcal{H}^d(L^C\cap \F_{NC})+\mathcal{H}^d(L^C \cap \partial_{side} \Gamma_k) + \mathcal{H}^d(\partial\F_{NC}^\delta\cap L^C\cap \Gamma_k).
$$

For the first three terms on the right hand side,
$$
\mathcal{H}^d(L^C \cap \F_{NC})  \leq  \Per(L,\F_{NC}^\delta) \leq C 2^kV_k \ \hbox{ and } \mathcal{H}^d(L^C \cap \partial_{side} \Gamma_k) \leq 2^kV_k.$$

\medskip

{\bf $\circ$ Estimates along the microscopic contact line.} What is left is to bound the term $\mathcal{H}^d(\partial \F_{NC}^\delta\cap L^C\cap \Gamma_k)$.  It turns out that we can bound the surface area of $L^C$ along $\partial \F_{NC}^\delta\cap \Gamma_k$ in terms of the volume and perimeter of of $L^C$ above the boundary squares of $\F_{NC}$.  We refer to the collection of boundary squares,
\begin{equation}
\delta \F_{NC} = \{ \Box \in \F_{NC} : \ \Box \ \hbox{ is neighboring an element of $\mathcal{F}_{C}$}\}.
\end{equation}
We also define similarly $\delta \mathcal{F}_C$ to be the boundary squares of $\F_C$. In analogy to our previous definitions we call $\delta\mathcal{F}_{NC}^\delta$ to be the region above $\delta \F_{NC}$ in $0 < z< \delta$, more precisely it is the the interior of the union of the closures of $\Box^{0,\delta}$ for $\Box \in \delta \F_{NC}$.

\begin{lemma}\label{boundary}
\begin{equation}\label{main2}
\mathcal{H}^d(\partial \F_{NC}^\delta \cap L^C \cap  \Gamma_k) \leq 2d\left[\frac{1}{\e}|L^C \cap \delta\F_{NC}^\delta|+ \Per(L^C,\delta\mathcal{F}_{NC}^\delta)\right],
\end{equation}
\end{lemma}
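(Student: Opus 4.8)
The plan is to reduce the estimate to a one-dimensional slicing (trace) inequality for $L^C$ across a single $\e$-sized vertical face of a boundary cell, and then to sum that inequality over $\delta\F_{NC}$.

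First I would identify which faces actually contribute. Since $\Gamma_k=\Box_{r_k^*}^{0,\delta}$ is open in the $z$-variable and $\F_C\cup\F_{NC}$ tiles $\Box_{r_k^*}$ by congruent cubes of side $R_k\e\ge\e$ sitting on the lattice $R_k\e\integer^d$, the part of the topological boundary of $\F_{NC}^\delta$ lying in $\Gamma_k$ is contained in the union of those lateral faces of cubes $\Box^{0,\delta}$, $\Box\in\F_{NC}$, that are shared with a cube of $\F_C$; any such $\Box$ lies in $\delta\F_{NC}$ and, because faces of neighbouring lattice cubes match up exactly, each such face is a full lateral face of $\Box^{0,\delta}$. (The horizontal faces $\{z=0\}$, $\{z=\delta\}$ and the faces lying on $\partial\Box_{r_k^*}\times(0,\delta)=\dside\Gamma_k$ are not met by the open set $\Gamma_k$, and faces shared between two $\F_{NC}$-cubes lie in the interior of $\F_{NC}^\delta$.) Hence $\partial\F_{NC}^\delta\cap\Gamma_k\subseteq\bigcup_{\Box\in\delta\F_{NC}}\dside\Box^{0,\delta}$, and it suffices to bound $\sum_{\Box\in\delta\F_{NC}}\mathcal{H}^d(L^C\cap\dside\Box^{0,\delta})$, charging each face to the $\F_{NC}$-side cube $\Box$ — which is precisely what keeps only $\delta\F_{NC}^\delta$ (not all of $\F_{NC}^\delta$) on the right-hand side.

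The core inequality I would prove is: writing a single boundary cube in coordinates as $\Box^{0,\delta}=(0,R_k\e)\times\Box'\times(0,\delta)$ with lateral face $F=\{x_1=0\}\times\Box'\times(0,\delta)$,
\[
\mathcal{H}^d(L^C\cap F)\le\frac{1}{R_k\e}\,|L^C\cap\Box^{0,\delta}|+|D_1\indicator_{L^C}|\big(\Box^{0,\delta}\big).
\]
To get this I would slice $\Box^{0,\delta}$ along the segments $t\mapsto(t,y)$, $t\in(0,R_k\e)$: by the one-dimensional slicing theory for $BV$ functions (\cite{MaggiBV}, \cite{Giusti}), since $\indicator_{L^C}\in BV(\Box^{0,\delta})$, for $\mathcal{H}^d$-a.e. $y\in\Box'\times(0,\delta)$ the slice $h_y(t):=\indicator_{L^C}(t,y)$ is one-dimensional $BV$, its right limit $h_y(0^+)$ exists and coincides with the trace of $\indicator_{L^C}$ on $F$ taken from inside $\Box^{0,\delta}$, and $\int_{\Box'\times(0,\delta)}|Dh_y|((0,R_k\e))\,\d y=|D_1\indicator_{L^C}|(\Box^{0,\delta})$. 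For a scalar $BV$ function on an interval, $|h_y(0^+)|\le|h_y(t)|+|Dh_y|((0,R_k\e))$ for a.e. $t$; averaging in $t$, integrating in $y$ (Fubini turns the $t$-average term into $\tfrac{1}{R_k\e}|L^C\cap\Box^{0,\delta}|$), and using that a density-zero point of $L$ lying on $F$ is in particular a point where the one-sided trace of $\indicator_{L^C}$ equals $1$, gives the displayed bound.

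Finally I would sum: a cube $\Box\in\delta\F_{NC}$ has at most $2d$ lateral faces; applying the core inequality to each (slicing in the matching coordinate direction), bounding $|D_i\indicator_{L^C}|(\Box^{0,\delta})\le\Per(L^C,\Box^{0,\delta})$ for each of them and using $R_k\ge1$, yields $\mathcal{H}^d(L^C\cap\dside\Box^{0,\delta})\le\tfrac{2d}{\e}|L^C\cap\Box^{0,\delta}|+2d\,\Per(L^C,\Box^{0,\delta})$. The cubes $\Box^{0,\delta}$, $\Box\in\delta\F_{NC}$, are pairwise disjoint and lie in $\delta\F_{NC}^\delta$, so summing and using additivity of $|L^C\cap\cdot|$ and $\Per(L^C,\cdot)$ over disjoint open sets gives exactly $2d\big[\tfrac{1}{\e}|L^C\cap\delta\F_{NC}^\delta|+\Per(L^C,\delta\F_{NC}^\delta)\big]$. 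I do not expect a genuine obstacle: the computation is routine, and the only two points requiring care are the geometric bookkeeping in the second paragraph (so that only $\delta\F_{NC}^\delta$ appears on the right) and correctly identifying the trace of $\indicator_{L^C}$ on a face with the one-sided limit of its one-dimensional slices.
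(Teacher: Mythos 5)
Your proof is correct and follows essentially the same route as the paper: reduce to a single-face estimate $\mathcal{H}^d(L^C\cap F)\le\frac{1}{R_k\e}|L^C\cap\Box^{0,\delta}|+\Per(L^C,\Box^{0,\delta})$ and sum over the at most $2d$ faces of each cube in $\delta\F_{NC}$. The only cosmetic difference is that the paper packages the slicing normal to the face into a rearrangement function $g(x',z)=\int_0^{R_k\e}\varphi_{L^C}(t,x',z)\,dt$ and splits $\{g=R_k\e\}$ (Chebyshev, giving the volume term) from $\{0<g<R_k\e\}$ (giving the perimeter term), whereas you run the one-dimensional $BV$ slicing directly; these are the same argument.
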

We will return to the proof of Lemma~\ref{boundary} later, first we explain that $|L^C \cap \delta\F_{NC}^\delta|$ can be bounded in a similar way to $|L^C \cap \F_C|$ with even a stronger result.  
\begin{lemma}\label{boundary_cell}
\begin{equation}\label{main3}
|L^C\cap \delta\F_{NC}^{\delta}| \leq C\e2^kV_k.
\end{equation}
\end{lemma}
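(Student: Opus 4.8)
The plan is to run the same machinery as in the proof of Lemma~\ref{lem:contact} — vertical rearrangement followed by a Poincaré inequality — but paired across the microscopic contact line so as to land on cells where $w$ genuinely vanishes on a definite fraction. Let $w$ be the rearrangement from \eqref{eqn: w defn}, so that $|L^C\cap\delta\F_{NC}^\delta|=\int_{\delta\F_{NC}}w\,dx$ and \eqref{eqn: w prop} holds on every square. Fix a boundary cell $\Box\in\delta\F_{NC}$; by definition it has a face-neighbour $\Box'\in\F_C$, and on $\Box'$ the set $\{w=0\}$ has measure at least $s|\Box'|$, $s=\tfrac12\alpha(1-\cos\theta_Y)$. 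Let $\widetilde\Box$ be the cube of side $3R_k\e$ concentric with $\Box$; it contains $\Box\cup\Box'$ and satisfies $|\{w=0\}\cap\widetilde\Box|\ge 3^{-d}s\,|\widetilde\Box|$, so the Poincaré inequality \eqref{moser} applies on $\widetilde\Box$ with a constant $C=C(d,\alpha,\cos\theta_Y)$. Combined with Hölder's inequality and $|\widetilde\Box|^{1/d}\lesssim\e$, exactly as in Lemma~\ref{lem:contact}, this gives
\[ \int_{\Box}w\,dx\;\le\;\int_{\widetilde\Box}w\,dx\;\le\;C\e\int_{\widetilde\Box}|Dw|\,dx .\]

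Summing over $\Box\in\delta\F_{NC}$ and using that the cubes $\widetilde\Box$ have overlap bounded by a dimensional constant (their centres lie on the $R_k\e$-lattice),
\[ |L^C\cap\delta\F_{NC}^\delta|\;\le\;C\e\int_{\mathcal N}|Dw|\,dx ,\]
where $\mathcal N$ is the union of all cells lying within $R_k\e$ of $\delta\F_{NC}$, hence contained in $\F_{NC}$ together with a one–cell–thick collar of contact cells around the microscopic contact line. Over the $\F_{NC}$–part, \eqref{eqn: w prop} together with Lemma~\ref{contact} gives $\int_{\F_{NC}}|Dw|\,dx\le\Per(L^C,\F_{NC}^\delta)\le C2^kV_k$, which is already the desired bound.

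What remains — and this is the heart of the matter — is to control $\int|Dw|$ over the collar of \emph{contact} cells adjacent to $\F_{NC}$, essentially $\Per(L^C,\delta\F_C^\delta)$; here Lemma~\ref{contact} is useless since it only bounds perimeter over non-contact cells. The approach I would take is to re-run the energy comparison $E'(L)\le E'(L\cup\Gamma_k)$ behind \eqref{energy_diff}, but localised to the collar $\delta\F_C\cup\delta\F_{NC}$ and retaining the solid–liquid and solid–vapour terms over the contact cells: on each such cell $L$ fills the fibre over a set of measure $\ge s|\Box'|$, so replacing $L$ by $L\cup\Box'^{0,\delta}$ converts a definite area of solid–vapour interface into solid–liquid interface, with a strict energy gain governed by $1-\cos\theta_Y>0$ — this is precisely the mechanism of Proposition~\ref{prop: contact angle quantity non-degen} applied at the unit scale, so the grooves of $S$ are handled exactly as there. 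Balancing this gain against the newly created liquid–vapour area on $\dside\Gamma_k$, which is at most $2^kV_k$ by \eqref{eqn: 2k vk}, and against $\Per(L,\F_{NC}^\delta)\le C2^kV_k$, yields $\Per(L^C,\delta\F_C^\delta)\le C2^kV_k$. Feeding this back into the previous display produces $|L^C\cap\delta\F_{NC}^\delta|\le C\e\, 2^kV_k$.

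The main obstacle is exactly this last step: the collar of contact cells bordering the non-contact region is where air pockets can sit against the rough surface, so the cell-by-cell energy bookkeeping is delicate and must exploit $|\cos\theta_Y|<1$ (through the proof of Proposition~\ref{prop: contact angle quantity non-degen}) rather than any a priori regularity of $\partial L$ near $z=0$; everything else is a direct adaptation of the Poincaré/surface-area-to-$BV$ argument already used for Lemma~\ref{lem:contact}.
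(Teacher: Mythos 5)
Your first half runs exactly parallel to the paper's proof: enlarge each $\Box\in\delta\F_{NC}$ to a concentric cube of side $3R_k\e$ containing a contact neighbour, note that this enlarged cube is itself a contact cell for the parameter $3^{-d}s$, apply the Poincar\'e inequality \eqref{moser} there, and sum with bounded overlap. Up to the point where you must control $\int|Dw|$ over the collar of contact cells, the argument is sound and matches the paper.

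The gap is in how you propose to close that last step. You want to prove $\Per(L^C,\delta\F_C^\delta)\leq C2^kV_k$ by re-running the energy comparison on the collar and invoking a ``strict energy gain governed by $1-\cos\theta_Y$'' as in Proposition~\ref{prop: contact angle quantity non-degen}. That mechanism does not apply cell-by-cell on \emph{contact} cells: the reabsorption of the solid-interface terms into a definite fraction of $\Per(L,\Box^{0,\delta})$ (as in \eqref{est_NC_0} and the proof of Lemma~\ref{contact}) uses precisely that $|\{w>0\}\cap\Box|\geq(1-s)|\Box|$, which is the defining property of \emph{non-contact} cells and generally fails on contact cells. On a contact cell the best the comparison $E'(L)\leq E'(L\cup\Gamma_k)$ yields is $\Per(L,\Box^{0,\delta})\leq(\hbox{error})+|\Box|$, so one unavoidably picks up an additive area term of order $\mathcal{H}^d(\delta\F_C)$ (this is the $a^2|\cdot|$ term in Lemma~\ref{lem: surface area to BV} with $a=1$, i.e.\ the localized form of \eqref{eqn:est_contact_3}). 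The missing observation — and the actual content of the paper's proof — is that this area term is itself already of the right size: every $\blacksquare\in\delta\F_C$ neighbours a square of $\F_{NC}$ and each square has at most $2d$ neighbours, so $\mathcal{H}^d(\delta\F_C)\leq 2d\,\mathcal{H}^d(\F_{NC})$, while $\mathcal{H}^d(\F_{NC})\leq(1-s)^{-1}\mathcal{H}^d(\{w>0\}\cap\F_{NC})\leq(1-s)^{-1}\Per(L^C,\F_{NC}^\delta)\leq C2^kV_k$ by \eqref{est_NC_0} and Lemma~\ref{contact}. With that counting step in place, $\sum_{\blacksquare\in\delta\F_C}\int_{3\blacksquare}|Dw|\leq C(2^kV_k+\mathcal{H}^d(\delta\F_C))\leq C2^kV_k$ and the lemma follows; no new localized energy comparison is needed, and the one you sketch would not go through as stated.
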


\begin{proof}
Each square $\Box \in \delta\F_{NC}$ has a neighbor $\blacksquare \in \mathcal{F}_C$ and $\Box \subseteq 3\blacksquare$.  Since $\blacksquare$ is a contact square it holds
$$|\{w = 0 \} \cap 3 \blacksquare| \geq s|\blacksquare| \geq 3^{-d}s|3\blacksquare|.$$
We remark that this condition means that $3 \blacksquare$ is a contact square for the parameter $s \mapsto 3^{-d}s$. Each $\blacksquare \in \delta \F_C$ serves as the neighbor for at most $2d$ squares $\Box \in \delta\F_{NC}$.  Proceeding as in the proof of Lemma~\ref{contact}, and using \eqref{eqn:est_contact_3}, we have

$$
\begin{array}{lll}
|L^C\cap\delta\F_{NC}^{\delta}| = \int_{\delta\F_{NC}} w \ dx & \leq & 2d\sum_{\blacksquare \in \delta \F_{C}} \int_{3\blacksquare} w \ dx \\ \\
& \leq & \sum_{\blacksquare \in \delta \F_{C}}C(s,d)\e\int_{3\blacksquare} |Dw| \ dx \\ \\
   &\leq & C\e (2^kV_k + \mathcal{H}^d(\delta\F_{C}))\\ \\
   &\leq & C\e(2^kV_k + 2 d \mathcal{H}^d(\F_{NC})) \\ \\
   &\leq & C\e2^kV_k,
\end{array}
$$
where for the last inequality is we observe, due to \eqref{main1}, that 
\[\mathcal{H}^d(\delta\mathcal{F}_{NC}) \leq \mathcal{H}^d(\mathcal{F}_{NC}) \leq (1-s)^{-1} H^d(\{w>0\}\cap \mathcal{F}_{NC}) \leq (1-s)^{-1}Per(L^C\cap \mathcal{F}_{NC}^{\delta})  \leq C2^kV_k.\]
\end{proof}

 \medskip

Now we return to the proof of Lemma~\ref{boundary}:
\begin{proof}[Proof of Lemma~\ref{boundary}]

  We begin with some definitions.  For a square $\Box$ of $\real^d$ let $\partial^e \Box$ be the face of $\Box$ with inward normal $e$ ranging over the $2d$ lattice directions.  Note that $\partial \F_{NC}^\delta$ is a union of $\partial^e \Box^{0,\delta} :=\partial^e \Box \times (0,\delta)$ where $\Box \in \F_{NC}$ and $e$ is some lattice direction.  Each cylinder of $\F_{NC}^\delta$ has at most $2d$ faces of $\partial \F_{NC}^\delta$ which are directly neighboring it.

  \medskip
  
  Now we aim to prove the following.  Fix a single face $\partial^e \Box^{0,\delta}$ of $\partial \F_{NC}^\delta$ with $\Box \in \delta\F_{NC}$ we aim to bound,
  \begin{equation}\label{eqn: single bdry cube bound}
  \mathcal{H}^d(L^C \cap \partial^e \Box^{0,\delta}) \leq \frac{1}{\e}|L^C \cap \Box^{0,\delta}|+\Per(L^C,\Box^{0,\delta}).
  \end{equation}
  Then summing over the faces $\partial^e \Box^{0,\delta}$ making up $\partial \F_{NC}^\delta$ and using that each $\Box \in \delta\F_{NC}$ is associated with at most $2d$ faces, 
    \begin{equation*}
  \mathcal{H}^d(L^C \cap \partial \F_{NC}^\delta \cap \Gamma_k) \leq \frac{2d}{\e}|L^C \cap \delta\F_{NC}^\delta|+2d\Per(L^C,\delta\F_{NC}^\delta).
  \end{equation*}
  This would complete the proof, thus we are left to show \eqref{eqn: single bdry cube bound}.

  \medskip
  
  For the rest of the proof we will fix a single face $\partial^e \Box^{0,\delta}$ of $\partial \F_{NC}^\delta$ with $\Box \in \delta\F_{NC}$.  By using translation and rotation symmetry there is no loss in supposing that $e = e_1$ and 
  \[\partial^e\Box^{0,\delta} = \{ 0 \} \times (-R_k\e/2,R_k\e/2)^{d-1}\times (0,\delta),\]
  which we naturally identify with a subset of $\real^d$ which we call by the same name.  
  
  \medskip
  
  Now, similar to what we did in \eqref{eqn: w defn}, we want to rearrange $L^C \cap \Box^{0,\delta}$ into a subgraph, but now we would like the graph to be over $\partial_{e_1}\Box^{0,\delta}$ instead of over $\{ z = 0\}$.  We define,
    \begin{equation}\label{eqn: g defn}
 g(x',z) := \int_0^{R_k\e} \varphi_{L^C}(t,x',z) \ dt \ \hbox{ defined for } \ (x',z) \in \partial^e\Box^{0,\delta}.
 \end{equation}
 Similar with the properties for $w$ \eqref{eqn: w prop} we have
 \begin{equation}\label{eqn: g prop}
 \int_{ \partial^e\Box^{0,\delta} } g(x',z) \ dx'dz = |L^C \cap \Box^{0,\delta}| \ \hbox{ and } \ \int_{\partial^e\Box^{0,\delta} \cap \{ g <R_k\e\}} \sqrt{1+|Dg|^2} \ dx'dz \leq \Per(L^C,\Box^{0,\delta}),
 \end{equation}
and finally also $\mathcal{H}^d(\{g >0\}) \geq \mathcal{H}^d(L^C \cap \partial^e\Box^{0,\delta})$. 
\medskip

Now we estimate the places where $0<g<R_k\e$ and $g = R_k\e$ separately.  When $0<g <R_k\e$ we can estimate using the second part of \eqref{eqn: g prop},
\[\mathcal{H}^d(\{0 < g < R_k\e\} \cap \partial^e\Box^{0,\delta} ) \leq \int_{\partial^e\Box^{0,\delta} \cap \{ g <R_k\e\}} \sqrt{1+|Dg|^2} \ dx'dz \leq \Per(L^C,\Box^{0,\delta}).\]
For the set $g = R_k\e$ we use the first part of \eqref{eqn: g prop} to get,
\[\mathcal{H}^d(\{g= R_k\e\} \cap \partial^e\Box^{0,\delta} ) = \int_{\{g= R_k\e\} \cap \partial^e\Box^{0,\delta} } \frac{1}{R_k\e}g(x',z)dx'dz \leq \frac{1}{R_k\e} |L^C \cap \Box^{0,\delta}|.\]
Summing the two estimates together we get,
\[ \mathcal{H}^d(L^C \cap \partial^e\Box^{0,\delta}) \leq \mathcal{H}^d( \{g >0\} \cap \partial^e\Box^{0,\delta} ) \leq \frac{1}{R_k\e} |L^C \cap \Box^{0,\delta}|+\Per(L^C,\Box^{0,\delta}).\]
This completes the proof of \eqref{eqn: single bdry cube bound}.
\end{proof}

\bigskip

{\bf $\circ$ DeGiorgi-type iteration.} Now we collect the estimates we obtained above to show \eqref{discrete}. 
First  \eqref{est_contact_0} would get us
\[
V_{k+1} \leq |L^C\cap \Gamma_k| \leq C\e2^{k/2}V_k^{1/2} + |L^C\cap \mathcal{F}_{NC}^\delta|.
\]

Combining \eqref{main1}, \eqref{main2} and \eqref{main3}  yields \eqref{discrete}.
%\begin{equation}
%V_{k+1} \leq |L^C \cap \F_{C}^\delta| + (C\e^{-1}|L^C \cap \delta\F_{C}^\delta|+C2^kV_k)^{1+\frac{1}{d}}
%\end{equation}
% \begin{equation}
%V_{k+1}
% \leq |L^C \cap \F_{C}^\delta| + (C2^kV_k)^{1+\frac{1}{d}}
 %\leq C\e 2^{k/2}V_k^{1/2}+ (C2^kV_k)^{1+\frac{1}{d}}.
%\end{equation}
In particular for all $k$ such that 
\[\e \leq (C2^kV_k)^{\frac{1}{2}+\frac{1}{d}}\]
 it holds that,
\[V_{k+1} \leq (C(d,\alpha,\cos\theta_Y)2^kV_k)^{1+\frac{1}{d}}.\]
It is well known that for iterations as above, when $V_0 \leq C(d)\delta_0$ is sufficiently small depending on $C(d,\alpha,\cos\theta_Y)$, $V_k \to 0$ as $k \to \infty$.  Thus for $k$ sufficiently large it must hold that $(C2^kV_k)^{\frac{1}{2}+\frac{1}{d}} \leq \e$, this is the desired result.  Actually we can show something slightly stronger than is claimed by the statement of the Proposition since we claim that under this iteration $2^kV_k \leq C\e^{\frac{2d}{d+2}}$ for $k \geq C\log(\log\frac{1}{\e}+C)$ so that $2^{-k}\leq c(\log\frac{1}{\e}+C)^{-1}$ as $\e \to 0$.  This improves the statement by a logarithmic factor.

%%%%%%%%%%%%%%%%%%%%%%%%%%%%%%%%%%%%%%%%

%SECTION: HOMOGENIZATION

%%%%%%%%%%%%%%%%%%%%%%%%%%%%%%%%%%%%%%%%

\section{Homogenization and the size of the boundary layer}\label{sec: hom}

In this final section we are able to combine the results of the previous sections to obtain the main quantitative homogenization result Theorem~\ref{thm: main}. 

\medskip

Fix a bounded domain $U \subset \real^d$ with smooth or piecewise smooth boundary and call $\Omega = \overline{U} \times \real$.  First let us recall that the homogenized energy density is given by, for $S_0= \{z\leq 0\}$,
\[ d\overline{E}(L) = \sigma_{\textup{LV}}\left.d\mathcal{H}^{d}\right|_{\partial L \cap \partial V}+\overline{\sigma}_{\textup{SL}}\left.d\mathcal{H}^{d}\right|_{\partial L \cap \partial S_0} + \overline{\sigma}_{\textup{SV}}\left.d\mathcal{H}^{d}\right|_{\partial V \cap \partial S_0},  \]
and the associated volume constrained minimization problem,
\begin{equation}\label{eqn: L0 min}
L_0  = \argmin \bigg\{ \overline{{E}}(\Lambda,\Omega): \Lambda \in \mathcal{A}_0(\textup{Vol},\Omega) \bigg\} 
\end{equation}
\[
\hbox{ in the admissible class } \ \mathcal{A}_0(\textup{Vol},\Omega) = \bigg \{ \Lambda \in BV(\Omega \setminus S_0) \ \hbox{ with } \ |\Lambda| = \textup{Vol} \bigg\}
\]
From the result of Gonzalez \cite{Gonzalez}, if $U\times \real$ is large enough to contain a ball of radius $\rho_0$, $L_0$ is a spherical cap $B_{\rho_0}^+(x,z_0)$ with the parameters $\rho_0$ and $z_0$ fixed by the volume constraint and the contact angle,
\begin{equation}\label{eqn: rho z}
z_0 = \rho_0 \cos \bar{\theta}_Y \ \hbox{ and } \ |B^+_{\rho_0}(x,z_0)| = \textup{Vol}.
 \end{equation}
From now on assume that $U$ satisfies said property.  The $L^1$-stability of the global minimizer $L_0$, which we will use below, was proven in \cite{CM},
\begin{thm}[Caffarelli, Mellet \cite{CM}]\label{stability} 
There are constants  $0<\alpha(d) \leq 1/2$ and $C(d,1-|\cos\overline{\theta}_Y|)>0$ so that for any $\Lambda \in BV(\real^{d+1} \setminus S_0)$ with $|\Lambda| = \textup{Vol}$,
 \begin{equation}\label{rate}
 \min_{x \in \real^d} \frac{|\Lambda \Delta (L_0+x) |}{ \textup{Vol}} \leq C\left(\frac{\overline{{E}}(\Lambda) - \overline{E}(L_0)}{\sigma_{\textup{LV}}\textup{Vol}^{\frac{d}{d+1}}}\right)^\alpha.
  \end{equation}
\end{thm}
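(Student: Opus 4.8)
Following \cite{CM}, the plan is to reduce the statement to a quantitative isoperimetric-type estimate for the spherical cap. First I would rewrite the homogenized energy in reduced form: using $\overline\sigma_{\textup{SL}} = \overline\sigma_{\textup{SV}} + \sigma_{\textup{LV}}\cos\overline\theta_Y$ and that on $S_0\cap U$ the trace of the vapor region complements that of $\Lambda$, one gets $\overline E(\Lambda) = \sigma_{\textup{LV}}\mathcal F(\Lambda) + \overline\sigma_{\textup{SV}}|U|$ with
\[ \mathcal F(\Lambda) := \Per(\Lambda,\{z>0\}) + \cos\overline\theta_Y\,\mathcal H^d(\partial_e\Lambda\cap\{z=0\}), \]
so that $\overline E(\Lambda)-\overline E(L_0) = \sigma_{\textup{LV}}\big(\mathcal F(\Lambda)-\mathcal F(L_0)\big)$. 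Since $\mathcal H^d(\partial_e\Lambda\cap\{z=0\})\le \Per(\Lambda,\{z>0\})$ by the divergence theorem and $|\cos\overline\theta_Y|<1$ by Proposition~\ref{prop: contact angle quantity non-degen}, $\mathcal F$ is coercive in $\Per(\Lambda,\{z>0\})$; the direct method then produces a volume-constrained minimizer, which by Gonz\'alez \cite{Gonzalez} is, modulo horizontal translation, the smooth spherical cap $L_0=B^+_{\rho_0}(0,z_0)$ of \eqref{eqn: rho z}. Writing $\mathfrak d(\Lambda)$ for the normalized deficit appearing on the right of \eqref{rate} and $\mathfrak a(\Lambda):=\textup{Vol}^{-1}\min_x|\Lambda\Delta(L_0+x)|$, the goal becomes $\mathfrak a(\Lambda)\le C\,\mathfrak d(\Lambda)^{\alpha}$.

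\medskip

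Second, I would make a compactness reduction. If the inequality failed there would be $\Lambda_n$ with $|\Lambda_n|=\textup{Vol}$, $\mathfrak d(\Lambda_n)\to 0$ and $\mathfrak d(\Lambda_n)^{\alpha}/\mathfrak a(\Lambda_n)\to 0$; by the coercivity of $\mathcal F$ and $BV$-compactness, after horizontal translations $\Lambda_n\to\Lambda_\infty$ in $L^1$ with $|\Lambda_\infty|=\textup{Vol}$ and, by lower semicontinuity, $\mathcal F(\Lambda_\infty)=\mathcal F(L_0)$, so $\Lambda_\infty$ is a translate of $L_0$ and $\mathfrak a(\Lambda_n)\to 0$, a contradiction. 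Hence it suffices to prove $\mathfrak a\le C\mathfrak d^{\alpha}$ under the extra hypotheses that $\mathfrak d(\Lambda)$ is small and $\Lambda$ is, after translation, $L^1$-close to $L_0$ (when $\mathfrak d$ is bounded below the estimate is trivial since $\mathfrak a$ is bounded).

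\medskip

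Third, the analytic heart: (i) upgrade $L^1$-closeness to $C^{1,\gamma}$-closeness, then (ii) run a Fuglede-type second-variation argument. For (i): by the penalization of the volume constraint (exactly as in Lemma~\ref{lem: volume change}) a near-minimal $\Lambda$ is an almost-minimizer of $\mathcal F$ with a fixed constant, so the interior density estimates (Lemma~\ref{lem: meas reg}) together with $\varepsilon$-regularity up to the \emph{flat} wall $\{z=0\}$ (now with a Robin-type contact term, in the spirit of \cite{caffarellicordoba}) give that $\partial^*\Lambda$ is a small normal graph $u$ over $\partial^*L_0$ with $\|u\|_{C^{1,\gamma}}$ controlled by a power $\|u\|_{L^1}^{\gamma'}$, $\gamma'=\gamma'(d)<1$, and $\min_x|\Lambda\Delta(L_0+x)|\sim\inf_{a\in\real^d}\|u-a\cdot\nu_0\|_{L^1(\partial^*L_0)}$, the affine part recording the horizontal translation. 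For (ii): expanding $\mathcal F$ along $u$,
\[ \mathcal F(\Lambda)-\mathcal F(L_0) = \tfrac12 Q_{L_0}(u) + O\!\left(\|u\|_{C^1}\|u\|_{H^{1/2}(\partial^*L_0)}^2\right), \]
where $Q_{L_0}$ is the Jacobi quadratic form of the capillary functional with the natural contact-line boundary condition; since $L_0$ is the unique minimizer modulo translations, $Q_{L_0}$ is positive definite transversally to the translation fields $\{a\cdot\nu_0\}$, giving $Q_{L_0}(u)\gtrsim\inf_a\|u-a\cdot\nu_0\|_{H^{1/2}}^2\gtrsim\inf_a\|u-a\cdot\nu_0\|_{L^1}^2$. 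Combining (i) and (ii), and accounting for the scaling by $\rho_0\sim\textup{Vol}^{1/(d+1)}$, yields $\mathfrak d(\Lambda)\gtrsim\mathfrak a(\Lambda)^{1/\alpha}$ with $\alpha=\alpha(d)\le 1/2$; replacing the bootstrap in (i) by the selection principle of Cicalese--Leonardi, or by reduction to the sharp quantitative Wulff inequality, would instead give the sharp exponent $\alpha=1/2$.

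\medskip

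The main obstacle I expect lies in step three: establishing the strict positivity (coercivity) of the second variation $Q_{L_0}$ with kernel exactly the horizontal translations --- i.e.\ the spectral gap of the Jacobi operator on the spherical cap with the Robin condition fixed by $\cos\overline\theta_Y$ --- together with the regularity bootstrap up to the flat wall that makes the graphical expansion legitimate and that pins down $\alpha(d)$. The remaining ingredients (the reduction to $\mathcal F$, the coercivity of $\mathcal F$, and the compactness argument) are routine.
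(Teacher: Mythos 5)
First, a structural point: the paper offers no proof of Theorem~\ref{stability} at all --- it is imported verbatim from Caffarelli--Mellet \cite{CM} as a black box --- so there is no in-paper argument to compare against. Measured against what \cite{CM} actually do, your route is genuinely different: they obtain \eqref{rate} by an elementary, purely measure-theoretic reduction (rewriting the reduced functional $\mathcal F$ and comparing $\Lambda$ with its symmetrized/cap competitor so as to invoke a quantitative isoperimetric inequality of the Hall type), which is precisely why the statement only claims some possibly dimension-dependent $0<\alpha(d)\le 1/2$ rather than the sharp exponent. Your first step (reduction to $\mathcal F$, coercivity, existence and identification of the minimizer via Gonz\'alez) matches theirs; everything after that is a different, much heavier strategy in the style of the Cicalese--Leonardi selection principle plus a Fuglede expansion.

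The proposal as written has genuine gaps that would need to be filled before it counts as a proof. (a) The compactness reduction takes place in $BV(\real^{d+1}\setminus S_0)$ with no confinement, so $BV$-compactness alone does not prevent the mass of $\Lambda_n$ from splitting or escaping; one needs a concentration-compactness or truncation step (or to work in $\overline U\times\real$ and track the dependence on $U$). (b) The crucial regularity step --- that an almost-minimizer of $\mathcal F$ which is $L^1$-close to $L_0$ is a $C^{1,\gamma}$ normal graph over $\partial L_0$ \emph{up to the contact line} --- cannot be obtained ``in the spirit of \cite{caffarellicordoba}'', which is an interior estimate; boundary regularity at the contact line with the Robin/Young condition is exactly the hard content of De~Philippis--Maggi \cite{DePMag}, which the paper cites as a separate deep theorem and only for exact minimizers. (c) The Fuglede step is asserted, not proved: the strict positivity of the Jacobi form $Q_{L_0}$ on the orthogonal complement of the horizontal translations, for a spherical cap with the contact-line boundary condition determined by $\cos\overline\theta_Y$, is itself a nontrivial spectral statement; moreover the second variation of the capillary functional controls the $H^1(\partial L_0)$ norm of $u$ (gradient term minus $|A|^2u^2$ plus a contact-line term), not the $H^{1/2}$ norm you write, and if this machinery did go through it would yield $\alpha=1/2$ exactly, which is a sign that your argument is not the one that produces the stated $\alpha(d)\le 1/2$. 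None of these gaps is fatal in principle --- the selection-principle route has been carried out for related capillary functionals --- but each of (b) and (c) is a theorem-sized missing ingredient, whereas the \cite{CM} symmetrization argument avoids regularity theory entirely.
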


 Before we state the main Theorem of the section let us first recall the length scale from Proposition~\ref{prop: perim 0 t},
 \begin{equation}\label{eqn: r length scale}
  r_0(\e):=R_0\e\exp(C_1 |\log (\textup{Vol}^{-\frac{1}{d+1}}\e)|^{1/2}),
  \end{equation}
and in order to avoid complicated formulas involving $\textup{Vol}$, $|U|$ and $|\partial U|$ in the statement of the Theorem we define the error rate, which is a non-dimensional quantity,
\[ \textup{err}(\e) := \left(\textup{Vol}^{-1}|U| + \textup{Vol}^{-\frac{d}{d+1}}|\partial U|\right) r_0(\e).\]
\begin{thm}\label{thm: main periodic}
Let $\rho_0, z_0$ as above in \eqref{eqn: rho z} so that $L_0=B^+_{\rho_0}(x, z_0)$ minimizes $\overline{E}$ with volume constraint.  Let $L_\e$ with $|L_\e| = \textup{Vol}$ be the volume constrained minimizer associated with the energy $E_\e$ 
\begin{enumerate}[(i)]
\item We have a rate of convergence in $L^1$,
\begin{equation}
\min_{x \in \real^d} \frac{|L_\e \Delta L_0|}{\textup{Vol}} \lesssim \textup{err}(\e)^\alpha,
\end{equation}
where $\alpha$ is the exponent given in \eqref{rate} above.
\item Call $\beta = \frac{2d}{(d+1)(d+2)}$ and $h_0(\e) = C\textup{Vol}^{\frac{1-\beta}{d+1}}\e^{\beta}\textup{err}(\e)^{\frac{(1-\beta)\alpha}{d+1}}$ then,
 \[ \min_{x \in \real^d} \tfrac{1}{\textup{Vol}^{\frac{1}{d+1}}}d_H\big(L_\e \cap \{ z \geq h_0(\e)\},  L_0 \cap \{ z \geq h_0(\e)\} \big) \lesssim\textup{err}(\e)^{\frac{\alpha}{d+1}}.   \] 
 In words, $L_\e$ converges {uniformly}, outside of a boundary layer of size $h_0(\e)$, to some global minimizer of the homogenized problem. In $d+1=2$ this is $h_0(\e) \sim \e^{(1+\alpha)/3}$ and in $d+1 = 3$ this is $h_0(\e) \sim \e^{(3+2\alpha)/9 - o(1)}$.
 \end{enumerate}
\end{thm}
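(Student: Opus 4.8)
The plan is to obtain Theorem~\ref{thm: main periodic} from three ingredients in sequence: a two-sided energy comparison $|E_\e(L_\e)-\overline{E}(L_0)|\lesssim\sigma_{\textup{LV}}\textup{Vol}^{\frac{d}{d+1}}\,\textup{err}(\e)$, the stability estimate Theorem~\ref{stability}, and the near-boundary density estimates Proposition~\ref{prop: boundary density estimates}. For the energy \emph{upper} bound I would construct a recovery competitor: keep the spherical cap $L_0$ above the level $z=c\e$, and below it glue in the rescaled $\integer^d$-periodic cell-problem solutions of Proposition~\ref{maximal} --- $\e L_{\textup{SL}}$ under the wetted disk $\{|x-x_0|<\rho_0\}$ and $\e L_{\textup{SV}}$ over the rest of $U$ --- and then restore the exact volume by a small vertical dilation of the cap, as in Lemma~\ref{lem: volume change}. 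Using the per-cell identities ${{E}}_{1\textup{-}per}(L_{\textup{SL}})=\overline{\sigma}_{\textup{SL}}$, ${{E}}_{1\textup{-}per}(L_{\textup{SV}})=\overline{\sigma}_{\textup{SV}}$ and the convergence rate of Theorem~\ref{thm: cell gen periodic}, the energy of this competitor is $\overline{E}(L_0)$ plus the $O(\e\log\tfrac1\e)$ per-unit-area cell error, the $O(\e)$ liquid--vapor area near the contact circle, and the volume correction, all of which are absorbed into $\sigma_{\textup{LV}}\textup{Vol}^{\frac{d}{d+1}}\,\textup{err}(\e)$; hence $E_\e(L_\e)\le\overline{E}(L_0)+C\sigma_{\textup{LV}}\textup{Vol}^{\frac{d}{d+1}}\,\textup{err}(\e)$.

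For the energy \emph{lower} bound I would run the same surgery on $L_\e$ itself at height $t=r_0(\e)$: shift $L_\e$ downward by $2t$ above the level $z=2t$, and replace $L_\e$ below $z=0$ by flat solid--liquid / solid--vapor regions chosen according to whether $L_\e$ is wetted or non-wetted over each vertical column, recognizing the discarded bulk energy below a level set as a rescaled cell problem via Lemma~\ref{reduction}. This produces a competitor $\widetilde L_0$ for $\overline{E}$ of nearly the right volume, and the energy lost in the surgery is controlled by $\sigma_{\textup{LV}}\Per(L_\e,\{0<z<2t\})$; since $t=r_0(\e)$, Proposition~\ref{prop: perim 0 t} bounds this by $C\sigma_{\textup{LV}}(\textup{Vol}^{-\frac{1}{d+1}}|U|+|\partial U|)\,r_0(\e)$, and minimality $\overline{E}(\widetilde L_0)\ge\overline{E}(L_0)$ (up to a Lemma~\ref{lem: volume change} correction) closes the estimate. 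Then, for part (i), I apply Theorem~\ref{stability} to $\widetilde L_0$ after restoring exact volume: its homogenized energy exceeds $\overline{E}(L_0)$ by $\lesssim\sigma_{\textup{LV}}\textup{Vol}^{\frac{d}{d+1}}\,\textup{err}(\e)$, so stability gives $\min_x|\widetilde L_0\Delta(L_0+x)|\lesssim\textup{Vol}\,\textup{err}(\e)^\alpha$; since $|\widetilde L_0\Delta L_\e|\lesssim(1+M)|U|\,r_0(\e)\lesssim\textup{Vol}\,\textup{err}(\e)\le\textup{Vol}\,\textup{err}(\e)^\alpha$, the triangle inequality yields (i).

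For part (ii), fix the near-optimal translate $x_0$ from (i), so $|L_\e\Delta(L_0+x_0)|\le\eta\,\textup{Vol}$ with $\eta=C\,\textup{err}(\e)^\alpha$. I claim that any point of $\partial_e L_\e$ at height $\ge h_0(\e)$ lies within distance $\ell:=C\textup{Vol}^{\frac1{d+1}}\eta^{\frac1{d+1}}$ of $\partial(L_0+x_0)$, and symmetrically. If this failed, then on a cube of side $\sim\ell$ about that point $L_0+x_0$ would be identically $0$ or $1$, whereas --- by the interior density estimates Lemma~\ref{lem: meas reg} away from $z=0$ and the near-boundary density estimates Proposition~\ref{prop: boundary density estimates} close to $z=0$ --- both $L_\e$ and $L_\e^C$ would occupy a fixed fraction of that cube, forcing $|L_\e\Delta(L_0+x_0)|\gtrsim\ell^{d+1}$ and hence $\ell^{d+1}\lesssim\eta\,\textup{Vol}$. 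Proposition~\ref{prop: boundary density estimates} applies only at heights $\ge C\e^\beta\ell^{1-\beta}$, and substituting $\ell\sim\textup{Vol}^{\frac1{d+1}}\textup{err}(\e)^{\frac{\alpha}{d+1}}$ makes this threshold exactly $h_0(\e)=C\textup{Vol}^{\frac{1-\beta}{d+1}}\e^\beta\textup{err}(\e)^{\frac{(1-\beta)\alpha}{d+1}}$, which is why the uniform estimate degrades precisely at that scale. The explicit rates in $d+1=2,3$ then follow by inserting $\beta=\frac{2d}{(d+1)(d+2)}=\frac13$ (valid for $d=1$ and $d=2$), $\textup{err}(\e)=\e^{1-o(1)}$ and $\alpha\le\frac12$ into $h_0$.

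The main obstacle is the lower bound in the energy step: one must arrange the surgery on $L_\e$ near the rough surface so that the flattened replacement genuinely realizes the homogenized coefficients $\overline{\sigma}_{\textup{SL}},\overline{\sigma}_{\textup{SV}}$ (through Lemma~\ref{reduction}), while simultaneously keeping the discarded liquid--vapor interface in $\{0<z<2r_0(\e)\}$ of size $O(r_0(\e))$ --- and it is exactly here that Proposition~\ref{prop: perim 0 t} is indispensable and that the non-optimal scale $r_0(\e)=\e^{1-o(1)}$ is injected into $\textup{err}(\e)$, hence into $h_0(\e)$. A secondary, more routine but still delicate point is the bookkeeping in part (ii), where the cube radius in the contradiction argument must be matched against the admissible height in Proposition~\ref{prop: boundary density estimates}, which is what pins down the boundary-layer exponent.
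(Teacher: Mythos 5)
Your proposal is correct and follows essentially the same route as the paper: a two-sided energy comparison via cell-problem surgery at the mesoscale $r_0(\e)$ controlled by Proposition~\ref{prop: perim 0 t} (this is the paper's Proposition~\ref{prop: energy convergence}), then Theorem~\ref{stability} for part (i), then the density-estimate contradiction argument (the paper's Lemma~\ref{lem: uniform convergence}) with the same exponent matching for part (ii). The only imprecision is that the wetted/non-wetted classification in the lower-bound surgery must be done over squares of side $r\ge r_0(\e)$ rather than ``over each vertical column,'' since otherwise the boundary of the wetted set need not be $d-1$ dimensional and the $O(\e\log\frac{r}{\e})$ per-unit-boundary cell error could not be absorbed.
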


\begin{remark}
The terms $\textup{Vol}^{-1}|U|r_0(\e)$ and $\textup{Vol}^{-\frac{d}{d+1}}|\partial U|r_0(\e)$ in the error rate $\textup{err}(\e)$ correspond to parts of the error which do not appear in the flat surface case. There is one additional term, $\textup{Vol}^{-\frac{1}{d+1}}r_0(\e)$, that does appear in the flat surface case but it is hidden here inside the first error term $\textup{Vol}^{-1}|U|r_0(\e)$.

\medskip

The term $\textup{Vol}^{-1}|U|r_0(\e)$ comes from the part of the volume of the liquid droplet which fills in the rough surface below the level $z=0$, this effectively changes the volume of the droplet and contributes an error.  The term $\textup{Vol}^{-\frac{d}{d+1}}|\partial U|r_0(\e)$ comes from the boundary layer along $\partial U$ where the periodic cell problem solution is no longer necessarily optimal, this contributes an error of order $\sim \sigma_{\textup{LV}}\e|\partial U|$ in the estimate of the energies.  The last (hidden) term $\textup{Vol}^{-\frac{1}{d+1}}r_0(\e)$ is the ``true" homogenization error which is coming from the $d-1$ dimensional contact line where again the cell problems no longer give good estimates for the energy per unit cell.

\end{remark}

 Let us give an outline of the arguments in the proof of Theorem~\ref{thm: main periodic}.  Using the regularity theory we have developed in the previous sections we will prove that given $L$ which is the volume constrained global minimizer of $E$ over a domain $\Omega$ with solid surface $S_\e$ we can find a finite perimeter set $\Lambda \subset \real^{d+1}_+$ such that,
\[ |L \Delta \Lambda | \lesssim \textup{err}(\e) \ \hbox{ and } \ |{{E}}_\e(L) - \overline{{E}}(\Lambda)| \lesssim  \textup{err}(\e).\]
 Correspondingly, given the spherical cap $L_0$ which is the global minimizer of $\overline{{E}}$ with solid surface $S = \real^{d+1}_-$, we can modify $L_0$ to $L_{0,\e}$ by unioning on the solid-liquid and solid-vapor cell problem solutions in the respective solid-liquid and solid-vapor contact regions so that,
\[ |(L_0 \Delta L_{0,\e}) \cap \{z \geq 0\}| =0 \ \hbox{ and } \ |\overline{{E}} (L_0) - {{{E}}}_\e(L_{0,\e})| \lesssim \e.\]
So we have
\[
\overline{{E}}(L_0) \leq \overline{{E}}(\Lambda)\leq {{E}}_\e(L) + C\textup{err}(\e) \leq {{E}}_\e(L_{0,\e})+C\textup{err}(\e) \leq \overline{{E}}(L_0) +C\textup{err}(\e),
 \]
from which we conclude that $|\bar{E}(L_0)-\bar{E}(\Lambda)| \leq |\overline{{E}}(L_0) - {{E}}(L)| \lesssim \textup{err}(\e)$.  The  $L^1$ convergence of $L$ to $L_0$ then follows from the stability estimate in \cite{CM} for the homogenized energy (Theorem~\ref{stability}). Convergence in Hausdorff distance is a consequence of the $L^1$ convergence combined with the nondegeneracy estimate  (Proposition~\ref{prop: boundary density estimates}).

\subsection{The proof of homogenization theorem}

As outlined above, the proof of Theorem~\ref{thm: main periodic} consists of three, essentially independent, main ingredients.  The first part is where we use the correctors and the Hausdorff estimate of the contact line (Proposition~\ref{prop: non-degen}) to establish the convergence rate of the energies. The second is the stability result for the homogenized energy given in Theorem~\ref{stability}.  The third part of the argument uses the contact angle non-degeneracy result Proposition~\ref{prop: boundary density estimates}
 to upgrade convergence in measure to convergence in Hausdorff distance outside of a certain boundary layer.

\medskip

Now we state more precisely the first and third part.
 
 \medskip

  The following result does \emph{not} depend on setting of the homogenized problem in the upper half space and, at least at a conceptual level, could be extended to surfaces with $\e$-scale roughness which are smoothly varying (non-flat) at scale $1$. 
  
   \begin{prop}[Convergence of the energy]\label{prop: energy convergence}
Let $U \subset \real$ a fixed open bounded region of $\real^d$ with smooth or piecewise smooth boundary and containing some ball of radius $\rho_0$.  Call $\Omega= \overline{U} \times \real$.   Let $L_\e \subset \Omega \setminus S_\e$ be the volume constrained global minimizer of the energy ${{E}}_\e(\cdot,\Omega)$ and $L_0 \subset \Omega \cap \{ z >0\}$ be the volume constrained global minimizer of the energy $\overline{E}(\cdot,\Omega)$.  There exists $\Lambda_\e \subset \Omega \cap\{z>0\}$ with,
\[ |L_\e \Delta \Lambda_\e| \leq C|U|r_0(\e) \ \hbox{ and } \ |\overline{E}(\Lambda_\e,\Omega) - \overline{E}(L_0,\Omega)| \leq C\sigma_{\textup{LV}}\left(\textup{Vol}^{-\frac{1}{d+1}}|U| + |\partial U|\right)r_0(\e). \]
Recall that $r_0(\e)$ here is the length scale from Proposition~\ref{prop: perim 0 t}, see \eqref{eqn: r length scale}.
\end{prop}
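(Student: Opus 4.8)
The plan is to build $\Lambda_\e$ by \emph{chopping $L_\e$ off below a carefully chosen height and sliding it down to $\{z=0\}$}, and then to compare $E_\e(L_\e,\Omega)$ with $\overline{E}(\Lambda_\e,\Omega)$ region by region against the homogenized minimizer. First, apply Proposition~\ref{prop: perim 0 t} at $t=r_0(\e)$ and $t=\tfrac{4}{3}r_0(\e)$, so that $\Per\big(L_\e,\{\tfrac{1}{2}r_0(\e)<z<2r_0(\e)\}\big)\le C\big(\textup{Vol}^{-\frac{1}{d+1}}|U|+|\partial U|\big)r_0(\e)$. By Chebyshev in the height variable and the coarea inequality for sets of finite perimeter one may pick $h\in[r_0(\e),2r_0(\e)]$ which is \emph{generic} (so $\mathcal H^{d}(\partial_e L_\e\cap\{z=h\})=0$) and for which $W:=\{x\in U:\ (x,h)\in L_\e^{(1)}\}$ satisfies $\mathcal H^{d-1}(\partial_e W)\le\tfrac{4}{r_0(\e)}\Per\big(L_\e,\{\tfrac{1}{2}r_0(\e)<z<2r_0(\e)\}\big)\lesssim \textup{Vol}^{-\frac{1}{d+1}}|U|+|\partial U|$. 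Set $\Lambda_\e:=\big(L_\e\cap\{z>h\}\big)-h\,e_{d+1}\subset\Omega\cap\{z>0\}$. The volume bound is immediate: $L_\e\cap\{z\le h\}\subset U\times(-M\e,h]$ has measure $\le(h+M\e)|U|\lesssim|U|r_0(\e)$, and $\big(L_\e\cap\{z>h\}\big)\Delta\Lambda_\e$ has measure $\le h\,\Per\big(L_\e\cap\{z>h\}\big)\lesssim r_0(\e)\big(\textup{Vol}^{\frac{d}{d+1}}+|U|+|\partial U|\textup{Vol}^{\frac{1}{d+1}}\big)\lesssim|U|r_0(\e)$, using the a priori perimeter bound \eqref{eqn: perim a-priori 2}, the height bound on $L_\e$, and $|U|\gtrsim\textup{Vol}^{\frac{d}{d+1}}$ (since $\Omega^+$ contains a ball of radius $\rho_0$).

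For the energy, split both functionals along $\{z=h\}$ (for $E_\e$) and $\{z=0\}$ (for $\overline{E}$); the generic hyperplane $\{z=h\}$ carries no $E_\e$-energy. Above the slice $S_\e$ is absent, so $E_\e(L_\e,U\times(h,\infty))=\sigma_{\textup{LV}}\Per(L_\e,\{z>h\})=\overline{E}(\Lambda_\e,U\times(0,\infty))$ exactly, and the lateral contributions along $\partial U\times(h,\infty)$ (resp.\ $\partial U\times(0,\infty)$) agree after the shift; in $\overline{E}$ the trace of $\Lambda_\e$ at $\{z=0\}$ contributes $\overline{\sigma}_{\textup{SL}}|W|+\overline{\sigma}_{\textup{SV}}|U\setminus W|$. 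The residual lateral contribution of $E_\e$ in the thin slab $\partial U\times(-M\e,h)$ is $\lesssim(h+M\e)|\partial U|\lesssim|\partial U|r_0(\e)$. Hence everything reduces to the two-sided estimate
\[
\Big|E_\e\big(L_\e,U\times(-M\e,h)\big)-\big(\overline{\sigma}_{\textup{SL}}|W|+\overline{\sigma}_{\textup{SV}}|U\setminus W|\big)\Big|\ \le\ C\sigma_{\textup{LV}}\big(\textup{Vol}^{-\frac{1}{d+1}}|U|+|\partial U|\big)r_0(\e),
\]
together with the comparison with the homogenized minimizer $L_0$.

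The \emph{upper} bound in the slab estimate comes from a competitor: retain $L_\e\cap\{z>h\}$, fill $W\times(0,h)$ with liquid and $(U\setminus W)\times(0,h)$ with vapor, and over the rough layer $\{-M\e<z<0\}$ insert $\e L_{\textup{SL}}(\tfrac{1}{\e}W)$ above $W$ and $\e L_{\textup{SV}}\big(\tfrac{1}{\e}(U\setminus W)\big)$ above $U\setminus W$ (Lemma~\ref{lem: maximal minimal minimizers}); minimality of $L_\e$ and Lemma~\ref{lem: volume change} (the volume change is $O(|U|r_0(\e))$) then bound $E_\e(L_\e,U\times(-M\e,h))$ by $\e^d\Sigma_{\textup{SL}}(\tfrac{1}{\e}W)+\e^d\Sigma_{\textup{SV}}(\tfrac{1}{\e}(U\setminus W))+\sigma_{\textup{LV}}h\,\mathcal H^{d-1}(\partial_e W)+C\sigma_{\textup{LV}}\textup{Vol}^{-\frac{1}{d+1}}|U|r_0(\e)$, the third term (the vertical liquid--vapor wall over $\partial_e W\times(0,h)$) being $\lesssim\sigma_{\textup{LV}}(\cdots)r_0(\e)$ by the choice of $h$. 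For the \emph{lower} bound, since $L_\e$ contains the level $\{z=h\}$ over $W$ (and $V_\e$ contains it over $U\setminus W$), restricting $L_\e$ to $W\times(-M\e,h)$ and extending trivially above $z=h$ gives an admissible competitor for the solid--liquid cell problem over $\tfrac{1}{\e}W$ (resp.\ the solid--vapor problem over $\tfrac{1}{\e}(U\setminus W)$) by Lemma~\ref{reduction}, so $E_\e(L_\e,U\times(-M\e,h))\ge\e^d\Sigma_{\textup{SL}}(\tfrac{1}{\e}W)+\e^d\Sigma_{\textup{SV}}(\tfrac{1}{\e}(U\setminus W))$ by super-additivity, Lemma~\ref{lem: super-additivity}. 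It remains to replace $\e^d\Sigma_{\textup{SL}}(\tfrac{1}{\e}W)$ by $\overline{\sigma}_{\textup{SL}}|W|$ and $\e^d\Sigma_{\textup{SV}}(\tfrac{1}{\e}(U\setminus W))$ by $\overline{\sigma}_{\textup{SV}}|U\setminus W|$: assuming (WLOG, by the $L\leftrightarrow V$ symmetry) the surface is hydrophobic, Lemma~\ref{lem: trivial minimizers} makes the solid--vapor term the explicit integral $\sigma_{\textup{SV}}\int\sqrt{1+|D\phi(x/\e)|^2}$, whose cell average differs from $\overline{\sigma}_{\textup{SV}}|U\setminus W|$ by $\lesssim\sigma_{\textup{LV}}\big(\mathcal H^{d-1}(\partial_e W)+|\partial U|\big)\e$, while the solid--liquid term is handled by tiling $W$ by integer-side squares, estimating each by Theorem~\ref{thm: cell gen periodic} and controlling the untiled $\mathcal H^{d-1}(\partial_e W)$-thin boundary layer, with the square side chosen so that all the errors are absorbed into $C\sigma_{\textup{LV}}\big(\textup{Vol}^{-\frac{1}{d+1}}|U|+|\partial U|\big)r_0(\e)$. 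Finally, testing $E_\e$ with the modified cap $L_{0,\e}$ (the cell minimizers inserted under $L_0$ over the \emph{smooth} disk $D_{\rho_0}$ and over $U\setminus D_{\rho_0}$) and invoking Theorem~\ref{thm: cell gen periodic}, whose (logarithmic) rate is anyway $\ll r_0(\e)/\e$, yields $E_\e(L_\e,\Omega)\le E_\e(L_{0,\e},\Omega)\le\overline{E}(L_0,\Omega)+C\sigma_{\textup{LV}}(\cdots)r_0(\e)$; combined with $\overline{E}(L_0,\Omega)\le\overline{E}(\Lambda_\e,\Omega)+C\sigma_{\textup{LV}}\textup{Vol}^{-\frac{1}{d+1}}|U|r_0(\e)$ (admissibility of $\Lambda_\e$ plus the homogenized analogue of Lemma~\ref{lem: volume change}) and the slab estimate, the chain $\overline{E}(L_0)\le\overline{E}(\Lambda_\e)\le E_\e(L_\e)+C(\cdots)r_0\le E_\e(L_{0,\e})+C(\cdots)r_0\le\overline{E}(L_0)+C(\cdots)r_0$ closes the proof.

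The main obstacle is exactly the slab estimate, and inside it the passage from the cell quantity $\Sigma_{\textup{SL}}$ over the \emph{irregular} wetted region $W$ to $\overline{\sigma}_{\textup{SL}}|W|$: the wetted slice at scale $r_0(\e)$ is not a nice domain, and it is only the macroscopic perimeter control of Proposition~\ref{prop: perim 0 t} that keeps $\mathcal H^{d-1}(\partial_e W)$ bounded and allows the boundary-layer error in the tiling to be absorbed. Everything else --- the volume bound, the region-by-region bookkeeping, the vertical-wall term, and the upper comparison through $L_{0,\e}$ --- is routine given the results of Sections~\ref{sec: cell}--\ref{sec: reg}.
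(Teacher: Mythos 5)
Your overall architecture is the same as the paper's: identify the wetted/non-wetted regions at the scale $r_0(\e)$, replace the droplet below that scale by the cell-problem minimizers, use Theorem~\ref{thm: cell gen periodic} to convert cell energies into $\overline{\sigma}_{\textup{SL}},\overline{\sigma}_{\textup{SV}}$, and close the chain of inequalities through the competitor $L_{0,\e}$. The genuine difference is how the contact region is bookkept: the paper classifies a grid of $r$-squares as wetted/non-wetted by \emph{pointwise} containment of $2\Box_k^r$ in $L_\e$ or $L_\e^C$ and then \emph{counts} the remaining ``contact-line'' squares via the interior density estimates (Lemma~\ref{lem: meas reg}) together with Proposition~\ref{prop: perim 0 t}; you instead take a single generic horizontal slice $W$ at height $h\sim r_0(\e)$ and try to control everything through the coarea bound $\mathcal{H}^{d-1}(\partial_e W)\lesssim \textup{Vol}^{-\frac{1}{d+1}}|U|+|\partial U|$.

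This is where the gap is. In the step ``tiling $W$ by integer-side squares \dots and controlling the untiled $\mathcal{H}^{d-1}(\partial_e W)$-thin boundary layer'' you implicitly use that the measure of the part of $W$ not covered by grid squares fully contained in $W$ is $\lesssim \ell\,\mathcal{H}^{d-1}(\partial_e W)$ (with $\ell$ the tile side). For a general set of finite perimeter this is false: a bound of the form $|W\setminus W_\ell|\lesssim \ell\,\mathcal{H}^{d-1}(\partial_e W)$ requires two-sided density estimates for $W$ (this is exactly the hypothesis \eqref{eqn: t0 def} behind \eqref{eqn: d func est} in the proof of Theorem~\ref{thm: cell gen periodic}), and the measure-theoretic slice of $L_\e$ at a generic height need not satisfy them --- one can have arbitrarily many grid squares in which $W$ has density strictly between $0$ and $1$ while $\Per(W)$ stays bounded, in which case the ``untiled'' mass is of order $|U|$, not $\ell\Per(W)$. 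Without this control the lower bound $\e^d\Sigma_{\textup{SL}}(\tfrac{1}{\e}W)\geq \overline{\sigma}_{\textup{SL}}|W|-C\sigma_{\textup{LV}}(\cdots)r_0(\e)$ does not follow, since on the untiled set you can only compare $\Sigma_{\textup{SL}}$ with $\min(\sigma_{\textup{SL}},\sigma_{\textup{SV}})$ times its measure, losing $O(\sigma_{\textup{LV}})$ per unit area. The repair is precisely the paper's mechanism: the squares where $W$ is neither essentially full nor essentially empty contain a point of $\partial_e L_\e$ at height $h$, so Lemma~\ref{lem: meas reg} gives each one at least $c\ell^{d}$ of perimeter in the slab $\{h-\ell<z<h+\ell\}$, and Proposition~\ref{prop: perim 0 t} then bounds their number, hence their total measure, by $C(\textup{Vol}^{-\frac{1}{d+1}}|U|+|\partial U|)r_0(\e)/\ell^{d}\cdot\ell^{d}$. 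If you insert that counting argument (or, equivalently, define the wetted set by pointwise containment of squares as the paper does, rather than as a measure-theoretic slice), the rest of your proof --- the volume bound, the exact matching of the energies above the slice, the vertical-wall term $\sigma_{\textup{LV}}h\,\mathcal{H}^{d-1}(\partial_e W)$, the super-additivity lower bound, and the closing chain through $L_{0,\e}$ --- goes through.
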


\medskip

For the last part of Theorem~\ref{thm: main periodic}, we give a result which upgrades $L^1$ estimates to uniform estimates for sets satisfying the boundary density estimates Proposition~\ref{prop: boundary density estimates}:

\begin{lem}[$L^1$ to uniform]\label{lem: uniform convergence}
Let $L_1$ and $L_2$ be any two sets of $\Omega \setminus S_\e$ which both satisfy the results of Proposition~\ref{prop: boundary density estimates}.  Then for  $\beta := \frac{2d}{(d+1)(d+2)}$,
\[ d_H\bigg( L_1 \cap \{ z \geq h\}, L_2 \cap \{ z \geq h\}\bigg) \leq C |L_1 \Delta L_2|^{\frac{1}{d+1}} \ \hbox{ for any } \ h \geq C\e^\beta|L_1 \Delta L_2|^{\frac{1-\beta}{d+1}}.\] 
\end{lem}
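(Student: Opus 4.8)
The plan is to prove this by a standard argument: a point of one set that is far from the other set must carry a definite amount of $L^1$-symmetric-difference volume, and summing over a maximal packing of such points gives the bound. First I would set up the contrapositive framing. Suppose $x_0 + t e_{d+1} \in L_1 \cap \{z \ge h\}$ with $h \ge C\e^\beta|L_1 \Delta L_2|^{\frac{1-\beta}{d+1}}$, and suppose that the ball (or upper cube) of radius $\sigma := \tfrac{1}{2}\textrm{dist}(x_0+te_{d+1}, L_2 \cap \{z \ge h\})$ around this point does not meet $L_2$. By the near boundary density estimates Proposition~\ref{prop: boundary density estimates}(i), since $x_0 + t e_{d+1} \in L_1^{(1)} \cup \partial_e L_1$ and $t \ge h \ge C\e^\beta \sigma^{1-\beta}$ (which holds as long as $\sigma \lesssim |L_1\Delta L_2|^{\frac{1}{d+1}}$, since then $\e^\beta\sigma^{1-\beta} \le \e^\beta|L_1\Delta L_2|^{\frac{1-\beta}{d+1}} \le h/C$), we get $|L_1 \cap Q_\sigma^+(x_0)| \ge c\sigma^{d+1}$. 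But on $Q_\sigma^+(x_0)$ the set $L_2$ is empty (up to translating the cube to fit inside the half-ball where $L_2$ is absent), so this whole volume is counted in $|L_1 \setminus L_2| \le |L_1 \Delta L_2|$, forcing $\sigma \le C|L_1 \Delta L_2|^{\frac{1}{d+1}}$. The symmetric argument using part (ii) of Proposition~\ref{prop: boundary density estimates} handles a point of $L_2 \cap \{z \ge h\}$ far from $L_1$.

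The main technical care is the bookkeeping: the density estimate is stated for half-cubes $Q_r^+$ centered on $\{z=0\}$, whereas the natural object here is a ball around an interior point $x_0 + t e_{d+1}$ with $t$ possibly much larger than $h$. I would handle this by noting that if the distance from $x_0 + te_{d+1}$ to $L_2 \cap \{z \ge h\}$ is $2\sigma$, then either $2\sigma \le t - h$, in which case the full ball $B_{2\sigma}(x_0 + te_{d+1})$ lies in $\{z > h\}$ and misses $L_2$ entirely, so an ordinary interior density estimate (from the fact that $L_1$ is an almost minimizer away from the solid surface, Lemma~\ref{lem: meas reg}) already gives $\sigma \lesssim |L_1\Delta L_2|^{1/(d+1)}$; or $2\sigma > t - h$, meaning the point is within $2\sigma$ of the slab $\{z = h\}$, and then I replace the ball by a half-cube $Q_\sigma^+$ based at height $h$ (rather than at $z=0$) sitting below $x_0 + te_{d+1}$, which still lies in $\{z \ge h\}$ hence misses $L_2$, and apply Proposition~\ref{prop: boundary density estimates}(i) with the base at height $h$ — here one checks the condition $t - h \ge C\e^\beta\sigma^{1-\beta}$ is automatically satisfied since $\e^\beta\sigma^{1-\beta} \lesssim h$ when $\sigma \lesssim |L_1 \Delta L_2|^{1/(d+1)}$, and this is exactly the hypothesis on $h$. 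Since we may assume $\sigma$ is not already $\le C|L_1\Delta L_2|^{1/(d+1)}$ (otherwise there is nothing to prove), the condition is met.

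The final step is to turn this pointwise distance bound into a Hausdorff distance bound. Since $d_H(A, B) = \max\{\sup_{a \in A}\textrm{dist}(a, B), \sup_{b \in B}\textrm{dist}(b, A)\}$, and we have shown every point of $L_1 \cap \{z \ge h\}$ is within $C|L_1\Delta L_2|^{1/(d+1)}$ of $L_2 \cap \{z \ge h\}$ and vice versa, we are done — no packing argument is even needed, just the pointwise estimate applied at each point. One subtlety worth a remark: I should take the convention $L_i = L_i^{(1)}$ as in the rest of the paper so that ``$x_0 + te_{d+1} \in L_1$'' means it is a point of density one (or an essential boundary point), which is precisely the hypothesis form in Proposition~\ref{prop: boundary density estimates}(i)–(ii).

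I expect the main obstacle to be the geometric gymnastics of fitting an axis-aligned half-cube $Q_\sigma^+$ (based at some height, with the right orientation relative to the missing region) inside a Euclidean ball around the far point while keeping it inside $\{z \ge h\} \setminus L_2$; this is elementary but requires choosing constants so that a cube of side comparable to $\sigma$ fits inside the half-ball of radius $2\sigma$ that is disjoint from $L_2$, and tracking that the factor lost is dimensional. Everything else is a direct invocation of Proposition~\ref{prop: boundary density estimates} and Lemma~\ref{lem: meas reg}.
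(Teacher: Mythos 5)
Your proposal is correct and takes essentially the same route as the paper's proof: a point of one set in $\{z\ge h\}$ at distance $r=C_0|L_1\Delta L_2|^{\frac{1}{d+1}}$ from the other forces, via the density estimates of Proposition~\ref{prop: boundary density estimates} (the hypothesis on $h$ being exactly what makes them applicable at scale $r$), a volume $\gtrsim r^{d+1}$ of symmetric difference, contradicting the choice of $r$; no packing argument is needed in either version. The paper's write-up is terser and omits the interior/near-slab case distinction and the cube-versus-ball bookkeeping that you spell out, so your extra care is a refinement rather than a departure.
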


 Before proceeding with the proofs of Proposition~\ref{prop: energy convergence} and Lemma~\ref{lem: uniform convergence} we explain how to derive Theorem~\ref{thm: main periodic} from the above three results.
 \begin{proof}[Proof of Theorem~\ref{thm: main periodic}] Take $\Lambda_\e$ from Proposition~\ref{prop: energy convergence} which has,
 \begin{equation}\label{eqn: lambd aprop}
  |L_\e \Delta \Lambda_\e| \leq C|U|r_0(\e) \ \hbox{ and } \ |\overline{E}(\Lambda_\e,\Omega) - \overline{E}(L_0,\Omega)| \leq C\sigma_{\textup{LV}}\left(\textup{Vol}^{-\frac{1}{d+1}}|U| + |\partial U|\right)r_0(\e). 
  \end{equation}
 Applying the $L^1$-stability estimate (Theorem~\ref{stability}) with volume $|\Lambda_\e|$ and then using that,
 \[ |L_0(\textup{Vol}) \Delta L_0(\textup{Vol}\pm \delta)| \leq C\delta,\]
 we obtain,
\begin{align*}\notag
 \min_{x \in \real^d} \frac{|\Lambda_\e \Delta (L_0+x) |}{ \textup{Vol}} &\leq C\left(\frac{\overline{{E}}(\Lambda_\e) - \overline{E}(L_0)}{\sigma_{\textup{LV}}\textup{Vol}^{\frac{d}{d+1}}}\right)^\alpha +C\textup{Vol}^{-1}|U|r_0(\e) \\
 &\leq C\left(\textup{Vol}^{-1}|U| + \textup{Vol}^{-\frac{d}{d+1}}|\partial U|\right)^\alpha r_0(\e)^\alpha
 \end{align*}
 using for the last inequality \eqref{eqn: lambd aprop} and that $\e \leq 1$.  Using $|L_\e \Delta \Lambda_\e| \leq C|U|r_0(\e)$ gets the same estimate (up to constants) for  $\min_{x \in \real^d} |L_\e \Delta (L_0+x) |$ and that is the result of part $(i)$.
 
 \medskip
 
 Now to achieve part $(ii)$ we make use of the boundary non-degeneracy in the form of Lemma~\ref{lem: uniform convergence}.  Both $L_\e$ and $L_0$ satisfy the interior density estimates Lemma~\ref{lem: meas reg} and the boundary density estimates in the form of Proposition~\ref{prop: non-degen} so Lemma~\ref{lem: uniform convergence} applies and gives the result,
\[ \min_{x \in \real^d} d_H\bigg( L_\e \cap \{ z \geq h\}, (L_0+x) \cap \{ z \geq h\}\bigg) \leq A r_0(\e)^{\frac{\alpha}{d+1}}
\ \hbox{ for any } \ h \geq A^{1-\beta}\e^{\beta}r_0(\e)^{\frac{\alpha(1-\beta)}{d+1}},\] 
 with $\beta = \frac{2d}{(d+1)(d+2)}$ and $A = C\textup{Vol}^{\frac{1}{d+1}}\left(\textup{Vol}^{-1}|U| + \textup{Vol}^{-\frac{d}{d+1}}|\partial U|\right)^{\frac{\alpha}{d+1}}$ with a universal $C$.
 
 \end{proof}

\medskip

\begin{proof}[Proof of Proposition~\ref{prop: energy convergence}]
Let $R$ a positive integer to be chosen large depending on $\e$ and call $r = R\e$ to be the intermediate length scale $\e \ll r \ll 1$.  Partition $\real^d$ by squares $\Box_{k} = rk + [0,r)^d$ of side length $r$ centered at the points of $r\integer^d$.  Corresponding to each square $\Box_k$ is a $d+1$-dimensional cube $Q_k$ with side lengths $r$ centered at the same point $k \in r \integer^d$. Call $3\Box_{k}$ to be the square of side length $3r$ centered at $r k$.

\medskip

Since we are computing the energy only in the region $\Omega = \overline{U} \times \real$ we can restrict to the squares $\Box_k$ with $\Box_k \cap U \neq \emptyset$.  First we remove from consideration the {\it boundary squares} which are located too near to $\partial U$, we call $J_{B}$ to be the collection of $k$ so that $3\Box_k \cap \partial  U \neq \emptyset$. Since $\partial U$ is at least piecewise smooth,
\[ \#(J_{B}) \leq C|\partial U|r^{1-d} . \]
Then for the remaining squares we divide based on whether they are in the wetted set, in the complement of the wetted set, or near the contact line. Based on Proposition~\ref{prop: perim 0 t} we will be able to guarantee that there are not too many squares near the contact line.  

\medskip

 We say that $\Box_k$ is \emph{wetted} if $2\Box_k^{r} \subset L_\e$, we say that $\Box_k$ is \emph{non-wetted} if $2\Box_k^{r} \subset L^C$ and otherwise we say that $\Box_k$ is \emph{near the contact line}.  Recall that,
 \[ \Box_k^r = \Box_k \times \{ z = r\}.\]
 Call $J_{\textup{SL}}$ to be the $k \in \integer^d$ so that $\Box_k$ is wetted, $J_{\textup{SV}}$ to be the $k \in \integer^d$ so that $\Box_k$ is non-wetted, and call $J_{\textup{SLV}}$ to be the $k$ so that $\Box_k$ is near the contact line.  We will refer to the contact sets (discretized at scale $r$) as,
 \[ K_{\textup{SL}} = \cup_{k \in J_{\textup{SL}}} \Box_k \ \hbox{ and } \ K_{\textup{SV}} = \cup_{k \in J_{\textup{SV}}} \Box_k.\]
Every boundary face of $K_{\textup{SL}}$ or $K_{\textup{SV}}$ must border a box of $J_{\textup{SLV}}$ or $J_B$ and each such square may only have at most $2d$ neighbors therefore 
\[ |\partial K_{\textup{SL}}| \vee  |\partial K_{\textup{SV}}| \leq 2d \#(J_{\textup{SLV}} \cup J_B) r^{d-1} .\]

\medskip

Consider now $\#(J_{\textup{SLV}})$, the number of $\Box_k$ which are near the contact line.  If $\Box_k$ is near the contact line then, from the definition, there is a point $(x,r) \in \partial_* L _\e\cap 2\Box_k^{r}$ and so, by the interior density estimates Lemma~\ref{lem: meas reg} we have
\[
\Per(L_\e, 3Q_k \cap \{z \geq \tfrac{1}{2} r  \}) \geq c (\tfrac{1}{2} r)^d.
\]
On the other hand, since $\cup_{k \in J_{\textup{SLV}}} 3Q_k$ is finite overlapping, by Proposition~\ref{prop: perim 0 t}
\[ c\#(J_{\textup{SLV}}) r^d \leq\sum_{k \in J_{\textup{SLV}}} \Per(L_\e, 3Q_k \cap \{z \geq \tfrac{1}{2}  r  \}) \leq C \Per(L_\e, \{ \tfrac{1}{2}  r  \leq z \leq \tfrac{3}{2}r\}) \leq C\textup{Vol}^{-\frac{1}{d+1}}r |U|,\]
as long as $r \geq r_0(\e) $, i.e. $R = r/\e \geq r_0(\e)/\e$, rearranging this we get,
\begin{equation}\label{eqn: hom bdry est}
 \#(J_{\textup{SLV}}) \leq C \textup{Vol}^{-\frac{1}{d+1}}r^{1-d}|U| \ \hbox{ and also } \ |\partial K_{\textup{SL}}| \vee  |\partial K_{\textup{SV}}| \leq C(\textup{Vol}^{-\frac{1}{d+1}}|U|+|\partial U|)
 \end{equation}
This is simply a statement that the contact line is $d-1$ dimensional if we look at a scale $r \geq r_0(\e)$.

\medskip

Now we modify $L_\e$ by replacing the profile in the wetted set by the optimal profile $L_{\textup{SL}}$ from the solid-liquid cell problem and in the non-wetted set by the optimal profile $L_{\textup{SV}}$ from the solid-vapor cell problem.  Recall that $L_{\textup{SL}}$ and $L_{\textup{SV}}$ are the $1$-periodic minimizers obtained in Lemma~\ref{maximal} which satisfy,
\[ {{E}}_{1\textup{-}{per}}(L_{\textup{SL}}) = \overline{\sigma}_{\textup{SL}} \ \hbox{ and } \ {{E}}_{1\textup{-}{per}}(L_{\textup{SV}}) = \overline{\sigma}_{\textup{SV}}.\]
We define,
\[
 \Lambda := \left[L_\e \setminus \bigg((K_{\textup{SL}} \cup K_{\textup{SV}}) \times (-\infty,r) \bigg)\right] \cup \left[\e L_{\textup{SL}} \cap \bigg(K_{\textup{SL}} \times (-\infty,r)\bigg)\right ]\cup\left [\e L_{\textup{SV}} \bigg(K_{\textup{SV}} \times (-\infty,r)\bigg)\right].
\]
We aim to estimate the energy difference ${{E}}(\Lambda,\Omega)-{{E}}(L_\e,\Omega)$.  As usual we use the minimization property of $L_\e$ to get the estimate in one direction, because we are altering the volume we need to have an estimate of the volume change $|L_\e \Delta \Lambda|$.  We can easily estimate, 
\[ |L_\e \Delta \Lambda| \leq |\{(x,z) \in U \times \real: \e \phi(\tfrac{x}{\e}) \leq z \leq r\}| \leq (r+M \e) |U|.\]
 Now we can use that $L_\e$ is the constrained global minimizer with volume $\textup{Vol}$ with the volume change monotonicity formula \eqref{eqn: volume change} to obtain,
 \[ {{E}}_\e(L_\e,\Omega) \leq {{E}}_\e(\Lambda,\Omega) + C\sigma_{\textup{LV}}\textup{Vol}^{-\frac{1}{d+1}}r|U|.\]

We compare the $L_{\textup{SL}}$ with $L_\e\cap \{z\leq r\}$ in $J_{\textup{SL}}$, and $L_{\textup{SV}}$ with $L_\e\cap \{z\leq r\}$ in $J_{\textup{SV}}$, and we conclude 
\begin{equation}\label{eqn: energy change care}
\begin{array}{lll}
{{E}}_\e(\Lambda,\Omega)  &\leq& E_\e(\Lambda, K_{\textup{SL}} \times (-\infty,r)) + E_\e(\Lambda, K_{\textup{SL}} \times (-\infty,r)) +E_\e(\Lambda, \Omega \setminus [(K_{\textup{SL}} \cup K_{\textup{SV}}) \times (-\infty,r)]) \vspace{1.5mm}\\
& \leq& {{E}}_\e(L_\e,\Omega) + C\sigma_{\textup{LV}}\left(\textup{Vol}^{-\frac{1}{d+1}}|U| + |\partial U|\right)r
\end{array}
\end{equation}
More precisely we are using that, via Lemma~\ref{reduction} and Theorem~\ref{thm: cell gen periodic},
\[  
\begin{array}{lll}
E_\e(L_\e, K_{\textup{SL}} \times (-\infty,r)) &\geq& \e^d\Sigma_{\textup{SL}}(\tfrac{1}{\e}K_{\textup{SL}}) - C\sigma_{\textup{LV}}|\partial K_{\textup{SL}}|\e \vspace{1.5mm}\\
&\geq& \overline{\sigma}_{\textup{SL}}|K_{\textup{SL}}| - C\sigma_{\textup{LV}}|\partial K_{\textup{SL}}| \e(1+\log\frac{r}{\e}) \vspace{1.5mm}\\
&=& E_\e(\Lambda,K_{\textup{SL}} \times (-\infty,r))- C\sigma_{\textup{LV}}|\partial K_{\textup{SL}}|\e(1+\log\frac{r}{\e}).
\end{array}
    \]
The analogous result holds also over the non-wetted set $K_{\textup{SV}}$.  Combining those two estimates and using \eqref{eqn: hom bdry est} to estimate $|\partial K_{\textup{SL}}|$ and $|\partial K_{\textup{SV}}|$ we get \eqref{eqn: energy change care}.

\medskip

Finally we have perturbed $L_\e$ to a set $\Lambda$ with,
\begin{equation}\label{eqn: est with lambda L}
 |L_\e \Delta \Lambda| \leq Cr|U| \ \hbox{ and } \ |{{E}}_\e(L_\e,\Omega)- {{E}}_\e(\Lambda,\Omega)| \leq C\sigma_{\textup{LV}}\left(\textup{Vol}^{-\frac{1}{d+1}}|U| + |\partial U|\right)r,
 \end{equation}
and ${{E}}_\e(\Lambda,\Omega)$ is close to $\overline{{E}}(\Lambda^+,\Omega)$ where $\Lambda^+:= \Lambda\cap\{z \geq 0\}$,
\begin{equation}\label{eqn: cant be helped}
|{{E}}_\e(\Lambda,\Omega) - \overline{{E}}(\Lambda^+,\Omega)| \leq \sigma_{\textup{LV}}(\#(J_{\textup{SLV}})+\#(J_B))r^d \leq C\sigma_{\textup{LV}}\left(\textup{Vol}^{-\frac{1}{d+1}}|U| + |\partial U|\right)r
\end{equation}

\medskip

It remains to show that $\overline{{E}}(\Lambda^+,\Omega)$ is close to $\overline{{E}}(L_0)$. One direction is easy, since $\Lambda^+$ is admissible for the volume constrained minimization problem for $\overline{E}$ with surface $\{z\leq 0\}$ (up to an error in the volume):
\begin{equation}\label{eqn: lambda L0 1 dir}
 \overline{E}(L_0,\Omega) \leq \overline{E}(\Lambda^+,\Omega) +C\textup{Vol}^{-\frac{1}{d+1}}r|U|. 
 \end{equation}
 
 \medskip
 
For the other direction we modify $L_0$ to construct $L_{0,\e} \subseteq \Omega \setminus S_\e$ such that
\[ L_{0,\e} \cap \{ z \geq 0 \} = L_0 \ \hbox{ and } \ \ |{{E}}(L_{0,\e},\Omega) - \overline{{E}}(L_0,\Omega)| \leq C|U|\e.\]
 By assumption $U$ contains some ball of radius $\rho_0$, so $L_0$ does not intersect with $\partial U$.  As we did for $L_\e$ we define the contact area of $L_0$ at scale $r$ in terms of,
\[ J^0_{\textup{SL}} :=  \{ k: \Box_k^r \subset L_0\}, \ J^0_{\textup{SV}} :=  \{ k: \Box_k^r \subset \Omega \setminus L_0\}, \ J^0_{\textup{B}} := \{ k: 3\Box_k  \cap \partial U \neq \emptyset\}.\]  
As before the contact line cells $J_{\textup{SLV}}^0$ are all the remaining indices $k$ so that $\Box_k \cap U \neq \emptyset$. From the perimeter estimate Proposition~\ref{prop: perim 0 t} for the homogenized problem and the finite perimeter of $U$,
\[ \#(J_{\textup{SLV}}^0) \leq C\textup{Vol}^{-\frac{1}{d+1}}r^{1-d} |U| \ \hbox{ and } \  \#(J_{\textup{B}}^0) \leq Cr^{1-d} |\partial U|.\]
Then we modify $L_0$ to be admissible for the $\e$-problem as before using the cell problem solutions,
\[
L_{0,\e}:= L_0\cup\left (\e L_{SV} \cap\bigcup_{k\in J^0_{\textup{SV}}} Q_k\right) \cup \left (\e L_{SL} \cap\bigcup_{k\in J_{\textup{SL}}^0} Q_k\right).
\]
Note that the energies ${{E}}_\e(L_{0,\e},\Omega)$ and $\overline{{E}}(L_0,\Omega)$ are close,
\[ {{E}}_\e(L_{0,\e},\Omega) - \overline{{E}}(L_0,\Omega) \leq C\sigma_{\textup{LV}}\left(\textup{Vol}^{-\frac{1}{d+1}}|U| + |\partial U|\right)r . \]
On the other hand $L_{0,\e}$ is admissible for the ${{E}}_\e$ problem with volume $|L_0| \leq |L_{0,\e}| \leq |L_0| + M\e|U|$. Using the volume change monotonicity formula \eqref{eqn: volume change} as usual,
\[
{{E}}_\e(L_\e,\Omega) \leq {{E}}_\e(L_{0,\e},\Omega) + M\sigma_{\textup{LV}}\textup{Vol}^{-\frac{1}{d+1}}r|U|\leq \overline{{E}}(L_0,\Omega)+C\sigma_{\textup{LV}}\left(\textup{Vol}^{-\frac{1}{d+1}}|U| + |\partial U|\right)r.
\]
Then combining this last estimate with \eqref{eqn: est with lambda L} and \eqref{eqn: cant be helped},
\[
\overline{{E}}(\Lambda^+,\Omega) \leq \overline{{E}}(L_0,\Omega)+C\sigma_{\textup{LV}}\left(\textup{Vol}^{-\frac{1}{d+1}}|U| + |\partial U|\right)r.
\]
Combining this estimate with \eqref{eqn: lambda L0 1 dir} gives the claimed estimate of $|\overline{{E}}(\Lambda^+,\Omega) - \overline{{E}}(L_0,\Omega)|$.

\end{proof}

\medskip

\begin{proof}[Proof of Lemma~\ref{lem: uniform convergence}]
Let $C_0,C_1>0$ to be chosen large enough (universal) and call $r = C_0|L_1 \Delta L_2|^{\frac{1}{d+1}}$ and let ${h} \geq C_1\e^\beta|L_1 \Delta L_2|^{\frac{1-\beta}{d+1}}$. Suppose that $(x_1,z_1) \in L_1 \cap \{ z \geq {h}\}$ with $d((x_1,z_1), L_2 \cap \{ z \geq {h}\}) \geq r$.  From the choice of $h$, 
\[ \frac{h}{r}  = C_1 (\frac{\e}{r})^{\beta}\]
which is exactly chosen so that (when $C_1$ is large enough) Proposition~\ref{prop: boundary density estimates} implies,
\[ |L_1 \Delta L_2 | \geq c r^{d+1} \]
for a (possibly smaller than before) universal $c$ from Proposition~\ref{prop: boundary density estimates} so we have a contradiction plugging in the choice of $r$ making $C_0$ larger if necessary so that $C_0^{d+1} > 1/c$.

\medskip

The same argument applies for $(x_2,z_2) \in L_2 \cap \{ z \geq {h}\}$ with $d((x_2,z_2), L_1 \cap \{ z \geq {h}\}) \geq r$.

\end{proof}

\appendix

\section{}

\addtocontents{toc}{\SkipTocEntry}
\subsection{Proof of Lemma~\ref{lem: meas reg}}\label{proof: meas reg}
This proof appears in \cite{CM}, we repeat it here for completeness and because of minor technical differences.  We write $B_r$ for $B_r(x,z) \cap \Omega$ to simplify the notation.  Define the following quantities,
\begin{equation}
V_1(r) = | L \cap B_r|, \  S_1(r) = \mathcal{H}^d(L \cap \partial B_r) \ \hbox{ and } \ V_2(r) = | B_r \setminus L|, \  S_2(r) = \mathcal{H}^d( \partial B_r \setminus L ).
\end{equation}
 From the co-area formula,
\[ V_j'(r) = S_j(r).\]
If we consider the change in energy by removing $L \cap B_r$ from the minimizer $L$ with volume $\textup{Vol}$ we obtain
\[ {{E}}'(L) - \textup{Vol}^{-\frac{1}{d+1}}V(r) \leq {{E}}'(L \setminus B_r) = {{E}}'(L) - \Per(L,B_r)+S_1(r) \]
This estimate of course relies on $r < z$.  Rearranging and using $V_1(r) \leq \textup{Vol}$ we obtain,
\begin{equation}\label{eqn: reg energy est}
 \Per(L,B_r) \leq S_1(r) + \textup{Vol}^{-\frac{1}{d+1}}V_1(r) \leq S_1(r) + V_1(r)^{\frac{d}{d+1}}.
 \end{equation}
Using \eqref{eqn: reg energy est} in combination with the isoperimetric inequality we obtain differential inequalities for $V_j$,
\begin{equation}\label{eqn: ode V1}
V_1'(r) \geq \frac{1}{2}(S_1(r)+\Per(L,B_r))- \frac{1}{2}V_1(r)^{\frac{d}{d+1}} \geq \frac{1}{2}(\mu_{d+1}-1)V_1(r)^{\frac{d}{d+1}} \geq cV_1(r)^{\frac{d}{d+1}}
\end{equation}
since $\mu_{d+1}-1 = c(d) >0$.  Now since $V_1(r) >0$ for all $r>0$ from the definition $(x,z)$ being in the essential boundary $\partial_eL$, the differential inequality implies that,
\[V_1(r) \geq c(d)r^{d+1},\]
which is part of the desired result.

\medskip

We need to be a bit more careful with unioning on $B_r$ since that may not preserve the spatial constraint.  If we consider the change in energy by adding $B_r \cap \Omega$ to $L$,
\begin{equation}\notag
{{E}}'(L)  \leq {{E}}'(L \cup B_r \cap \Omega) =
\left\{\begin{array}{lll}
{{E}}'(L) - \Per(L,B_r)+S_2(r) & \hbox{ when } & d(x,\partial U) \leq r \\
 {{E}}'(L) - \Per(L,B_r)+\mathcal{H}^d(\partial (B_r \cap \Omega) \setminus L)  & \hbox{ when } & d(x,\partial U) > r
\end{array}\right.
\end{equation}
  Rearranging we get,
\[ \Per(L,B_r) \leq \max\{S_2(r),\mathcal{H}^d(\partial (B_r \cap \Omega)\} \leq \max\{ \mathcal{H}^d(\partial B_r), \mathcal{H}^d(\partial (B_r \cap \Omega))\}.\]
Either way implies, by the smoothness of $\partial \Omega$, the desired upper bound on the perimeter,
\[ \Per(L,B_r) \leq C r^d.\]
We need also a slightly different argument to obtain $V_2(r) \geq c(d)r^{d+1}$ -- if $d(x,\partial \Omega) < r/2$ then from the regularity of $\partial \Omega$,
\[ V_2(r) \geq |B_r \setminus \Omega| \geq c r^{d+1}.\]
If $d(x,\partial \Omega) \geq r/2$ then we can argue as before in \eqref{eqn: ode V1},
\begin{equation}
\hbox{for } \ 0 \leq t \leq r/2 \ \hbox{ we have } V_2'(t) \geq \frac{1}{2}(S_2(t)+\Per(L,B_t)) \geq \frac{1}{2}\mu_{d+1}V_2(t)^{\frac{d}{d+1}} \geq cV_2(t)^{\frac{d}{d+1}}
\end{equation}
Now since $V_2(t) >0$ for all $t>0$ from the definition $(x,z)$ being in the essential boundary $\partial_eL$, the differential inequality implies that,
\[V_2(r) \geq V_2(r/2) \geq cr^{d+1},\]
which is the upper bound on the volume of $L$ in $B_r$.

\medskip

Finally we aim for the lower bound on $\Per(L,B_r)$, 
$$(V_1(r) + V_2(r))^{\frac{d}{d+1}} = |B_r|^{\frac{d}{d+1}} = \mu_{d+1}|\partial B_r| = \mu_{d+1}(S_1(r)+S_2(r))$$
and by isoperimetric inequality
$$ V_1(r)^{\frac{d}{d+1}}+V_2(r)^{\frac{d}{d+1}} \leq \mu_{d+1}(S_1(r)+S_2(r) + 2\Per(L,B_r)).$$
Subtracting the the first inequality from the second,
$$ 2\mu_{d+1}\Per(L,B_r) \geq V_1(r)^{\frac{d}{d+1}}+V_2(r)^{\frac{d}{d+1}} - (V_1(r) + V_2(r))^{\frac{d}{d+1}} \geq c_d \min\{ V_1(r),V_2(r) \}^{\frac{d}{d+1}} \geq c_d r^d,$$
here we have used the simple inequality $a^{\frac{d}{d+1}}+b^{\frac{d}{d+1}} - (a+b)^{\frac{d}{d+1}} \geq (2-2^{\frac{d}{d+1}}) \min\{a,b\}$ for all $a,b>0$.

\addtocontents{toc}{\SkipTocEntry}
\subsection{Proof of Lemma~\ref{lem: vertical horizontal}}\label{proof: vertical horizontal}

  We work with a specific $\Gamma_j$ and therefore drop the $j$ for the remainder of the proof, furthermore there is no loss in considering that $(x_j,z_j) = (0,0)$.  

\medskip

From the definition of $\partial E$ being vertical we have that $n = (n',n_{d+1})$ with $|n_{d+1}| < 2/\gamma \leq \frac{1}{2}$, in particular $n \cdot n' \geq \frac{1}{2}$.  We consider the half-spaces 
\[ H_\pm = \{ (x,z) \cdot n  \leq \pm \delta \}.\]
Let $\psi$ be a smooth function with $\psi = 1$ on $\frac{1}{2} \Gamma$ and $\psi = 0$ on $\real^d \setminus \Gamma$ with $|\grad \psi| \leq \frac{C(d)}{r}$.  Now we compute,
\begin{equation*}
\begin{array}{lll}
 \int_{E \setminus H_-} \grad \cdot (n' \psi) \ dxdz &=& \int_{\partial E \cap \Gamma} \psi n' \cdot \nu_E \ d\mathcal{H}^{d} - \int_{\partial H_- \cap \Gamma} \psi n' \cdot n \ d\mathcal{H}^{d}  \vspace{1.5mm}\\
 &\leq& \Per(E,\Gamma) - \tfrac{1}{2} \mathcal{H}^d(\partial H_- \cap \frac{1}{2}\Gamma) 
 \end{array}
 \end{equation*}
 Since $|\grad \cdot (\psi n' )|\leq C(d)/r$ and 
 \[|(E \setminus H_-) \cap \Gamma| \leq |(H_+ \setminus H_-) \cap \Gamma| \leq C(d)\delta \gamma r^{d+1}\]
  we rearrange to obtain,
 \[ \frac{1}{2} \mathcal{H}^d(\partial H_- \cap \tfrac{1}{2}\Gamma) - C(d)\delta \gamma r^{d} 
\leq \Per(E,\Gamma) . \]
Now we show that for $\delta \leq 1/4$,
 \[ \mathcal{H}^d(\partial H_- \cap \tfrac{1}{2}\Gamma) \geq c(d)\gamma r^d . \]
 So, as long as $\delta \leq c_1$ for some dimensional constant $c_1(d)$, we obtain for some other constant $c_0(d)$,
 \[ \Per(E,\Gamma) \geq c_0(d)\gamma r^d.\]
 That completes the proof of the first part of the Lemma.

\medskip

 Now we consider the case $\partial E$ is horizontal in $\Gamma$ then 
 \[ |n \cdot e_{d+1}| \geq 2\gamma^{-1} \ \hbox{ and } \ |n'| \leq 1-2\gamma^{-1}\]
 We just need to show that $\partial E$ cannot intersect the top or bottom of $\Gamma$ or $3\Gamma$. If $(x,z) \in \partial E \cap \Gamma$ then we have:
 \[ 2\gamma^{-1}|z| - (1-2\gamma^{-1})r\leq | n \cdot (x,z)  | \leq   3\delta r. \]
Rearranging the above we obtain,
\[ |z| \leq (\tfrac{3}{2}\delta \gamma + (\tfrac{1}{2}\gamma-1))r < \tfrac{1}{2} \gamma r \ \hbox{ if } \ \delta < \tfrac{2}{3}\gamma^{-1}.\]
The same argument shows that $\partial E$ does not intersect the top or bottom boundary caps of $3\Gamma$. \qed

\addtocontents{toc}{\SkipTocEntry}
\subsection{Proof of Proposition~\ref{prop: boundary density estimates}}\label{proof: boundary density estimates}
Let's prove $(i)$.  Assume that $x+te_{d+1} \in L$ with $t \geq C\e^\beta r^{1-\beta}$, then for $C$ a sufficiently large universal constant the converse of Corollary~\ref{cor: uniform non-degen} implies that,
\[ L \cap Q_{r/2}^+ \cap \{ z = \delta_0 r/2\} \neq \emptyset.\]
In particular there is some point $(x_0,\delta_0 r/2)$ in that set.  Then using the interior density estimates Lemma~\ref{lem: meas reg} in $Q_{\delta_0 r/2}(x_0,\delta_0 r/2)$ we obtain,
\[ |L \cap Q_r^+(x,t)| \geq |L \cap Q_{\delta_0 r/2}(x_0,\delta_0 r/2)| \gtrsim r^{d+1}.\]
A similar argument applies to $(ii)$, and for part $(iii)$ we use the perimeter lower bound from Lemma~\ref{lem: meas reg} in a similar way. \qed

\bibliographystyle{plain}
\bibliography{capillary_articles.bib}

\end{document}